\documentclass{amsart}

\usepackage{amsthm}

\newtheorem*{theorem*}{Theorem}

\usepackage{macros-christian}
\usepackage[qeds]{formatting-christian}
\usepackage{parskip}
\usepackage{multicol}
\usepackage{marvosym}
\usepackage{verbatim}

\renewcommand{\Cat}{\mathsf{Cat}}
\newcommand{\cSet}{\mathsf{cSet}}

\renewcommand{\Set}{\mathsf{Set}}
\newcommand{\sSet}{\mathsf{sSet}}

\newcommand{\Ho}{\mathsf{Ho}}

\newcommand{\bd}{\partial}

\newcommand{\tBox}{\Box^+} 

\renewcommand{\Im}{\mathrm{Im}}
\newcommand{\cmin}{\cSet_\varnothing}
\newcommand{\cneg}{\cSet_0}
\newcommand{\cpos}{\cSet_1}
\newcommand{\cboth}{\cSet_{01}}
\newcommand{\cA}{\cSet_A}
\newcommand{\cB}{\cSet_B}
\newcommand{\Boxmin}{\Box_\varnothing}
\newcommand{\Boxneg}{\Box_0}
\newcommand{\Boxpos}{\Box_1}
\newcommand{\Boxboth}{\Box_{01}}
\newcommand{\BoxA}{\Box_A}
\newcommand{\BoxB}{\Box_B}
\newcommand{\BoxPO}{\Boxmin^n \cup_{\bd \Boxmin^n} i^* \bd \BoxA^n}
\newcommand{\adjoint}{\dashv}
\newcommand{\Nerve}{\mathsf{N}_\Box}

\newcommand{\sk}{\mathrm{sk}}
\newcommand{\gjq}{\gamma_{j : q, \mu}}
\newcommand{\gjqt}{\widetilde{\gamma}_{j : q,\mu}}

\newcommand{\wBox}{\widetilde{\Box}}

\newcommand{\pushoutcorner}[1][dr]{\save*!/#1+1.2pc/#1:(1,-1)@^{|-}\restore}

\newcommand{\pshf}[1]{\mathsf{Set}^{ #1 ^{\mathrm{op}}}}

\newtheorem{examples}[thm]{Examples}

\DeclareFontFamily{U}{mathx}{\hyphenchar\font45}
\DeclareFontShape{U}{mathx}{m}{n}{
      <5> <6> <7> <8> <9> <10>
      <10.95> <12> <14.4> <17.28> <20.74> <24.88>
      mathx10
      }{}
\DeclareSymbolFont{mathx}{U}{mathx}{m}{n}
\DeclareFontSubstitution{U}{mathx}{m}{n}
\DeclareMathAccent{\widecheck}{0}{mathx}{"71}
\DeclareMathAccent{\widecheck}{0}{mathx}{"71}

\begin{document}
\title{Cubical models of higher categories without connections}
\author[B.~Doherty]{Brandon Doherty}

\begin{abstract}
We prove that each of the model structures for ($n$-trivial, saturated) comical sets on the category of marked cubical sets having only faces and degeneracies (without connections) is Quillen equivalent to the corresponding model structure for ($n$-trivial, saturated) complicial sets on the category of marked simplicial sets, as well as to the corresponding comical model structures on cubical sets with connections. As a consequence, we show that the cubical Joyal model structure on cubical sets without connections is equivalent to its analogues on cubical sets with connections and to the Joyal model structure on simplicial sets. We also show that any comical set without connections may be equipped with connections via lifting, and that this can be done compatibly on the domain and codomain of any fibration or cofibration of comical sets.
\end{abstract}

\maketitle

\section*{Introduction}
In \cite{doherty-kapulkin-lindsey-sattler}, \emph{cubical sets with connections} were shown to model the homotopy theory of $(\infty,1)$-categories. Specifically, a \emph{cubical Joyal model structure} was established on each of the categories $\cSet_0$, $\cSet_1$ and $\cSet_{01}$, respectively consisting of cubical sets with  negative connections, positive connections, and both kinds of connections. Each of these model structures was shown to be Quillen equivalent, via triangulation, to the Joyal model structure on simplicial sets. Similar techniques were used in \cite{doherty-kapulkin-maehara} to show that the \emph{comical model structures} on the categories $\cneg^+$, $\cpos^+$, and $\cboth^+$ of marked cubical sets with connections (meaning cubical sets with connections having markings on cubes of positive dimension), modelling $(\infty,n)$-categories for all $n \geq 0$ (including $n = \infty$), are equivalent to the corresponding complicial model structures on the category $\sSet^+$ of marked simplicial sets developed in \cite{verity:weak-complicial-1,ozornova-rovelli}.

\emph{Minimal cubical sets}, \ie cubical sets having only faces and degeneracies, without connections, are also of  interest. These are the cubical sets which were first studied by Kan \cite{kan:abstract-htpy-1} prior to the introduction of simplicial sets. More recently, minimal cubical sets have been used to model the homotopy theory of $\infty$-groupoids, as in \cite{CisinskiAsterisque,jardine:categorical-homotopy-theory}. In \cite{doherty-kapulkin-lindsey-sattler} and \cite{doherty-kapulkin-maehara}, versions of the cubical Joyal and comical model structures were established on the category $\cmin$ of minimal cubical sets and the category $\cmin^+$ of marked minimal cubical sets. However, these model structures were not shown to be equivalent to those with connections, or to their simplicial analogues.

In this paper, we will prove the following:

\begin{theorem*}[\cf \cref{T-min-Quillen-eqv,T-Quillen-equiv-unmarked}]
The triangulation adjunction $T : \cmin \rightleftarrows \sSet : U$ defines a Quillen equivalence between the cubical Joyal model structure on $\cSet_\varnothing$ and the Joyal model structure on $\sSet$. Likewise, the marked triangulation adjunction $T : \cmin^+ \rightleftarrows \sSet^+ : U$ defines a Quillen equivalence between the each of the comical model structures on $\cSet_\varnothing^+$ and the corresponding complicial model structure on $\sSet^+$.
\end{theorem*}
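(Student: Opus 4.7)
The plan is to factor the triangulation adjunction through cubical sets with connections, where the corresponding Quillen equivalence is already known from \cite{doherty-kapulkin-lindsey-sattler, doherty-kapulkin-maehara}. Concretely, the inclusion of sites $\Boxmin \hookrightarrow \Boxneg$ induces a free-forgetful adjunction $F \adjoint V : \cneg \to \cmin$ (and its marked analogue between $\cneg^+$ and $\cmin^+$), and because triangulation of a minimal cube agrees with triangulation of the same cube considered as having (trivially many) connections, the composition of left adjoints $\cmin \to \cneg \to \sSet$ is canonically naturally isomorphic to the triangulation $T$. By two-out-of-three for Quillen equivalences, it therefore suffices to show that $F \adjoint V$ is a Quillen equivalence in each case.

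The first step is to verify that $F \adjoint V$ is a Quillen adjunction. Since the generating cofibrations and trivial cofibrations of the two model structures are parallel --- being boundary inclusions, (inner) open box fillers, and (in the marked case) the standard marking-anodyne maps, all indexed by cubes --- this reduces to checking that $F$ carries each generator on $\cmin^{(+)}$ to the analogous generator on $\cneg^{(+)}$, which is immediate from the definition of $F$ as freely adjoining negative connections.

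The core step is the Quillen equivalence itself. Here I would lean on the paper's own companion result, announced in the abstract, that every (marked) minimal comical set admits a connection structure via lifting, and that this can be done compatibly on the domain and codomain of any given fibration or cofibration. Applied respectively to $\varnothing \to X$ for cofibrant $X$ and to $Y \to \ast$ for fibrant $Y$, this furnishes, up to weak equivalence, a section of $V$ on fibrant objects and shows that the derived unit $X \to V (FX)^{\fib}$ is a weak equivalence on cofibrant $X$. Combined with the fact that $V$ is already known to reflect weak equivalences between fibrant objects (as $V$ is conservative on the level of homotopy categories by a direct inspection of the standard cylinders), these are the two standard conditions for a Quillen equivalence.

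The main obstacle will be the connection-lifting result. Establishing it requires an inductive construction, cube by cube, of connection operators on a given minimal comical set, at each stage solving a comical anodyne lifting problem whose shape encodes the axioms of a connection. The combinatorial bookkeeping is delicate already in the absolute case because a freshly introduced connection must cohere with all previously constructed faces, degeneracies, markings, and connections; doing this simultaneously on the source and target of a (co)fibration, so that the two systems of connections commute with the given map, ties two such inductions together and is where most of the paper's technical content is presumably concentrated.
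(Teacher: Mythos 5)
Your global strategy coincides with the paper's: factor $T$ through a category of cubical sets with connections via the free--forgetful adjunction $i_! \adjoint i^*$ (your $F \adjoint V$), check that it is Quillen on generators, and conclude by two-out-of-three for Quillen equivalences; you also correctly locate the technical heart in an inductive, cube-by-cube construction of connections by solving comical lifting problems. The gap is in the bridge from that construction to the Quillen equivalence. You propose to feed the ``comical sets admit connections, compatibly with a given fibration or cofibration'' result into condition (3) of \cref{QuillenEquivCreate-original}. This does not work as stated, for two reasons. First, essential surjectivity of $i^*$ on fibrant objects (the content of \cref{i-surj-fib}) is not a hypothesis of that criterion and does not by itself control the derived unit: knowing $Y \cong i^*\overline{Y}$ says nothing about the comparison $X \to i^*((i_!X)^{\mathrm{fib}})$, and for a general cofibrant (i.e.\ arbitrary) $X$ the connection-existence result is not even applicable, since it concerns comical sets and (co)fibrations between them. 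Second, the claim that $V = i^*$ reflects weak equivalences between fibrant objects ``by direct inspection of the standard cylinders'' is unjustified: $i^*$ preserves the cylinder and hence preserves homotopy equivalences, but to reflect them one must lift a homotopy inverse and the homotopies from $\cmin^+$ back to $\cneg^+$, which is exactly the kind of statement that only becomes available \emph{after} the equivalence is established (\cref{i-create-we}). Indeed, in the paper the strong-connection-structure results of \cref{sec:surj} are downstream of the equivalence (\cref{wcs-extension} relies on \cref{eta-tcof-all}), so taking them as input risks circularity.

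The correct bridge, and the one the paper takes, is sharper than what you propose: one shows that the unit $\eta_X \colon X \to i^* i_! X$ is itself a trivial cofibration for every $X \in \cmin^+$ (\cref{eta-tcof}). Since a map out of $i^* i_! X$ extending a given $f$ is precisely a weak connection structure on $f$, a skeletal induction (\cref{nat-weq}) reduces this to showing that the relative map $\BoxPO \hookrightarrow i^*\BoxA^n$ is comical (\cref{PO-lift}), i.e.\ that a weak connection structure on an $n$-cube lifts along a comical fibration given one on its boundary --- this is where your open-box induction actually lives. The counit is then a weak equivalence by reduction to representables, which lie in the image of $i_!$, plus a triangle identity (\cref{epsilon-weq}); since $i^*$ is also left Quillen, \cref{nat-we-QE} applies to the underived unit and counit and yields the equivalence without ever needing $i^*$ to reflect weak equivalences as an input. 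Finally, note that the paper derives the unmarked statement not by rerunning the argument in $\cmin$ but via the chain of Quillen equivalences $(-)^\flat \adjoint |-| \adjoint (-)^\sharp \adjoint c$ relating the cubical Joyal model structure to the saturated $1$-trivial comical one; your parallel treatment of the unmarked case is viable in principle, but would require redoing the combinatorics with the inner open boxes and the invertible interval $K$.
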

 
Among the convenient properties of the category of minimal cubical sets is its simple presentation, with its objects possessing fewer structure maps than in alternative categories of cubical sets. Moreover, the minimal cube category has a useful universal property: as shown in \cite[Thm.~4.2]{grandis-mauri}, it is initial among strict monoidal categories equipped with an interval object, allowing for easy construction of co-cubical objects in such categories. On the other hand, cubical sets with connections are often easier to work with in homotopy-theoretic applications. For instance, the cube categories with connections, unlike the minimal cube category, are strict test categories \cite{maltsiniotis:connections-strict-test-cat,buchholtz-morehouse:varieties-of-cubes}. Similarly, all cubical groups with connections are Kan, but this does not hold for minimal cubical groups \cite{tonks:cubical-groups-kan}.
  
Most relevantly to the present work, the techniques used in \cite{doherty-kapulkin-lindsey-sattler,doherty-kapulkin-maehara} to prove Quillen equivalences of cubical models with their simplicial counterparts involve a functor from simplicial to cubical sets which can only be defined in the presence of connections. Thus we cannot simply adapt these techniques to the minimal cubical setting. Instead, we will compare $\cmin^+$ with the three categories of marked cubical sets with connections, by means of adjoint triples $i_! \adjoint i^* \adjoint i_*$ induced by the inclusions of the relevant cube categories. We will also use this comparison to show that any fibrant minimal cubical set can be obtained from a cubical set with connections by forgetting the connection structure. We will focus primarily on proving the desired Quillen equivalences in the marked case; we will obtain the unmarked result as a corollary of this, by showing the cubical Joyal model structure to be equivalent to one of the comical model structures.


\subsection*{Organization of the paper}

In \cref{sec:prelim}, we introduce the four categories of marked cubical sets which will be our focus and the adjoint triples by which we will compare them, and review essential background. We also state or prove various technical lemmas about model categories and the combinatorics of cubical sets which will be of use in subsequent sections.

In \cref{section:wcf}, we prove the main theorem of the paper, \cref{T-min-Quillen-eqv}, which states that the triangulation adjunction defines a Quillen equivalence between each of the comical model structures on $\cmin^+$ and the corresponding complicial model structure on $\sSet^+$. We do this via an extensive study of the adjoint triples $i_! \adjoint i^* \adjoint i_*$ between categories of marked cubical sets induced by the inclusions of the minimal cube category into the cube categories with connections, showing that each of these adjunctions is a Quillen equivalence. As part of our proof, we define the concept of a \emph{weak connection structure}, an assignment of connections on the cubes of a marked minimal cubical set $X \in \cmin^+$ which are compatible with the structure maps of $X$, but which are not required to satisfy the cubical identities for composition of connections. We describe techniques for constructing such structures on comical sets, the fibrant objects of the comical model structures, and show that they may be lifted along the fibrations of the comical model structures.

\cref{sec:surj} is concerned with a question raised by the techniques used in \cref{section:wcf}, namely: to what extent is it meaningful to say that comical sets ``have connections''? In this section we strengthen the definition of a weak connection structure to obtain that of a \emph{strong connection structure}, and show that a marked minimal cubical set is isomorphic to the image under $i^*$ of a marked cubical set with connections if and only if it admits such a structure. Thus, a reasonable interpretation of the statement that a marked minimal cubical set $X$ ``has connections'' is that there exists a strong connection structure on $X$. We show that every comical set in $\cmin^+$ admits a strong connection structure, and furthermore, that given a fibration or cofibration of comical sets $f \colon X \to Y$ in $\cmin^+$, such structures can be chosen so as to be preserved by $f$. However, this cannot always be done when $f$ is an arbitrary map, or even an equivalence of comical sets.

Finally, in Appendix \ref{app:inf-1}, we transfer the Quillen equivalences established in \cref{section:wcf} to the model structures for $(\infty,1)$-categories established in \cite{doherty-kapulkin-lindsey-sattler}. Specifically, we obtain Quillen equivalences between the cubical Joyal model structures on the various categories of (unmarked) cubical sets considered in \cite{doherty-kapulkin-lindsey-sattler}, and use the two-out-of three property to show that the triangulation adjunction $T : \cmin \rightleftarrows \sSet : U$ is a Quillen equivalence bewteen the cubical Joyal and Joyal model structures. We likewise obtain a similar equivalence between the cubical marked model structure on the category $\cmin'$ of cubical sets with marked edges and the marked model structure on the category $\sSet'$ of simplicial sets with marked edges.

\subsection*{Acknowledgements}

While working on this paper, the author was supported by a grant from the Knut and Alice Wallenberg Foundation, entitled  ``Type Theory for Mathematics and Computer Science'' (principal investigator: Thierry Coquand).

\section{Background}\label{sec:prelim}

\subsection{Model categories}

Before focusing specifically on marked cubical sets, we consider some general model-categorical results and concepts which will be of use in the proofs that follow.

\begin{definition}
A class of \emph{pseudo-generating trivial cofibrations} for a model category $\catC$ is a class of cofibrations $S$ such that a map in $\catC$ with fibrant codomain is a fibration if and only if it has the right lifting property against $S$.
\end{definition}

Note that by \cite[Prop.~7.15]{joyal-tierney:qcat-vs-segal}, if $\catC$ has a class of pseudo-generating trivial cofibrations $S$, then to show that an adjunction $\catC \rightleftarrows \catD$ is Quillen, it suffices to show that the left adjoint preserves cofibrations and sends the maps of $S$ to trivial cofibrations.

We next consider some results which will be of use in identifying Quillen equivalences.

\begin{proposition}[{\cite[Cor.~1.3.16]{hovey:book}}]\label{QuillenEquivCreate-original}
Let $F : \mathsf{C} \rightleftarrows \mathsf{D} : U $ be a Quillen adjunction between model categories.
Then the following are equivalent.
\begin{enumerate}
\item $F \adjoint U$ is a Quillen equivalence.
\item $F$ reflects weak equivalences between cofibrant objects and, for every fibrant $Y$, the derived counit $F\widetilde{UY} \to  Y$ is a weak equivalence.
\item $U$ reflects weak equivalences between fibrant objects and, for every cofibrant $X$, the derived unit $X \to U (FX)'$ is a weak equivalence.
\end{enumerate}
\end{proposition}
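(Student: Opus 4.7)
The plan is to prove the equivalence of the three conditions by establishing $(1) \Leftrightarrow (3)$, observing that $(1) \Leftrightarrow (2)$ then follows by passing to the opposite adjunction $U^{\mathrm{op}} \dashv F^{\mathrm{op}}$ between the opposite model categories (which swaps fibrant and cofibrant, and swaps $F$ with $U$). Throughout the argument, the two essential tools will be Ken Brown's lemma (a right Quillen functor preserves weak equivalences between fibrant objects, and dually for left Quillen functors and cofibrant objects) together with the two-out-of-three property. Recall that the defining condition for a Quillen equivalence is that, for every cofibrant $X \in \mathsf{C}$ and fibrant $Y \in \mathsf{D}$, a map $f \colon FX \to Y$ is a weak equivalence if and only if its adjoint $f^\flat \colon X \to UY$ is.

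For $(1) \Rightarrow (3)$, fix a cofibrant $X$ and a fibrant replacement $FX \to (FX)'$; its adjoint is exactly the derived unit $X \to U(FX)'$, and since the original map is a weak equivalence from the cofibrant object $FX$ to the fibrant $(FX)'$, the defining property of a Quillen equivalence yields that the derived unit is a weak equivalence. To see that $U$ reflects weak equivalences between fibrant objects, take $f \colon Y \to Y'$ between fibrant objects with $Uf$ a weak equivalence. Choose cofibrant replacements $\widetilde{UY} \to UY$ and $\widetilde{UY'} \to UY'$, and use the lifting axioms to obtain a compatible map $\widetilde{UY} \to \widetilde{UY'}$. The corresponding adjoint maps $F\widetilde{UY} \to Y$ and $F\widetilde{UY'} \to Y'$ are weak equivalences by hypothesis (1), and $Uf$ being a weak equivalence gives (by two-out-of-three) that $\widetilde{UY} \to \widetilde{UY'}$ is a weak equivalence between cofibrant objects. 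Ken Brown's lemma applied to the left Quillen functor $F$ then shows $F\widetilde{UY} \to F\widetilde{UY'}$ is a weak equivalence, and a final application of two-out-of-three to the commuting square forces $f$ to be a weak equivalence.

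For $(3) \Rightarrow (1)$, let $X$ be cofibrant, $Y$ fibrant, and $f \colon FX \to Y$ arbitrary. Factor $FX \to (FX)'$ as a trivial cofibration into a fibrant object, and lift $f$ against the fibration $Y \to *$ along this trivial cofibration to obtain $g \colon (FX)' \to Y$ extending $f$. Then $f^\flat$ factors as $X \to U(FX)' \xrightarrow{Ug} UY$, where the first map is the derived unit. If $f$ is a weak equivalence, then so is $g$ by two-out-of-three, then so is $Ug$ by Ken Brown applied to the right Quillen functor $U$ between fibrant objects, and finally so is $f^\flat$ since the derived unit is a weak equivalence by hypothesis. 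Conversely, if $f^\flat$ is a weak equivalence, two-out-of-three forces $Ug$ to be a weak equivalence; since $(FX)'$ and $Y$ are fibrant, the reflection hypothesis promotes this to $g$ being a weak equivalence, whence $f$ is also a weak equivalence.

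The main bookkeeping obstacle is ensuring that, at each invocation of Ken Brown or of the hypotheses in (3), the relevant objects are of the correct type (fibrant or cofibrant); once the fibrant/cofibrant replacements and the adjoint identifications are set up, the argument is essentially a diagram chase with two-out-of-three. No deeper input is required beyond the definition of a Quillen equivalence and these two standard model-categorical facts.
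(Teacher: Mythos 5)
Your proof is correct. Note that the paper gives no argument for this proposition at all --- it is stated with a citation to Hovey's book (Cor.~1.3.16) --- so there is no in-paper proof to compare against. Your argument is a clean, self-contained verification from the adjoint-transpose characterization of Quillen equivalences: the reduction of $(1)\Leftrightarrow(2)$ to $(1)\Leftrightarrow(3)$ via the opposite adjunction is legitimate (passing to $\mathsf{C}^{\mathrm{op}}$ and $\mathsf{D}^{\mathrm{op}}$ swaps the roles of $F$ and $U$, of fibrant and cofibrant, and of unit and counit), and both directions of $(1)\Leftrightarrow(3)$ are handled correctly, with Ken Brown's lemma and two-out-of-three invoked only where the fibrancy/cofibrancy hypotheses actually hold. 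The one place worth tightening is the phrase ``a weak equivalence from the cofibrant object $FX$ to the fibrant $(FX)'$'': the hypothesis the Quillen-equivalence criterion actually needs there is that $X$ (not $FX$) is cofibrant and $(FX)'$ is fibrant, though both statements are true. Hovey's own proof routes through the total derived adjunction being an adjoint equivalence of homotopy categories; your direct diagram chase is more elementary and equally valid.
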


\begin{corollary}\label{QuillenEquivCreate}
Let $F : \mathsf{C} \rightleftarrows \mathsf{D} : U $ be a Quillen adjunction between model categories.

\begin{enumerate}
\item\label{QuillenEquivUnit} If $U$ preserves and reflects weak equivalences, then the adjunction is a Quillen equivalence if and only if, for all cofibrant $X \in \mathsf{C}$, the unit $X \to UFX$ is a weak equivalence.
\item\label{QuillenEquivCounit} If $F$ preserves and reflects weak equivalences, then the adjunction is a Quillen equivalence if and only if, for all fibrant $Y \in \mathsf{D}$, the counit $FUY \to Y$ is a weak equivalence. \qed
\end{enumerate}
\end{corollary}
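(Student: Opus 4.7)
The plan is to deduce both parts directly from \cref{QuillenEquivCreate-original} by showing that, under the given hypotheses, the derived unit (respectively derived counit) is a weak equivalence if and only if the ordinary unit (respectively counit) is one; then the condition in Hovey's statement reduces to the condition asserted here.

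For part \eqref{QuillenEquivUnit}, I would check criterion (3) of \cref{QuillenEquivCreate-original}. The hypothesis that $U$ preserves and reflects weak equivalences immediately entails that $U$ reflects weak equivalences between fibrant objects. For any cofibrant $X \in \mathsf{C}$, choose a fibrant replacement $FX \to (FX)'$ in $\mathsf{D}$; the derived unit factors as the composite $X \to UFX \to U(FX)'$. Since $FX \to (FX)'$ is a weak equivalence and $U$ preserves weak equivalences, the second map is a weak equivalence. By 2-out-of-3, the derived unit is a weak equivalence iff the ordinary unit $X \to UFX$ is. Criterion (3) of \cref{QuillenEquivCreate-original} thus becomes exactly the condition that the unit be a weak equivalence at every cofibrant object, which is what we want.

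Part \eqref{QuillenEquivCounit} follows by the formally dual argument, using criterion (2) of \cref{QuillenEquivCreate-original}: that $F$ reflects weak equivalences between cofibrant objects is immediate, and for fibrant $Y$ the derived counit $F\widetilde{UY} \to Y$ factors as $F\widetilde{UY} \to FUY \to Y$ with the first map a weak equivalence (since $\widetilde{UY} \to UY$ is one and $F$ preserves weak equivalences), so 2-out-of-3 again identifies the two conditions. There is no real obstacle: the whole proof amounts to writing the factorizations of the derived (co)unit through the ordinary (co)unit and applying 2-out-of-3. The only mild subtlety is bookkeeping the direction of preservation versus reflection so that the 2-out-of-3 argument actually uses the hypothesis on $U$ (resp.\ $F$) in the correct direction.
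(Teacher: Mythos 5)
Your proposal is correct and is exactly the argument the paper intends: the corollary is stated with a \qed as an immediate consequence of \cref{QuillenEquivCreate-original}, and your reduction of the derived (co)unit condition to the ordinary (co)unit via the factorization through a fibrant (resp.\ cofibrant) replacement and 2-out-of-3 is the standard way to see it.
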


\begin{proposition}\label{nat-we-QE}
Let $F \colon \catC \to \catD$ and $G \colon \catD \to \catC$ be a pair of left (resp. right) Quillen functors. 
If both composites $GF \colon \catC \to \catC$ and $FG \colon \catD \to \catD$ are related to the identities on the respective categories by zigzags of natural weak equivalences between left (resp. right) Quillen functors, then both $F$ and $G$ are left (resp. right) Quillen equivalences.
\end{proposition}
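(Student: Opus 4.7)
The plan is to pass to homotopy categories and show that the total left derived functors $\mathbb{L}F \colon \Ho\catC \to \Ho\catD$ and $\mathbb{L}G \colon \Ho\catD \to \Ho\catC$ are mutually inverse equivalences; this characterises the Quillen adjunctions of $F$ and $G$ with their respective right adjoints as Quillen equivalences, a standard consequence of \cref{QuillenEquivCreate-original}. Note that $F$ and $G$ being left Quillen guarantees that the composites $GF$ and $FG$, as well as every functor $H_i$ appearing in the given zigzags, are left Quillen, so all the relevant total left derived functors exist.

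The first ingredient is that any natural weak equivalence $\alpha \colon H \Rightarrow H'$ between left Quillen functors descends to a natural isomorphism $\mathbb{L}\alpha \colon \mathbb{L}H \Rightarrow \mathbb{L}H'$ on the homotopy category. Indeed, applied to a cofibrant replacement $QX$ of an object $X$, the component $\alpha_{QX}$ is a weak equivalence between the cofibrant objects $H(QX)$ and $H'(QX)$, hence an isomorphism in $\Ho\catC$. Applying this observation to each arrow of the given zigzag between $GF$ and $\mathrm{id}_\catC$ yields $\mathbb{L}(GF) \cong \mathrm{id}_{\Ho\catC}$, and dually $\mathbb{L}(FG) \cong \mathrm{id}_{\Ho\catD}$. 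The second ingredient is the composition formula $\mathbb{L}G \circ \mathbb{L}F \cong \mathbb{L}(GF)$, which holds because $F$ preserves cofibrant objects: the value $F(QX)$ is already cofibrant and may serve as its own cofibrant replacement when computing $\mathbb{L}G$, giving $\mathbb{L}G(\mathbb{L}F(X)) = G(F(QX)) = \mathbb{L}(GF)(X)$ naturally in $X$.

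Combining these two ingredients yields $\mathbb{L}G \circ \mathbb{L}F \cong \mathrm{id}_{\Ho\catC}$ and $\mathbb{L}F \circ \mathbb{L}G \cong \mathrm{id}_{\Ho\catD}$, so $\mathbb{L}F$ and $\mathbb{L}G$ are mutually inverse equivalences, and hence $F$ and $G$ are left Quillen equivalences. The right Quillen case follows by a formally dual argument, with fibrant replacement and right derived functors replacing cofibrant replacement and left derived functors throughout. The only real obstacle is bookkeeping: one must chain the isomorphisms produced by the zigzag carefully through the naturality squares and check compatibility with the composition formula; no individual step is deep, resting ultimately on the principle that natural weak equivalences between Quillen functors become natural isomorphisms on the homotopy category.
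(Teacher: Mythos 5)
Your proposal is correct and follows essentially the same route as the paper: both arguments descend to homotopy categories using the two facts that a natural weak equivalence of left (resp.\ right) Quillen functors induces a natural isomorphism of derived functors, and that derived functors compose up to natural isomorphism, then conclude that $\mathbb{L}F$ and $\mathbb{L}G$ are mutually inverse equivalences. Your write-up merely supplies slightly more detail on why these two ingredients hold (cofibrant replacement and preservation of cofibrant objects), which the paper takes as standard.
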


\begin{proof}
A natural weak equivalence between left or right Quillen functors between model categories gives rise to a natural isomorphism of their derived functors between homotopy categories. Moreover, the construction of the left or right derived functor is functorial up to natural isomorphism. Thus we obtain a natural isomoprhism $\Ho G \circ \Ho F \cong \Ho (GF) \cong \Ho (\id_{\catC}) \cong \id_{\Ho \catC}$, and likewise a natural isomorphism $\Ho F \circ \Ho G \cong \id_{\Ho \catD}$. We thus see that the derived functors $\Ho F$ and $\Ho G$ are inverse equivalences of categories.
\end{proof}

Next we prove a general lemma which we will use to verify that certain natural transformations of functors into model categories are in fact natural weak equivalences. The proof is a generalization of a standard argument which can be found, for instance, in the proof of \cite[Prop. 6.21]{doherty-kapulkin-lindsey-sattler}. We begin by recalling a basic concept of Reedy category theory which plays a key role in this lemma.

\begin{definition}\label{skel-def}
For $n \geq -1$, a presheaf $X$ on an EZ-Reedy category $\mathsf{A}$ is \emph{$n$-skeletal} if $X_a$ has no non-degenerate elements for all $a \in \mathsf{A}$ such that $\mathrm{deg}(a) > n$. The \emph{$n$-skeleton} of a presheaf $X$ on $\mathsf{A}$ is its maximal $n$-skeletal subcomplex.
\end{definition}

In particular, as the degree function on a Reedy category takes non-negative values, we may note that the only $(-1)$-skeletal presheaf on any EZ-Reedy category is the empty presheaf.


\begin{lemma}\label{nat-weq}
Let $\mathsf{A}$ be an EZ-Reedy category, and let $\catC$ denote a reflective subcategory of $\pshf{\mathsf{A}}$. Denote the left adjoint of the inclusion $\catC \hookrightarrow \pshf{\mathsf{A}}$ by $L \colon \pshf{\mathsf{A}} \to \catC$. Let $\eta \colon F \Rightarrow G$ be a natural transformation of functors from $\catC$ into a model category $\catM$. Suppose that the following criteria are satisfied:

\begin{enumerate}
\item \label{F-G-pres-colim} $F$ and $G$ preserve colimits.
\item \label{mono-to-cof} the composites $FL$ and $GL$ send monomorphisms in $\pshf{\mathsf{A}}$ to cofibrations in $\catM$.
\item \label{weq-on-rep} For some $n \geq - 1$, for every object $a \in \mathsf{A}$ of degree less than or equal to $n$, the component of $\eta$ at the object $L \mathsf{A}(-,a)$ is a weak equivalence.
\end{enumerate}

Then all components of $\eta$ at the images under $L$ of $n$-skeletal objects of $\pshf{\mathsf{A}}$ are weak equivalences. Moreover, if assumption \ref{weq-on-rep} holds for all $n \geq -1$, then all components of $\eta$ are weak equivalences.
\end{lemma}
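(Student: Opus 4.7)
The plan is to induct on the skeletal dimension $n$, then pass to arbitrary presheaves via a sequential colimit over the skeleton filtration. The key tool is that in an EZ-Reedy category, every $n$-skeletal presheaf $X \in \pshf{\mathsf{A}}$ is canonically the pushout of $\sk_{n-1} X$ and $\coprod_{x \in N_n X} \mathsf{A}(-, a_x)$ along $\coprod_{x \in N_n X} \bd \mathsf{A}(-, a_x)$, and every presheaf is the sequential colimit of its skeleta along monomorphisms. Since $L$ is a left adjoint and $F, G$ preserve colimits by hypothesis (1), these decompositions are preserved by $FL$ and $GL$; hypothesis (2) then makes all attaching maps cofibrations in $\catM$, so every object that appears is cofibrant.

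For the first assertion, I would induct on $n$. The base case $n = -1$ is immediate since $\emptyset$ is the only $(-1)$-skeletal presheaf and $FL(\emptyset) = GL(\emptyset)$ is the initial object of $\catM$ via preservation of colimits. For the inductive step, apply $\eta$ to the pushout diagram above. The components $\eta_{L(\sk_{n-1} X)}$ and $\eta_{L(\coprod \bd \mathsf{A}(-, a_x))}$ are weak equivalences by the inductive hypothesis, since both presheaves are $(n-1)$-skeletal; $\eta_{L(\coprod \mathsf{A}(-, a_x))}$ is a weak equivalence by combining hypothesis (3) with the fact that coproducts of weak equivalences between cofibrant objects are weak equivalences in any model category. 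The gluing lemma for pushouts along cofibrations then yields that $\eta_{L(X)}$ is a weak equivalence.

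For the second assertion, if (3) holds for all $n$, then by the first assertion $\eta_{L(\sk_n X)}$ is a weak equivalence for every $n$ and every $X \in \pshf{\mathsf{A}}$; applying $FL$ and $GL$ to $X = \operatorname{colim}_n \sk_n X$ and invoking the transfinite version of the gluing lemma (whose cofibrancy hypotheses are again furnished by (2)) shows $\eta_{L(X)}$ is a weak equivalence. Since $\catC$ is a reflective subcategory of $\pshf{\mathsf{A}}$, the counit $L\iota \Rightarrow \id_\catC$ of the reflection is a natural isomorphism, so every object of $\catC$ is canonically in the image of $L$, and all components of $\eta$ are weak equivalences. The main obstacle is essentially organizational: ensuring that the three preservation results for weak equivalences invoked (for binary pushouts, for arbitrary coproducts, and for sequential colimits of cofibrations) all hold in an arbitrary model category under the cofibrancy provided by (2), without inadvertently smuggling in further hypotheses on $\catM$.
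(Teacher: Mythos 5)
Your proposal is correct and follows essentially the same route as the paper's proof: the skeletal pushout decomposition plus the gluing lemma for the inductive step, the transfinite composition of skeleton inclusions (as a homotopy colimit, thanks to the cofibrancy supplied by hypothesis (2)) for general presheaves, and essential surjectivity of the reflector $L$ to pass from $\pshf{\mathsf{A}}$ to the reflective subcategory $\catC$. The only cosmetic difference is that you carry $L$ through the whole argument, whereas the paper first treats the case $\catC = \pshf{\mathsf{A}}$ and then observes that $FL$, $GL$, $\eta_L$ satisfy the same hypotheses.
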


\begin{proof}
We begin by considering the case in which $\catC = \pshf{\mathsf{A}}$, so that $L = \id$. We proceed by induction on $n \geq -1$. For the base case $n = -1$, we recall that the only $(-1)$-skeletal presheaf is the empty presheaf $\varnothing$. Item \ref{F-G-pres-colim} implies that both $F$ and $G$ send $\varnothing$ to the initial object of $\catM$. Hence $\eta_\varnothing$ is the identity on this object, which is indeed a weak equivalence.

Now suppose \ref{weq-on-rep} is satisfied for some $n \geq 0$, and that the statement holds for all $-1 \leq n' < n$. Consider an $n$-skeletal $X \in \pshf{\mathsf{A}}$. By \cite[Prop. 4.10]{isaacson:symmetric}, we have a pushout diagram in $\pshf{\mathsf{A}}$:
\[
\xymatrix{
\coprod\limits_{\mathsf{A}(-,a) \to X} \bd \mathsf{A}(-,a) \ar[r] \ar[d] & \mathrm{sk}_{n-1} X \ar[d] \\
\coprod\limits_{\mathsf{A}(-,a) \to X} \mathsf{A}(-,a) \ar[r] & X \pushoutcorner \\
}
\]
where the coproducts are taken over all non-degenerate $\mathsf{A}(-,a) \to X$ with $\mathrm{deg}(a) = n$, and $\bd \mathsf{A}(-,a)$ denotes the $(n-1)$-skeleton of $\mathsf{A}(-,a)$.

Applying $F$ and $G$ to this diagram and using assumption \ref{F-G-pres-colim}, we thus have the following diagram in $\catM$:
\[
\xymatrix{
\coprod\limits_{\mathsf{A}(-,a) \to X} F \bd \mathsf{A}(-,a) \ar[rr] \ar[dd] \ar[dr]^{\eta} & &  F \mathrm{sk}_{n-1} X \ar[dd]|{\hole} \ar[dr]^{\eta} \\
& \coprod\limits_{\mathsf{A}(-,a) \to X} G \bd \mathsf{A}(-,a) \ar[rr] \ar[dd] & & G \mathrm{sk}_{n-1} X \ar[dd] \\
\coprod\limits_{\mathsf{A}(-,a) \to X} F \mathsf{A}(-,a) \ar[rr]|(0.565){\hole} \ar[dr]^{\eta} & &  F X \ar[dr]^{\eta} \\
& \coprod\limits_{\mathsf{A}(-,a) \to X} G \mathsf{A}(-,a) \ar[rr] & & G X \\
}
\]

(Note that the component of $\eta$ at a coproduct is simply the map between coproducts induced by its components at each cofactor; this follows from assumption \ref{F-G-pres-colim}.) Assumptions \ref{F-G-pres-colim} and \ref{mono-to-cof} imply that all objects in this diagram are cofibrant, thus we may apply the gluing lemma. We verify that its hypotheses are satisfied:

\begin{itemize}
\item The front and back faces are pushouts by assumption \ref{F-G-pres-colim}.
\item The vertical maps are cofibrations by assumption \ref{mono-to-cof}.
\item The components of $\eta$ at $\mathrm{sk}_{n-1} X$ and each $\bd \mathsf{A}(-,a)$ are weak equivalences by the induction hypothesis, while the components at the objects $\mathsf{A}(-,a)$ are weak equivalences by assumption \ref{weq-on-rep}. Thus the back-to-front maps between the coproducts are weak equivalences as coproducts of weak equivalences between cofibrant objects.
\end{itemize}

Thus, by the gluing lemma, we see that $\eta_X \colon FX \to GX$ is a weak equivalence as well.

Now suppose \ref{weq-on-rep} holds for all $n$, and consider a general $X \in \pshf{\mathsf{A}}$; we may express $X$ as the transfinite composite of the inclusions $\mathrm{sk}_n X \hookrightarrow \mathrm{sk}_{n+1} X$. We thus obtain a diagram in $\catM$:
\[
\xymatrix{
F \sk_{0} \ar[d]^{\eta} X \ar[r] & F \sk_1 X \ar[d]^{\eta} \ar[r] & \ldots \ar[r] & F X \ar[d]^{\eta} \\
G \sk_{0} X \ar[r] & G \sk_1 X \ar[r] & \ldots \ar[r] & G X \\
}
\]
By assumption \ref{F-G-pres-colim}, the top and bottom rows are transfinite compositions, and $\eta_X$ is the induced map between the colimit objects. Moreover, all the horizontal maps are cofibrations between cofibrant objects by assumptions \ref{F-G-pres-colim} and \ref{mono-to-cof}; thus these transfinite compositions are homotopy colimits. Since each component $\eta_{\sk_n X}$ is a weak equivalence, it thus follows that $\eta_X$ is a weak equivalence as well.

We now consider the general case of a reflective subcategory $\catC \subseteq \pshf{\mathsf{A}}$. Assuming that $F$ and $G$ satisfy \ref{F-G-pres-colim} through \ref{weq-on-rep} with respect to $\catC$ for some $n$, we may verify that the composites $FL, GL$ and the natural transformation $\eta_L$ satisfy \ref{F-G-pres-colim} through \ref{weq-on-rep} with respect to $\pshf{\mathsf{A}}$ viewed as a reflective subcategory of itself. For \ref{F-G-pres-colim} this follows from the fact that $L$ preserves colimits as a left adjoint, while for the other two assumptions it is trivial. It thus follows that $\eta_{LX}$ is a weak equivalence for every $n$-skeletal $X$ in $\pshf{\mathsf{A}}$. Similarly, if \ref{F-G-pres-colim} and \ref{mono-to-cof} hold and \ref{weq-on-rep} holds for all $n$, then $\eta_{LX}$  is a weak equivalence for all $X \in \pshf{\mathsf{A}}$. But \cite[Prop. 1.3]{gabriel-zisman} implies that $L$ is essentially surjective, hence all components of $\eta$ are weak equivalences.
\end{proof}

\begin{remark}
Although \cref{nat-weq} is stated in significant generality, and we will indeed have use for this general result in \cref{section:wcf}, it is most readily applicable in the case where $\catC = \pshf{\mathsf{A}}$ comes equipped with a model structure having monomorphisms as its cofibrations, and $F$ and $G$ are left Quillen functors; in this case assumptions \ref{F-G-pres-colim} and \ref{mono-to-cof} are  automatically satisfied.
\end{remark}

\subsection{Cubical sets without markings}

We now turn our attention to some specific presheaf categories which will be central to our work. \emph{Cube categories}, for our purposes, will mean subcategories of the category of posets whose objects are the interval posets $[1]^n, n \geq 0$, and whose morphisms are generated under composition by some or all of the following four classes of maps:

\begin{itemize}
  \item \emph{faces} $\partial^n_{i,\varepsilon} \colon [1]^{n-1} \to [1]^n$ for $i = 1, \ldots , n$ and $\varepsilon = 0, 1$ given by:
  \[ \partial^n_{i,\varepsilon} (x_1, x_2, \ldots, x_{n-1}) = (x_1, x_2, \ldots, x_{i-1}, \varepsilon, x_i, \ldots, x_{n-1})\text{;}  \]
  \item \emph{degeneracies} $\sigma^n_i \colon [1]^n \to [1]^{n-1}$ for $i = 1, 2, \ldots, n$ given by:
  \[ \sigma^n_i ( x_1, x_2, \ldots, x_n) = (x_1, x_2, \ldots, x_{i-1}, x_{i+1}, \ldots, x_n)\text{;}  \]
  \item \emph{negative connections} $\gamma^n_{i,0} \colon [1]^n \to [1]^{n-1}$ for $i = 1, 2, \ldots, n-1$ given by:
  \[ \gamma^n_{i,0} (x_1, x_2, \ldots, x_n) = (x_1, x_2, \ldots, x_{i-1}, \max\{ x_i , x_{i+1}\}, x_{i+2}, \ldots, x_n) \text{.} \]
  \item \emph{positive connections} $\gamma^n_{i,1} \colon [1]^n \to [1]^{n-1}$ for $i = 1, 2, \ldots, n-1$ given by:
  \[ \gamma^n_{i,1} (x_1, x_2, \ldots, x_n) = (x_1, x_2, \ldots, x_{i-1}, \min\{ x_i , x_{i+1}\}, x_{i+2}, \ldots, x_n) \text{.} \]
\end{itemize}

These maps obey the following \emph{cubical identities}:

\begin{multicols}{2}
$\partial_{j, \varepsilon'} \partial_{i, \varepsilon} = \partial_{i+1, \varepsilon} \partial_{j, \varepsilon'}$ for $j \leq i$;

$\sigma_i \sigma_j = \sigma_j \sigma_{i+1} \quad \text{for } j \leq i$;

$\sigma_j \partial_{i, \varepsilon} = \left\{ \begin{array}{ll}
\partial_{i-1, \varepsilon} \sigma_j   & \text{for } j < i \text{;} \\
\id                                                       & \text{for } j = i \text{;} \\
\partial_{i, \varepsilon} \sigma_{j-1} & \text{for } j > i \text{;}
\end{array}\right.$

$\gamma_{j,\varepsilon'} \gamma_{i,\varepsilon} = \left\{ \begin{array}{ll} \gamma_{i,\varepsilon} \gamma_{j+1,\varepsilon'} & \text{for } j > i \text{;} \\
\gamma_{i,\varepsilon}\gamma_{i+1,\varepsilon} & \text{for } j = i, \varepsilon' = \varepsilon \text{;}\\
\end{array}\right.$

$\gamma_{j,\varepsilon'} \partial_{i, \varepsilon} =  \left\{ \begin{array}{ll}
\partial_{i-1, \varepsilon} \gamma_{j,\varepsilon'}   & \text{for } j < i-1 \text{;} \\
\id                                                         & \text{for } j = i-1, \, i, \, \varepsilon = \varepsilon' \text{;} \\
\partial_{i, \varepsilon} \sigma_i         & \text{for } j = i-1, \, i, \, \varepsilon = 1-\varepsilon' \text{;} \\
\partial_{i, \varepsilon} \gamma_{j-1,\varepsilon'} & \text{for } j > i \text{;} 
\end{array}\right.$

$\sigma_j \gamma_{i,\varepsilon} =  \left\{ \begin{array}{ll}
\gamma_{i-1,\varepsilon} \sigma_j  & \text{for } j < i \text{;} \\
\sigma_i \sigma_i           & \text{for } j = i \text{;} \\
\gamma_{i,\varepsilon} \sigma_{j+1} & \text{for } j > i \text{.} 
\end{array}\right.$
\end{multicols}

For $A \subseteq \{0,1\}$, let $\Box_A$ denote the cube category with morphisms generated by faces, degeneracies, and connections $\gamma_{i,\varepsilon}$ for $\varepsilon \in A$. (We will often omit braces and commas in writing subsets of $\{0,1\}$ for the sake of readability.) We have thus defined four cube categories:

\begin{itemize}
\item $\Boxmin$, having only faces and degeneracies;
\item $\Boxneg$, having faces, degeneracies, and negative connections;
\item $\Boxpos$, having faces, degeneracies, and positive connections;
\item $\Boxboth$, having faces, degeneracies, and both kinds of connections.
\end{itemize}

Other cube categories, such as the full subcategory of the category of posets on the objects $[1]^n$, have been studied elsewhere; see \cite{buchholtz-morehouse:varieties-of-cubes} for an overview. In this paper, however, we will concern ourselves only with the four cube categories described above; all references to cube categories here should be interpreted to refer only to these four, and our results should not be understood as applying to any others.

For each such $A$, let $\cSet_A$ denote the presheaf category $\pshf{\Box_A}$. The representable presheaves in $\cSet_A$ will be denoted $\Box^n_A$. 

We adopt the convention of writing the action of cubical operators on the right. For instance, the $(1, 0)$-face of an $n$-cube $x \colon \Box^n_A \to X$ will be denoted $x \partial_{1, 0}$. By a \emph{degenerate} cube of a cubical set $X$, we will understand one that is in the image of either a degeneracy or a connection.
A \emph{non-degenerate} cube is one that is not degenerate.

We will occasionally represent cubical sets using pictures.
In doing so, we will follow the conventions used in \cite{doherty-kapulkin-lindsey-sattler}, in which $0$-cubes are represented as vertices, $1$-cubes as arrows, $2$-cubes as squares, and $3$-cubes as cubes.

For a $1$-cube $f$, we draw
\[
\xymatrix{ x \ar[r]^f & y} \]
to indicate $x = f \partial_{1,0}$ and $y = f \partial_{1,1}$.
For a $2$-cube $s$, we draw
\[
\xymatrix{
 x
  \ar[r]^h
  \ar[d]_f
&
   y
  \ar[d]^g
\\
  z
  \ar[r]^k
&
 w
}
\]
to indicate $s\partial_{1,0} = f$,  $s\partial_{1,1} = g$, $s\partial_{2,0} = h$,  and $s\partial_{2,1} = k$. 

As for the convention when drawing $3$-dimensional boxes, we use the following ordering of axes:
\[
\begin{tikzpicture}
\filldraw
(0,0) circle [radius = 1pt]	
(2,0) circle [radius = 1pt]	
(0,-2) circle [radius = 1pt]	
(1,-1) circle [radius = 1pt];

\draw[->] (0.2,0) -- (1.8,0) node [midway, above] {$1$};
\draw[->] (0.1,-0.1) -- (0.9,-0.9) node [midway, below] {$3$};
\draw[->] (0,-0.2) -- (0,-1.8) node [midway, left] {$2$};
\end{tikzpicture}
\]
For readability, we do not label $2$- and $3$-cubes.
Similarly, if a specific $0$-cube is irrelevant for the argument or can be inferred from the context, we represent it by $\bullet$, and we omit labels on edges whenever the label is not relevant for the argument.

Lastly, a degenerate $1$-cube $x \sigma_1$ on $x$ is represented by
\[
\xymatrix{ x \ar@{=}[r] & x\text{,}} \]
while a $2$- or $3$-cube whose boundary agrees with that of a degenerate cube is assumed to be degenerate unless indicated otherwise.
For instance, a $2$-cube depicted as
\[
\xymatrix{
 x
  \ar@{=}[r]
  \ar[d]_f
&
   x
  \ar[d]^f
\\
  y
  \ar@{=}[r]
&
 y
}
\]
represents $f \sigma_1$.

The following result gives a \emph{standard form} for maps in $\Boxboth$, which specializes in evident ways to the subcategories $\Boxmin, \Boxneg, \Boxpos$.

\begin{theorem}[{\cite[Thm.~5.1]{grandis-mauri}}] \label{normal-form}
  Every map in the category $\Box_{01}$ can be factored uniquely as a composite
  \[ (\partial_{c_1, \varepsilon'_1} \ldots \partial_{c_r, \varepsilon'_r})
     (\gamma_{b_1,\varepsilon_1} \ldots \gamma_{b_q,\varepsilon_q})
     (\sigma_{a_1} \ldots \sigma_{a_p})\text{,} \]
  where $1 \leq a_1 < \ldots < a_p$, $1 \leq b_1 \leq \ldots \leq b_q$, $b_i < b_{i+1}$ if $\varepsilon_{i} = \varepsilon_{i+1}$, and $c_1 > \ldots > c_r \geq 1$.   \qed
\end{theorem}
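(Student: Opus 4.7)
The plan is to prove existence of the normal form by rewriting, and uniqueness by matching against an explicit combinatorial description of morphisms in $\Box_{01}$.

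For existence, I would start with an arbitrary word in the generators and rewrite it into the claimed shape using the cubical identities as one-directional rules. The strategy is to push degeneracies to the right, connections to the middle, and faces to the left. Concretely, working right-to-left in a word: the $\sigma_j \partial_{i,\varepsilon}$ identities let me move any degeneracy past a face to its left (possibly cancelling a face-degeneracy pair when $j = i$); the $\sigma_j \gamma_{i,\varepsilon}$ identities let me move degeneracies past connections, with the special case $j = i$ producing $\sigma_i \sigma_i$, which stays in the degeneracy block; and the $\gamma_{j,\varepsilon'} \partial_{i,\varepsilon}$ identities let me move any remaining face past a connection to its right, with the case $j \in \{i-1,i\}, \varepsilon = 1-\varepsilon'$ producing a $\partial_{i,\varepsilon}\sigma_i$ (creating a new degeneracy that must then be pushed rightward again). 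After finitely many iterations — tracked by a suitable well-founded complexity measure, e.g.\ the lexicographic pair (number of generators, total displacement of degeneracies and faces from their target block) — one reaches a word of the form (faces)(connections)(degeneracies). Within each block, the pure commutation relations among faces, among connections, and among degeneracies allow the indices to be sorted into the prescribed orderings, where in the connection block the restriction $b_i < b_{i+1}$ when $\varepsilon_i = \varepsilon_{i+1}$ is precisely what the identity $\gamma_{j,\varepsilon}\gamma_{i,\varepsilon} = \gamma_{i,\varepsilon}\gamma_{i+1,\varepsilon}$ requires to make the sort well-defined.

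For uniqueness, I would set up a bijection between normal-form data $(a_\bullet, b_\bullet, \varepsilon_\bullet, c_\bullet, \varepsilon'_\bullet)$ with the given inequality constraints and an independent combinatorial description of morphisms $[1]^m \to [1]^n$ in $\Box_{01}$. A morphism in $\Box_{01}$ from $[1]^m$ to $[1]^n$ can be described as an $n$-tuple of monotone maps $[1]^m \to [1]$, each of which is either constant $0$, constant $1$, or a min/max of a nonempty subset of input coordinates, subject to the constraint that distinct non-constant coordinates use disjoint subsets of inputs. The degeneracy indices $a_1 < \dots < a_p$ indicate which input coordinates are \emph{contracted} into a connection block; the connection indices with signs encode, through the standard-form ordering, how the surviving coordinates are grouped into min-blocks and max-blocks; and the face indices with signs encode, for the output, which coordinates are constants of value $0$ or $1$ and in what positions. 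I would then verify that reading off this data from the normal form and, conversely, constructing the normal form from such an $n$-tuple, are mutually inverse operations, with the inequality constraints precisely killing the ambiguities caused by the commutation relations in each block.

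The main obstacle will be the uniqueness argument, specifically in the connection block: the identity $\gamma_{j,\varepsilon'}\gamma_{i,\varepsilon} = \gamma_{i,\varepsilon}\gamma_{j+1,\varepsilon'}$ for $j > i$ and the identity $\gamma_{i,\varepsilon}\gamma_{i+1,\varepsilon} = \gamma_{i,\varepsilon}\gamma_{i,\varepsilon}$ (in the sense forbidden by the $b_i < b_{i+1}$ constraint) together with the mixed-sign behaviour need to be shown to \emph{exactly} account for all coincidences among normal-form words, with no further nontrivial relations hidden among mixed strings of connections of both signs. An alternative, possibly cleaner route would be to avoid the combinatorial bijection entirely and argue uniqueness by induction on the target dimension $n$: applying a normal-form morphism $f \colon [1]^m \to [1]^n$ to the vertices and edges of $[1]^m$ lets one read off $c_r, \varepsilon'_r$ from the outermost face, strip it off by composing with the appropriate codegeneracy on the target side, and recurse; a parallel induction on $m$ handles the degeneracy block from the right. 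Either way, once existence and uniqueness are both in hand, specialization to $\Box_A$ for $A \subsetneq \{0,1\}$ is immediate by simply omitting the connections of the absent sign from the words under consideration.
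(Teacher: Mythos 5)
The paper does not actually prove this statement: it is quoted verbatim from Grandis--Mauri and stamped with a \qed, so you are supplying an argument where the paper supplies only a citation. Your architecture --- existence by directed rewriting with a termination measure, uniqueness by matching normal forms against an explicit description of the underlying poset maps --- is the standard and correct one. One small repair: the measure ``(number of generators, total displacement)'' does not terminate the rewriting as stated, because the rule $\gamma_{j,\varepsilon'}\partial_{i,\varepsilon} = \partial_{i,\varepsilon}\sigma_i$ preserves the generator count and spawns a fresh degeneracy where the connection stood, possibly far from the degeneracy block, so the displacement can jump up. Inserting the number of connection letters as a lexicographic component ahead of the displacement fixes this, since that rule strictly destroys a connection.

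The substantive gap is in the uniqueness step: your combinatorial target is wrong for $\Box_{01}$. It is not true that every morphism has each output coordinate equal to a constant or to a $\min$/$\max$ of a subset of inputs; that description is only correct for $\Box_0$ or $\Box_1$ separately. The normal form deliberately permits $b_i = b_{i+1}$ when $\varepsilon_i \neq \varepsilon_{i+1}$, and such mixed words produce genuinely nested lattice expressions. For example $\gamma_{1,0}\gamma_{1,1} \colon [1]^3 \to [1]$ is $(x_1,x_2,x_3) \mapsto \max(\min(x_1,x_2),x_3)$, which is neither a $\min$ nor a $\max$ of any subset of the inputs, and which differs from the equally valid normal form $\gamma_{1,1}\gamma_{1,0} = \min(\max(x_1,x_2),x_3)$ (evaluate at $(1,1,0)$). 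So the bijection you describe cannot exist: your proposed codomain omits these morphisms entirely (and, separately, it overcounts by allowing non-consecutive or out-of-order blocks such as $(x_1,x_2,x_3)\mapsto(\max(x_1,x_3),x_2)$, which is not in $\Box_{01}$). The correct codomain consists of tuples whose non-constant entries are left-nested alternating $\min$/$\max$ words over consecutive, order-respecting blocks of the surviving inputs. Your fallback induction that strips off faces and degeneracies is sound but bottoms out at exactly this same kernel --- showing that two normal-form connection words inducing the same poset map coincide --- which is where all the content of the theorem lives and which the sketch leaves open. Once that kernel is supplied, the remainder of your plan, including the specialization to $\Box_A$ for $A \subsetneq \{0,1\}$, does go through.
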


\begin{corollary}\label{epi-or-mono}
A map in any cube category $\BoxA$ is a monomorphism if and only if its standard form contains only face maps, and an epimorphism if and only if its standard form contains only connection and degeneracy maps. In particular, every monomorphism in any cube category is contained in $\Boxmin$. \qed
\end{corollary}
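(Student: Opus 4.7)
The plan is to extract the result from \cref{normal-form} by embedding each cube category $\BoxA$ into the category of posets. By direct inspection of the defining formulas, every generator (face, degeneracy, connection) is a monotone map $[1]^n \to [1]^m$, so this embedding is well-defined, and two morphisms of $\BoxA$ are equal iff their underlying functions agree. Moreover, faces are injective while degeneracies and connections are surjective as functions; hence the standard form furnished by \cref{normal-form} is precisely an epi--mono factorization of the underlying function of each morphism. Consequently, a morphism $f$ is injective (resp.\ surjective) as a function if and only if its standard form contains only face maps (resp.\ only connections and degeneracies).

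It then remains to upgrade injective/surjective to monomorphism/epimorphism in $\BoxA$. Injective morphisms are automatically monomorphisms, since the underlying function determines the morphism. Conversely, if $f \colon [1]^n \to [1]^m$ fails to be injective, choose distinct $x, y \in [1]^n$ with $f(x) = f(y)$; each of $x$ and $y$ is the unique composite of face maps $[1]^0 \to [1]^n$ landing on the corresponding vertex, and these composites lie in $\Boxmin \subseteq \BoxA$, yielding distinct morphisms that $f$ equalizes. Likewise, surjective morphisms are automatically epimorphisms. For the converse, suppose the standard form of $f$ contains $r > 0$ face factors; then the image of $f$ is contained in the image of the face part of its standard form, which fixes some coordinate $i$ of $[1]^m$ to some value $\varepsilon \in \{0,1\}$. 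The two morphisms $\mathrm{id}_{[1]^m}$ and $\partial_{i,\varepsilon} \sigma_i$, both in $\Boxmin \subseteq \BoxA$, are then distinct (since $m \geq 1$) yet become equal after precomposition with $f$, so $f$ is not an epimorphism.

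The final clause of the corollary is immediate: any monomorphism has standard form consisting of face maps, each of which is a morphism of $\Boxmin$. I anticipate no real obstacle; the only mild cleverness lies in choosing the two small gadgets (the vertex inclusions $[1]^0 \to [1]^n$ and the idempotent $\partial_{i,\varepsilon}\sigma_i$) that detect the failure of mono and epi respectively.
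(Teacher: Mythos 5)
Your proof is correct, and it follows the same route the paper intends: the paper states this as an immediate consequence of \cref{normal-form} with no written proof, and your argument simply supplies the standard verification (faces are injective, the epi part of the normal form is surjective, and injectivity/surjectivity coincide with mono/epi because morphisms in $\BoxA$ are literally poset maps, detected by the vertex inclusions and the idempotents $\partial_{i,\varepsilon}\sigma_i$). No gaps.
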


In view of the result above, we will regularly have occasion to refer to composites of face maps, or to composites of face and degeneracy maps; in both of these cases, the terminology should be understood to include empty composites, defined to be identities, unless otherwise noted. In particular, when factoring a general map $\phi$ as $\phi_1 \ldots \phi_p$ or some similar expression, the case $p = 0$ is understood as the case where $\phi$ is an identity.

\begin{corollary}\label{epi-mono-factor}
In any cube category $\BoxA$, each map has a unique factorization as an epimorphism followed by a monomorphism. \qed
\end{corollary}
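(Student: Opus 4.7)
The plan is to deduce this directly from the standard form of \cref{normal-form} together with the characterization of monomorphisms and epimorphisms in \cref{epi-or-mono}.

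For existence, I would take an arbitrary map $\phi$ in $\BoxA$ and apply \cref{normal-form} to obtain its standard form
\[ \phi = (\partial_{c_1, \varepsilon'_1} \ldots \partial_{c_r, \varepsilon'_r})(\gamma_{b_1,\varepsilon_1} \ldots \gamma_{b_q,\varepsilon_q})(\sigma_{a_1} \ldots \sigma_{a_p}). \]
Split this expression at the boundary between the face part and the connection/degeneracy part, setting $m = \partial_{c_1, \varepsilon'_1} \ldots \partial_{c_r, \varepsilon'_r}$ and $e = (\gamma_{b_1,\varepsilon_1} \ldots \gamma_{b_q,\varepsilon_q})(\sigma_{a_1} \ldots \sigma_{a_p})$. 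These expressions are themselves in standard form (with trivial face or connection/degeneracy parts respectively), so by \cref{epi-or-mono}, $m$ is a monomorphism and $e$ is an epimorphism, yielding $\phi = m \circ e$.

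For uniqueness, suppose $\phi = m_1 \circ e_1 = m_2 \circ e_2$ are two epi-mono factorizations. By \cref{epi-or-mono}, each $m_i$ admits a standard form consisting purely of face maps, and each $e_i$ admits a standard form consisting purely of connection and degeneracy maps. Concatenating these two standard forms (faces on the left, then connections, then degeneracies) gives an expression for $m_i \circ e_i$ which satisfies exactly the ordering constraints of \cref{normal-form}, hence is itself the standard form of $\phi$. By uniqueness of the standard form of $\phi$, the two concatenated expressions coincide, and since face maps and connection/degeneracy maps occupy disjoint positions in the standard form, we conclude $m_1 = m_2$ and $e_1 = e_2$.

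The argument is essentially mechanical given the two previous results, so there is no serious obstacle; the only point requiring a brief check is that the concatenation of the standard form of $m_i$ with the standard form of $e_i$ actually satisfies the global ordering and inequality constraints of \cref{normal-form}, which it does automatically since the two subexpressions involve disjoint classes of generators.
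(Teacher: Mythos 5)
Your proof is correct and is exactly the argument the paper intends: the corollary is stated with no written proof precisely because it follows immediately from the uniqueness of the standard form in \cref{normal-form} together with \cref{epi-or-mono}, by splitting the standard form at the face/non-face boundary and noting that concatenating the standard forms of any mono and epi again yields a standard form. No gaps.
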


\begin{corollary}\label{Box-Reedy}
Each of the cube categories defined above admits the structure of an EZ-Reedy category, in which:
\begin{itemize}
\item $\mathrm{deg}([1]^{n}) = n$;
\item $(\Box_A)_{+}$ is generated under composition by the face maps;
\item $(\Box_A)_{-}$ is generated under composition by the degeneracy and connection maps.\qed
\end{itemize}
\end{corollary}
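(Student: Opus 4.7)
The plan is to deduce all the required axioms from the two immediately preceding corollaries, plus the cubical identities. First I would check that the proposed degree function is compatible with the generators: each face map $\partial_{i,\varepsilon} \colon [1]^{n-1} \to [1]^n$ strictly raises degree, while each degeneracy $\sigma_i$ and each connection $\gamma_{i,\varepsilon}$ strictly lowers it by one. Hence every non-identity morphism in $(\BoxA)_+$ strictly raises degree, and every non-identity morphism in $(\BoxA)_-$ strictly lowers it; in particular $(\BoxA)_+ \cap (\BoxA)_-$ consists of identities only.

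Next, the central Reedy factorization axiom falls out immediately from the combination of \cref{epi-or-mono} and \cref{epi-mono-factor}: the former identifies $(\BoxA)_-$ with the wide subcategory of epimorphisms and $(\BoxA)_+$ with that of monomorphisms, while the latter gives unique epi-mono factorization. The remaining EZ conditions are (i) every morphism in $(\BoxA)_-$ is a split epimorphism, and (ii) isomorphisms in $\BoxA$ are identities. For (i), it suffices that each generator of $(\BoxA)_-$ admits a section, and the cubical identities furnish $\sigma_i \partial_{i,\varepsilon} = \id$ and $\gamma_{i,\varepsilon} \partial_{i,\varepsilon} = \id$; composites of split epimorphisms are then again split epimorphisms. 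For (ii), any isomorphism would be simultaneously a monomorphism and an epimorphism, so by \cref{epi-or-mono} its standard form (in the sense of \cref{normal-form}) would contain neither face maps nor degeneracy/connection maps, leaving only the empty composite — that is, an identity.

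Overall the argument is essentially bookkeeping on top of the standard-form theorem, and I do not anticipate any serious obstacle; the only step requiring even momentary care is (i), where one must observe that the factor appearing on the right of each generator's section is itself in $(\BoxA)_+$, so that the section lies in the correct subcategory for whichever stronger variant of the EZ definition is in use. Should the definition in force also demand that parallel morphisms in $(\BoxA)_-$ with a common set of sections coincide, this again follows from uniqueness of standard form, since two distinct epimorphisms in normal form already differ as underlying poset maps and are therefore separated by composition with a suitable face.
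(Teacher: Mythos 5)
Your proposal is correct and matches the paper's intended derivation: the paper states this corollary with no written proof, treating it as immediate from \cref{normal-form}, \cref{epi-or-mono}, and \cref{epi-mono-factor}, and your argument is exactly the routine verification being left implicit (degree behaviour of generators, unique epi--mono factorization, sections of degeneracies and connections furnished by the cubical identities, and separation of distinct epimorphisms by their sections).
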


Interpreting the standard definition of a skeletal presheaf in terms of this Reedy structure gives us a natural concept of a skeletal cubical set.

\begin{definition}\label{skel-def-unmarked}
For any $A \subseteq \{0,1\}$ and $n \geq - 1$, a cubical set $X \in \cSet_A$ is \emph{$n$-skeletal} if it has no non-degenerate cubes above dimension $n$.
\end{definition}

From here until the end of this subsection, fix a specific $A \subseteq \{0,1\}$; all of our definitions and results will be valid for all choices of $A$.

We now prove some combinatorial lemmas involving the standard forms of maps in $\Box_A$, which will be of use when proving our main results in \cref{section:wcf}.

\begin{lemma}\label{face-shift}
For any map $\phi \colon [1]^m \to [1]^n$ in $\Box_A$ and any face map $\bd_{i,\mu} \colon [1]^n \to [1]^{n+1}$, the standard form of the composite $\bd_{i,\mu} \phi$ contains at least one face map.
\end{lemma}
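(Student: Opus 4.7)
My approach would be to avoid manipulating standard forms directly, and instead use the characterization of epimorphisms via standard forms provided by \cref{epi-or-mono}. By that corollary, a map in $\BoxA$ has no face factors in its standard form precisely when it is an epimorphism; so it suffices to show that $\bd_{i,\mu} \phi$ is not an epimorphism.

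For this I would invoke the elementary categorical observation that whenever a composite $f \circ g$ is an epimorphism, $f$ itself must be one: if $\alpha f = \beta f$, postcomposing with $g$ gives $\alpha f g = \beta f g$, and as $fg$ is epi we conclude $\alpha = \beta$. Applied with $f = \bd_{i,\mu}$ and $g = \phi$, this reduces the lemma to showing that the single face map $\bd_{i,\mu} \colon [1]^n \to [1]^{n+1}$ is not an epimorphism in $\BoxA$. But its standard form is just itself, a single face map, so \cref{epi-or-mono} immediately gives that it is not an epimorphism. Therefore $\bd_{i,\mu} \phi$ is not an epimorphism, and applying \cref{epi-or-mono} once more shows that its standard form contains at least one face map.

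I do not anticipate any substantive obstacle with this strategy; the key point is to let \cref{normal-form,epi-or-mono,epi-mono-factor} handle the bookkeeping of face/degeneracy/connection interactions, rather than pushing individual cubical identities through an explicit normal form computation. An alternative proof, which would more closely mirror the shape of the conclusion, would be to use uniqueness of the epi-mono factorization (\cref{epi-mono-factor}): factor $\phi$ as $m \circ e$ with $m$ a composite of faces and $e$ a composite of connections and degeneracies, observe that $\bd_{i,\mu} \phi = (\bd_{i,\mu} m) \circ e$ is again an epi-mono factorization, and note that its mono part contains at least the factor $\bd_{i,\mu}$. Either route works, but the first is more concise.
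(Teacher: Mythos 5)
Your proof is correct, but it takes a genuinely different route from the paper's. The paper argues directly with normal forms: it writes $\phi$ in the standard form of \cref{normal-form}, observes that prepending $\bd_{i,\mu}$ yields a non-empty string of face maps, and then rearranges that string into strictly decreasing index order via the cubical identity for composing faces, so that the resulting expression is visibly the standard form of $\bd_{i,\mu}\phi$ and contains at least one face. You instead translate the statement through \cref{epi-or-mono} into the assertion that $\bd_{i,\mu}\phi$ is not an epimorphism, and dispatch that with the purely categorical fact that an outer factor of an epimorphism is an epimorphism together with the observation that a single face map is not one. Both arguments are sound and both ultimately rest on \cref{normal-form} (of which \cref{epi-or-mono} is a corollary, so there is no circularity); yours is shorter and avoids any manipulation of cubical identities, while the paper's computation has the side benefit of exhibiting the standard form of $\bd_{i,\mu}\phi$ explicitly — namely, that its epimorphism part is unchanged from that of $\phi$ — which is the kind of information the companion result \cref{degen-shift} needs and which the surrounding combinatorial lemmas are designed to track. (One small terminological slip: in your cancellation argument you \emph{pre}compose with $g$, not postcompose; the displayed equation is nonetheless the right one.) Your alternative argument via the uniqueness of epi--mono factorizations (\cref{epi-mono-factor}) is essentially the paper's proof in disguise, since verifying that $\bd_{i,\mu} m$ is in standard form requires the same rearrangement of face indices.
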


\begin{proof}
Writing $\phi$ in standard form as in the statement of \cref{normal-form}, we may express $\bd_{i,\mu}\phi$ as follows:

$$
\bd_{i,\mu} \phi = \bd_{i,\mu}\partial_{c_1, \varepsilon'_1} \ldots \partial_{c_r, \varepsilon'_r}
     \gamma_{b_1,\varepsilon_1} \ldots \gamma_{b_q,\varepsilon_q}
     \sigma_{a_1} \ldots \sigma_{a_p}
$$

Although the standard form of $\phi$ might not contain any face maps, the string of faces $\bd_{i,\mu}\partial_{c_1, \varepsilon'_1} \ldots \partial_{c_r, \varepsilon'_r}$ is necessarily non-empty, as it contains $\bd_{i,\mu}$. By repeatedly applying the cubical identity for composition of face maps, we can rearrange this string into one whose indices are in strictly decreasing order; thus we obtain the standard form of $\bd_{i,\mu} \phi$, containing at least one face map.
\end{proof}

\begin{lemma}\label{degen-shift}
Let $\phi \colon [1]^m \to [1]^n$ and $\psi \colon [1]^n \to [1]^k$ be maps in $\Box_{A}$, with $\phi$ an epimorphism. Then we may characterize the standard form of the composite $\psi \phi$ as follows:

\begin{itemize}
\item if the standard form of either $\phi$ or $\psi$ contains at least one degeneracy, then so does that of $\psi \phi$;
\item if neither of the standard forms of $\phi$ and $\psi$ contains a degeneracy, but at least one contains a connection, then that of $\psi \phi$ contains at least one connection, but no degeneracies.
\end{itemize}
\end{lemma}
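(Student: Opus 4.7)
The plan is to factor out the face maps of $\psi$ via the epi-mono factorization (\cref{epi-mono-factor}) and then track the counts of $\sigma$'s and $\gamma$'s through the cubical identities. Write $\psi = \iota \pi$, where $\iota$ is a composite of faces and $\pi$ is a composite of connection and degeneracy maps with the same standard-form $\sigma$-count $p'$ and $\gamma$-count $q'$ as $\psi$. Since both $\pi$ and $\phi$ are epimorphisms, so is $\pi\phi$, and therefore $\psi\phi = \iota(\pi\phi)$ is the epi-mono factorization of $\psi\phi$. Concatenating the standard forms of $\iota$ and of $\pi\phi$ yields the standard form of $\psi\phi$ (the face identities for $\iota$ leave the epi part untouched). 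Hence the $\sigma$'s and $\gamma$'s in the standard form of $\psi\phi$ coincide with those in the standard form of $\pi\phi$, and the problem reduces to analyzing the latter.

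Next, consider rewriting
\[
\pi\phi = \gamma_{b'_1,\varepsilon'_1} \cdots \gamma_{b'_{q'},\varepsilon'_{q'}} \sigma_{a'_1} \cdots \sigma_{a'_{p'}} \gamma_{b_1,\varepsilon_1} \cdots \gamma_{b_q,\varepsilon_q} \sigma_{a_1} \cdots \sigma_{a_p}
\]
into standard form using the cubical identities. Since no face maps appear, only the $\sigma\sigma$, $\sigma\gamma$, and $\gamma\gamma$ identities are relevant. A case-by-case inspection shows that every such identity preserves both the total $\sigma$-count and the total $\gamma$-count, with the sole exception of $\sigma_j \gamma_{i,\varepsilon} = \sigma_i \sigma_i$ (applicable when $j = i$), which converts one $\sigma$ and one $\gamma$ into two $\sigma$'s. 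Consequently, in any rewriting to standard form, the number of $\sigma$'s is monotonically non-decreasing while the combined $\sigma + \gamma$ count is preserved; by uniqueness of the standard form (\cref{normal-form}) these two invariants determine the $\sigma$- and $\gamma$-counts of the standard form of $\pi\phi$ regardless of the chosen sequence of rewrites.

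For claim (1), the initial expression contains $p + p'$ $\sigma$'s, so by monotonicity the standard form contains at least $p + p'$ $\sigma$'s, which is positive under the hypothesis that $\phi$ or $\psi$ has a degeneracy. For claim (2), the hypotheses $p = p' = 0$ and $q + q' \geq 1$ mean the initial expression has no $\sigma$'s at all, so the count-changing identity $\sigma_i\gamma_{i,\varepsilon} = \sigma_i\sigma_i$ cannot ever be triggered; therefore the standard form of $\pi\phi$ contains no degeneracies, and the preserved combined count forces it to contain exactly $q + q' \geq 1$ connections. The main obstacle is the bookkeeping in the second paragraph: carefully verifying through the case list of $\sigma\sigma$, $\sigma\gamma$, and $\gamma\gamma$ identities that $\sigma_i\gamma_{i,\varepsilon} = \sigma_i\sigma_i$ is the only identity that changes individual counts, and being precise about the fact that we may choose a rewriting strategy (all identities applied left-to-right) under which the monotonicity holds.
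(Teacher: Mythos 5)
Your proof is correct, and the first half (factoring $\psi = \iota\pi$ via \cref{epi-mono-factor} and observing that the standard form of $\psi\phi$ is the concatenation of that of $\iota$ with that of $\pi\phi$) is exactly the paper's opening reduction; the connections-only case is likewise handled identically in both arguments. Where you diverge is the degeneracy case: the paper pushes a single degeneracy to the rightmost position of the string (so that WLOG $\phi = \phi'\sigma_i$), then writes $\psi\phi'$ in standard form and notes the trailing degeneracy block $\sigma_{a_1}\cdots\sigma_{a_p}\sigma_i$ is non-empty and can be sorted; you instead track the $\sigma$-count and the combined $\sigma+\gamma$ count as (semi-)invariants of the rewriting, observing that $\sigma_i\gamma_{i,\varepsilon} = \sigma_i\sigma_i$ is the only identity that changes individual counts and that it only increases the number of degeneracies. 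The two arguments rest on the same combinatorial fact about that one identity, but yours packages it more uniformly and yields slightly more information (exact counts in case (2), a lower bound of $p+p'$ in case (1)), at the cost of having to justify that the standard form is reachable by a left-to-right rewriting sequence so that your monotonicity applies. That last point is not a real gap relative to the paper's own standard of rigor --- the paper's proof also asserts without further argument that ``repeatedly applying the cubical identities'' brings these strings into standard form, and uniqueness of the standard form (\cref{normal-form}) then lets either of you read off the conclusion from any one such rewriting.
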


\begin{proof}
Writing $\psi$ in standard form as $\delta \psi'$, where $\delta$ is a monomorphism and $\psi'$ an epimorphism, we may note that the standard form of $\psi \phi$ is the concatenation of that of $\delta$ with that of $\psi' \phi$; thus we may assume without loss of generality that $\psi$ is an epimorphism.

We first consider the case where the standard forms of both $\phi$ and $\psi$ contain no degeneracies, with at least one containing a connection. Then $\psi \phi$ is a composite of connections; by repeatedly applying the cubical identities for comoposition of connection maps, we may rewrite this composite into standard form. Thus we do indeed obtain a standard form of $\psi \phi$ containing at least one connection, but no degeneracies.

Now consider the case where at least one of the standard forms of $\phi$ and $\psi$ contains a degeneracy. By repeatedly applying the cubical identities for composition of degeneracies with degeneracies and connections, the concatenation of these standard forms can be expressed as a string of such maps whose rightmost map is a degeneracy. Therefore, without loss of generality, we may assume that $\phi = \phi'\sigma_i$ for some degeneracy $\sigma_i \colon [1]^m \to [1]^{m-1}$ and some epimorphism $\phi' \colon [1]^{m-1} \to [1]^n$.

Writing $\psi \phi'$ in standard form, we may express $\psi \phi$ as follows:

$$
\psi \phi = \partial_{c_1, \varepsilon'_1} \ldots \partial_{c_r, \varepsilon'_r}
     \gamma_{b_1,\varepsilon_1} \ldots \gamma_{b_q,\varepsilon_q}
     \sigma_{a_1} \ldots \sigma_{a_p} \sigma_i
$$

Similarly to the proof of \cref{face-shift}, the string of degeneracies $\sigma_{a_1} \ldots \sigma_{a_p} \sigma_i$ is non-empty, as it contains at least the map $\sigma_i$. Repeatedly applying the cubical identities for composition of degeneracies, we can rearrange this string into one whose indices are in strictly increasing order; thus we obtain the standard form of $\psi \phi$, containing at least one degeneracy map.
\end{proof}

Many of our proofs will require detailed study of maps whose standard forms only contain connections; thus we now define some terminology and prove some results relating to such maps. 

\begin{definition}\label{max-index-def}
Let $\phi$ be a (non-empty) composite of connection maps in $\Box_A$. The \emph{maximal index} of $\phi$ is the largest index of any connection map appearing in the standard form of $\phi$.
\end{definition}

For a map $\phi$ written in standard form as $\phi = \gamma_{i_1,\varepsilon_1} \ldots \gamma_{i_p,\varepsilon_p}$, the maximal index is simply $i_p$; this follows from the definition of the standard form. If we are given an expression for a map $\phi$ as a composite of connections, not necessarily written in standard form, then it may not be the case that the largest index appearing in this expression is in fact the maximal index of $\phi$; however, the following result shows that it does give a lower bound on the maximal index of $\phi$.

\begin{lemma}\label{con-shift}
Let $\phi = \gamma_{i_1,\varepsilon_1} \ldots \gamma_{i_p,\varepsilon_p}$ be a (non-empty) composite of connection maps in $\Box_A$. Then the maximal index of $\phi$ is greater than or equal to $\max_k(i_k)$. 
\end{lemma}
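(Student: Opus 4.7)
The plan is to analyze the process of rewriting the given composite expression into standard form. Since $\phi$ is itself a composite of connections only, and the left-hand side of every cubical identity that involves two adjacent connection maps is one of the two $\gamma\gamma$-identities above, no face or degeneracy map can be introduced at any stage of such a rewriting. Hence the standard form of $\phi$ necessarily has the shape $\gamma_{b_1,\varepsilon'_1}\cdots\gamma_{b_q,\varepsilon'_q}$ with $b_1 \leq \cdots \leq b_q$, and its maximal index in the sense of \cref{max-index-def} is $b_q$.

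For an arbitrary expression $\gamma_{j_1,\alpha_1}\cdots\gamma_{j_p,\alpha_p}$ written as a word in connections, I will consider the quantity $M = \max_k(j_k)$ and show that each application of one of the $\gamma\gamma$-identities can only preserve or increase $M$. Indeed, each such identity operates on two adjacent positions, replacing the pair of indices $(j,i)$ with $j > i$ by $(i,j+1)$, or replacing $(i,i)$ (with matching signs) by $(i,i+1)$; in either case the local maximum at those two positions is incremented by $1$, while indices at other positions are unchanged.

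Iterating this observation along any sequence of rewrites taking the original expression, which has $M = \max_k(i_k)$, to its standard form, which has $M = b_q$, yields $b_q \geq \max_k(i_k)$, as desired. The main step requiring care is the opening claim, that only the $\gamma\gamma$-identities apply during the rewriting; this amounts to inspection of the list of cubical identities, noting that every other identity has a face or degeneracy on its left-hand side and therefore cannot be triggered by a word in connections. Once this is in place the rest of the argument is pure bookkeeping on indices.
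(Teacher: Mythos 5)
Your monovariant idea is correct and is a genuinely different route from the paper's: each left-to-right application of a $\gamma\gamma$-identity replaces an adjacent index pair $(j,i)$ with $j\geq i$ by $(i,j+1)$, so the maximum index of the word never decreases, and the number of letters is preserved. The paper instead argues by induction on $p$: it puts the tail $\gamma_{i_2,\varepsilon_2}\cdots\gamma_{i_p,\varepsilon_p}$ into standard form, explicitly commutes $\gamma_{i_1,\varepsilon_1}$ through it one position at a time (an insertion sort), checks that the result is again in standard form, and reads off where the maximal index lands. Your observation would replace that final index bookkeeping entirely.

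There is, however, a gap in the step ``iterating this observation along any sequence of rewrites taking the original expression to its standard form.'' The cubical identities are unoriented equations: applied right-to-left, the $\gamma\gamma$-identity replaces $(i,j+1)$ by $(j,i)$ and strictly \emph{decreases} the local maximum, so the monovariant is not preserved by an arbitrary rewrite sequence. What you need is that the standard form is reachable from an arbitrary word of connections using \emph{left-to-right} applications only, and \cref{normal-form} does not give you this; it asserts existence and uniqueness of the standard form, not that the oriented rewriting system reaches it. That reachability is true, but it requires an argument --- essentially the insertion-sort induction on $p$ that constitutes the body of the paper's proof --- and it is not immediate from naive inversion counting, since a forward rewrite can create new out-of-order adjacent pairs on either side of the position where it is applied. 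Once you supply such a sequence (by induction on $p$, inserting the leading connection into the already-sorted tail), your monovariant does finish the job, and the resulting argument is a clean factorization of the paper's proof into ``oriented reachability'' plus ``monotonicity of the maximal index.''
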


\begin{proof}
We proceed by induction on $p$. In the base case $p = 1$ we have $\phi = \gamma_{i_1,\varepsilon_1}$, which is already written in standard form; thus the maximal index of $\phi$ is $i_1$, which is trivially equal to  $\max_k(i_k)$.

Now let $p \geq 2$ and suppose the result holds for composites of $p-1$ connection maps, and let $\phi$ be a composite of $p$ connection maps. Let $\psi = \gamma_{i_2,\varepsilon_2} \ldots \gamma_{i_p, \varepsilon_p}$; this is a composite of $p-1$ connections, so by the induction hypothesis, it may be written in standard form as $\gamma_{j_1,\mu_1} \ldots \gamma_{j_{p-1},\mu_{p-1}}$, where $j_{p-1} \geq \max_{k \geq 2}(i_k)$. 

Let $q$ be minimal such that either $j_q >  i_1 + q - 1$ or $j_q = i_1 + q - 1$ and $\mu_q = 1 - \varepsilon_1$; if no such $q$ exists then let $q = p$. We will show that for all $1 \leq r \leq q$ we have:
\[
\phi = \gamma_{j_1,\mu_1} \ldots \gamma_{j_{r-1},\mu_{r-1}} \gamma_{{i_1+r-1},\varepsilon_1} \gamma_{j_r,\mu_r} \ldots \gamma_{j_{p-1}}
\]
In the extreme cases $r = 1$ and $r = q = p$ we interpret this statement as $\phi = \gamma_{i_1,\varepsilon_1}\gamma_{j_1,\mu_1} \ldots \gamma_{j_{p-1},\mu_{p-1}}$ and $\phi = \gamma_{j_1,\mu_1} \ldots \gamma_{j_{p-1},\mu_{p-1}} \gamma_{i_1 + p - 1,\varepsilon_1}$, respectively. Thus the base case $r = 1$ reduces to $\phi = \gamma_{i_1,\varepsilon_1} \psi$, which is true by the definition of $\psi$.

Now let $2 \leq r \leq q$, and suppose the statement is true for $r-1$, i.e. that
\[
\phi = \gamma_{j_1,\mu_1} \ldots \gamma_{j_{r-2},\mu_{r-2}} \gamma_{{i_1+r-2},\varepsilon_1} \gamma_{j_{r-1},\mu_{r-1}} \ldots \gamma_{j_{p-1}}
\]
By the minimality of $q$, we have $j_{r-1} \leq i_1 + r - 2$, with equality only if $\mu_{r-1} = \varepsilon_{1}$. Thus we may apply the cubical identity for composition of connections to see that $\gamma_{i_1+r-2,\varepsilon_1} \gamma_{j_{r-1},\mu_{r-1}} = \gamma_{j_{r-1},\mu_{r-1}} \gamma_{i_1 + r - 1,\varepsilon_1}$, thus proving the statement for $r$.

By induction, we see that the statement is true for all $1 \leq r \leq q$; in particular, the case $r = q$ gives us
\[
\phi = \gamma_{j_1,\mu_1} \ldots \gamma_{j_{q-1},\mu_{q-1}} \gamma_{{i_1+q-1},\varepsilon_1} \gamma_{j_q,\mu_q} \ldots \gamma_{j_{p-1}}
\]
In fact, this is the standard form of $\phi$. To see this, note that by our choice of $q$ we have $j_{q-1} \leq i_1 + q - 2 < i_1 + q - 1 \leq j_q$, and the last inequality is an equality only if $\mu_q = 1 - \varepsilon_1$. The necessary conditions hold on all other adjacent pairs of connection maps because $\gamma_{j_1,\mu_1} \ldots \gamma_{j_{p-1},\mu_{p-1}}$ was already in standard form.

If $q < p$, then the largest index of a connection in this standard form is $j_{p-1} \geq \max_{k \geq 2}(i_k)$. In this case, we have $i_1 \leq i_1 + q - 1 \leq j_q \leq j_{p-1}$, so in fact $j_{p-1} \geq \max_k(i_k)$. On the other hand, if $q = p$ then the largest index of a connection in this standard form is $i_1 + p - 1$. In this case, we have $i_1 + p - 1 > j_{p-1} \geq \max_{k}(i_k)$, and furthermore $i_1 + p - 1 > i_1$, so that $i_1 + p - 1 > \max_{k}(i_k)$.
\end{proof}

The cartesian product of cubical sets is not well-behaved in general; for  instance, it is not the case that $\Box^m_A \times \Box^n_A \cong \Box^{m+n}_A$ in any of the categories under consideration here. In view of this, we typically work with an alternative monoidal product on our categories of cubical sets.

We define this monoidal product as follows: the assignment $([1]^m,[1]^n) \mapsto [1]^{m+n}$ extends naturally to morphisms to define a functor $\Box_A \times \Box_A \to \Box_A$. Postcomposing this functor with the Yoneda embedding, and taking the left Kan extension along the product of the Yoneda embedding with itself, we obtain a bifunctor $\otimes \colon \cSet \times \cSet \to \cSet$, which we call the \emph{geometric product}.
\[
\xymatrix@C+0.5cm{
  \Box_A \times \Box_A
  \ar[r]
  \ar@{^{(}->}[d]
&
  \cSet_A
\\
  \cSet_A \times \cSet_A
  \ar[ru]_{\otimes}
&
}
\]
This defines a non-symmetric, biclosed monoidal structure on $\cSet_A$, with the unit given by $\Box^0_A$. 

\begin{proposition}[{\cite[Prop.~1.24]{doherty-kapulkin-lindsey-sattler}}]\label{geo-prod-description}
  Given cubical sets $X, Y \in \cSet_A$, we have the following description of their geometric product $X \otimes Y$.
\begin{itemize}
	\item For $n \geq 0$, the $n$-cubes in $X \otimes Y$ are the formal products $x \otimes y$ of pairs $x \in X_k$ and $y \in Y_\ell$ such that $k+\ell = n$, subject to the identification $(x\sigma_{k+1})\otimes y = x\otimes(y\sigma_{1})$.
 	\item For $x \in X_k$ and $y \in Y_\ell$, the faces, degeneracies, and connections of the $(k+\ell)$-cube $x \otimes y$ are computed as follows:
 	\begin{itemize}
 		\item $(x\otimes y)\partial_{i,\varepsilon} = 
 		\begin{cases} (x\partial_{i,\varepsilon})\otimes y & 1 \leq i \leq k \\ 
 		x\otimes( y\partial_{i-k,\varepsilon})  & k + 1 \leq i \leq k+l
 		\end{cases}$
 		\item $(x\otimes y)\sigma_{i} = 
 		\begin{cases} (x\sigma_{i})\otimes y     & 1 \leq i \leq k + 1 \\
 		x\otimes (y\sigma_{i-k}) & k + 1 \leq i \leq k+l+ 1 
 		\end{cases}$  
 		\item $(x\otimes y)\gamma_{i,\varepsilon} = 
 		\begin{cases} (x\gamma_{i, \varepsilon})\otimes y     & 1 \leq i \leq k \\ 
 		x\otimes (y\gamma_{i-k,\varepsilon}) & k + 1 \leq i \leq k+l
 		\end{cases}$
 	\end{itemize}
 \end{itemize}
 In particular, an $n$-cube $x \otimes y$ of $X \otimes Y$ is non-degenerate exactly when both $x$ and $y$ are non-degenerate in $X$ and $Y$, respectively. \qed
 \end{proposition}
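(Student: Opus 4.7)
The plan is to unpack the definition of $\otimes$ as a left Kan extension along the Yoneda embedding. Via the co-Yoneda lemma and the fact that left Kan extensions preserve colimits in each variable, one obtains a coend description
\[
(X \otimes Y)_n \;=\; \biggl( \coprod_{x \in X_m,\, y \in Y_k} \Box_A([1]^n, [1]^{m+k}) \biggr) \Big/ \sim,
\]
in which $(x, y, (f \otimes g)\psi) \sim (xf, yg, \psi)$ whenever $f \colon [1]^{m'} \to [1]^m$ and $g \colon [1]^{k'} \to [1]^k$ and $\psi \colon [1]^n \to [1]^{m'+k'}$. The main task is then to reduce any representative $(x, y, \psi)$ to a canonical form $(x, y, \id)$ with $\dim x + \dim y = n$, and to identify the resulting equivalence relation on such canonical forms.

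To achieve the reduction, I would decompose $\psi$ via its standard form from \cref{normal-form} into a string of faces, degeneracies, and connections, and absorb these generators into $x$ and $y$ one at a time. For each generator at an index $i$, the cases $i \leq m$ and $i \geq m+1$ yield decompositions $\phi \otimes \id$ and $\id \otimes \phi$ respectively, compatible with the fixed tensor decomposition $[1]^{m+k} = [1]^m \otimes [1]^k$ of the codomain, so that the coend relation absorbs the generator into $x$ or $y$. A dimension check shows that the boundary degeneracy $\sigma_{m+1}$ is the unique generator admitting both decompositions $\sigma_{m+1} \otimes \id_{[1]^k}$ and $\id_{[1]^m} \otimes \sigma_1$, and is therefore the only source of nontrivial identifications among canonical forms. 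Careful bookkeeping with the uniqueness clause of \cref{normal-form} and the cubical identities for composition of degeneracies then shows that the identification $(x\sigma_{k+1})\otimes y = x \otimes (y\sigma_1)$ generates all the rest. The structure-map formulas follow immediately: for any operator $\phi$, the cube $(x \otimes y) \cdot \phi$ is represented by $(x, y, \phi)$, and the absorption procedure rewrites it as the formal product listed in the proposition.

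For non-degeneracy, one direction is immediate: a degenerate $x$ or $y$ yields a degenerate $x \otimes y$ by direct application of the structure-map formulas. For the converse, if $x \otimes y = (x' \otimes y')\eta$ for some nonidentity composite $\eta$ of degeneracies and connections, then reducing the right-hand side to canonical form via absorption produces a formal product in which at least one factor is in the image of a degeneracy or connection, and uniqueness of canonical representatives (up to the boundary-degeneracy identification) transports this to $x$ or $y$. The main obstacle, as signalled above, is establishing uniqueness of the reduced representative: one must check that every zig-zag of coend identifications between two canonical forms can be realised by applications of the single identification $(x\sigma_{k+1})\otimes y = x \otimes (y\sigma_1)$, and this requires systematic use of the uniqueness of the standard form from \cref{normal-form}.
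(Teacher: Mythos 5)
Your proposal is correct and follows the standard argument: the paper itself gives no proof of this proposition (it is quoted from \cite[Prop.~1.24]{doherty-kapulkin-lindsey-sattler} with a \qed), and the coend/absorption analysis you describe — decomposing each generator of $\Box_A$ over the splitting $[1]^{m+k}=[1]^m\otimes[1]^k$, observing that $\sigma_{m+1}$ is the unique generator admitting both decompositions, and invoking uniqueness of the standard form to control the identifications — is exactly the approach of the cited source. The one step you flag but do not execute, namely that every zig-zag of coend identifications between reduced representatives is generated by the single boundary-degeneracy relation, is indeed the crux, but the tools you name (uniqueness of standard forms together with the epi--mono factorization) do suffice to carry it out.
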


The restriction of the nerve functor defines a functor $\Box_A \to \sSet$; taking the left Kan extension of this functor along the Yoneda embedding, we obtain the \emph{triangulation} functor $T \colon \cSet_A \to \sSet$.

\[
\xymatrix@C+0.5cm{
  \BoxA
  \ar[r]
  \ar[d]
&
  \sSet
\\
  \cSet_A
  \ar[ru]_{T}
&
}
\]

This functor has a right adjoint $U \colon \sSet \to \cSet_A$ given by $(UX)_{n} = \sSet((\Delta^1)^n,X)$. Intuitively, we think of triangulation as creating a simplicial set $TX$ from a cubical set $X$ by subdividing the cubes of $X$ into simplices.

\subsection{Marked cubical sets}\label{sec:mcSet}

To model $(\infty,n)$-categories for general $n$, we make use of cubical sets which have some of their cubes \emph{marked}, designating these cubes as equivalences. To define these objects precisely, for each $A \subseteq \{0,1\}$ we introduce a new category $\tBox_A$, an enlargement of $\BoxA$.
The category $\tBox_A$ consists of objects of the form $[1]^n$ for $n \geq 0$, as well as objects $[1]^n_e$ for $n \geq 1$.
The maps of $\tBox_A$ are generated by the generating maps of $\Box_A$, along with the following:

\begin{itemize}
\item $\varphi^n \colon [1]^n \to [1]^n_e$ for $n \geq 1$;
\item $\zeta^{n}_{i} \colon [1]^{n}_e \to [1]^{n-1}$ for $n \geq 1, 1 \leq i \leq n$;
\item $\xi^{n}_{i,\varepsilon} \colon [1]^{n}_e \to [1]^{n-1}$ for $n \geq 2, 1 \leq i \leq n - 1, \varepsilon \in A$
\end{itemize}

subject to the usual cubical identities, plus the following:

\begin{multicols}{2}
  $\zeta_i \varphi = \sigma_i$;

  $\xi_{i, \varepsilon} \varphi = \gamma_{i, \varepsilon}$;

  $\sigma_i \zeta_j = \sigma_j \zeta_{i+1}$ for $j \leq i$;

  $\gamma_{j, \varepsilon} \xi_{i, \delta} = \left\{ \begin{array}{ll}
	\gamma_{i, \varepsilon'} \xi_{j, \varepsilon} & \text{for } j > i\text{;} \\
	\gamma_{i, \varepsilon'} \xi_{i+1, \delta} & \text{for } j=i, \\ & \varepsilon' = \varepsilon\text{;}
	\end{array}\right.$	

 $\sigma_j \xi_{i, \varepsilon} =  \left\{ \begin{array}{ll}
	\gamma_{i-1, \varepsilon} \zeta_j    & \text{for } j < i \text{;} \\
	\sigma_i \zeta_i                        & \text{for } j = i \text{;} \\
	\gamma_{i, \varepsilon} \zeta_{j+1} & \text{for } j > i \text{.} 
	\end{array}\right.$
\end{multicols}

\begin{proposition-qed}[{\cite[Prop.\ 2.1]{campion-kapulkin-maehara}}] \label{tBox-Reedy}
  Each category $\tBox_A$ is an EZ Reedy category with the Reedy structure defined as follows:
  \begin{itemize}
    \item $\deg ([1]^0) = 0$, $\deg([1]^n) = 2n-1$ for $n \geq 1$, and $\deg ([1]^n_e) = 2n$ for $n \geq 1$;
    \item $(\tBox_A)_+$ is generated by the maps $\bd^n_{i,\varepsilon}$ and $\varphi^n$ under composition;
    \item $(\tBox_A)_-$ is generated by the maps $\sigma^{n}_{i}, \gamma^{n}_{i,\varepsilon}, \zeta^n_{i}$, and $\xi^{n}_{i,\varepsilon}$ (as applicable) under composition. \qedhere
  \end{itemize}
\end{proposition-qed}

For the remainder of this subsection, fix a specific $A \subseteq \{0,1\}$.

A \emph{structurally marked cubical set} is a contravariant functor $X \colon (\tBox_A)^\op \to \Set$ and a morphism of structurally marked cubical sets is a natural transformation of such functors.
We will write $\cSet^{++}_A$ for the category of structurally marked cubical sets.
The representable presheaf at the object $[1]^n$ will be denoted $\Box^n_A$, as in $\cSet_A$,  while the representable presheaf at the object $[1]^n_e$ will be denoted $\widetilde{\Box}^n_A$.

Structurally marked cubical sets should be thought of as cubical sets with (possibly multiple) labels on their cubes of positive dimension, called \emph{markings}, such that each degenerate cube has, in particular, one distinguished marking: for a cube of the form $x \sigma_i$ this is $x \zeta_i$, while for a cube of the form  $x \gamma_{i,\varepsilon}$ this is $x \xi_{i,\varepsilon}$. Specifically, the image under $X$ of the object $[1]^n_e$ is thought of as the set of markings on $n$-cubes of $X$, with an element $x \in [1]^n_e$ being a marking on the $n$-cube $x \varphi$. For $n \geq 1$, the underlying cubical set of $\widetilde{\Box}^n_A$ is $\Box^n_A$, with a unique marking on the unique non-degenerate $n$-cube, while all other non-degenerate cubes are unmarked. When quantifying a statement over arbitrary representable presheaves, we will often use the generic notation $\Box^n_{A,(e)}$, with the understanding that this may refer to $\Box^n_A$ for $n \geq 0$ or $\wBox^n_{A}$ for $n \geq 1$. For $n \geq 1$, the \emph{$n$-marker} is the entire map $\Box^n_A \to \wBox^n_A$, the image under the Yoneda embedding of the map $\varphi \colon [1]^n \to [1]^n_e$ in $\Box^+_A$.

A \emph{marked cubical set} is a structurally marked cubical set in which each cube has at most one marking.
We write $\cSet^+_A$ for the category of marked cubical sets.
Alternatively, we may view a marked cubical set as a pair $(X, eX)$ consisting of a cubical set $X$ together with a set of \emph{marked cubes}, i.e. a subset $eX \subseteq \bigcup\limits_{n \geq 1} X_n$ of cubes of positive dimension that includes all degenerate cubes, with a morphism of marked cubical sets being a map of cubical sets that preserves marked cubes.

The inclusion $\cSet^+_A \hookrightarrow \cSet^{++}_A$ admits a left adjoint $\Im \colon \cSet^{++}_A \to \cSet^+_A$; for $X \in \cSet^{++}_A$ the underlying cubical set of $\Im X$ coincides with that of $X$, with a cube of $\Im X$ being marked if and only if the corresponding cube of $X$ has at least one marking. Thus $\cSet^+_A$ is a reflective subcategory of $\cSet^{++}_A$. We will primarily work with $\cSet^+_A$, referring to $\cSet^{++}_A$ only when it is necessary or convenient to do so in the course of studying $\cSet^+_A$, such as when applying \cref{nat-weq}.


Given $X \in \cA^+$, we will occasionally denote its underlying cubical set by $|X| \in \cA$. Note that this defines a functor $|-| \colon \cA^+ \to \cA$; this functor and its adjoints will be studied further in Appendix \ref{app:inf-1}.

Note that we have an embedding $\BoxA^+ \to \cA^+$, given by the composite of the Yoneda embedding with the reflector $\Im$; this functor is fully faithful, as all representables are contained in $\cA^+$.

\begin{definition}
For $n \geq -1$, a marked cubical set is \emph{$n$-skeletal} if its underlying cubical set is $n$-skeletal (in the sense of \cref{skel-def-unmarked}). The \emph{regular $n$-skeleton} of a marked cubical set $X$, denoted $\sk_{n}X$, is the regular subcomplex of $X$ whose underlying cubical set is $\sk_{n}|X|$.
\end{definition}

Note that this technically conflicts with \cref{skel-def} when interpreted in terms of the Reedy structure of \cref{tBox-Reedy}; this choice of terminology, however, has the advantage of corresponding intuitively to the dimensions of non-degenerate cubes in a marked cubical set, and will not cause any confusion or ambiguity in our proofs.

\begin{definition}
A map $X \to Y$ in $\cSet^+_A$ is:

\begin{itemize}
\item \emph{regular} if it creates markings, i.e. a cube $x$ of $X$ is marked if and only if $f(x)$ is marked in $Y$;
\item a \emph{regular subcomplex inclusion} if it is regular and a monomorphism;
\item \emph{entire} if its underlying cubical set map $|X| \to |Y|$ is an isomorphism.
\end{itemize}
\end{definition}

When representing marked cubical sets visually, we will label marked 1-cubes with a tilde. Marked 2-cubes will typically be labelled with tildes in their interiors. For instance, the following diagram represents a marked 2-cube whose $(2,0)$-face is also marked.

\[
\xymatrix{
\bullet \ar[r]^{\sim} \ar[d] \ar@{}[dr]|{\sim} & \bullet \ar[d] \\
\bullet \ar[r] & \bullet \\
}
\]

In cases where it is not feasible to label marked cubes in illustrations, markings will simply be described in accompanying text. 

We next study some convenient properties of the presheaf category $\cSet_A^{++}$ which descend to its reflective subcategory $\cSet^+_A$. We begin by noting that every marked cubical set is the colimit (in $\cSet^+$) of its cubes and markings.

\begin{proposition}\label{mcset-colim}
Every object $X \in \cSet^+_A$ may be constructed as
\[
\colim_{\Box^n_{A,(e)} \to X} \Box^n_{A,(e)}
\]
where the symbol $\Box^n_{A,(e)}$ represents an arbitrary representable marked cubical set, i.e. an arbitrary object of the form $\Box^n_A$ or $\wBox^n_A$, and the colimit is taken in $\cA^+$.
\end{proposition}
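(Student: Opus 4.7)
The plan is to deduce this from the standard density theorem for presheaf categories, applied to $\cSet^{++}_A = \pshf{\tBox_A}$, together with the fact that $\cSet^+_A$ is a reflective subcategory.

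First, I would invoke the co-Yoneda lemma in the presheaf category $\cSet^{++}_A$: any structurally marked cubical set $Y$ is canonically isomorphic to the colimit
\[
Y \cong \colim_{(\Box^n_{A,(e)} \to Y)} \Box^n_{A,(e)}
\]
indexed by its category of elements, with the colimit computed in $\cSet^{++}_A$. This is entirely formal from the fact that $\cSet^{++}_A$ is a presheaf category on $\tBox_A$.

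Next, given $X \in \cSet^+_A$, I view it as an object of $\cSet^{++}_A$ via the inclusion. Applying the reflector $\Im \colon \cSet^{++}_A \to \cSet^+_A$ to the colimit expression above and using that $\Im$ is a left adjoint and so preserves colimits, I obtain
\[
\Im X \cong \colim_{(\Box^n_{A,(e)} \to X)} \Im\,\Box^n_{A,(e)}
\]
with the colimit now computed in $\cSet^+_A$. Two facts then finish the argument: (i) each representable $\Box^n_{A,(e)}$ on $\tBox_A$ is already a marked cubical set (as the paper observes, all representables lie in $\cSet^+_A$), so $\Im\,\Box^n_{A,(e)} = \Box^n_{A,(e)}$; and (ii) since $X$ is itself in the reflective subcategory, $\Im X = X$. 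Moreover, the indexing category in $\cSet^{++}_A$ agrees with that in $\cSet^+_A$ because, by the reflection adjunction combined with $\Im\,\Box^n_{A,(e)} = \Box^n_{A,(e)}$, we have $\cSet^{++}_A(\Box^n_{A,(e)}, X) \cong \cSet^+_A(\Box^n_{A,(e)}, X)$ for each representable.

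There is no real obstacle here; the only minor care required is to verify that the representables on $\tBox_A$ really are fixed by $\Im$, which follows directly from the description of the representables $\Box^n_A$ and $\wBox^n_A$ as marked cubical sets (a unique marking on the top cube of $\wBox^n_A$, no other non-degenerate markings), and to be explicit that passing through the reflector does not change the indexing diagram up to isomorphism. No combinatorial computation in $\tBox_A$ is needed.
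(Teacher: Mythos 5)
Your argument is correct and is essentially the paper's own proof: both apply the co-Yoneda/density theorem in the presheaf category $\cSet^{++}_A$, push the colimit through the colimit-preserving reflector $\Im$, and use that the representables and $X$ itself already lie in $\cSet^+_A$ so that $\Im$ fixes them. No substantive difference.
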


\begin{proof}
The analogous colimit construction in $\cA^{++}$ holds by a standard result about presheaf categories, and this colimit is preserved by the left adjoint functor $\Im$. That the stated colimit construction holds in $\cSet^+$ thus follows from the fact that both $X$ and all representables are contained in the full subcategory $\cSet^+$ and that the composite $\cSet^+ \hookrightarrow \cSet^{++} \xrightarrow{\Im} \cSet^+$ is naturally isomorphic to the identity. 
\end{proof}

The following result follows from a similar proof to the above, involving the standard construction of a monomorphism of presheaves on an EZ-Reedy category by skeletal induction.

\begin{proposition}[{\cite[Prop.~2.10]{campion-kapulkin-maehara}]}]\label{cell-model-marked}
The class of monomorphisms in $\cSet^+_A$ is the saturation of the set of boundary inclusions $\bd \Box^n_A \hookrightarrow \Box^n_A$ for $n \geq 0$ and markers $\Box^n_A \to \wBox^n_A$ for $n \geq 1$. \qed
\end{proposition}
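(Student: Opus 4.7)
My plan is to prove the two containments of the saturation separately. The easy direction is that every map in the saturation is a monomorphism: each boundary inclusion $\bd \Box^n_A \hookrightarrow \Box^n_A$ is a monomorphism as an inclusion of subcomplexes, and each marker $\Box^n_A \to \wBox^n_A$ is a monomorphism because it is an entire map (an isomorphism on underlying cubical sets) that is injective on markings. Since monomorphisms in any presheaf category are closed under pushout, transfinite composition, coproducts, and retracts, and since these closure properties are inherited by the reflective subcategory $\cSet^+_A \hookrightarrow \cSet^{++}_A$, the saturation is contained in the class of monomorphisms.

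For the reverse containment, given a monomorphism $f \colon X \to Y$, I would build $Y$ from $X$ by a transfinite (in fact countable) composite of pushouts of generating maps, organized by a skeletal filtration. Define $Y^{(-1)} := X$ and, for each $n \geq 0$, let $Y^{(n)}$ denote the marked cubical subcomplex of $Y$ whose underlying cubical set is $f(|X|) \cup \sk_n |Y|$ and whose marked cubes are exactly those marked in $Y$. Each inclusion $Y^{(n-1)} \hookrightarrow Y^{(n)}$ will be decomposed into two stages. In stage (a), I attach, simultaneously via a coproduct of pushouts, every non-degenerate $n$-cube of $|Y|$ not already present in $|Y^{(n-1)}|$, each as an unmarked cube via $\bd \Box^n_A \hookrightarrow \Box^n_A$; this is well-defined because the boundary of every such $n$-cube already lies in $\sk_{n-1}|Y| \subseteq |Y^{(n-1)}|$. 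In stage (b), for every $n$-cube of the result of stage (a) that is marked in $Y$ but not yet marked, I apply a pushout of the marker $\Box^n_A \to \wBox^n_A$.

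Each stage is a (coproduct of) pushout(s) of the generating maps, so each $Y^{(n-1)} \hookrightarrow Y^{(n)}$ lies in the saturation. Since every non-degenerate cube of $Y$ has finite dimension, the colimit $\colim_n Y^{(n)}$ recovers $Y$ on the nose as a marked cubical set (using \cref{mcset-colim} to identify the colimit), and $f$ is the corresponding transfinite composite. Saturation is closed under transfinite composition, so $f$ lies in the saturation, completing the proof.

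The main obstacle in writing up stage (b) correctly is handling the fact that a monomorphism in $\cSet^+_A$ need not be regular: $f$ may send an unmarked cube of $X$ to a marked cube of $Y$. The marker pushouts in stage (b) must therefore be applied not only to cubes newly attached in stage (a) but also to cubes already in the image of $f(X)$ whose markings in $Y$ were not present in $X$. Once one recognizes that both sources of ``new'' markings are handled uniformly in stage (b), the induction proceeds without further subtlety.
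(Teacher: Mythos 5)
Your proposal is correct and is exactly the argument the paper has in mind: the paper does not prove this proposition but cites it and remarks that it follows by the standard skeletal induction for monomorphisms of presheaves on an EZ-Reedy category, which (for the Reedy structure of \cref{tBox-Reedy}, where the marker $\Box^n_A \to \wBox^n_A$ plays the role of the cell attachment for $[1]^n_e$) is precisely your two-stage attach-then-mark decomposition. The only point needing care is the one you flag yourself --- since $f$ need not be regular, markings on cubes already in $f(X)$ must also be added by marker pushouts (as literally defined, your $Y^{(0)}$ forces all of these, in every dimension, into the step $n=0$) --- and your closing paragraph handles this correctly.
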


The following family of functors relate general marked cubical sets to those which are \emph{$n$-trivial} for some $n$, i.e. those which have all of their cubes above dimension $n$ marked.

\begin{definition}
For $n \geq 0$, the \emph{$n$-trivialization} functor $\tau_n \colon \cSet^+_A \to \cSet^+_A$ sends $X \in \cSet^+_A$ to the marked cubical set $\tau_n X$ obtained by marking all cubes of $X$ of dimension greater than $n$. 

In both cases, for a map $f \colon X \to Y$, the map $\tau_n f$ acts identically to $f$ on underlying cubical sets.
\end{definition}

The geometric product of cubical sets admits a natural generalization to the marked case; we refer to this monoidal product as the \emph{lax Gray tensor product}, as it is a realization in the marked cubical setting of the more general higher categorical concept of the Gray tensor product (see \cite{campion-kapulkin-maehara} for further discussion of this).

\begin{definition}
For  $X, Y \in \cSet^+_A$, the \emph{lax Gray tensor product} $X \otimes Y$ is defined as follows:

\begin{itemize}
\item The underlying cubical set of $X \otimes Y$ is $|X| \otimes |Y|$, \ie the geometric product of the underlying cubical sets of $X$ and $Y$;
\item A cube $x \otimes y$ is marked if either $x$ is marked in $X$ or $y$ is marked in $Y$.
\end{itemize}
\end{definition}

As with the geometric product of unmarked cubical sets, this defines a non-symmetric, biclosed monoidal product $\otimes \colon \cSet^+_A \times \cSet^+_A \to \cSet^+_A$, with $\Box^0_A$ as its unit. 

Marked cubical sets admit a family of model structures modelling $(\infty,n)$-categories for any $n \geq 0$, including $n = \infty$. We now introduce these model structures, and the terminology needed to describe them. 

\begin{definition}\label{crit-face-def}
For $n \geq 1$, $1 \leq i \leq n$, and $\varepsilon \in \{0.1\}$, a monomorphism $\delta \colon [1]^m \to [1]^n$ in $\Box_A$ defines a \emph{critical face} of $\Box^n_A$ with respect to the face $\bd_{i,\varepsilon}$ if the standard form of $\delta$ does not contain any of the following sets of face maps:

\begin{enumerate}
\item\label{comical-middle} $\bd_{i,\varepsilon}$ or $\bd_{i,1-\varepsilon}$;
\item\label{comical-high} for some $j > i$, the face map $\bd_{j,\varepsilon}$, as well as $\bd_{k, 1-\varepsilon}$ for all $j > k > i$;
\item\label{comical-low} for some $j < i$, the face map $\bd_{j,\varepsilon}$, as well as $\bd_{k, 1-\varepsilon}$ for all $j < k < i$.
\end{enumerate}
\end{definition}

Note that if we specialize to the case $m = 1$ in the definition above, we obtain a unique \emph{critical edge} of $\Box^n_A$ with respect to $\bd_{i,\varepsilon}$, whose standard form is given by 
\[
\bd_{n,1-\varepsilon}\bd_{n-1,1-\varepsilon} \ldots \bd_{i+1,1-\varepsilon} \bd_{i-1,1-\varepsilon} \ldots \bd_{1,1-\varepsilon}
\]
This coincides with the definition of the critical edge given in \cite[Def.~1.21]{doherty-kapulkin-lindsey-sattler}.

\begin{definition}
We define certain objects and maps which play key roles in the model structures to be introduced.

\begin{itemize}
\item For $n \geq 1, 1 \leq i \leq n$, and $\varepsilon \in \{0,1\}$, the \emph{$(i,\varepsilon)$-comical cube}, denoted $\Box^n_{A,i,\varepsilon}$, is the marked cubical set whose underlying cubical set is $\Box^n_A$, with all critical faces with respect to $\bd_{i,\varepsilon}$ marked. The \emph{$(i,\varepsilon)$-comical open box}, denoted $\sqcap^n_{A,i,\varepsilon}$, is the regular subcomplex of $\Box^n_{A,i,\varepsilon}$ whose non-degenerate cubes consist of all faces of the $n$-cube other than the interior face $\id_{[1]^n}$ and $\bd_{i,\varepsilon}$. The \emph{$(i,\varepsilon)$-comical open box inclusion} is the regular subcomplex inclusion $\sqcap^n_{A,i,\varepsilon} \hookrightarrow \Box^n_{A,i,\varepsilon}$.
\item For $n \geq 2, 1 \leq i \leq n$, and $\varepsilon \in \{0,1\}$, the marked cubical set $(\Box^n_{A,i,\varepsilon})'$ is obtained from $\Box^n_{A,i,\varepsilon}$ by marking all $(n-1)$-dimensional faces except for $\bd_{i,\varepsilon}$. The \emph{$(i,\varepsilon)$-comical marking extension} is the entire map $\Box^n_{A,i,\varepsilon} \to \tau_{n-2} \Box^n_{A,i,\varepsilon}$, i.e. the map which marks the remaining $(n-1)$-dimensional face.
\item A map in $\cA^+$ is \emph{comical} if it is in the saturation of the set of comical open box inclusions and comical marking extensions. A \emph{comical fibration} is a map having the right lifting property with respect to all comical open box inclusions and comical marking extensions.
\item The marked cubical set $L_A$ is depicted below:
	\[
	\begin{tikzcd} [column sep = large, row sep = large]
		\bullet
		\arrow [r]
		\arrow [d, swap, "\sim"]
		\ar [rd, phantom,"\sim"]
          &
		 \bullet
		\arrow [r, "\sim"]
		\arrow [d]
		\arrow [rd, phantom, "\sim"]
		&
		\bullet
		\arrow [d, "\sim"] \\
		\bullet
		\arrow [r, swap, "\sim"] 
		&
		\bullet
		\arrow [r]
		&
		\bullet
	\end{tikzcd}
	\]
	The \emph{elementary Rezk map} is the entire map $L_A \to \tau_0 L_A$, i.e. the entire map which marks the three unmarked edges of $L_A$. In general, a \emph{Rezk map} is any map of the form $(\bd \Box^m_A \hookrightarrow \Box^m_A) \hatotimes (L_A \to \tau_0 L_A) \hatotimes (\bd \Box^n_A \hookrightarrow \Box^n_A)$, where $\hatotimes$ denotes the pushout product.
\item For $n \geq 1$, the \emph{$n$-marker} is the entire map $\Box^n_A \to \wBox^n_A$ induced by the structure map $\varphi \colon [1]^n \to [1]^n_e$ in $\Box^+_A$.
\item A \emph{comical set} is a marked cubical set having the right lifting property with respect to comical open box inclusions and comical marking extensions. A comical set is \emph{saturated} if it has the right lifting property with respect to all Rezk maps, and \emph{$n$-trivial}, for $n \geq 0$, if it has the right lifting property with respect to all markers of dimension greater than $n$ (in other words, if all of its cubes of dimension greater than $n$ are marked).
\end{itemize}
\end{definition}

\begin{remark}
Note that the definition of the Rezk maps given above differs from that used in \cite{doherty-kapulkin-maehara}, in which there are four different elementary Rezk maps, denoted $L_{x,y} \to L_{x,y}'$ for $x, y \in \{1,2\}$. Our elementary Rezk map $L_A \to \tau_0 L_A$ is denoted in that paper by $L_{1,2} \to L_{1,2}'$. The construction used here, however, provides a simpler set of pseudo-generating trivial cofibrations. That the model structures defined here coincide with those of \cite{doherty-kapulkin-maehara} follows from \cite[Thm.~2.17]{doherty-kapulkin-maehara}.
\end{remark}

\begin{proposition}\label{comical-wfs}
For each $A \subseteq \{0,1\}$, comical maps and comical fibrations form the left and right class, respectively, of a weak factorization system on $\cSet_A^+$.
\end{proposition}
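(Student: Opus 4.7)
The plan is to produce this weak factorization system via Quillen's small object argument applied to the small set $I$ consisting of all comical open box inclusions $\sqcap^n_{A,i,\varepsilon} \hookrightarrow \Box^n_{A,i,\varepsilon}$ and all comical marking extensions $\Box^n_{A,i,\varepsilon} \to \tau_{n-2}\Box^n_{A,i,\varepsilon}$, ranging over $(n,i,\varepsilon)$ in the appropriate bounds. By the definitions given just prior to the proposition, comical maps are exactly the saturation of $I$, and comical fibrations are exactly $I^{\perp}$, so it suffices to verify that $(\text{sat}(I), I^{\perp})$ is a weak factorization system.

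The first step is to verify that $\cSet_A^+$ is locally presentable, so that the small object argument is available. This follows because $\cSet_A^+$ is a reflective subcategory of the presheaf topos $\cSet_A^{++}$ via the reflector $\Im$ from \cref{sec:mcSet}, and the inclusion is accessible: a filtered colimit of structurally marked cubical sets in which each cube carries at most one marking again has at most one marking per cube, so the inclusion $\cSet_A^+ \hookrightarrow \cSet_A^{++}$ preserves filtered colimits. Hence every object of $\cSet_A^+$ is small relative to $I$, and the small object argument produces, for any morphism $f$ in $\cSet_A^+$, a factorization $f = p \circ j$ in which $j$ is a transfinite composite of pushouts of coproducts of maps in $I$ and $p$ has the right lifting property against $I$. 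By construction $j$ is a comical map and $p$ is a comical fibration, establishing the factorization axiom.

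To finish, I would invoke the standard retract argument to show that any map with the left lifting property against $I^{\perp}$ already lies in $\text{sat}(I)$: applying the small object argument to such a map $f$ yields $f = p \circ j$ with $p \in I^{\perp}$, and then a lift in this square exhibits $f$ as a retract of $j$, so $f \in \text{sat}(I)$. The reverse lifting direction is automatic from the definition of the saturation. Together these give the desired weak factorization system. The only mildly technical ingredient is the accessibility of the reflector $\Im$, but this is a routine verification; the proposition involves no combinatorics specific to the comical setting, and the substantive work on the comical open boxes and marking extensions is deferred to the subsequent sections where they are shown to generate a model structure.
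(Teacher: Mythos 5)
Your proposal is correct and matches the paper's approach: the paper's proof simply states that the result ``follows from a standard application of the small object argument,'' and your write-up is exactly that standard argument (local presentability of $\cSet_A^+$ as an accessibly embedded reflective subcategory of $\cSet_A^{++}$, the small object argument applied to the generating set, and the retract argument) spelled out in detail.
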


\begin{proof}
This follows from a standard application of the small object argument.
\end{proof}

\begin{theorem} \label{comical-model-structure}
Each category $\cSet^+_A$ carries the following model structures:

\begin{enumerate}
  \item \label{model-struct-comical} The \emph{comical model structure} in which
    \begin{itemize}
      \item cofibrations are monomorphisms;
      \item fibrant objects are comical sets;
      \item fibrations with fibrant codomain are characterized by the right lifting property with respect to comical
      open box inclusions and comical marking extensions.
    \end{itemize}
    
    \item \label{model-struct-comical-sat} The \emph{saturated comical model structure} in which
    \begin{itemize}
      \item cofibrations are monomorphisms;
      \item fibrant objects are saturated comical sets;
      \item fibrations with fibrant codomain are characterized by the right lifting property with respect to comical
      open box inclusions, comical marking extensions, and the Rezk maps.
    \end{itemize}
    
      \item \label{model-struct-comical-triv} An \emph{$n$-trivial comical model structure} for each $n \geq 0$, in which
    \begin{itemize}
      \item cofibrations are monomorphisms;
      \item fibrant objects are $n$-trivial comical sets;
      \item fibrations with fibrant codomain are characterized by the right lifting property with respect to comical
      open box inclusions, comical marking extensions, and markers $\BoxA^m \to \widetilde{\Box}_{A}^m$ for $m > n$.
    \end{itemize}
    
      \item \label{model-struct-comical-sat-triv} An \emph{$n$-trivial saturated comical model structure} for each $n \geq 0$, in which
    \begin{itemize}
      \item cofibrations are monomorphisms;
      \item fibrant objects are $n$-trivial saturated comical sets;
      \item fibrations with fibrant codomain are characterized by the right lifting property with respect to comical
      open box inclusions, comical marking extensions, Rezk maps, and markings $\BoxA^m \to \widetilde{\Box}_{A}^m$ for $m > n$.
    \end{itemize}
\end{enumerate}
Moreover, all of these model structures are monoidal with respect to the lax Gray tensor product. \qed
\end{theorem}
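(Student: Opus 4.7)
The plan is to derive all four model structures, together with their monoidality, from the results of \cite{doherty-kapulkin-maehara}, where analogous structures on each $\cA^+$ were constructed using a slightly different presentation of the Rezk maps. The existence of the relevant weak factorization systems follows from the small object argument, just as in \cref{comical-wfs}, applied to the appropriate enlargement of the generating set (adding markers for the $n$-trivial variants and Rezk maps for the saturated variants); the characterizations of cofibrations, fibrant objects, and fibrations with fibrant codomain then hold by construction, and the existence of the model structures themselves can be imported directly.

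The one point requiring genuine verification is that our Rezk maps — the pushout products of the single elementary Rezk map $L_A \to \tau_0 L_A$ with pairs of boundary inclusions — generate the same saturated class as the four elementary Rezk maps $L_{x,y} \to L_{x,y}'$ (together with their pushout products) used in \cite{doherty-kapulkin-maehara}. This is precisely the content of \cite[Thm.~2.17]{doherty-kapulkin-maehara}, which I would invoke directly; the remark immediately preceding the theorem already anticipates this reduction.

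For the monoidality claim, I would verify the pushout-product axiom separately for each of the four model structures. By \cref{cell-model-marked}, cofibrations are generated by the boundary inclusions $\bd \BoxA^n \hookrightarrow \BoxA^n$ and the markers $\BoxA^n \to \wBox^n_A$, so the cofibration half of the axiom reduces to checking that pushout products among these generators remain monomorphisms; this is a routine combinatorial computation using \cref{geo-prod-description} and the fact that $|-|$ commutes with $\otimes$. For the trivial cofibration half, it suffices to check that the pushout product of each pseudo-generating trivial cofibration of the given model structure with each cofibration generator again lies in the corresponding saturation.

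The main obstacle is this last verification. The open box inclusions and comical marking extensions can be handled by adapting monoidality arguments from \cite{doherty-kapulkin-maehara}, and the pleasant feature of our reformulation is that the Rezk maps are \emph{already} defined as iterated pushout products against boundary inclusions, so pushout products against further boundary inclusions are essentially automatic by the associativity of $\hatotimes$. What remains is the interaction of the elementary Rezk map and the comical marking extensions with the markers $\BoxA^n \to \wBox^n_A$, which would be reduced to known trivial cofibrations by producing explicit cellular decompositions in the lax Gray tensor product.
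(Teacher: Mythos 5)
Your proposal takes essentially the same route as the paper: everything is imported from \cite{doherty-kapulkin-maehara}, with \cite[Thm.~2.17]{doherty-kapulkin-maehara} supplying exactly the identification of the present Rezk maps with the four elementary Rezk maps used there. The one place you diverge is monoidality, which you propose to re-verify by hand via the pushout-product axiom; this is unnecessary, since once Thm.~2.17 identifies the model structures with those of the cited paper, monoidality is part of what \cite[Thm.~2.7]{doherty-kapulkin-maehara} already provides, and your flagged ``main obstacle'' (the interaction of the elementary Rezk map and the marking extensions with the markers) simply does not need to be confronted.
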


\begin{proof}
The construction of the model structures, monoidality with respect to the lax Gray tensor product, and characterizations of fibrations with fibrant codomain in the unsaturated model structures are given by \cite[Thm.~2.7]{doherty-kapulkin-maehara}. The characterizations of fibrations with fibrant codomain in the saturated model structures are given by \cite[Thm.~2.17]{doherty-kapulkin-maehara}.
\end{proof}

\begin{remark}
When viewing comical sets as higher categories, $n$-cubes represent $n$-dimensional cells, with the marked cubes corresponding to equivalences. The four classes of pseudo-generating trivial cofibrations then have the following higher-categorical meanings.
\begin{itemize}
\item The 1-dimensional comical open box inclusions are the inclusions of endpoints into the marked interval; thus marked edges may be lifted along comical fibrations, analogous to the lifting of isomorphisms along isofibrations in 1-category theory.
\item The $n$-dimensional $(i,\varepsilon)$-comical open box inclusions for $n \geq 2$ represent composition of $(n-1)$-dimensional cells, with the $(i,\varepsilon)$-face of the $(i,\varepsilon)$-comical cube representing the composite.
 They also ensure that every morphism presented by a marked edge has a left and right inverse, i.e., is an equivalence.
\item Lifting against the comical marking extensions ensures that a composite of equivalences is again an equivalence.
\item Lifting against the Rezk maps ensures that in a saturated comical set, all cubes which are invertible in a suitable sense are marked.
\item Lifting against markers of dimension greater than $n$ ensures that in an $n$-trivial comical set, all cubes of dimension greater than $n$ are equivalences.
\end{itemize}
\end{remark}

The categories of cubical sets with weak equivalences admit a natural simplicial analogue: we denote by $\sSet^+$ the category of \emph{marked simplicial sets}. Similarly to its cubical counterparts, this is a reflective subcategory of a presheaf category $\sSet^{++}$, and its objects may be thought of as simplicial sets in which some simplices of positive dimension, including all degenerate simplices, are marked, with morphisms being simplicial set maps which preserve markings. (Note that this differs from the category of marked simplicial sets defined in \cite{lurie:htt}; the marked simplicial sets defined in that reference have markings only on their edges.)

As with its cubical counterparts, $\sSet^+$ admits a family of model structures modeling $(\infty,n)$-categories for $n \geq 0$, including $n = \infty$ (the \emph{($n$-trivial, saturated) complicial model structures}). Each of these is defined analogously to its simplicial counterpart. Specifically, cofibrations in all cases are monomorphisms, with a set of pseudo-generating trivial cofibrations consisting of:

\begin{itemize}
\item horn inclusions with specified faces marked, representing composition and isofibration properties;
\item entire maps representing the principle that a composite of marked simplices is again marked;
\item in the saturated case, entire maps representing the principle that an invertible simplex is marked;
\item in the $n$-trivial case, entire maps representing the principle that a simplex above dimension $n$ is marked.
\end{itemize}

A full description of the complicial model structures is beyond the scope of this paper, but may be found in \cite{ozornova-rovelli}.


We next recall the extension of the triangulation functor to marked cubical sets, first developed in \cite{campion-kapulkin-maehara}. To describe this functor, we first need an explicit description of the simplices of $T\Box^n_A = (\Delta^1)^n = N[1]^n$. For $r \geq 0$, observe that since $\Delta^r = N[r]$ and the nerve functor is fully faithful, $r$-simplices $\Delta^r \to (\Delta^1)^n$ can be identified with order-preserving maps $\phi \colon [r] \to [1]^n$. Such a map $\phi$ can be identified with a unique function $\{1, \ldots ,n\} \to \{1, \ldots ,r,\pm \infty\}$, defined as follows:

\[
i \mapsto \left\{\begin{array}{cl}
+\infty, & \pi_i \circ \phi(r) = 0,\\
p, & \pi_i \circ \phi(p-1) = 0~\text{and }~\pi_i \circ \phi(p) = 1,\\
-\infty, & \pi_i \circ \phi(0) = 1.
\end{array}\right.
\]
  
Under this identification, a simplicial operator $\alpha \colon [q] \to [r]$ sends an $r$-simplex $\phi$ to the $q$-simplex $\alpha$ defined as follows:

\[
	(\phi   \alpha)(i) = \left\{\begin{array}{cl}
	+\infty, & \phi(i) > \alpha(q),\\
	p, & \alpha(p-1) < \phi(i) \le \alpha(p),\\
	-\infty, & \phi(i) \le \alpha(0).
	\end{array}\right.
	\]

We let $\iota_n$ denote the inclusion $\{1, \ldots ,n\} \to \{1, \ldots ,n,\pm \infty\}$, viewed as an $n$-simplex of $\Delta^n$.

\begin{definition}
We define the functor $T \colon \Box^+_A \to \sSet^{+}$ as follows:
\begin{itemize}
\item $T[1]^n$ has $(\Delta^1)^n$ as its underlying simplicial set, with an $r$-simplex $\phi \colon \{1, \ldots,n\} \to \{1, \ldots ,r,\pm \infty\}$ unmarked if and only if there exists a sequence $i_1 < \cdots < i_r$ in $\{1, \ldots ,n\}$ such that $\phi(i_p) = p$ for all $p \in \{1, \ldots ,r\}$;
\item $T[1]^n_e$ is obtained from $T[1]^n$ by marking the $n$-simplex $\iota_n$.
\end{itemize}
\end{definition}  
  
By left Kan extension, this definition extends to a colimit-preserving functor $T \colon \cSet^{++}_A \to \sSet^{+}$, with a right adjoint  $U \colon \sSet^{+} \to \cSet^{++}_A$, and these restrict to an adjunction $T : \cSet^+_A \rightleftarrows \sSet^+ : U$. From the definition, we can see that the only unmarked $n$-simplex of $T\Box^n$ is $\iota_n$, while all $n$-simplices of $T\widetilde{\Box}^n$ are marked.

In \cite{doherty-kapulkin-maehara}, triangulation was used to compare the (saturated,$n$-trivial) comical model structures on marked cubical sets with connections with the analogous complicial model structures on simplicial sets. Specifically, we have the following result, which establishes cubical sets with connections as models for $(\infty,1)$-categories.

\begin{theorem}[{\cite[Thm.~5.24]{doherty-kapulkin-maehara}}]\label{QE-with-cons}
For $A \in \{0,1,01\}$, the adjunction $T : \cSet_A^+ \rightleftarrows \sSet^+ : U$ is a Quillen equivalence between the (saturated, $n$-trivial) comical model structure on $\cSet_{A}^+$ and the (saturated, $n$-trivial) complicial model structure on $\sSet^+$. \qed
\end{theorem}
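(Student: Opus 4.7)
The plan is to establish the theorem in two stages: verify that $T \dashv U$ is a Quillen adjunction, then upgrade to a Quillen equivalence by applying \cref{nat-we-QE}. For the Quillen adjunction, since cofibrations in both families of model structures are the monomorphisms, I would first check that $T$ preserves cofibrations by verifying this on the generating monomorphisms identified in \cref{cell-model-marked}: the boundary inclusions $\bd \BoxA^n \hookrightarrow \BoxA^n$ and the markers $\BoxA^n \to \wBox^n_A$. Both translate under $T$ to monomorphisms of marked simplicial sets, as can be read off from the explicit descriptions of $T[1]^n$ and $T[1]^n_e$.

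By the observation immediately following the definition of pseudo-generating trivial cofibrations, it then suffices to show that $T$ sends each pseudo-generating trivial cofibration of the relevant comical model structure to a trivial cofibration of the corresponding complicial model structure. The comical marking extensions, Rezk maps, and high-dimensional markers all translate fairly directly to their complicial analogues, because triangulation respects the combinatorial structure of markings. The main technical obstacle in this step is analyzing $T(\sqcap^n_{A,i,\varepsilon} \hookrightarrow \Box^n_{A,i,\varepsilon})$: one must exhibit an explicit cellular decomposition of $T \Box^n_{A,i,\varepsilon}$ relative to $T \sqcap^n_{A,i,\varepsilon}$ as a sequence of complicial horn inclusions and marking extensions, ordered so that each successive attachment is a trivial cofibration in the complicial model structure.

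For the Quillen equivalence, I would construct a second left Quillen functor $L \colon \sSet^+ \to \cA^+$ via left Kan extension from a co-simplicial object in $\cA^+$ that realizes each $\Delta^n$ as a marked $n$-cube with suitable connection data attaching the degenerate faces of the simplicial prism. The existence of such an object fundamentally uses the presence of connections in $\BoxA$ for $A \in \{0,1,01\}$, since the maps $\gamma_{i,\varepsilon}$ provide precisely the folding structure required to collapse a cubical subdivision of a simplicial prism back onto a single cube. This is exactly why the minimal case $A = \varnothing$ must be handled by the different techniques developed in the present paper. One then verifies $L$ is left Quillen by the analogous, if somewhat more involved, analysis of generators and pseudo-generating trivial cofibrations.

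To conclude via \cref{nat-we-QE}, one exhibits zigzags of natural weak equivalences between $TL$ and $\id_{\sSet^+}$, and between $LT$ and $\id_{\cA^+}$. Here I would appeal to \cref{nat-weq} to reduce the verification to components at representable marked cubical and simplicial sets, which the relevant composites and comparison maps preserve by the left Kan extension construction. The principal obstacle is the explicit verification at the representable level: one must construct comical (resp. complicial) homotopies witnessing that the subdivide-then-collapse composites agree up to equivalence with the identity on each $\BoxA^n$, $\wBox^n_A$, $\Delta^n$, and marked variant, with the connection maps again providing the necessary contracting structure. Once these base cases are checked uniformly across the four variants (plain, saturated, $n$-trivial, saturated $n$-trivial), \cref{nat-we-QE} delivers the full Quillen equivalence in each case simultaneously.
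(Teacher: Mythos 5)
This statement is not proved in the paper at all: it is imported verbatim from \cite[Thm.~5.24]{doherty-kapulkin-maehara} (note the \verb|\qed| in the statement itself), precisely because the techniques needed to prove it are the ones the present paper is working \emph{around} in the connection-free case. So there is no internal proof to compare against. That said, your outline is a faithful reconstruction of the strategy of the cited proof: establish the Quillen adjunction by checking generating cofibrations and pseudo-generating trivial cofibrations (with the decomposition of $T(\sqcap^n_{A,i,\varepsilon} \hookrightarrow \Box^n_{A,i,\varepsilon})$ into complicial anodyne maps as the combinatorial core), then build a reverse left Quillen functor $Q \colon \sSet^+ \to \cSet^+_A$ from a cosimplicial object whose definition requires connections, and conclude via natural weak equivalences of the two composites with the identities, exactly in the style of \cref{nat-we-QE} and \cref{nat-weq}. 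You also correctly identify why this route is unavailable for $A = \varnothing$, which is the paper's stated motivation for its alternative approach. The caveat is that your proposal defers precisely the two steps that constitute the substance of the cited proof --- the explicit cellular decomposition of the triangulated comical cube and the verification of the comparison maps on representables --- labelling them as ``obstacles'' rather than resolving them; as a standalone argument it is therefore an architecture, not a proof. Within the logic of the present paper, the correct move is simply to cite the external result, as the author does.
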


The methods of \cite{doherty-kapulkin-maehara} are not directly applicable to the category $\cSet_\varnothing^+$, as they make use of a functor $Q \colon \sSet^+ \to \cSet_A^+$ which cannot be defined in the absence of connections. Thus we will extend \cref{QE-with-cons} to $\cmin^+$ via an alternate method; namely, we will establish Quillen equivalences between $\cmin^+$ and the categories of cubical sets with connections which commute with triangulation, and apply the two-out-of-three property for Quillen equivalences.

\subsection{Adjunctions induced by inclusions}
We now focus on adjunctions relating our various categories of marked cubical sets. For $A \subseteq B \subseteq \{0,1\}$, we have an inclusion $i \colon \Box_A^+ \hookrightarrow \Box_B^+$; these inclusions form the following commuting diagram.
\begin{equation*}\label{inclusion-diagram}\tag{*}
\begin{tikzcd}
\Box^+_{\varnothing} \arrow[r] \arrow[d] & \Box^+_{0} \arrow[d] \\
\Box^+_{1} \arrow[r] & \Box^+_{01} \\
\end{tikzcd}
\end{equation*}
By pre-composition, each of these inclusions induces a functor $i^* \colon \cB^{++} \to \cA^{++}$. 
For $X \in \cSet_{B}^{++}$ and $n \geq 0$, we have $(i^* X)_n = X_n$, with structure maps computed as in $X$, and markings likewise agreeing with those in $X$. However, if $A \neq B$, then some degenerate cubes of $X$ become non-degenerate in $i^* X$, namely those connections which are not in the image of any map in $\Box_A$. From this description we can see that $i^*$ restricts to a functor $i^* \colon \cB^+ \to \cA^+$. In the case where $A = B$ and, this functor is simply the identity.

From here until the end of this section, we fix a specific choice of $A \subseteq B \subseteq \{0,1\}$.

The functor $i^* \colon \cSet_B^{++} \to \cSet_A^{++}$ has both a left and a right adjoint, given respectively by left and right Kan extension of presheaves along the inclusion $\Box_A^+ \hookrightarrow \Box_B^+$. These functors restrict to define a left and right adjoint of $i^* \colon \cSet_B^+ \to \cSet_A^+$. For our purposes, however, we will require more explicit descriptions of these adjoints, thus we will construct and characterize them directly.

\begin{proposition}\label{i-has-adjoints}
The functor $i^* \colon \cB^+ \to \cA^+$ admits both a left and a right adjoint.
\end{proposition}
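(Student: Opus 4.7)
The strategy is to descend the corresponding adjoint triple from the presheaf categories. The inclusion $i\colon \Box_A^+ \hookrightarrow \Box_B^+$ induces a precomposition functor $\hat{\imath}^{\,*}\colon \cB^{++} \to \cA^{++}$ which, being precomposition along a fully faithful functor between small categories, fits into an adjoint triple $\hat{\imath}_! \dashv \hat{\imath}^{\,*} \dashv \hat{\imath}_*$ via left and right Kan extension. One checks immediately that $\hat{\imath}^{\,*}$ restricts to $i^*$, in the sense that $\hat{\imath}^{\,*}\iota_B = \iota_A i^*$, where $\iota_A\colon \cA^+ \hookrightarrow \cA^{++}$ and $\iota_B\colon \cB^+ \hookrightarrow \cB^{++}$ denote the inclusions, since $i^*$ neither creates nor destroys multiple markings on a cube. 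A direct comparison of values at $[1]^n$ and $[1]^n_e$ also yields the compatibility $\Im_A \hat{\imath}^{\,*} = i^* \Im_B$.

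For the left adjoint, I would define $i_! := \Im_B \circ \hat{\imath}_! \circ \iota_A$. The adjunction follows from the chain of natural bijections
\[
\cB^+(i_! X, Y) \cong \cB^{++}(\hat{\imath}_! \iota_A X, \iota_B Y) \cong \cA^{++}(\iota_A X, \hat{\imath}^{\,*}\iota_B Y) \cong \cA^{++}(\iota_A X, \iota_A i^* Y) \cong \cA^+(X, i^*Y),
\]
using the adjunction $\Im_B \dashv \iota_B$, the adjunction $\hat{\imath}_! \dashv \hat{\imath}^{\,*}$, the identity $\hat{\imath}^{\,*}\iota_B = \iota_A i^*$, and full faithfulness of $\iota_A$.

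For the right adjoint, the natural candidate is $i_*X := \hat{\imath}_* \iota_A X$, but one must verify that this lands in the reflective subcategory $\cB^+ \subseteq \cB^{++}$ -- this is the main obstacle. By adjunction, the $[1]^n$-component of $\hat{\imath}_*\iota_A X$ is $\cA^{++}(\hat{\imath}^{\,*}\Box^n_B, \iota_A X)$ and its $[1]^n_e$-component is $\cA^{++}(\hat{\imath}^{\,*}\widetilde{\Box}^n_B, \iota_A X)$, and the structure map $\varphi^*$ between them is given by precomposition with $\hat{\imath}^{\,*}(\Box^n_B \to \widetilde{\Box}^n_B)$. Since the marker $\Box^n_B \to \widetilde{\Box}^n_B$ is entire and $\hat{\imath}^{\,*}$ does not alter the underlying cubical set, the map $\hat{\imath}^{\,*}\Box^n_B \to \hat{\imath}^{\,*}\widetilde{\Box}^n_B$ is entire in $\cA^+$; since entire maps are epimorphisms (two maps out of an entire map agreeing on the source must agree, as markings form a subset of cubes), the induced map on hom-sets is injective. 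Thus each cube of $\hat{\imath}_*\iota_A X$ carries at most one marking, so $\hat{\imath}_*\iota_A X$ lies in $\cB^+$.

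With $i_*$ so defined, the adjunction $i^* \dashv i_*$ follows from the analogous chain
\[
\cA^+(i^*Y, X) \cong \cA^{++}(\hat{\imath}^{\,*}\iota_B Y, \iota_A X) \cong \cB^{++}(\iota_B Y, \hat{\imath}_*\iota_A X) \cong \cB^+(Y, i_*X),
\]
completing the proof.
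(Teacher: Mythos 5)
Your proof is correct, but it takes a genuinely different route from the paper. The paper proves the proposition abstractly via the adjoint functor theorem: both $\cA^+$ and $\cB^+$ are locally presentable, so it suffices to check that $i^*$ preserves all small limits and colimits, the latter being verified using the commuting square $\Im \circ \hat{\imath}^{\,*} = i^* \circ \Im$ and the fact that colimits in a reflective subcategory are computed by reflecting colimits from the ambient category. You instead construct the adjoints explicitly by descending the Kan-extension adjoint triple from the presheaf level -- which is in fact the route the paper gestures at in the sentence preceding the proposition but does not carry out. The one genuinely delicate point in your approach is exactly the one you isolate: that $\hat{\imath}_*\iota_A X$ again has at most one marking per cube. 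Your argument for this is sound -- the marker $\Box^n_B \to \widetilde{\Box}^n_B$ is not an epimorphism in the presheaf category $\cB^{++}$, but its image under $\hat{\imath}^{\,*}$ is entire, and entire maps are epimorphic relative to targets lying in the reflective subcategory, which is all that is needed since the target $\iota_A X$ has $X \in \cA^+$. What your approach buys is an explicit formula for $i_*$ (anticipating the paper's later Proposition characterizing $i_*X$ via $\cSet_A(i^*\Box^n_B, X)$) at the cost of this pointwise verification; what the paper's approach buys is brevity, deferring all explicit descriptions to subsequent lemmas. Minor remark: the compatibility $\Im_A \hat{\imath}^{\,*} = i^*\Im_B$ that you assert in the first paragraph is never used in either of your chains of bijections, so it could be dropped.
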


\begin{proof}
Both $\cB^+$ and $\cA^+$ are locally presentable, as full subcategories of presheaf categories; by the adjoint functor theorem, it thus suffices to show that $i^*$ preserves all (small) limits and colimits. For $i^* \colon \cB^{++} \to \cA^{++}$, this holds because limits and colimits in presheaf categories are computed objectwise. 

Recall that, given a reflective subcategory $\catC$ of a complete and cocomplete category $\catD$, limits in $\catC$ are computed in $\catD$, while colimits in $\catC$ are computed by applying the reflector to colimits in $\catD$. It thus follows immediately that $i^* \colon \cB^+ \to \cA^+$ preserves limits, as limits in $\cA^+$ and $\cB^+$ are computed in the corresponding presheaf categories.

To show preservation of colimits, we first note that by the definition of the reflector $\Im$, the following diagram commutes:
\[
\begin{tikzcd}
\cB^{++} \arrow[r, "i^*"] \arrow[d, swap, "\Im"] & \cA^{++} \arrow[d, "\Im"] \\
\cB^+ \arrow[r, "i^*"] & \cA^+ \\ 
\end{tikzcd}
\]
Given a diagram $F \colon J \to \cA^{+}$, let $\colim_{+} F$ denote its colimit, and let $\colim_{++} F$ denote its colimit when considered as a diagram in $\cA^{++}$ (more precisely, the colimit of the composite $J \xrightarrow{F} \cA^{+} \hookrightarrow \cA^{++}$).  Denote colimits in $\cB^+$ and $\cB^{++}$ similarly. Then for all diagrams $F$ in $\cA^+$ or $\cB^+$ we have $\colim_{+} F = \Im \colim_{++} F$. Using the commutativity of the diagram above, and the fact that the functor $i^*$ between presheaf categories preserves colimits, we can compute, for any diagram $F \colon J \to \cB^{+}$:
\begin{align*}
\mathrm{colim}_{+} i^* F & =\Im \, \mathrm{colim}_{++} i^* F \\
& = \Im \, i^* \mathrm{colim}_{++} F \\
& = i^* \Im \, \mathrm{colim}_{++} F \\
& = i^* \mathrm{colim}_{+} F \\
\end{align*}

Thus we see that $i^* \colon \cB^+ \to \cA^+$ preserves colimits.
\end{proof}

Let $i_!, i_* \colon \cA^+ \to \cB^+$ denote the left and right adjoints of $i^*$, respectively. We will focus primarily on the adjunction $i_! \adjoint i^*$; thus we let $\eta$ and $\epsilon$ denote the unit and counit of this adjunction. We will now characterize the functors $i_!$ and $i_*$ explicitly. 

\begin{lemma}\label{i-left-colim}
For $X \in \cA^+$, the object $i_! X \in \cB^+$ is given by the colimit
\[
\colim_{\Box^n_{A,(e)} \to X} \Box^n_{B,(e)}
\]
Likewise, $i_!$ sends each map in $\cA^+$ to the induced map between colimits. In particular, for representable marked cubical sets we have a natural isomorphism $i_! \Box^n_{A,(e)} \cong \Box^n_{B,(e)}$.
\end{lemma}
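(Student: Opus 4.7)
The plan is to observe that $i_!$ is a left adjoint and hence preserves colimits, combine this with the presentation of every marked cubical set as a colimit of its representables (\cref{mcset-colim}), and reduce the whole lemma to identifying $i_!$ on representable objects. Since $i_!$ sends each map to the induced map between colimits essentially by naturality of this construction, the entire lemma follows once $i_!\Box^n_{A,(e)} \cong \Box^n_{B,(e)}$ is established.

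To identify $i_!$ on representables, I would use a pure Yoneda argument via the adjunction $i_! \adjoint i^*$. For any $Y \in \cB^+$, the Yoneda lemma (applied in $\cA^{++}$ and restricted to the reflective subcategory $\cA^+$) shows that $\cA^+(\Box^n_{A,(e)}, i^*Y)$ is naturally isomorphic to the set of $n$-cubes, respectively markings on $n$-cubes, of $i^*Y$. By the explicit description of $i^*$, which leaves the values on $[1]^n$ and $[1]^n_e$ unchanged, this set is the same as the corresponding set of cubes or markings of $Y$, which in turn is naturally isomorphic to $\cB^+(\Box^n_{B,(e)}, Y)$ by Yoneda applied in $\cB^+$. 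The composite natural isomorphism
\[
\cB^+(i_!\Box^n_{A,(e)}, Y) \;\cong\; \cA^+(\Box^n_{A,(e)}, i^*Y) \;\cong\; \cB^+(\Box^n_{B,(e)}, Y)
\]
then yields $i_!\Box^n_{A,(e)} \cong \Box^n_{B,(e)}$ by Yoneda, and this isomorphism is natural in the representable.

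For the general case, I would expand $X = \colim_{\Box^n_{A,(e)} \to X} \Box^n_{A,(e)}$ using \cref{mcset-colim}, then apply $i_!$; since $i_!$ preserves colimits (as a left adjoint, by \cref{i-has-adjoints}), we get
\[
i_! X \;\cong\; \colim_{\Box^n_{A,(e)} \to X} i_!\Box^n_{A,(e)} \;\cong\; \colim_{\Box^n_{A,(e)} \to X} \Box^n_{B,(e)},
\]
as desired. The action on morphisms is then automatic: a map $f \colon X \to X'$ in $\cA^+$ induces a map between indexing categories of these colimit diagrams, and $i_! f$ is the resulting map between colimits, again by functoriality of $i_!$ and naturality of the isomorphism on representables.

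The only step with any real content is the Yoneda computation on representables; everything else is formal. The mild subtlety there is that one is working in the reflective subcategory $\cA^+ \subseteq \cA^{++}$ rather than the ambient presheaf category, but this causes no trouble because representables are already in $\cA^+$ and the inclusion is fully faithful, so the usual Yoneda lemma transfers directly.
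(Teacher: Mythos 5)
Your proposal is correct and matches the paper's proof essentially step for step: both identify $i_!$ on representables via the chain of natural bijections $\cB^+(i_!\Box^n_{A,(e)}, Y) \cong \cA^+(\Box^n_{A,(e)}, i^*Y) \cong \cB^+(\Box^n_{B,(e)}, Y)$ and Yoneda, then deduce the general case from \cref{mcset-colim} and the fact that $i_!$ preserves colimits as a left adjoint. The extra remark about the reflective subcategory is a fine, harmless elaboration.
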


\begin{proof}
We begin by proving the special case of representable marked cubical sets, i.e. that $i_! \Box^n_{A,(e)} = \Box^n_{B,(e)}$. For any $X \in \cB^+$, we have a natural bijection as follows:
\begin{align*}
\cB^+(i_! \Box^n_{A,(e)}, X) & = \cong \cA^+(\Box^n_{A,(e)}, i^* X) \\
& \cong \cB^+(\Box^n_{B,(e)}, X) \\
\end{align*}
Here the first isomorphism follows from the defining property of the adjunction $i_! \adjoint i^*$, while the second follows from the definition of $i^*$. 

By the Yoneda lemma, we thus obtain a natural isomorphism $i_! \Box^n_{A,(e)} \cong \Box^n_{B,(e)}$. Thus we have proven the stated result in the special case of a representable marked cubical set; the general result then follows from this special case, together with \cref{mcset-colim} and the fact that $i_!$ preserves colimits as a left adjoint.
\end{proof}

We can improve this characterization to one which provides an explicit description of the cubes of $i_! X$.

\begin{proposition}\label{i-left-explicit}
For $X \in \cA^+$, the object $i_! X$ is obtained by freely adding the connections of $\Box_B$ to $X$. More precisely, for $n \geq 0$ the $n$-cubes of $X$ consist of all expressions of the form $x \phi$, where $x$ is a non-degenerate $m$-cube of $X$ for some $m \leq n$ and $\phi \colon [1]^n \to [1]^m$ is an epimorphism in $\Box_B$. Such a cube is marked if either $\phi \neq \id$ or $\phi = \id$ and $x$ is marked in $X$. Structure maps are computed using the cubical identities and the structure maps of $X$.

For a map $f \colon X \to Y$ in $\cmin$, $i_! f$ sends $x \phi$ to $(fx) \phi$.
\end{proposition}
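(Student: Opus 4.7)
The plan is to extract a canonical representative for each cube of $i_!X$ from the colimit description $i_!X = \colim_{\Box^n_{A,(e)} \to X} \Box^n_{B,(e)}$ of \cref{i-left-colim}. An $n$-cube of this colimit is represented by a pair $(x, c)$, where $x \colon \Box^m_{A,(e)} \to X$ is a cube of $X$ and $c$ is an $n$-cube of $\Box^m_{B,(e)}$, taken modulo the colimit relations $(x, c\alpha) \sim (x\alpha, c)$ for $\alpha$ a morphism in $\Box_A^+$, together with the marking identifications imposed by the reflector $\Im$.

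First I would reduce each representative to the claimed form by two applications of epi-mono factorization. Applying \cref{epi-mono-factor} in $\Box_B$, write $c = \delta\phi$ with $\delta$ monic and $\phi$ epic; by \cref{epi-or-mono}, $\delta$ lies in $\Box_A$, so the generating relation gives $(x, c) \sim (x\delta, \phi)$. Then, applying \cref{epi-mono-factor} in $\Box_A$ to $x\delta$, factor it as $x'\psi$ with $x'$ a non-degenerate cube of $X$ and $\psi$ epic in $\Box_A$, yielding $(x, c) \sim (x', \psi\phi)$ with $\psi\phi$ epic in $\Box_B$ as a composite of epimorphisms.

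For uniqueness, I would verify that two canonical representatives $(x_1, \phi_1)$ and $(x_2, \phi_2)$ can only be equivalent trivially. The generating relation $(x, c\alpha) \sim (x\alpha, c)$ transfers a morphism of $\Box_A^+$ across the pair, and I would argue inductively on the length of a zig-zag: in canonical form, the non-degeneracy of $x_i$ blocks any nontrivial epic $\alpha$ from being absorbed into $x_i$, while the uniqueness half of \cref{epi-mono-factor} in $\Box_B$ blocks any nontrivial monic from being extracted from $\phi_i$, so canonical form is preserved at each step and distinct canonical representatives remain distinct.

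Finally, the descriptions of markings, structure maps, and action on morphisms follow. For markings, the definition of $\Im$ says an $n$-cube of $i_!X$ is marked iff some representative in the colimit over $\cB^{++}$ has either $x$ marked in $X$ or $c$ factoring through $\wBox^n_B$; in canonical form, and using the identities $\zeta_i\varphi = \sigma_i$ and $\xi_{i,\varepsilon}\varphi = \gamma_{i,\varepsilon}$, this reduces to: $\phi \neq \id$, or $\phi = \id$ and $x$ is marked in $X$. The formulas for structure maps and for the action on a morphism $f \colon X \to Y$ are then inherited from functoriality of the colimit, after normalizing the resulting representatives. The main obstacle is the uniqueness argument, as it must track canonical form through arbitrary zig-zags of colimit relations; this is ultimately controlled by the uniqueness of the two epi-mono factorizations used in the reduction.
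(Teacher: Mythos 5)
Your overall strategy---computing $i_!X$ directly as the coend/quotient $\coprod_{x}(\Box^m_{B,(e)})_n/\!\sim$ and exhibiting normal forms via two epi--mono factorizations---is a legitimate alternative to the paper's route. The paper instead quotes the factorization-through-the-cone lemma \cite[Lem.~1.23]{doherty-kapulkin-lindsey-sattler} to see that every cube of $i_!X$ has the form $x\phi$, and then gets uniqueness by identifying the non-degenerate cubes of $i_!X$ with those of $X$ and invoking the Eilenberg--Zilber lemma for $i_!X$ as a presheaf on the EZ-Reedy category $\Box_B^+$; this sidesteps any analysis of zig-zags in the colimit.

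The gap in your version is the uniqueness step. Your claim that ``canonical form is preserved at each step'' of a zig-zag is false: the generating relation also lets you rewrite a canonical pair $(x',\chi)$, with $x'$ non-degenerate, as $(y,\partial_{i,\varepsilon}\chi)$ whenever $x'=y\partial_{i,\varepsilon}$ is a face of a higher cube $y$ of $X$, and $\partial_{i,\varepsilon}\chi$ is not an epimorphism, so the intermediate representatives of a zig-zag need not be canonical. Consequently the observation that non-degeneracy of $x_i$ blocks absorption of epis, and that monos cannot be split off an epi, does not by itself show that distinct canonical pairs are never identified. What is actually needed is to define the normalization $N(x,c)=(x',\psi\phi)$ on \emph{all} pairs and verify $N(x,\alpha c)=N(x\alpha,c)$ for every generator $\alpha$ of $\Box_A^+$ (including faces, degeneracies, and the marking maps $\varphi,\zeta_i,\xi_{i,\varepsilon}$), using the uniqueness clauses of \cref{epi-mono-factor} and the Eilenberg--Zilber lemma in $X$; then $N$ descends to the colimit and splits the quotient, giving uniqueness. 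This verification is the combinatorial heart of the proposition and is exactly what your sketch omits. The treatment of markings, structure maps, and functoriality is fine modulo this point.
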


\begin{proof}
For a (possibly degenerate) cube $x \colon \BoxA^m \to X$, we have a map $\BoxB^n \to i_! X$ in the colimit cone given by \cref{i-left-colim}; we denote this cube of $i_! X$ by $x \id_{[1]^m}$, or simply $x$. It follows that for every map $\phi \colon [1]^n \to [1]^m$ in $\BoxB$ we have an $n$-cube $x \phi$ in $i_! X$, marked if $\phi$ is a non-identity epimorphism, and that these assignments obey the cubical identities in the sense that for $\psi \colon [1]^m \to [1]^k$ in $\BoxB^n$ we have $(x \phi) \psi = x (\phi \psi)$. 

Next we will show that the structure maps of cubes of the forms defined above are consistent with those of $X$. For $x \colon \BoxA^m \to X$ and $\phi \colon [1]^n \to [1]^m$ in $\BoxA$, the composite $\BoxB^n \xrightarrow{\phi} \BoxB^m \xrightarrow{x} i_! X$ is the map in the colimit cone corresponding to the cube $x \phi$ of $X$. In other words, the image of $x$, viewed as a cube of $i_! X$, under the structure map $\phi$ is precisely the cube of $i_! X$ corresponding to the cube $x \phi$ of $X$.

Now we note that by \cite[Lem.~1.23]{doherty-kapulkin-lindsey-sattler}, every map from a representable marked cubical set into $i_! X$ factors through the colimit cone. In particular, for every $y \colon \BoxB^n \to i_! X$, there is some $x \colon \Box^m_{A,(e)} \to X$ such that there exists a commuting diagram as depicted below:
\[
\begin{tikzcd}
\BoxB^n \arrow[d, swap, "y"] \arrow[d] \arrow[r] & \Box^m_{B,(e)} \arrow[dl, "x"] \\
i_! X & \\
\end{tikzcd}
\]
As every non-identity map into $[1]^m_e$ factors through $ [1]^m$, we may assume that the representable $\Box^m_{B,(e)}$ is $\Box^m_B$ for some $m \geq 0$. Thus we see that $y = x \phi$ for some $\phi \colon [1]^n \to [1]^m$ in $\BoxB$; in other words, every cube of $i_! X$ is of the form described above. Moreover, a cube of $i_! X$ is non-degenerate if and only if it corresponds to a non-degenerate cube of $X$; by the Eilenberg-Zilber lemma, it follows that the cubes of $i_! X$ admit the unique form given in the statement.

It remains to be shown that the markings in $i_! X$ are as described, \ie that a non-degenerate cube $x$ is marked in $i_! X$ if and only if it is marked in $X$. If $x$ is marked in $X$, \ie the cube $x \colon \BoxA^n \to X$ factors through $\wBox^n_A$, then it follows by \cref{i-left-colim} that the corresponding cube $x \colon \BoxB^n \to i_! X$ factors through $\wBox_B^n$, so that $x$ is marked in $i_! X$ as well.

On the other hand, suppose that $x$ is marked in $i_! X$, \ie that the map $x \colon \BoxB^n \to i_! X$ factors through $\wBox^n_B$. Once again applying the fact that every map from a representable marked cubical set into $i_! X$ factors through the colimit cone, for some map $\Box^m_{B,(e)} \to i_! X$ in the colimit cone there exists a commuting diagram as below:
\[
\begin{tikzcd}
\BoxB^n \arrow[dr,swap,"x"]  \arrow[r,"\varphi"] & \wBox_B^n \arrow[d] \arrow[r,"\psi"] & \Box^m_{B,(e)} \arrow[dl] \\
& i_! X & \\
\end{tikzcd}
\]
The only generating maps in $\BoxB^+$ having $[1]^n_e$ as their domain are those of the form $\zeta_i$ or $\xi_{i,\varepsilon}$; if $\psi$ factors through such a map, then by the cubical identities, the composite $\psi \varphi$ factors through $\sigma_i$ or $\gamma_{i,\varepsilon}$, contradicting the assumption that $x$ is non-degenerate. It follows that $\Box^m_{B,(e)} = \wBox^n_B$ and $\psi = \id$. Thus the map $\wBox^n_B \to i_! X$ above is itself part of the colimit cone, implying that $x \colon \BoxA^n \to X$ factors through $\wBox^n_A$.

Thus we have characterized the action of $i_!$ on objects; its action on morphisms then follows from \cref{i-left-colim}.
\end{proof}

From the two results above, we obtain immediate characterizations of the unit and counit of the adjunction $i_! \adjoint i^*$.

\begin{corollary}\label{unit-characterization}
For $A, B, \alpha$ as above:
\begin{itemize}
 \item For $X \in \cA^+$, $i^* i_! X$ is obtained by freely adding the connections of $\Box_B$ to $X$, and then forgetting the additional structure maps. The unit $\eta_X$ is the natural inclusion.
 \item For $X \in \cB^+$, the $n$-cubes of $i_! i^* X$ consist of all expressions of the form $x \phi$, where $x$ is an $m$-cube of $X$ which is not in the image of any epimorphism in $\Box_A$, and $\phi \colon [1]^n \to [1]^m$ is an epimorphism in $\cB^+$. The counit $\epsilon \colon i_! i^* X \to X$ sends such a cube $x \phi$ to the cube $x \phi$ in $X$, \ie the image of $X$ under $\phi$ regarded as a structure map of $X$. \qed
\end{itemize} 
\end{corollary}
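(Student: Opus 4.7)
The plan is to derive both characterizations directly from \cref{i-left-explicit} together with the fact that $i^*$ is defined by precomposition along the inclusion $\Box_A^+ \hookrightarrow \Box_B^+$.

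For the first item, I would begin by applying \cref{i-left-explicit} to $X \in \cA^+$: every $n$-cube of $i_!X$ has the form $x\phi$, with $x$ a non-degenerate $m$-cube of $X$ and $\phi \colon [1]^n \to [1]^m$ an epimorphism in $\Box_B$, and a non-identity $\phi$ forces the cube to be marked. Since $i^*$ acts by restricting the $\Box_B^+$-presheaf structure to $\Box_A^+$, the underlying set $(i^*i_!X)_n$ coincides with $(i_!X)_n$, and only those structure maps coming from $\Box_A^+$ are retained. This is precisely the description ``freely add the connections of $\Box_B$, then forget the additional structure maps.'' The unit $\eta_X$ is then identified via the colimit construction of \cref{i-left-colim}: it is the map in the colimit cocone at each cube $x$ of $X$, sending $x$ to $x\cdot \id$, which in the explicit description is the natural inclusion.

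For the second item, fix $X \in \cB^+$. The cubes of $i^*X$ are literally the cubes of $X$, but the structure maps are restricted to those of $\Box_A^+$; hence an $n$-cube of $i^*X$ is degenerate precisely when it lies in the image of some non-identity map of $\Box_A^+$ out of $[1]^n$, which by the epi-mono factorization of \cref{epi-mono-factor} (together with the Reedy structure of \cref{tBox-Reedy}) is equivalent to lying in the image of some non-identity epimorphism of $\Box_A$. Applying \cref{i-left-explicit} to $i^*X$ then yields the stated description of the $n$-cubes of $i_!i^*X$. The counit $\epsilon$ is the adjunct of $\id_{i^*X}$, so under the colimit characterization of \cref{i-left-colim} it is the unique map induced by sending each generating cube $x\cdot \id$ to $x$ itself; on the explicit form $x\phi$, this gives $\epsilon(x\phi) = x\phi$, with $\phi$ now interpreted as an actual structure map of $X$.

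The main obstacle is the verification that a cube of $i^*X$ fails to be degenerate exactly when it is not in the image of any non-identity epimorphism of $\Box_A$; I would discharge this by appealing to the Eilenberg--Zilber property of $\Box_A^+$ coming from \cref{tBox-Reedy}, which guarantees that the essentially unique witness to degeneracy is an epimorphism in $\Box_A$. Once this is in place, both characterizations are immediate consequences of \cref{i-left-explicit} applied to $X$ and to $i^*X$ respectively, together with the universal-property identification of the unit and counit as the maps induced on the colimit cocones.
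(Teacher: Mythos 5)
Your proposal is correct and matches the paper's intent exactly: the corollary is stated as an immediate consequence of \cref{i-left-colim} and \cref{i-left-explicit} (applied to $X$ for the unit and to $i^*X$ for the counit, using that $i^*$ is precomposition), which is precisely the route you take. Your extra care in identifying the non-degenerate cubes of $i^*X$ as those not in the image of a non-identity epimorphism of $\Box_A$ is a correct and welcome elaboration of a point the paper leaves implicit.
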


Our characterization of the right adjoint $i_*$ will be fairly basic for now; in \cref{i-right-wcs}, we will characterize it further in terms of the theory of weak connection structures to be developed in \cref{section:wcf}.

\begin{proposition}\label{i-right-adj} 
For $X \in \cSet_A^+$, the object $i_* X \in \cB^+$ is defined as follows:
\begin{itemize}
\item for $n \geq 0$, $(i_* X)_n = \cSet_{A}(i^* \Box_B^n,X)$, with structure maps induced by pre-composition;
\item for $n \geq 1$, an $n$-cube of $i_* X$ is marked if and only if the corresponding map $i^* \Box_B^n \to X$ sends the $n$-cube $\id_{[1]^n}$ of $i^* \Box_B^n$ to a marked cube of $X$.
\end{itemize}
\end{proposition}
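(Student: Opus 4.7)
The strategy is to derive the explicit formulas from the existence of $i_*$, which is already established in \cref{i-has-adjoints}, by applying the Yoneda lemma to the adjunction $i^* \adjoint i_*$. For $n \geq 0$, chaining Yoneda with the adjunction gives
\[
(i_*X)_n \cong \cB^+(\Box^n_B, i_* X) \cong \cA^+(i^* \Box^n_B, X),
\]
with the structure maps corresponding under these isomorphisms to pre-composition by $i^*(y\phi)$ for each cubical operator $\phi \colon [1]^m \to [1]^n$, by naturality of the Yoneda and adjunction isomorphisms in the first variable.

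For $n \geq 1$, the same reasoning applied to the marked representable yields
\[
\{\text{marked } n\text{-cubes of } i_*X\} \cong \cB^+(\widetilde{\Box}^n_B, i_* X) \cong \cA^+(i^* \widetilde{\Box}^n_B, X).
\]
I would then identify $i^*\widetilde{\Box}^n_B$ explicitly. The marked cubical set $\widetilde{\Box}^n_B$ differs from $\Box^n_B$ only by carrying the additional marking on $\id_{[1]^n}$; since $i^*$ acts by restriction of presheaves, $i^*\widetilde{\Box}^n_B$ differs from $i^*\Box^n_B$ only by this same extra marking. Hence a morphism $i^*\widetilde{\Box}^n_B \to X$ in $\cA^+$ is precisely a morphism $f \colon i^*\Box^n_B \to X$ satisfying the additional condition that $f(\id_{[1]^n})$ is marked in $X$, as required.

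The only point to verify carefully is that the marking description above is self-consistent with the structure maps, i.e.\ that every degenerate $n$-cube of $i_* X$ is automatically marked. A degenerate $n$-cube takes the form $f \circ i^*(y\phi)$ for some $f \colon i^*\Box^m_B \to X$ and some non-identity epimorphism $\phi \colon [1]^n \to [1]^m$ in $\Box^+_B$; evaluating at $\id_{[1]^n}$ gives $f(\phi)$, which is marked in $X$ because $\phi$ is a degenerate (hence marked) cube of $\Box^m_B$ and $f$ preserves markings. No serious obstacle arises here: the proof is essentially a routine Yoneda-and-adjunction calculation on top of the existence of $i_*$ already established in \cref{i-has-adjoints}, with the minor care point being the concrete description of $i^*\widetilde{\Box}^n_B$.
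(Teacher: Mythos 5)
Your proposal is correct and follows essentially the same route as the paper: the unmarked characterization is the standard Yoneda-plus-adjunction argument, and the marked characterization comes from identifying $i^*\widetilde{\Box}^n_B$ as $i^*\Box^n_B$ with the extra marking on $\id_{[1]^n}$. Your added check that degenerate cubes come out marked is a harmless extra verification the paper leaves implicit.
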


\begin{proof}
The characterization of $n$-cubes of $i_* X$ follows from a standard argument involving the Yoneda lemma. Likewise, marked $n$-cubes of $i_* X$ correspond to maps $\Box^n_B \to i_*X$ which factor through $\wBox^n_B$; under the adjunction $i^* \adjoint i_*$ these correspond to maps $i^* \Box^n_B \to X$ which factor through $i^* \wBox^n_B$. The given characterization of marked cubes thus follows from the fact that $i^* \wBox^n_B$ is obtained from $i^* \BoxB$ by marking the $n$-cube $\id_{[1]^n}$.
\end{proof}

\begin{proposition}\label{i-monoidal}
The functors $i_!, i^* \colon \cA^+ \to \cB^+$ are monoidal with respect to the lax Gray tensor product.
\end{proposition}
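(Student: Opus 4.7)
The plan is to handle $i^*$ and $i_!$ separately. For $i^*$, monoidality is essentially a matter of inspection. By \cref{geo-prod-description}, the underlying cubical set of a lax Gray tensor product $X \otimes Y$, together with its marking pattern, is described entirely by formulas that depend only on the structure maps of $\Box_A$, and the same formulas govern the computation in $\cA^+$ and in $\cB^+$. Since $i^*$ acts as the identity on sets of cubes and markings, forgetting only the additional connection operators of $\Box_B$, we obtain $i^*(X \otimes Y) = i^* X \otimes i^* Y$ strictly, and $i^* \Box^0_B = \Box^0_A$.

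For $i_!$, the plan is to obtain natural comparison maps, either by the mate construction applied to the monoidal structure on $i^*$ or directly from the universal properties of $i_!$ as a left Kan extension. The unit comparison $\Box^0_B \to i_! \Box^0_A$ is an isomorphism by \cref{i-left-colim}. For the product comparison $i_!(X \otimes Y) \to i_! X \otimes i_! Y$, I would first reduce to representables: $i_!$ preserves colimits as a left adjoint, and the lax Gray tensor product preserves colimits in each variable by biclosedness, so \cref{mcset-colim} implies that it suffices to check the comparison is an isomorphism when $X$ and $Y$ are representable marked cubical sets $\Box^m_{A,(e)}$ and $\Box^n_{A,(e)}$.

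On representables, $\Box^m_A \otimes \Box^n_A = \Box^{m+n}_A$ by construction of the geometric product, so combining this with \cref{i-left-colim} gives $i_!(\Box^m_A \otimes \Box^n_A) = \Box^{m+n}_B = \Box^m_B \otimes \Box^n_B$ in the purely unmarked case. For the marked or mixed representables, the underlying cubical sets are identified in exactly the same way, and the remaining task is to verify that the markings match. The main obstacle lies precisely in this marking bookkeeping: via \cref{i-left-explicit}, the marked cubical set $i_!(\wBox^m_A \otimes \Box^n_A)$ receives new cubes built from the $\Box_B$-connections absent from $\Box_A$, and I would verify that all such new cubes are degenerate in $\cB^+$ and hence automatically marked in both $i_!(\wBox^m_A \otimes \Box^n_A)$ and in $\wBox^m_B \otimes \Box^n_B$. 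Meanwhile, \cref{epi-or-mono} ensures that the non-degenerate cubes of the representables coincide in $\cA^+$ and $\cB^+$, so the inherited markings match term by term with those prescribed by the marking rule of the lax Gray tensor product in $\cB^+$. This completes the verification on representables and hence the proof.
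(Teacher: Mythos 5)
Your proposal is correct and follows essentially the same route as the paper: $i^*$ is handled by inspection via \cref{geo-prod-description}, and $i_!$ by reducing to representables using colimit preservation together with \cref{mcset-colim} and then invoking \cref{i-left-colim}. The extra marking bookkeeping you supply for the mixed representable case is a sound elaboration of what the paper leaves implicit.
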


\begin{proof}
For $i^*$ this is immediate from \cref{geo-prod-description} and the definition of the lax Gray tensor product. For $i_!$, we note that $i_!$ preserves colimits as a left adjoint, while $\otimes$ similarly preserves colimits in each variable. Therefore, by \cref{mcset-colim}, it suffices to show that this holds on representables, which can be verified using \cref{i-left-colim}.
\end{proof}

Next we study some of the basic interactions of $i_!$ and $i^*$ with the comical model structures.

\begin{proposition}\label{i-left-on-gens}
The functor $i_! \colon \cA^+ \to \cB^+$ sends each of the following maps in $\cA^+$ to the corresponding map in $\cB^+$:

\begin{itemize}
\item boundary inclusions;
\item comical open box inclusions;
\item comical marking extensions;
\item Rezk maps;
\item markers.
\end{itemize}
\end{proposition}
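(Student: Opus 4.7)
My plan is to rely on the explicit description of $i_!$ from \cref{i-left-explicit} together with the preservation of representables given by \cref{i-left-colim}, working through the five classes of maps in turn. The markers case is essentially built into \cref{i-left-colim}: the natural isomorphisms $i_! \Box^n_A \cong \Box^n_B$ and $i_! \wBox^n_A \cong \wBox^n_B$ are induced by the Yoneda embedding, so the $n$-marker is sent to the $n$-marker. For boundary inclusions, \cref{i-left-explicit} describes the $k$-cubes of $i_!(\bd \Box^n_A)$ as formal composites $\delta \phi$ with $\delta$ a non-identity face in $\Box_A$ and $\phi$ an epimorphism in $\Box_B$; the canonical map $i_!(\bd \Box^n_A) \to i_! \Box^n_A \cong \Box^n_B$ sends such a cube to the actual composite in $\Box^n_B$, and uniqueness of the epi--mono factorization (\cref{epi-mono-factor}) identifies its image with the subset of $\Box^n_B$ of non-surjective maps, which is exactly $\bd \Box^n_B$.

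For comical open box inclusions and comical marking extensions, the key observation is that critical faces (\cref{crit-face-def}) are by construction monomorphisms, hence by \cref{epi-or-mono} already lie in $\Box_\varnothing$; thus the sets of critical faces of $\Box^n_A$ and $\Box^n_B$ with respect to $\bd_{i,\varepsilon}$ coincide. Applying \cref{i-left-explicit} again, a cube $\delta \phi$ of $i_! \Box^n_{A,i,\varepsilon}$ is marked precisely when $\phi$ is a non-identity epimorphism or $\delta$ is critical---which are exactly the degenerate cubes and critical faces of $\Box^n_{B,i,\varepsilon}$ in the $\Box_B$ sense. Hence $i_!$ sends $\Box^n_{A,i,\varepsilon}$, $\sqcap^n_{A,i,\varepsilon}$, and $(\Box^n_{A,i,\varepsilon})'$ to their $\cB^+$ analogues, and the regular subcomplex inclusions and entire maps between them to the corresponding maps.

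Finally, for Rezk maps I would invoke \cref{i-monoidal} together with the fact that $i_!$ preserves colimits, so $i_!$ commutes with pushout products. Combined with the boundary-inclusion case already settled, the problem reduces to showing that $i_!$ sends the elementary Rezk map $L_A \to \tau_0 L_A$ to $L_B \to \tau_0 L_B$. Since $L_A$ is a finite colimit of representables and markers in which all non-degenerate cubes are faces (i.e., lie in $\Box_\varnothing$), preservation of colimits and representables yields $i_! L_A \cong L_B$, and by \cref{i-left-explicit} the edges marked in $\tau_0 L_A$ (those with $\phi = \id$) go to the correspondingly marked edges of $\tau_0 L_B$. The main point throughout is the careful bookkeeping of markings; this is clean once one observes that in each instance the marked non-degenerate cubes lie in $\Box_\varnothing$, so their behaviour under $i_!$ reduces immediately to the general recipe of \cref{i-left-explicit}.
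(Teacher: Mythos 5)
Your proof is correct, and it takes a somewhat different route from the paper's on the non-trivial cases. For markers and for the reduction of Rezk maps to the elementary Rezk map via \cref{i-monoidal}, you and the paper agree exactly. For boundary inclusions the paper presents $\bd \Box^n_A$ as an explicit coequalizer of representables and uses only that $i_!$ preserves colimits and sends $\Box^{m}_{A,(e)}$ to $\Box^{m}_{B,(e)}$; it then disposes of the comical open box inclusions, marking extensions, and the elementary Rezk map by asserting that each is likewise a colimit of representables described identically in $\cA^+$ and $\cB^+$. You instead work at the level of cubes via \cref{i-left-explicit}, using the unique epi--mono factorization (\cref{epi-mono-factor}) to identify $i_!(\bd\Box^n_A)\to\Box^n_B$ with the inclusion of the non-surjective maps, and --- the genuinely new observation --- that by \cref{epi-or-mono} every monomorphism, hence every critical face, already lives in $\Box_\varnothing$, so the critical faces of $\Box^n_{A,i,\varepsilon}$ and $\Box^n_{B,i,\varepsilon}$ literally coincide. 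What your approach buys is an explicit verification of the marking bookkeeping that the paper leaves implicit in its ``the descriptions are identical'' step; what the paper's approach buys is uniformity, since one colimit-preservation argument covers all five cases without examining markings cube by cube. Both are complete and valid.
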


\begin{proof}
We first note that for each $n$, the $n$-dimensional marker is the image under the Yoneda embedding of the map $\varphi \colon \Box^n \to \wBox^n$; the stated result for markers thus follows from \cref{i-left-colim}. Furthermore, we may note that to show the stated result for Rezk maps, it suffices by \cref{i-monoidal} to prove it for the elementary Rezk map.

We next give the proof for boundary inclusions.
For $n \geq 0$, the object $\bd \Box^n_A$ is the coequalizer of the following diagram:
\[
\bigsqcup\limits_{1 \leq j \leq i \leq n; \varepsilon, \varepsilon' \in \{0,1\}} \Box^{n-2}_A \rightrightarrows \bigsqcup_{1 \leq i \leq n; \varepsilon \in \{0,1\}} \Box^{n-1}_A
\]
Here one of the two maps between the coproducts acts on the component corresponding to a tuple $(i,j,\varepsilon,\varepsilon')$ as the composite $\iota_{j,\varepsilon'} \bd_{i,\varepsilon}$, where $\iota_{j,\varepsilon'}$ denotes the coproduct inclusion corresponding to the pair $(j,\varepsilon')$. Similarly, the other map acts on component $(i,j,\varepsilon,\varepsilon')$ as the composite $\iota_{i+1,\varepsilon} \bd_{j,\varepsilon'}$. Taking the coequalizer thus amounts to taking one $(n-1)$-cube for each face of $\Box^n$ and identifying their $(n-2)$-dimensional faces according to the cubical identities. 

The inclusion $\bd \Box^n_A \hookrightarrow \Box^n_A$ is the map out of the coequalizer induced by the map $\bigsqcup_{1 \leq i \leq n; \varepsilon \in \{0,1\}} \BoxA^{n-1} \to \Box^n$ which acts on the $(i,\varepsilon)$-component as the face map $\bd_{i,\varepsilon}$. It thus follows from \cref{i-left-colim} and the fact that $i_!$ preserves colimits as a left adjoint that $i_!$ sends this map to the analogously defined map in $\cB^+$, \ie $\bd \BoxB^n \hookrightarrow \BoxB^n$.

The proofs for inner, marked or comical open box inclusions, endpoint inclusions into $K$, the saturation map, and the elementary Rezk map are similar to the above; in all of these cases, the maps in question may be explicitly defined in terms of colimits of representables, and these descriptions are identical in $\cA^+$ and $\cB^+$. 
\end{proof}

\begin{corollary}\label{i-free-Quillen}
The adjunction $i_! : \cA^+ \rightleftarrows \cB^+ : i^*$ is Quillen with respect to the ($n$-trivial, saturated) comical model structure on $\cSet^+_A$ and the analogous model structure on $\cSet^+_B$.
\end{corollary}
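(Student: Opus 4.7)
The plan is to appeal to the Joyal--Tierney criterion recorded after the definition of pseudo-generating trivial cofibrations: to show $i_! \adjoint i^*$ is a Quillen adjunction, it suffices to verify that $i_!$ preserves cofibrations and that $i_!$ sends a class of pseudo-generating trivial cofibrations for each model structure on $\cA^+$ to trivial cofibrations in the corresponding model structure on $\cB^+$. Both of these will follow mechanically from \cref{i-left-on-gens}.

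First I would check preservation of cofibrations. Since the cofibrations in each of the four model structures on $\cA^+$ are the monomorphisms, \cref{cell-model-marked} identifies them as the saturation of the set of boundary inclusions $\bd \BoxA^n \hookrightarrow \BoxA^n$ ($n \geq 0$) together with the markers $\BoxA^n \to \wBoxA^n$ ($n \geq 1$). By \cref{i-left-on-gens}, $i_!$ carries each of these generators to its namesake in $\cB^+$, which is in particular a monomorphism. Since $i_!$ preserves colimits as a left adjoint, it preserves saturations of classes of maps, and hence sends every monomorphism in $\cA^+$ to a monomorphism in $\cB^+$.

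Next I would verify the condition on pseudo-generating trivial cofibrations. By \cref{comical-model-structure}, each of the (saturated, $n$-trivial) comical model structures on $\cA^+$ admits a class of pseudo-generating trivial cofibrations consisting of the comical open box inclusions, the comical marking extensions, and -- as applicable -- the Rezk maps and the markers $\BoxA^m \to \wBoxA^m$ for $m > n$. Once again, \cref{i-left-on-gens} shows that $i_!$ sends each of these maps to the analogous map in $\cB^+$, which is itself a pseudo-generating trivial cofibration -- and in particular a trivial cofibration -- in the corresponding model structure on $\cB^+$. Combining the two preservation statements gives the desired Quillen adjunction in each of the four cases simultaneously. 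I do not anticipate any real obstacle: all of the genuine work has already been carried out in \cref{i-left-on-gens}, and what remains is essentially bookkeeping via the Joyal--Tierney criterion.
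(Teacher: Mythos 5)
Your proposal is correct and follows essentially the same route as the paper: the paper's proof likewise reduces the claim to showing that $i_!$ sends generating cofibrations to cofibrations and pseudo-generating trivial cofibrations to trivial cofibrations, both of which it deduces from \cref{i-left-on-gens}. Your extra detail (citing \cref{cell-model-marked} and the preservation of saturations by the left adjoint) is just an expansion of the same argument.
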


\begin{proof}
It suffices to show that $i_!$ sends the generating cofibrations of $\cA^+$ to cofibrations, and the pseudo-generating trivial cofibrations to trivial cofibrations. Both of these assertions follow from \cref{i-left-on-gens}.
\end{proof}

\begin{corollary}\label{i-create-fib}
A map $f \colon X \to Y$ be a map in $\cB^+$ is a fibration with fibrant codomain in the (saturated, $n$-trivial) comical model structure if and only if $i^* f$ is a fibration with fibrant codomain in the corresponding model structure on $\cSet_{A}^+$. In particular, an object $X \in \cB^+$ is fibrant if and only if $i^* X$ is fibrant.
\end{corollary}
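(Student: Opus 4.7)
The plan is to combine the adjunction $i_! \adjoint i^*$ with \cref{i-left-on-gens}, which identifies the image under $i_!$ of each pseudo-generating (trivial) cofibration of $\cSet^+_A$ with its counterpart in $\cSet^+_B$. Explicitly, let $S_A$ denote the appropriate set of pseudo-generating trivial cofibrations (comical open box inclusions, comical marking extensions, plus Rezk maps and/or markers above dimension $n$, depending on which of the four model structures we have fixed) characterizing fibrations with fibrant codomain on $\cSet^+_A$, and let $S_B$ be its analogue on $\cSet^+_B$. Then \cref{i-left-on-gens} yields $i_! S_A = S_B$; note that by \cref{epi-or-mono} the critical faces are composites of face maps and hence live in the common subcategory $\Boxmin$, so the comical cubes and comical open box inclusions indeed correspond bijectively under $i_!$.

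Given this, the equivalence of lifting conditions on $f$ and $i^* f$ is a direct consequence of adjoint transposition: for each $j \in S_A$, the hom-set isomorphism supplied by $i_! \adjoint i^*$ places lifting problems of $i_! j$ against $f$ in $\cB^+$ in bijection with lifting problems of $j$ against $i^* f$ in $\cA^+$, and this bijection sends diagonal fillers to diagonal fillers. Hence $f$ has the right lifting property with respect to $S_B = i_! S_A$ if and only if $i^* f$ has the right lifting property with respect to $S_A$.

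To package this as a statement about fibrations with \emph{fibrant} codomain, I would observe that $i^*$ preserves the terminal object, being a right adjoint. Consequently, fibrancy of $Y \in \cB^+$---equivalently, right lifting of $Y \to 1$ against $S_B$---is equivalent via the preceding argument to right lifting of $i^* Y \to i^* 1 = 1$ against $S_A$, i.e.\ to fibrancy of $i^* Y$. The ``in particular'' clause then drops out by specializing to $f = (Y \to 1)$. I do not foresee any significant obstacle beyond carefully matching up the four cases of model structures; the combinatorial substance of the argument has already been discharged in \cref{i-left-on-gens}.
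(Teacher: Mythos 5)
Your proposal is correct and matches the paper's proof: the paper likewise reduces everything to the observation that, by adjunction, $f$ has the right lifting property against the pseudo-generating trivial cofibrations of $\cSet^+_B$ if and only if $i^*f$ has it against those of $\cSet^+_A$, which is immediate from \cref{i-left-on-gens}. Your extra remarks (identification of comical data under $i_!$, preservation of the terminal object) are just explicit versions of steps the paper leaves implicit.
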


\begin{proof}
It suffices to show that a map $f$ as in the statement has the right lifting property with respect to the pseudo-generating trivial cofibrations of $\cB^+$ if and only if $i^* f$ has the right lifting property with respect to those of $\cA^+$; this is immediate from \cref{i-left-on-gens}.
\end{proof}

\begin{lemma}\label{i-precomp-cofs}
The functor $i^* \colon \cB^+ \to \cA^+$ preserves and reflects monomorphisms.
\end{lemma}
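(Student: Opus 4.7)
The proof splits naturally into two parts. For preservation, I would appeal to \cref{i-has-adjoints}: since $i^*$ admits a left adjoint $i_!$, it is a right adjoint and hence preserves all limits; in particular, it preserves monomorphisms.

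For reflection, the key is to characterize monomorphisms in $\cSet^+_A$ and $\cSet^+_B$ concretely as maps $f \colon X \to Y$ whose component $f_n \colon X_n \to Y_n$ is injective for every $n \geq 0$. The ``if'' direction can be verified by testing against the representable marked cubical sets $\Box^n_{A,(e)}$. For the ``only if'' direction, one observes that the reflective inclusion $\cSet^+ \hookrightarrow \cSet^{++}$ is fully faithful, and so preserves and reflects monomorphisms; monomorphisms in the presheaf category $\cSet^{++}$ are objectwise, and for a marked cubical set the data at $[1]^n_e$ is simply the set of marked $n$-cubes, so objectwise injectivity at $[1]^n_e$ is automatic from injectivity on $n$-cubes (since each cube carries at most one marking and marked cubical set morphisms preserve markings).

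With this characterization in hand, reflection is essentially tautological. By the explicit description given in the paragraph following the diagram introducing $i$, we have $(i^*X)_n = X_n$ for all $n$ and $(i^*f)_n = f_n$ for every map $f$ in $\cSet^+_B$. Hence $(i^*f)_n$ is injective if and only if $f_n$ is, which by the above characterization means that $i^*f$ is a monomorphism if and only if $f$ is.

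I do not foresee any real obstacle in carrying out this plan; the lemma essentially reduces to the observation that $i^*$ acts as the identity on underlying cube sets, combined with the standard fact that monomorphisms of marked cubical sets are detected on cubes. The only potentially delicate point is the characterization of monomorphisms in the reflective subcategory $\cSet^+$, but this is handled cleanly by passing through the presheaf category $\cSet^{++}$.
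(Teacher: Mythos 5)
Your proof is correct and, at its core, is the same as the paper's: the paper simply observes that $f$ and $i^*f$ have identical underlying set maps, so monomorphisms (which in $\cSet^+_B$ and $\cSet^+_A$ are exactly the levelwise injective maps) are preserved and reflected; you have just spelled out the standard characterization of monomorphisms that the paper leaves implicit. One small imprecision: full faithfulness of $\cSet^+\hookrightarrow\cSet^{++}$ only gives \emph{reflection} of monomorphisms — preservation follows because this inclusion is a right adjoint (of $\Im$) — but this does not affect the argument.
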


\begin{proof}
This is immediate from the definition of $i^*$, as for any map $f$ in $\cB^+$, the underlying set maps of $f$ and $i^* f$ coincide.
\end{proof}

The interaction between the adjunctions $i_! \adjoint i_*$ and triangulation will play a key role in establishing marked minimal cubical sets as a model for $(\infty,n)$-category theory.

\begin{proposition}\label{i-T-commutes}
The following diagram of adjunctions commutes:
\[
\xymatrix{
\cSet_A^+ \ar@<1ex>[rr]^{T} \ar@{}|{\rotatebox{-90}{$\adjoint$}}[rr] \ar@<1ex>[ddr]^{i_!} \ar@{}|{\rotatebox{-140}{$\adjoint$}}[ddr] && \sSet^+ \ar@<1ex>[ll]^{U} \ar@<1ex>[ddl]^{U} \\
\\
& \cSet_B^+ \ar@<1ex>[uul]^{i^*} \ar@<1ex>[uur]^{T} \ar@{}|{\rotatebox{-40}{$\adjoint$}}[uur]
}
\]
\end{proposition}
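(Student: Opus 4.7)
The plan is to establish a natural isomorphism $T \circ i_! \cong T$ of left adjoints $\cSet_A^+ \to \sSet^+$. Passing to right adjoints, which are unique up to natural isomorphism, this automatically yields the dual natural isomorphism $i^* \circ U \cong U$, making the entire triangle of adjunctions commute.

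Since both $T \circ i_!$ and $T$ are composites of left adjoints, both preserve colimits. By \cref{mcset-colim}, every object of $\cSet_A^+$ is canonically a colimit of representables $\Box^n_{A,(e)}$, so it suffices to exhibit a natural isomorphism between the two functors when restricted to the full subcategory of representables, and verify naturality with respect to morphisms of representables (which can be checked against generators in $\Box^+_A$). On representables, \cref{i-left-colim} supplies a natural isomorphism $i_! \Box^n_{A,(e)} \cong \Box^n_{B,(e)}$. It then remains to observe that the explicit formulas defining $T$ on objects $[1]^n$ and $[1]^n_e$ (given just before \cref{QE-with-cons}) reference only the dimension $n$ and the marking status, not the presence of connections; in other words, the triangulation functor $T \colon \Box^+_B \to \sSet^+$ restricts along the inclusion $\Box^+_A \hookrightarrow \Box^+_B$ to $T \colon \Box^+_A \to \sSet^+$ on the nose. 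Hence $T \Box^n_{B,(e)} = T \Box^n_{A,(e)}$, and stringing the identifications together gives the required natural isomorphism on representables.

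I do not expect a substantive obstacle here: the content is just that $T$ is defined in terms of the face/degeneracy structure common to all cube categories, while $i_!$ is by construction the colimit-preserving extension sending representables in $\cA^+$ to the corresponding representables in $\cB^+$. The only mild bookkeeping is naturality in morphisms of representables, which is handled by the fact that the isomorphism of \cref{i-left-colim} is itself natural (arising from a Yoneda argument on the adjunction $i_! \dashv i^*$), together with strict commutativity of $T$ with the inclusion of cube categories on generating morphisms.
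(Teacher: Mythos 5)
Your proposal is correct and follows essentially the same route as the paper: both reduce the claim via \cref{mcset-colim} to checking that the left adjoints $T$ and $T \circ i_!$ agree on representables, which is immediate from \cref{i-left-colim} and the fact that the triangulation functors on $\Box^+_A$ and $\Box^+_B$ are defined by the same formulas. Your additional remarks on naturality and on passing to right adjoints just make explicit what the paper leaves implicit.
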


\begin{proof}
By \cref{mcset-colim}, it suffices to verify that the left adjoints $T$ and $Ti_!$ agree on representable presheaves; this is immediate from the definition of the triangulation functors.
\end{proof}

Finally, we note the commutativity of the diagram \cref{inclusion-diagram} implies corresponding results for the induced adjunctions between categories of marked cubical sets, which we may summarize in the following commuting diagram of adjoint triples.

\begin{equation} \label{eq:all-functors-i}\tag{**}
\vcenter{\xymatrix@C+1.5cm{
  \cboth^+
  \ar[rr]^{i^*}
  \ar[dddd]^{i^*}
  \ar[rrdddd]^{i^*}
&
&
  \cneg^+
  \ar@/^1pc/[ll]^{i_*}
  \ar@/_1pc/[ll]_{i_!}
  \ar[dddd]^{i^*}
\\ 
&
&
\\
&
&
\\
\\
\cpos^+
 \ar[rr]^{i^*}
 \ar@/^1pc/[uuuu]^{i_!}
 \ar@/_1pc/[uuuu]_{i_*}
&
&
 \cmin^+
 \ar@/^1pc/[ll]^{i_*}
 \ar@/_1pc/[ll]_{i_!}
 \ar@/^1pc/[uuuu]^{i_!}
 \ar@/_1pc/[uuuu]_{i^*}
   \ar@/^1pc/[lluuuu]^{i_*}
  \ar@/_1pc/[lluuuu]_{i_!}
 \\
}}
\end{equation}

We will ultimately show that all of the adjunctions in this diagram are Quillen equivalences (assuming that the four categories of marked cubical sets depicted are equipped with analogous model structures); see \cref{i-Quillen-eqv}.

\section{Equivalence of comical and complicial model structures}\label{section:wcf}

\subsection{Weak connection structures}

This section is devoted to proving the following:

\begin{theorem}\label{T-min-Quillen-eqv}
The adjunction $T : \cmin^+ \rightleftarrows \sSet^+ : U$ is a Quillen equivalence between the (saturated, $n$-trivial) comical model structure on $\cmin^+$ and the (saturated, $n$-trivial) complicial model structure on $\sSet^+$.
\end{theorem}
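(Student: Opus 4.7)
The plan is to reduce \cref{T-min-Quillen-eqv} to the known Quillen equivalences of \cref{QE-with-cons} by two-out-of-three. For any $B \in \{0,1,01\}$, \cref{i-T-commutes} exhibits the triangulation adjunction on $\cmin^+$ as the composite of Quillen adjunctions
\[
\cmin^+ \;\underset{i^*}{\overset{i_!}{\rightleftarrows}}\; \cSet_B^+ \;\underset{U}{\overset{T}{\rightleftarrows}}\; \sSet^+,
\]
Quillen by \cref{i-free-Quillen} and \cref{QE-with-cons} respectively. Since the second factor is already a Quillen equivalence, it suffices to prove that $i_! \colon \cmin^+ \rightleftarrows \cSet_B^+ \colon i^*$ is a Quillen equivalence for some (equivalently, every) such $B$.

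To verify this, I would apply a criterion in the spirit of \cref{QuillenEquivCreate}: show that $i^*$ preserves and reflects weak equivalences (at least between fibrant objects, patched up afterwards via fibrant replacement and the full adjoint triple $i_! \adjoint i^* \adjoint i_*$), and show that the unit $\eta_X \colon X \to i^* i_! X$ is a weak equivalence for every $X \in \cmin^+$. For the latter, \cref{nat-weq} applied to the reflective inclusion $\cmin^+ \hookrightarrow \cmin^{++}$ reduces the check to the representables $\Box^n_\varnothing$ and $\wBox^n_\varnothing$; by \cref{unit-characterization} the unit is there the canonical inclusion of $\Box^n_\varnothing$ (resp.\ $\wBox^n_\varnothing$) into the marked cubical set obtained by freely adjoining all connection cells of $\Box_B$. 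Such inclusions should be deformation retracts, and the technical tool to make this precise will be a notion of \emph{weak connection structure} on objects of $\cmin^+$: an assignment of would-be connection cubes compatible with faces, degeneracies and markings, but not required to satisfy the cubical identities for composition of connections themselves. The two key lemmas to prove are that every comical set in $\cmin^+$ admits a weak connection structure, and that such structures may be lifted along fibrations between comical sets; both will be proved by skeletal induction, using lifts against comical open box inclusions to choose candidate connection cubes one dimension at a time. Granted these, weak connection structures furnish the retractions and homotopies needed to identify the unit and counit as equivalences on fibrant objects, from which the general case follows by fibrant replacement and \cref{nat-weq}. The combinatorial lemmas \cref{face-shift}, \cref{degen-shift}, and \cref{con-shift} on the standard forms of cubical operators appear to be set up precisely to handle the book-keeping in this induction.

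The principal obstacle will be the inductive construction of a weak connection structure on an arbitrary comical set: the critical-face markings of comical cubes, together with the cubical identities involving faces and degeneracies, constrain the possible choices of connection cubes at each stage, and these choices must be made mutually compatible. Dropping the connection-composition identities---the ``weak'' in weak connection structure---is precisely what makes the lifting problems solvable using only the comical open box inclusions, so the statements of the construction and lifting lemmas must be arranged to isolate exactly which identities need to hold at each inductive step.
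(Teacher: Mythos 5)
Your proposal follows essentially the same route as the paper: reduce via \cref{i-T-commutes} and two-out-of-three to showing $i_!\dashv i^*$ is a Quillen equivalence, use \cref{nat-weq} to reduce the unit to the representables, and prove that case by inductively lifting weak connection structures along comical fibrations (the paper's \cref{eta-tcof} and \cref{PO-lift}), with the counit then handled via its section $i_!\eta$ and \cref{nat-we-QE} rather than your fibrant-replacement patch. The plan correctly locates the hard combinatorial core (the relative, boundary-by-boundary construction of connection cubes via comical open box filling), which the paper executes with the tail-form and approximation machinery.
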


Our proof will make use of an extensive study of connections in minimal comical sets, i.e. the construction of (generally non-degenerate) marked cubes in a  comical set in $\cmin^+$ which satisfy the face identities for connections. Specifically, these cubes will be studied via the framework of \emph{weak connection structures}, which we now define.

\begin{definition}
Let $A \subseteq B \subseteq \{0.1\}$. Given a map $f \colon X \to Y$ in $\cA^{+}$, a \emph{weak connection structure on $f$ (with respect to $\BoxB$)} is a map $\Gamma_f \colon i^* i_! X \to Y$ such that $\Gamma_{f} \circ \eta_X = f$, where $\eta$ denotes the unit of the adjunction $i_! : \cA^{+} \rightleftarrows \cB^{+} : i^*$. In the case where $f$ is the inclusion of a regular subcomplex $X \subseteq Y$ (including the case $X = Y$), we instead refer to a \emph{weak connection structure on $X$} or \emph{weak connection structure on $X$ in $Y$}, and use the notation $\Gamma_X$.
\end{definition}

\begin{remark}
Note that there exist versions of the adjoint triple $i_! \adjoint i^* \adjoint i_*$ for the categories of cubical sets studied in \cite{doherty-kapulkin-lindsey-sattler}, \ie those without markings, or with markings only on edges. Thus we can analogously define weak connection structures in those settings, and all of the theory to be developed here and in \cref{sec:surj} applies there as well. In particular, analogues of \cref{T-min-Quillen-eqv} hold for the cubical Joyal and cubical marked model structures; proving these results, by means of a Quillen equivalence with the saturated 1-trivial comical model structure, is the focus of Appendix \ref{app:inf-1}.
\end{remark}

Given a map $f \colon X \to Y$ in $\cA^+$, a weak connection structure $\Gamma_f$, and a map $g \colon Y \to Z$, it is immediate from the definition that the composite $g\Gamma_f$ is a weak connection structure on $gf$. Likewise, given a map $h \colon W \to X$ in $\cA^+$, the composite $\Gamma_f \circ i^* i_! h$ is a weak connection structure on $fh$; this follows from the naturality of $\eta$.

\begin{example}
Given $A \subseteq B \subseteq \{0,1\}, X \in \cA^+, Y \in \cB^+$, any map $f \colon i^* X \to Y$ in $\cB^+$ has a weak connection structure given by the composite $f \circ i^* \epsilon_X$, where $\epsilon$ denotes the counit of the adjunction $i_! : \cA^{+} \rightleftarrows \cB^{+} : i^*$. That this defines a weak connection structure is immediate from the triangle identities.
\end{example}

The concept of a weak connection structure is easiest to grasp intuitively in the case where $f$ is a regular subcomplex inclusion $X \hookrightarrow Y$. Then constructing a weak connection structure $\Gamma_X$ amounts to choosing, for each $m$-cube $x$ of $X$ and $\phi \colon [1]^n \to [1]^m$ in $\BoxB$, an $n$-cube $\Gamma_X(x \phi)$ in $Y$, marked if $\phi$ is not a monomorphism, in a manner compatible with the structure maps of $Y$; we may think of this as ``defining connections in $Y$ on the cubes of $X$'', and denote $\Gamma_X(x \phi)$ by $x(\phi)$. Compatibility with the structure maps of $Y$ then means precisely that for each cube $x \colon \Box^n \to X$, we have $x(\id_{[1]^n}) = x$, and for $\phi \colon [1]^m \to [1]^n$, $\psi \colon [1]^k \to [1]^m$ in $\BoxB$, if $\phi$ is in $\BoxA$ then we have $x\phi (\psi) = x(\phi \psi)$, while if $\psi$ is in $\BoxA$ then we have $x(\phi)\psi = x(\phi \psi)$. Note, however, that the analogous identity need not hold when neither map is in $\BoxA$; indeed, it may be that $x(\phi)$ is not contained in $X$, in which case the expression $x(\phi)(\psi)$ for $\psi$ in $\BoxB \setminus \BoxA$ is undefined. (Technically, if $x(\phi)$ is not in $X$ then $x(\phi)(\psi)$ should be undefined even if $\psi$ is in $\BoxA$, but for ease of notation, in this case we will define this to be the cube $x(\phi)\psi = x(\phi \psi)$ of $Y$.) Even in the case where $x(\phi)$ happens to be in $X$, when viewed as a cube of $i^* i_! X$ it is distinct from the cube $x \phi$ arising from the freely-added structure maps of $i_! X$, and has its own set of free structure maps, which need not be mapped to the same cubes of $Y$ as the free structure maps of $x$. Thus, for instance, if we take $A = \varnothing, B = \{0\}$, and consider a case in which $x (\gamma_{1,0}) \in X$, then we also have a cube $x(\gamma_{1,0})(\gamma_{2,0})$ in $X$, which need not be equal to $x(\gamma_{1,0} \gamma_{2,0})$. Likewise, neither of these cubes would necessarily be equal to $x(\gamma_{1,0})(\gamma_{1,0})$.

Another useful case to consider is that of a weak connection structure $\Gamma_x$ on an $n$-cube $x \colon \BoxA^n \to X$; similarly to the previous case, such a structure consists of a choice of an $m$-cube $x(\phi)$ in $X$ for each $\phi \colon [1]^m \to [1]^n$ in $\BoxB$, consistent with the structure maps of $X$. Here subtleties may arise if $x$ is not a monomorphism, for instance, if its faces are not distinct; given distinct non-degenerate $m$-cubes $d, d'$ of $\BoxA^n$, the free connections on $d$ and $d'$ in $i^* i_! \Boxmin^n$ are distinct, and may thus be mapped to distinct cubes of $X$ by $\Gamma_x$ even if $xd = xd'$. For instance, in the case $A = \varnothing, B = \{0\}$, consider a 2-cube $x \colon \Boxmin^2 \to X$ such that $x \bd_{1,0} = x \bd_{1,1}$. The 2-cubes $x(\bd_{1,0} \gamma_{1,0})$ and $x(\bd_{1,1} \gamma_{1,0})$ may be distinct, although the cubical identities imply that their boundaries will coincide. Therefore, unlike in the case of a weak connection structure on $X \subseteq Y$, here we must think of $x(\bd_{1,\varepsilon} \gamma_{1,0})$ not as ``the image of $x\bd_{1,\varepsilon}$ under $\gamma_{1,0}$'', but rather as ``the image of $x$ under $\bd_{1,\varepsilon} \gamma_{1,0}$''. For weak connection structures on cubes, any expression of the form $y(\phi)$ for $y \neq x$ is undefined. 

Nevertheless, by pre-composition we do obtain weak connection structures on all of  the chosen cubes $x(\phi)$ for  $\phi \colon [1]^m \to [1]^n$ in $\BoxB$, including the faces and degeneracies of $x$. To see how these are obtained, observe that $x(\phi)$, by definition, is the image under $\Gamma_X$ of $\phi$ viewed as an $m$-cube of $i^* \BoxB^n$. In other words, $x(\phi)$ corresponds to the composite $\Gamma_x \overline{\phi}$, where $\overline{\phi}$ denotes the adjunct map $\BoxA^m \to i^* i_! \BoxA^n = i^* \BoxA^n$ corresponding to $\phi \colon \BoxB^m \to \BoxB^n$ . By a standard result, $\overline{\phi}$ factors as $i^* \phi \circ \eta_{\Boxmin^m}$, as shown in the following diagram. 
\[
\xymatrix{
\BoxA^m \ar[d]_{\eta} \ar[dr]^{\overline{\phi}} & \BoxA^n \ar[r]^{x} \ar[d]_{\eta} & X \\
i^* \BoxB^m \ar[r]^{i^* \phi} & i^* \BoxB^n \ar[ur]_{\Gamma_x} \\
}
\]
We denote the composite $\Gamma_x \circ i^* \phi$ by $\phi^* \Gamma_x$. In the case where $\phi$ is in $\BoxA$, this coincides with the weak connection structure obtained by the pre-composition method for more general maps described above; this follows from the naturality of $\eta$ and the fact that in this case, the map $\phi \colon \BoxB^m \to \BoxB^n$ is the image under $i_!$ of $\phi \colon \BoxA^m \to \BoxA^n$.

In view of the discussion above, we see that a minimal cubical set equipped with a weak connection structure does not behave precisely like a cubical set with connections, as we might have hoped. Thus we have chosen the notation $x(\phi)$ for the image of a free connection $x\phi$ under $\Gamma_x$ to avoid any ambiguity which might arise from these counter-intuitive behaviours of weak connection structures.

Despite these issues, we are able to prove an analogue of the Eilenberg-Zilber lemma for weak connection structures. Our proof of this result will strongly resemble standard proofs of the Eilenberg-Zilber lemma for a general EZ-Reedy category, but additional care must be taken due to the issues discussed above.

\begin{definition}
Let $X \hookrightarrow Y$  be a regular subcomplex inclusion in $\cA^{+}$ equipped with a weak connection structure $\Gamma_X$ with respect to $\BoxB$. An $m$-cube $x \colon \BoxA^m \to Y$ in the image of $\Gamma_X$ is \emph{degenerate with respect to $\Gamma_X$} if it is equal to $x'(\phi)$ for some $x' \colon \BoxA^n \to X$ and some non-identity epimorphism $\phi \colon [1]^m \to [1]^n$ in $\BoxB$, and \emph{non-degenerate with respect to $\Gamma_X$} otherwise.
\end{definition}

Note that the degenerate cubes of a (marked) cubical set $X$ are automatically degenerate with respect to any weak connection structure on $X$, as for any $m$-cube $x$ and any $\phi \colon [1]^n \to [1]^m$  in $\BoxA$ we have $x(\phi) = x\phi$. Furthermore, note that being non-degenerate with respect to $\Gamma_X$ implies $x$ is contained in the subcomplex $X$.

\begin{lemma}\label{wcs-sections}
Let $X \hookrightarrow Y$  be a regular subcomplex inclusion in $\cA^{+}$ equipped with a weak connection structure $\Gamma_X$ with respect to $\BoxB$, and let $\phi \colon [1]^m \to [1]^n$, $\psi \colon [1]^k \to [1]^m$ be maps in $\BoxB$, with $\phi$ factoring as $\phi_1 \ldots \phi_p$ for some $p \geq 0$. Then there exist a monomorphism $\delta \colon [1]^a \to [1]^n$ and an epimorphism $\chi \colon [1]^k \to [1]^a$, factoring as $\chi_1 \ldots \chi_q$ with each $\chi_i$ an epimorphism, such that:

\begin{itemize}
\item $x(\phi_1) \ldots (\phi_p) (\psi) = x\delta(\chi_1) \ldots (\chi_q)$;
\item $\phi \psi = \delta \chi$.
\end{itemize}

\end{lemma}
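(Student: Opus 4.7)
The plan is to prove the result by induction on $p$, using the epi-mono factorization in $\BoxB$ (\cref{epi-mono-factor}; note that every monomorphism in $\BoxB$ lies in $\BoxA$ by \cref{epi-or-mono}) together with two compatibility identities that hold for any weak connection structure $\Gamma_X$: for $\alpha \in \BoxA$ a monomorphism and $\beta \in \BoxB$,
\[
\text{(R1)}\ (x\alpha)(\beta) = x(\alpha\beta), \qquad \text{(R2)}\ x(\beta)\alpha = x(\beta\alpha).
\]
Both follow directly from $\Gamma_X$ being a morphism of $\BoxA$-presheaves from $i^* i_! X$ to $Y$, and in particular do not require any intermediate cube to lie in $X$.

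For the base case $p = 0$, I would factor $\psi = \delta\chi$ via epi-mono factorization and set $q = 1$, $\chi_1 = \chi$; (R1) then yields $x(\psi) = x(\delta\chi) = (x\delta)(\chi)$, while $\phi\psi = \psi = \delta\chi$ is immediate.

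For the inductive step, I will first apply the inductive hypothesis to the sub-expression $x(\phi_1)\cdots(\phi_{p-1})(\phi_p)$ (viewed with $p-1$ factors $\phi_1, \ldots, \phi_{p-1}$ and $\phi_p$ playing the role of the $\psi$-parameter), producing a monomorphism $\delta'$ and a factorization $\chi' = \chi_1'\cdots\chi_{q'}'$ into epis with $\phi = \delta'\chi'$ and $x(\phi_1)\cdots(\phi_p) = x\delta'(\chi_1')\cdots(\chi_{q'}')$. Appending the trailing $(\psi)$ and writing $\psi = \delta''\chi''$ (epi-mono), the task reduces to transporting the mono $\delta''$ leftward through the chain of epis $(\chi_1')\cdots(\chi_{q'}')$ and absorbing it into the leading face of $x$. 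I will do this by a secondary induction, iterating the following sliding move: whenever a mono $\delta^\flat$ sits immediately to the right of an epi factor $(\chi_j')$, use (R2) to merge $(\chi_j')(\delta^\flat)$ into $(\chi_j'\delta^\flat)$, take the epi-mono factorization $\chi_j'\delta^\flat = \tilde\delta\tilde\chi$, and apply (R1) to split $(\tilde\delta\tilde\chi) = (\tilde\delta)(\tilde\chi)$, thereby placing a fresh mono $\tilde\delta$ one position further to the left. After $q'$ iterations the mono reaches $x\delta'$, where a final application of (R1) absorbs it into the leading face to form $x\delta$, while the $\tilde\chi$'s from each step together with $\chi''$ assemble into the required sequence $\chi_1,\ldots,\chi_q$.

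Finally, the identity $\phi\psi = \delta\chi$ is verified by tracking the product of maps in the expression through each step: every sliding move replaces a (mono, epi) pair by its unique epi-mono factorization and hence preserves the total composite, so by uniqueness of the epi-mono factorization of $\phi\psi$ (\cref{epi-mono-factor}), $\delta$ must be its mono part and $\chi_1\cdots\chi_q$ a factorization of its epi part. The main obstacle will be the formal bookkeeping involved in tracking the iterated epi-mono factorizations and confirming that the running composite is preserved at every sliding step. A further subtlety is that intermediate expressions such as $x\delta'(\chi_1')\cdots(\chi_j')$ represent cubes of $Y$ that need not lie in $X$; however, as noted above, each sliding step only invokes (R1) and (R2) with the monomorphism argument drawn from $\BoxA$, and these identities are valid purely from the naturality of $\Gamma_X$ as a $\BoxA$-presheaf morphism, so no hypothesis on the ambient cube lying in $X$ is needed.
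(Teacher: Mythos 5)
Your proof is correct and rests on the same ingredients as the paper's: induction on $p$, the epi--mono factorization of \cref{epi-mono-factor} (whose monomorphism part automatically lies in $\Box_A$ by \cref{epi-or-mono}), and the two compatibility identities of a weak connection structure used to commute the mono part of $\psi$ leftward. The inductive step is decomposed differently, though. The paper factors $\psi = \lambda\psi'$ and then $\phi_p\lambda = \lambda'\phi_p'$ \emph{before} recursing, so that the inductive hypothesis is invoked on $x(\phi_1)\cdots(\phi_{p-1})(\lambda')$ with a monomorphism in the role of $\psi$; each level of the outer induction thus performs exactly one slide. You instead invoke the inductive hypothesis on $x(\phi_1)\cdots(\phi_{p-1})(\phi_p)$ to normalize the head first, and then do all the sliding in a secondary inner induction. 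Both routes work; the paper's absorbs the sliding into the outer induction and so avoids the second induction and its bookkeeping, while yours isolates the sliding move as a reusable step. One caveat: your closing claim that (R1) and (R2) need ``no hypothesis on the ambient cube lying in $X$'' is overstated. As identities about $\Gamma_X$ they do require the \emph{base} cube of each application --- the $w$ in $w(\chi_j')(\delta^\flat) = w(\chi_j'\delta^\flat)$ --- to lie in $X$ whenever $\chi_j' \notin \Box_A$, since otherwise $w(\chi_j')$ is not defined at all. The argument survives because, when $w \notin X$, definedness of the original expression forces $\chi_j' \in \Box_A$, and the identity then holds via the paper's convention that $y(\alpha) = y\alpha$ for $\alpha \in \Box_A$; but this case split should be stated rather than dismissed. (The paper's own proof elides the same point, so this is an imprecision, not a gap.)
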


\begin{proof}
We proceed by induction on $p$. In the base case $p = 0$ we have $\phi = \id$. Applying \cref{epi-mono-factor}, let $\lambda \psi'$ be the epi-monic factorization of $\psi$; then we see that $x (\psi) = x \lambda (\psi')$. Thus we may set $\delta = \lambda, \chi = \psi'$.

Now let $p \geq 1$, and suppose that the stated result holds for all $0 \leq p' \leq p-1$. Given $x, \phi, \psi$ as in the statement, we may again let $\lambda \psi'$ be the epi-monic factorization of $\psi$; likewise, let $\lambda' \phi_p'$ be the epi-monic factorization of $\phi_{p} \lambda$. By \cref{epi-or-mono}, $\lambda$ is in $\Boxmin \subseteq \BoxA$. Thus, using the induction hypothesis and the compatibility of $\Gamma_X$ with the structure maps of $\BoxA$, we can compute:

\begin{align*}
x (\phi_1) \ldots (\phi_p) (\psi) & = x (\phi_1) \ldots (\phi_p) (\lambda \psi') \\
& = x (\phi_1) \ldots (\phi_p) \lambda ( \psi') \\
& = x (\phi_1) \ldots (\phi_p \lambda) ( \psi') \\
& = x (\phi_1) \ldots (\phi_{p-1}) (\lambda \phi_p') (\psi') \\
& = x (\phi_1) \ldots (\phi_{p-1}) \lambda (\phi_p') (\psi') \\
\end{align*}

By the induction hypothesis, we may rewrite $x (\phi_1) \ldots (\phi_{p-1}) \lambda$ as $x \delta (\rho_1) \ldots (\rho_r)$ for some monomorphism $\delta$ and some epimorphisms $\rho_1,\ldots,\rho_r$. Thus we see that $x (\phi_1) \ldots (\phi_{p-1}) \lambda (\phi_p') (\psi') = x \delta (\rho_1) \ldots (\rho_q) (\phi_p') (\psi')$. Then each $\rho_i$ is an epimorphism, as are $\phi_p'$ and $\psi'$, and so the same is true of their composite, which we denote $\chi$. Moreover, by similar calculations to those shown above, we have $\delta \chi = \phi \psi$. Therefore, by induction, the result holds for all $p$.
\end{proof}

\begin{proposition}[Eilenberg-Zilber lemma for weak connection structures] \label{wcs-EZ}
Let $X \hookrightarrow Y$  be a regular subcomplex inclusion in $\cA^{+}$ equipped with a weak connection structure $\Gamma_X$ with respect to $\BoxB$. Then for every cube $x \colon \BoxA^m \to Y$ in the image of $\Gamma_X$, there exists a unique $x' \colon \BoxA^n \to X$ and a unique epimorphism $\phi \colon [1]^m \to [1]^n$ such that $x'$ is non-degenerate with respect to $\Gamma_X$ and $x = x'(\phi_1) \ldots (\phi_p)$ for some factorization $\phi = \phi_1 \ldots \phi_p$.
\end{proposition}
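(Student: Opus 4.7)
My plan is to prove existence by an inductive reduction on the dimension of $x$, and uniqueness in two separate stages: first the non-degenerate base $x'$ via a minimum-dimension argument, then the composite epimorphism $\phi$ via its sections. Both uniqueness arguments use \cref{wcs-sections} to push monomorphisms through iterated weak-connection expressions.

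For existence, any $x$ in the image of $\Gamma_X$ equals $y(\alpha)$ for some $y \in X$ and some $\alpha$ in $\BoxB$. Factoring $\alpha$ as a monomorphism followed by an epimorphism and using the $\BoxA$-compatibility of $\Gamma_X$, we reduce to the case where $\alpha$ is an epimorphism. If $y$ is non-degenerate with respect to $\Gamma_X$, we are done; otherwise $y = y'(\beta)$ for some $y' \in X$ of strictly smaller dimension and non-identity epimorphism $\beta$, and we iterate. The iteration terminates at a non-degenerate base because the successive bases strictly decrease in dimension.

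For uniqueness of $x'$, suppose $x = x'(\phi_1)\ldots(\phi_p) = x''(\psi_1)\ldots(\psi_q)$ with $x', x''$ both non-degenerate of dimensions $n_1, n_2$, and set $\phi = \phi_1 \cdots \phi_p$, $\psi = \psi_1 \cdots \psi_q$. Applying \cref{wcs-sections} to the first expression with a section $s_1$ of $\phi$ yields $xs_1 = x'\delta(\chi_1)\ldots(\chi_r)$ with $\phi s_1 = \delta\chi_1\cdots\chi_r = \id$; uniqueness of the epi-mono factorization of $\id$ forces $\delta = \id$, and by \cref{epi-or-mono} any dimension-preserving epimorphism in $\BoxB$ is the identity, so each $\chi_i = \id$ and $xs_1 = x'$. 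Applying \cref{wcs-sections} to the other expression gives $x' = xs_1 = x''\mu(\chi'_1)\ldots(\chi'_{r'})$ with $\psi s_1 = \mu\chi'_1\cdots\chi'_{r'}$ and $\mu$ a monomorphism; analyzing the dimensions in this relation yields $n_1 \leq n_2$, and a symmetric argument with a section of $\psi$ gives $n_2 \leq n_1$. Thus $n_1 = n_2 = n$, $\mu$ is an endomono of $[1]^n$ and so the identity, each $\chi'_i = \id$, and $x' = x''$.

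For uniqueness of $\phi$, the same analysis establishes the biconditional that $xs = x'$ if and only if $s$ is a section of $\phi$; applied symmetrically to $\psi$, this shows that $\phi$ and $\psi$ have identical sets of sections. I would conclude $\phi = \psi$ by invoking the combinatorial fact that in any cube category $\BoxB$ under consideration, an epimorphism is determined by its set of sections: on any vertex of $[1]^m$ in the image of a section, $\phi$'s value is directly read off, and on any other vertex it is forced by monotonicity and surjectivity of $\phi$ together with the values on section-image vertices. I expect this final combinatorial lemma to be the main obstacle: unlike in the simplex category (where sections of an epimorphism cover every vertex of the domain), the connections in $\BoxB$ allow epimorphisms whose sections leave gaps, so agreement on sections must be combined with the monotonicity argument above to propagate to agreement everywhere.
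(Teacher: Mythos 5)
Your argument is essentially the paper's proof: existence by iterated reduction on dimension, uniqueness of the base via sections and \cref{wcs-sections}, and uniqueness of the epimorphism via comparison of section sets. Two remarks. First, the step you flag as "the main obstacle" --- that an epimorphism in $\BoxB$ is determined by its set of sections --- is not something you need to prove from scratch: it is one of the defining axioms of an EZ-Reedy category, and is therefore already supplied by \cref{Box-Reedy}; the paper concludes $\phi = \phi'$ by citing exactly that. Your instinct that sections of an epimorphism need not cover every vertex in the presence of connections is correct, but the combinatorial verification is part of the cited Reedy structure, not of this proposition.

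Second, the one place where your write-up does not close as stated is the claim that "analyzing the dimensions" in $x' = x''\mu(\chi'_1)\ldots(\chi'_{r'})$ yields $n_1 \leq n_2$. From $\psi s_1 = \mu \chi'$ with $\mu$ monic and $\chi'$ epic one only gets that the intermediate object has dimension at most $\min(n_1,n_2)$, which gives no comparison between $n_1$ and $n_2$. You must first use the non-degeneracy of $x'$ with respect to $\Gamma_X$ to force each $\chi'_i = \id$ (note $x''\mu$ lies in $X$ since $\mu$ is a monomorphism, hence in $\BoxA$ by \cref{epi-or-mono}, and $X$ is a subcomplex); only then is $\mu$ a monomorphism $[1]^{n_1} \to [1]^{n_2}$ and $n_1 \leq n_2$ follows. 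Your stated order --- dimensions first, then $\mu = \id$, then $\chi'_i = \id$ --- is circular. This is exactly the step the paper phrases as "by our assumption that $z$ is non-degenerate with respect to $\Gamma_X$, each $\psi_i$ must be the identity," and with that reordering your proof matches the paper's.
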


\begin{proof}
To prove the existence of $x'$ and $\phi$, we proceed by induction on $m$, the dimension of $x$. In the base case $m = 0$, the fact that there are no non-identity epimorphisms in $\BoxB$ with domain $[1]^0$ implies that $x$ is non-degenerate with respect to $\Gamma_X$. Thus we may take $x' = x, \phi = \id$.

Now let $m \geq 1$, and suppose the statement holds for all $m' \leq m$. If $x$ is non-degenerate with respect to $\Gamma_X$, then once again we may take $x' = x, \phi = \id$. Otherwise, by definition there exist some $k < m$, some $z \colon \BoxA^k \to X$, and some non-identity epimorphism $\psi \colon [1]^m \to [1]^k$ such that $x = z(\psi)$. By the induction hypothesis, there exist some $k' \leq k$, some $z' \colon \BoxA^{k'} \to X$, and some epimorphism $\psi' \colon [1]^{k} \to [1]^{k'}$ factoring as $\psi'_{1} \ldots \psi'_{q}$ such that $z = z'(\psi'_1) \ldots (\psi'_q)$. Thus we may take $x' = z'$ and $\phi = \psi' \psi$.

To show uniqueness of $x'$ and $\phi$, suppose that for some $z \colon \BoxA^m \to  X$, $z' \colon \BoxA^{m'} \to X$, both non-degenerate with respect to $\Gamma$, and some epimorphisms $\phi \colon [1]^n \to [1]^m$ factoring as $\phi_1 \ldots \phi_p$ and $\phi' \colon [1]^n \to [1]^{m'}$ factoring as $\phi_1' \ldots \phi_{p'}'$, the $n$-cubes $z(\phi_1) \ldots  (\phi_p)$ and $z'(\phi'_1) \ldots  (\phi'_{p'})$ are equal. We will show that $z = z'$ and $\phi = \phi'$.

Without loss of generality, assume $m \leq m'$, so that $p \geq p'$. By \cref{Box-Reedy}, $\phi'$ has a section $\lambda$. By \cref{wcs-sections}, we have $z'(\phi'_1) \ldots  (\phi'_{p'}) \lambda = z'$ (note $\lambda$ is in $\BoxA$ by \cref{epi-or-mono}). Likewise, if we let $\delta \psi$ denote the epi-monic factorization of $\phi \lambda$, then $z(\phi_1) \ldots  (\phi_p) \lambda = z \delta (\psi_1) \ldots (\psi_q)$ for some factorization of $\psi$ into epimorphisms $\psi_1 \ldots \psi_q$. By our assumption that $z$ is non-degenerate with respect to $\Gamma_X$, each $\psi_i$ must be the identity, so that $z' = z \delta$. Thus $\delta$ is a monomorphism $[1]^{m'} \to [1]^m$; our assumption that $m \leq m'$ then implies that $\delta$ is the identity, so that $m = m'$ and $z = z'$.

Furthermore, recalling that $\delta \psi = \phi \lambda$, we see that $\lambda$ is a section of $\phi$. As our choice of $\lambda$ was arbitrary, this implies that every section of $\phi'$ is a section of $\phi$. Moreover, since $m = m'$ (and likewise $p = p'$), we may apply the same argument to see that every section of $\phi$ is a section of $\phi'$, so that the sets of sections of $\phi$ and $\phi'$ coincide; by \cref{Box-Reedy} this implies $\phi = \phi'$.
\end{proof}

Finally, we note that weak connection structures can be used to describe the right adjoint $i_* \colon  \cA^{+} \to \cB^{+}$.

\begin{proposition}\label{i-right-wcs}
For $A \subseteq B \subseteq \{0,1\}$ and $X \in \cA^{+}$, an $n$-cube in $i_* X$ consists of an $n$-cube $x \colon \BoxA^n \to X$, together with a weak connection structure $\Gamma_x$ on $x$; such a cube is marked if and only if $x$ is marked. A structure map $\phi \colon X_n \to X_m$ sends such a pair $(x, \Gamma_x)$ to the pair $(x(\phi), \phi^* \Gamma_x)$.
\end{proposition}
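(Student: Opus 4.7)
The plan is to deduce this from Proposition \ref{i-right-adj}, which already describes $(i_*X)_n$ as the set $\cSet_A^+(i^*\Box_B^n, X)$ with markings and structure maps given by the corresponding natural operations on these hom-sets; the proposed statement is then just a repackaging of this description in the language of weak connection structures.

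First I would observe that, by Lemma \ref{i-left-colim}, there is a canonical isomorphism $i_!\BoxA^n \cong \BoxB^n$, and hence $i^*i_!\BoxA^n \cong i^*\BoxB^n$ in $\cA^+$, naturally in $n$. Under this identification, a weak connection structure on an $n$-cube $x \colon \BoxA^n \to X$ is by definition a map $\Gamma_x \colon i^*\BoxB^n \to X$ satisfying $\Gamma_x \circ \eta_{\BoxA^n} = x$. Since $\eta_{\BoxA^n}$ sends the top-dimensional cube $\id_{[1]^n}$ of $\BoxA^n$ to the top-dimensional cube of $i^*\BoxB^n$, the compatibility condition $\Gamma_x \circ \eta_{\BoxA^n} = x$ is equivalent to the single constraint that $\Gamma_x$ takes $\id_{[1]^n}$ to $x$. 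Therefore, giving an element of $\cSet_A^+(i^*\BoxB^n, X)$ is the same as giving an $n$-cube $x$ together with a weak connection structure $\Gamma_x$ on it, establishing the claimed bijection of sets of $n$-cubes.

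Next, the marking condition in Proposition \ref{i-right-adj} says that an $n$-cube of $i_*X$ is marked if and only if the corresponding map $i^*\BoxB^n \to X$ sends $\id_{[1]^n}$ to a marked cube of $X$. Under the identification above, $\Gamma_x$ sends $\id_{[1]^n}$ to $x$, so markedness of the cube corresponds precisely to markedness of $x$.

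Finally, for the structure maps, let $\phi \colon [1]^m \to [1]^n$ in $\BoxB$ and let $(x,\Gamma_x)$ correspond to the map $\Gamma_x \colon i^*\BoxB^n \to X$. The structure map action in Proposition \ref{i-right-adj} is pre-composition with $i^*\phi \colon i^*\BoxB^m \to i^*\BoxB^n$, giving $\Gamma_x \circ i^*\phi$, which is by definition $\phi^*\Gamma_x$. Its underlying $m$-cube is $\Gamma_x \circ i^*\phi \circ \eta_{\BoxA^m} = \Gamma_x \circ \overline{\phi} = x(\phi)$, using the factorization of $\overline{\phi}$ through $\eta$ noted in the discussion preceding Lemma \ref{wcs-sections}. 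This yields the claimed formula $(x,\Gamma_x) \cdot \phi = (x(\phi), \phi^*\Gamma_x)$.

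The argument is essentially a tautology once the identification $i_!\BoxA^n \cong \BoxB^n$ is in hand, so there is no substantial obstacle; the only point requiring care is verifying that the compatibility condition in the definition of a weak connection structure reduces to the single equation $\Gamma_x(\id_{[1]^n}) = x$, which is immediate because $\eta_{\BoxA^n}$ is determined by the image of $\id_{[1]^n}$.
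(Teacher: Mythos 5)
Your proposal is correct and follows essentially the same route as the paper: both unpack Proposition \ref{i-right-adj} via the identification $i^*\Box_B^n = i^*i_!\Box_A^n$, identify a map $i^*\Box_B^n \to X$ with a weak connection structure on the $n$-cube obtained by precomposing with $\eta$, and read off the markings and the structure-map formula from there. Your extra remark that the compatibility condition reduces, by the Yoneda lemma, to the single equation $\Gamma_x(\id_{[1]^n}) = x$ is a harmless elaboration of the same argument.
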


\begin{proof}
By \cref{i-right-adj}, $n$-cubes $\BoxB^n \to i_* X$ correspond to maps into $X$ from $i^* \BoxB^n = i^* i_! \BoxA^n$, with structure maps acting by pre-composition. Each such cube can be identified as a weak connection structure $\Gamma_X$ on a unique $n$-cube of $x$ by pre-composing with $\eta$. By definition, the image of such a pair $(x, \Gamma_x)$ under a structure map $\phi$ is the composite $i^* i_! \BoxA^m \xrightarrow{i^* \phi} i^* i_! \BoxA^n \xrightarrow{x} X$, which this identification sends to the weak connection structure $\phi^* \Gamma_X$ on $x (\phi)$. The characterization of marked cubes then follows from \cref{unit-characterization}, which shows that $i^* i_! \wBox_A^n$ is obtained from $i^* i_! \BoxA^n$ by marking the cube $\id_{[1]^n}$.
\end{proof}


\subsection{Lifting weak connection structures}\label{section:lifting}

Our next goal is to prove the following result, which will play a key role in our proof of \cref{T-min-Quillen-eqv}:

\begin{proposition}\label{eta-tcof}
For every $X \in \cmin^+$, the adjunction unit $\eta \colon X \hookrightarrow  i^* i_! X$ is a trivial cofibration in each of the (saturated, $n$-trivial) comical model structures.
\end{proposition}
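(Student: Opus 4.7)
The map $\eta_X$ is a regular subcomplex inclusion by \cref{unit-characterization}, hence a monomorphism, and so a cofibration in all of the model structures under consideration. It remains to show it is a weak equivalence; I plan to prove the stronger statement that $\eta_X$ is a comical map (i.e.\ lies in the saturation of the comical open box inclusions and comical marking extensions), which yields the conclusion uniformly for all four (saturated, $n$-trivial) comical model structures at once.

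To reduce to representables I would apply \cref{nat-weq} to the natural transformation $\eta \colon \mathrm{id} \Rightarrow i^* i_!$ of endofunctors on $\cmin^+$, viewing $\cmin^+$ as a reflective subcategory of $\cmin^{++}$ via the reflector $\Im$. Conditions (1) and (2) follow quickly: $\mathrm{id}$ trivially preserves colimits, $i^* i_!$ does as well since $i_!$ is a left adjoint and $i^*$ preserves colimits by \cref{i-has-adjoints}, and the explicit description of $i^* i_!$ in \cref{unit-characterization} together with \cref{i-precomp-cofs} shows that $\Im$ and $i^* i_! \Im$ send monomorphisms in $\cmin^{++}$ to monomorphisms, hence cofibrations, in $\cmin^+$. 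So the claim reduces to showing $\eta_{\Box^n_\varnothing}$ and $\eta_{\wBox^n_\varnothing}$ are weak equivalences for every $n \geq 0$.

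For the representable case I would filter $i^* \Box^n_B$ (respectively $i^* \wBox^n_B$) over $\Box^n_\varnothing$ (respectively $\wBox^n_\varnothing$) by adding its newly-introduced non-degenerate cubes in a carefully chosen order: by \cref{i-left-explicit} and \cref{normal-form} these are indexed by standard forms $\delta \gamma$ with $\delta$ a composite of face maps and $\gamma$ a nonempty composite of connection maps in $\Box_B$, so one stratifies by the number of connection operators in $\gamma$, then by the maximal index of $\gamma$ in the sense of \cref{max-index-def}, and finally by dimension. The combinatorial lemmas \cref{face-shift}, \cref{degen-shift}, and \cref{con-shift}, together with the cubical identities relating $\gamma$ to $\partial$ and $\sigma$, are precisely what is needed to show that when a cube $x = \delta \gamma$ is added, enough of its faces are forced to coincide with cubes already present --- either original cubes of $\Box^n_\varnothing$, degenerate cubes, or earlier-added connection cubes --- for the attachment to fit as a pushout of a comical open box inclusion, with one distinguished face identifying $x$ as the composite and a comical marking extension supplying any missing critical marking. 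The marked case is handled identically, with the top-level marking of $\wBox^n_\varnothing$ propagating through the $\gamma$-faces to supply the critical markings required by the fillers at higher stages.

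The principal obstacle is the combinatorial bookkeeping: at certain stages the natural candidate for a comical open box filler has among its critical faces a previously-added unmarked connection cube, so one cannot proceed one cube at a time. Addressing this will require grouping several cubes into a single pushout of a higher-dimensional pseudo-generating trivial cofibration, or interleaving the filtration with judicious applications of comical marking extensions --- and the theory of weak connection structures and their lifting, to be developed in the remainder of this section, is designed precisely to organise this kind of data. Verifying that each step of the filtration really does present the inclusion as a pushout of a pseudo-generating trivial cofibration, especially in the presence of iterated connections whose faces are themselves iterated connections of lower complexity, is the main technical work.
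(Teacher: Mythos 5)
Your first step coincides with the paper's: note that $\eta_X$ is a monomorphism by \cref{unit-characterization} and reduce to the representables $\Boxmin^n$, $\wBox_\varnothing^n$ via \cref{nat-weq} (the paper additionally handles $\wBox_\varnothing^n$ painlessly via \cref{rep-unit-PO}, and runs an induction on $n$ so that the boundary case $\eta_{\bd\Boxmin^n}$ feeds into a pushout, isolating the real work in the single inclusion $\BoxPO \hookrightarrow i^*\BoxA^n$). Where you diverge is in how you propose to prove that the representable components are comical: you want to exhibit the inclusion as a transfinite composite of pushouts of comical open box inclusions and marking extensions, attaching the freely added connection cubes one at a time in a clever order. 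The paper does something genuinely different: by \cref{comical-wfs} it suffices to verify the \emph{left lifting property against comical fibrations}, and the proof of \cref{PO-lift} constructs the lift by hand using the ``approximation'' machinery. This only exhibits the map as a retract of a cellular map, never as a cell complex itself, and that weaker conclusion is all that is needed.

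The distinction matters, because your cellular plan breaks at the very first step, and the obstacle is not the one you name (unmarked critical faces among previously added cubes). A pushout of a comical open box inclusion $\sqcap^n_{i,\varepsilon} \hookrightarrow \Box^n_{i,\varepsilon}$ freely adjoins \emph{two} new non-degenerate cells: the interior $n$-cube and its missing $(i,\varepsilon)$-face. But in $i^*i_!X$ every face of every freely added connection cube is already forced by the cubical identities and is typically already present. Concretely, for $n=1$ and $0 \in B$, the $2$-cube $\gamma_{1,0}$ of $i^*\Box^1_B$ has all four faces equal to $\id_{[1]}$ or to the degenerate edge on the terminal vertex, all of which already lie in $\Box^1_\varnothing$; there is no pseudo-generating trivial cofibration that attaches a cube with \emph{fully} prescribed boundary, so this cube cannot be added by any single pushout, nor can the deficit be repaired by reordering the filtration. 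This is exactly the difficulty dramatized in \cref{simple-approximation-example}: the filler of the obvious open box has one wrong, freely chosen face. The paper's resolution is to produce the desired connection as the \emph{missing face} of a filler one dimension higher (the approximation $x(\psi\gjqt)$), and those auxiliary approximation cubes live in the codomain of the lifting problem, not in $i^*i_!X$ itself --- which is precisely why the argument yields the lifting property (hence a retract presentation) rather than a cell decomposition. To repair your proof you would have to either abandon the explicit decomposition in favor of the LLP argument, or enlarge $i^*i_!X$ to a cellularly built object containing all the approximation cubes and then retract back down; either way you are led to the weak connection structure and approximation formalism of \cref{PO-lift}.
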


Our proof of this result will involve showing that in $\cmin^+$, weak connection structures on cubes may be lifted along comical fibrations. 
Thus, from here until the end of Section \ref{section:lifting} we will fix some non-empty subset $A \subseteq \{0,1\}$ and study the adjunction $i_! : \cmin^+ \rightleftarrows \cA^+ : i^*$.

In order for the proofs in this section to be applicable in full generality, we will essentially be working with $A = \{0,1\}$, and one may keep this case in mind for the sake of concreteness. Note, however, that our proofs are equally valid for $\cneg^+$ and $\cpos^+$, and can also be read with either of these categories in mind; this simply results in certain cases of various case analyses becoming vacuous. 

We begin with a result which, together with \cref{nat-weq}, will ultimately allow us to restrict our focus to the standard unmarked cubes.

\begin{proposition}\label{rep-unit-PO}
For all $A \subseteq \{0,1\}, n \geq 1$, the unit map $\eta_{\wBox^n_A}$ is the pushout of $\eta_{\Box^n_A}$ along the $n$-dimensional marker. In other words, the following diagram is a pushout:
\[
\xymatrix{
\Box^n_A \ar[d]_{\eta_{\BoxA^n}} \ar[r] & \wBox^n_A \ar[d]^{\eta_{\wBox_A^n}} \\
i^* i_! \Box^n_A \ar[r] & i^* i_! \wBox^n_A \pushoutcorner \\
}
\]
\end{proposition}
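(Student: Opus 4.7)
The plan is to verify the pushout property directly, by explicitly computing the four corners of the square and comparing. The argument is essentially bookkeeping about cubes and markings.

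First, I would identify the bottom edge of the square. By \cref{i-left-colim} the left adjoint $i_!$ satisfies $i_! \Box^n_\varnothing \cong \Box^n_A$ and $i_! \wBox^n_\varnothing \cong \wBox^n_A$, and by \cref{i-left-on-gens} it sends the marker $\Box^n_\varnothing \to \wBox^n_\varnothing$ to the marker $\Box^n_A \to \wBox^n_A$. In particular, both horizontal maps in the square are entire, hence isomorphisms on underlying cubical sets. Using \cref{unit-characterization} I would next describe the vertical edges: the $m$-cubes of $i^* i_! \Box^n_\varnothing$ are exactly the maps $[1]^m \to [1]^n$ in $\Box_A$, and an $m$-cube is marked iff it fails to be a monomorphism; the object $i^* i_! \wBox^n_\varnothing$ has the same underlying cubical set, with the single additional marking on $\id_{[1]^n}$. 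Both units $\eta_{\Box^n_\varnothing}$ and $\eta_{\wBox^n_\varnothing}$ are the evident maps induced by the inclusion $\Box_\varnothing \hookrightarrow \Box_A$.

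Next I would exploit that pushouts in $\cmin^+$ may be computed by taking the pushout of underlying cubical sets and then taking the union of the two pushed-forward marking sets. Since the horizontal maps of our square are isomorphisms on underlying cubical sets, the underlying cubical set of the pushout $P$ is $|i^* i_! \Box^n_\varnothing|$, and the induced comparison map $P \to i^* i_! \wBox^n_\varnothing$ is an isomorphism on underlying cubical sets. It remains to check that the marked cubes agree. The marked cubes of $P$ form the union of: (i) all non-monomorphisms in $\Box_A([-], [1]^n)$, contributed by $i^* i_! \Box^n_\varnothing$, and (ii) the non-monomorphisms in $\Box_\varnothing([-], [1]^n)$ together with $\id_{[1]^n}$, contributed by $\wBox^n_\varnothing$ via $\eta$. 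Since the non-monomorphisms in $\Box_\varnothing$ are already contained in (i), the union is exactly the non-monomorphisms in $\Box_A$ together with $\id_{[1]^n}$---precisely the marked cubes of $i^* i_! \wBox^n_\varnothing$.

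There is no genuine obstacle; the only care required is in pinning down the marked cubes of the representables $\Box^n_A$ and $\wBox^n_A$, namely that $\wBox^n_A$ is obtained from $\Box^n_A$ by marking exactly the top non-degenerate cube $\id_{[1]^n}$. Everything else is a direct bookkeeping verification.
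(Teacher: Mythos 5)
Your proof is correct and follows essentially the same route as the paper's: both arguments use the explicit description of $i^*i_!$ from \cref{unit-characterization} to observe that the two units have the same underlying cubical set map and that $\id_{[1]^n}$ is the only cube unmarked in $i^*i_!\Box^n$ but marked in $i^*i_!\wBox^n$, so the square is a pushout. Your version merely spells out in more detail how pushouts of marked cubical sets are computed (pushout of underlying cubical sets together with the union of markings), which the paper leaves implicit.
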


\begin{proof}
By \cref{unit-characterization}, it follows that $\eta_{\wBox^n_A}$  has the same underlying cubical set map as $\eta_{\Box^n_A}$, with the marked cubes of $i^* i_! \wBox^n_A$ being precisely $\id_{[1]^n}$, the degenerate cubes, and the free connections; thus $\id_{[1]^n}$ is the only unmarked cube of $i^* i_! \Box^n_A$ which is marked in $i^* i_! \wBox^n_A$. It follows that the diagram given in the statement is a pushout.
\end{proof}

As our proof of \cref{eta-tcof} will be heavily combinatorial, we first present a simple example to convey the basic idea of the proof; this is essentially a marked and relative version of the proof of \cite[Prop. 1.44]{doherty-kapulkin-lindsey-sattler}.

\begin{example}\label{simple-approximation-example}
Let $f \colon X \to Y$ be a fibration in $\cmin^+$, and suppose we are given a 1-cube $x \colon \Boxmin^1 \to X$ with vertices $x \bd_{1,0} = x_0, x\bd_{1,1} = x_1$, together with a weak connection structure (with respect to a cube category containing negative connections) $\Gamma_{f(x)}$ on $f(x)$ in $Y$. We wish to define a negative connection on $x$ in $X$ lying above the negative connection on $f(x)$ given by $\Gamma_{f(x)}$, i.e. to define a marked cube $x(\gamma_{1,0})$ as below, such that $f(x(\gamma_{1,0})) = f(x)(\gamma_{1,0})$.
\[
\begin{tikzcd}
x_0 \arrow[d,swap,"x"] \arrow[r,"x"] \arrow[phantom,dr,"\sim"] & x_1 \arrow[equal,d] \\
x_1 \arrow[equal,r] & x_1 \\
\end{tikzcd}
\]
The $(1,0), (1,1)$, and $(2,1)$-faces of this cube define a $(2,0)$-comical open box in $X$, whose image in $Y$ has a marked filler given by $f(x)(\gamma_{1,0})$. Lifting this filler along $f$, we obtain the marked cube $x(\widetilde{\gamma}_{1,0})$ depicted below:
\[
\begin{tikzcd}
x_0 \arrow[d,swap,"x"] \arrow[r,"x'"] \arrow[phantom,dr,"\sim"] & x_1 \arrow[equal,d] \\
x_1 \arrow[equal,r] & x_1 \\
\end{tikzcd}
\]
This cube may not be a valid connection on $x$, as its $(2,0)$-face is an arbitrary cube $x'$ obtained by lifting, which is not necessarily equal to $x$. Note, however, that the boundary of $x'$ coincides with that of $x$, and that $f(x') = f(x)$.

To ``correct'' $x(\widetilde{\gamma}_{1,0})$ to a connection on $x$, we may consider the following three-dimensional $(1,0)$-comical open box in $X$ (markings are omitted from the diagram for clarity, but all 2-cubes are marked):
\[
\xymatrix{
x_0 \ar[rr]^{x'} \ar[dd]_{x} \ar[dr]^{x} &  & x_1 \ar@{=}[dd]|{\hole} \ar@{=}[dr] \\
& x_1 \ar@{=}[rr] \ar@{=}[dd] & & x_1 \ar@{=}[dd] \\
x_1 \ar@{=}[rr]|{\hole} \ar@{=}[dr] & & x_1 \ar@{=}[dr] \\
& x_1 \ar@{=}[rr] & & x_1 \\
}
\]
Here the top and back faces are equal to $x\widetilde{\gamma}_{1,0}$, while the left face is missing. The image of this open box in $Y$ has a marked filler given by $f(x)(\gamma_{1,0}\gamma_{2,0})$, which we may lift along $f$. Thus we obtain a marked filler for the open box depicted above; we may define its $(1,0)$-face to be $x(\gamma_{1,0})$. Moreover, all faces of this filler other than $x(\gamma_{1,0})$ are marked; as $f$ has the right lifting property with respect to comical marking extensions, it follows that $x(\gamma_{1,0})$ is marked as well. By construction, the image of $x(\gamma_{1.0})$ under $f$ is the $(1,0)$-face of $f(x)(\gamma_{1,0}\gamma_{2,0})$, i.e. $f(x)(\gamma_{1,0})$.
\end{example}

Our next result allows us to easily identify comical open boxes when dealing with weak connection structures.

\begin{lemma}\label{crit-edge-con-one}
For $1 \leq k \leq n$, $1 \leq i \leq n$, $\varepsilon \in \{0,1\}$ let $\delta$ denote a monomorphism $[1]^k \to [1]^{n+1}$ which defines a critical face with respect to either $\bd_{i,\varepsilon}$ or $\bd_{i+1,\varepsilon}$. Then the composite $\gamma_{i,\varepsilon} \delta \colon [1]^k \to [1]^n$ factors through a degeneracy or connection map $[1]^m \to [1]^{k-1}$.
\end{lemma}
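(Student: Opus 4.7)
The plan is to reduce the statement to showing that $\gamma_{i,\varepsilon}\delta$ fails to be a monomorphism. Once this is known, \cref{epi-or-mono} and \cref{epi-mono-factor} yield an epi-monic factorization $\gamma_{i,\varepsilon}\delta = \delta'\phi$ with $\phi \colon [1]^k \to [1]^{k'}$ a non-identity epimorphism, so $k' < k$, whose standard form is a non-empty composite of degeneracies and connections. The first generator applied (i.e.\ the rightmost in the standard form) is a single degeneracy or connection $[1]^k \to [1]^{k-1}$ through which $\gamma_{i,\varepsilon}\delta$ visibly factors, giving the desired conclusion with $m = k$.

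To establish non-injectivity, I would encode $\delta$ concretely via its standard form $\bd_{c_1,\eta_1}\cdots\bd_{c_r,\eta_r}$ (with $c_1 > \cdots > c_r$) by recording the set $S = \{1,\ldots,n+1\} \setminus \{c_1,\ldots,c_r\}$ of free coordinates, of cardinality $k$, together with the fixed-value assignment $c_s \mapsto \eta_s$. As a set map, $\delta$ sends $(y_1,\ldots,y_k)$ to the tuple placing $y_j$ at the $j$-th smallest element of $S$ and $\eta_s$ at each $c_s$; composition with $\gamma_{i,\varepsilon}$ then replaces the $i$-th and $(i+1)$-st coordinates of this image by their maximum (if $\varepsilon = 0$) or minimum (if $\varepsilon = 1$), placing the result at position $i$.

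I would then case split on which of $i$ and $i+1$ lie in $S$, using the critical-face hypothesis to rule out the remaining possibilities. If both $i, i+1 \in S$, then the $i$-th coordinate of $\gamma_{i,\varepsilon}\delta$ is the $\max$ or $\min$ of two distinct free variables $y_a, y_{a+1}$, so swapping their values preserves the image and the map fails to be injective. If $i \in S$ but $i+1 \notin S$, then the $i$-th coordinate becomes $\max/\min(y_a, \eta(i+1))$, which is constant in $y_a$ precisely when $\eta(i+1) = 1-\varepsilon$; criticality with respect to $\bd_{i,\varepsilon}$, via condition~(\ref{comical-high}) of \cref{crit-face-def} applied at $j = i+1$, forces $\bd_{i+1, \varepsilon} \notin \delta$, so indeed $\eta(i+1) = 1 - \varepsilon$. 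The mirror case $i \notin S$, $i+1 \in S$ only arises under the hypothesis that $\delta$ is critical with respect to $\bd_{i+1, \varepsilon}$, where condition~(\ref{comical-low}) applied at $j = i$ similarly forces $\eta(i) = 1 - \varepsilon$. The final case $i, i+1 \notin S$ is excluded outright by condition~(\ref{comical-middle}) of whichever critical-face hypothesis holds.

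The hard part will be extracting, from the abstract conditions of \cref{crit-face-def} concerning subsets of face maps in the standard form, the concrete sign data at coordinates $i$ and $i+1$ that forces non-injectivity. The key observation is that only the weakest instances of conditions~(\ref{comical-high}) and~(\ref{comical-low}) are needed, namely those at the smallest allowable values $j = i+1$ and $j = i$, where the forbidden subset collapses to the single face map $\bd_{i+1,\varepsilon}$ or $\bd_{i,\varepsilon}$; this eliminates the same-sign option and leaves the opposite-sign face as the only possibility when the relevant coordinate lies outside $S$.
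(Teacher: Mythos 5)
Your proof is correct, and it takes a genuinely different route from the paper's. The paper argues purely syntactically: it writes $\delta$ in standard form, splits into cases according to which face of $\Box^{n+1}_A$ the criticality hypothesis refers to and whether the face index adjacent to $i$ is present with which sign, and then pushes $\gamma_{i,\varepsilon}$ through the string of face maps using the cubical identities until it either survives as a trailing $\gamma_{i-q,\varepsilon}$ or collapses to a trailing $\sigma$ via $\gamma_{i,\varepsilon}\bd_{i+1,1-\varepsilon} = \bd_{i,1-\varepsilon}\sigma_i$; the desired factorization is then read off the resulting standard form. You instead work semantically with the underlying poset maps: you show $\gamma_{i,\varepsilon}\delta$ is not injective (either because two free coordinates are merged symmetrically by the $\max/\min$, or because a free coordinate is absorbed by the constant $1-\varepsilon$ forced at the adjacent position by the weakest instance of condition~(\ref{comical-high}) or~(\ref{comical-low})), and then invoke \cref{epi-or-mono} and \cref{epi-mono-factor} to extract a degeneracy or connection $[1]^k \to [1]^{k-1}$ from the epimorphism part of the standard form. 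Your case analysis is exhaustive (criticality rules out $i,i+1$ both being bound, and the case $i \notin S$ can only occur under criticality with respect to $\bd_{i+1,\varepsilon}$, as you note for the mirror case and should perhaps also note for case~(b)). Your approach trades the paper's explicit output — the actual standard form of $\gamma_{i,\varepsilon}\delta$, which could in principle be reused elsewhere — for a shorter argument with essentially no index bookkeeping; since downstream uses (\cref{crit-edge-con}, \cref{approx-crit-face}) only need that the composite is degenerate, hence marked, nothing is lost.
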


\begin{proof}
We begin with the case where $\delta$ is critical with respect to $\bd_{i,\varepsilon}$; this implies that the standard form of $\delta$ does not contain $\bd_{i+1,\varepsilon}$, or any face map with index $i$. Denote the standard form of $\delta$ by $\bd_{a_1,\mu_1} \ldots \bd_{a_p,\mu_p} \bd_{b_1,\nu_1} \ldots \bd_{b_q,\nu_q}$, where $a_1 > \ldots > a_p > i$ and $i > b_1 > \ldots > b_q$.

For $r < p$ we have $a_r > a_p > i$, implying $a_r \geq i + 2$. Thus we may apply the cubical identities to compute:
\begin{align*}
\gamma_{i,\varepsilon} \delta & = \gamma_{i,\varepsilon} \bd_{a_1,\mu_1} \ldots \bd_{a_p,\mu_p} \bd_{b_1,\nu_1} \ldots \bd_{b_q,\nu_q} \\
& = \bd_{a_1 - 1,\mu_1} \ldots \bd_{a_{p-1} - 1,\mu_{p-1}} \gamma_{i,\varepsilon} \bd_{a_p,\mu_p} \bd_{b_1,\nu_1} \ldots \bd_{b_q,\nu_q} \\
\end{align*}

We first consider the case in which $a_p \geq i + 2$. Then we can compute:

\begin{align*}
\gamma_{i,\varepsilon} \delta & = \bd_{a_1 - 1,\mu_1} \ldots \bd_{a_{p-1} - 1,\mu_{p-1}} \gamma_{i,\varepsilon} \bd_{a_p,\mu_p} \bd_{a_p,\mu_p} \bd_{b_1,\nu_1} \ldots \bd_{b_q,\nu_q} \\
& = \bd_{a_1 - 1,\mu_1} \ldots \bd_{a_{p-1} - 1,\mu_{p-1}} \bd_{a_p - 1,\mu_p} \gamma_{i,\varepsilon} \bd_{b_1,\nu_1} \ldots \bd_{b_q,\nu_q}  \\
& = \bd_{a_1 - 1,\mu_1} \ldots \bd_{a_{p-1} - 1,\mu_{p-1}} \bd_{a_p - 1,\mu_p}  \bd_{b_1,\nu_1} \ldots \bd_{b_q,\nu_q} \gamma_{i - q,\varepsilon} \\
\end{align*} 

Now consider the case $a_p = i + 1$; then by assumption, we must have $\mu_p = 1- \varepsilon$. Thus we may compute:

\begin{align*}
\gamma_{i,\varepsilon} \delta & = \bd_{a_1 - 1,\mu_1} \ldots \bd_{a_{p-1} - 1,\mu_{p-1}} \gamma_{i,\varepsilon} \bd_{i+1,1-\varepsilon} \bd_{a_p,\mu_p} \bd_{b_1,\nu_1} \ldots \bd_{b_q,\nu_q} \\
& = \bd_{a_1 - 1,\mu_1} \ldots \bd_{a_{p-1} - 1,\mu_{p-1}} \bd_{i,1-\varepsilon} \sigma_i \bd_{a_p,\mu_p} \bd_{b_1,\nu_1} \ldots \bd_{b_q,\nu_q} \\
& = \bd_{a_1 - 1,\mu_1} \ldots \bd_{a_{p-1} - 1,\mu_{p-1}} \bd_{i,1-\varepsilon} \bd_{a_p,\mu_p} \bd_{b_1,\nu_1} \ldots \bd_{b_q,\nu_q} \sigma_{i-q} \\ 
\end{align*}

Next we consider the case in which $\delta$ is critical with respect to $(i+1,\varepsilon)$. Here we will again express $\delta$ in standard form as $\bd_{a_1,\mu_1} \ldots \bd_{a_p,\mu_p} \bd_{b_1,\nu_1} \ldots \bd_{b_q,\nu_q}$, where now we have $a_1 > \ldots > a_p > i + 1$ and $i + 1 > b_1 > \ldots > b_q$. Similarly to the previous case, we can compute:

\begin{align*}
\gamma_{i,\varepsilon} \delta & = \gamma_{i,\varepsilon} \bd_{a_1,\mu_1} \ldots \bd_{a_p,\mu_p} \bd_{b_1,\nu_1} \ldots \bd_{b_q,\nu_q} \\
& = \bd_{a_1 - 1,\mu_1} \ldots \bd_{a_p - 1,\mu_p} \gamma_{i,\varepsilon} \bd_{b_1,\nu_1} \ldots \bd_{b_q,\nu_q} \\
\end{align*}

If $b_1 < i$, then we may compute:

\begin{align*}
\gamma_{i,\varepsilon} \delta & = \bd_{a_1 - 1,\mu_1} \ldots \bd_{a_p - 1,\mu_p} \gamma_{i,\varepsilon} \bd_{b_1,\nu_1} \ldots \bd_{b_q,\nu_q} \\
& =   \bd_{a_1 - 1,\mu_1} \ldots \bd_{a_p - 1,\mu_p} \bd_{b_1,\nu_1} \ldots \bd_{b_q,\nu_q} \gamma_{i-q,\varepsilon} \\
\end{align*}

On the other hand, if $b_1 = i$ then we must have $\nu_1 = 1 - \varepsilon$, allowing us to compute:

\begin{align*}
\gamma_{i,\varepsilon} \delta & = \bd_{a_1 - 1,\mu_1} \ldots \bd_{a_p - 1,\mu_p} \gamma_{i,\varepsilon} \bd_{i,1-\varepsilon} \ldots \bd_{b_q,\nu_q} \\
& = \bd_{a_1 - 1,\mu_1} \ldots \bd_{a_p - 1,\mu_p} \bd_{i,1-\varepsilon} \sigma_i \bd_{b_2,\nu_2} \ldots \bd_{b_q,\nu_q} \\
& = \bd_{a_1 - 1,\mu_1} \ldots \bd_{a_p - 1,\mu_p} \bd_{i,1-\varepsilon} \bd_{b_2,\nu_2} \ldots \bd_{b_q,\nu_q} \sigma_{i - q + 1} \\
\end{align*}

Thus we see that $\gamma_{i,\varepsilon} \delta$ factors through a connection or degeneracy in all cases.
\end{proof}

Our proof of \cref{eta-tcof} will make extensive use of the standard form for maps in $\BoxA$; in fact, we will introduce a refinement of this standard form which emphasizes composites of connections with consecutive indices.

\begin{definition}
For $n \geq 1$, $1 \leq i \leq n$, $q \geq 0$, and $\varepsilon \in \{0,1\}$, the map $\gamma_{i:q,\varepsilon} \colon [1]^{n+q} \to [1]^{n}$ in $\BoxA$ is defined to be the composite $\gamma_{i,\varepsilon} \cdots \gamma_{i+q-1,\varepsilon}$. (In the case $q = 0$ we interpret this expression as the identity.)
\end{definition}

Viewing the maps in $\BoxA$ as poset maps, we see that $\gamma_{i:q,0}$ sends an object $(a_1,\ldots,a_{n+q}) \in [1]^{n+q}$ to $(a_1,\ldots,a_{i-1},\mathrm{max}(a_{i},\ldots,a_{i+q}),a_{i+q+1},\ldots,a_{n+q}) \in [1]^n$, and $\gamma_{i:q,1}$ acts similarly, taking a minimum rather than a maximum.

\begin{lemma}\label{ext-gamma-faces}
For $n \geq 1$, $1 \leq i \leq n$, $q \geq 1$, and $\varepsilon \in \{0,1\}$, the map $\gamma_{i:q,\varepsilon}$ composes with face maps as follows:

$\gamma_{i:q,\varepsilon} \partial_{j, \varepsilon'} =  \left\{ \begin{array}{ll}
\partial_{j, \varepsilon'} \gamma_{i-1:q,\varepsilon}   & \mathrm{for } \, j < i \mathrm{;} \\
\gamma_{i:q-1,\varepsilon} & \mathrm{for } \, i \leq j \leq i+q,  \, \varepsilon' = \varepsilon \mathrm{;} \\
\partial_{i, 1-\varepsilon} \sigma_i \ldots \sigma_{i+q-1} & \mathrm{for} \, i \leq j \leq i+q,  \, \varepsilon' = 1-\varepsilon \mathrm{;} \\
\partial_{j-q, \varepsilon'} \gamma_{i:q,\varepsilon} & \mathrm{for} \, j > i + q \mathrm{.} 
\end{array}\right.$
\end{lemma}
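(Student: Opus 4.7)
The plan is to prove the four formulas simultaneously by induction on $q \geq 1$, using the recursive factorization $\gamma_{i:q,\varepsilon} = \gamma_{i,\varepsilon}\gamma_{i+1:q-1,\varepsilon}$ to split off the leading connection and then apply the cubical identities for $\gamma\partial$ together with the inductive hypothesis at each step.

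The base case $q = 1$ is a direct translation of the cubical identities for $\gamma_{i,\varepsilon}\partial_{j,\varepsilon'}$: the four subcases of the lemma correspond respectively to the four conditions $j < i$, $j \in \{i, i+1\}$ with $\varepsilon' = \varepsilon$, $j \in \{i, i+1\}$ with $\varepsilon' = 1-\varepsilon$, and $j > i + 1$ of the cubical identity, yielding respectively $\partial_{j,\varepsilon'}\gamma_{i-1,\varepsilon}$, the identity, $\partial_{i,1-\varepsilon}\sigma_i$, and $\partial_{j-1,\varepsilon'}\gamma_{i,\varepsilon}$. For the inductive step, write $\gamma_{i:q,\varepsilon}\partial_{j,\varepsilon'} = \gamma_{i,\varepsilon}\bigl(\gamma_{i+1:q-1,\varepsilon}\partial_{j,\varepsilon'}\bigr)$ and split into cases. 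If $j \leq i$, then since $j < i+1$ the inductive hypothesis (case 1) rewrites the inner composite as $\partial_{j,\varepsilon'}\gamma_{i:q-1,\varepsilon}$. If $i+1 \leq j \leq i+q$, the inductive hypothesis (case 2 or 3 depending on $\varepsilon'$) produces either $\gamma_{i+1:q-2,\varepsilon}$ or $\partial_{i+1,1-\varepsilon}\sigma_{i+1}\cdots\sigma_{i+q-1}$. If $j > i+q$, the inductive hypothesis (case 4) gives $\partial_{j-q+1,\varepsilon'}\gamma_{i+1:q-1,\varepsilon}$. In each subcase one then applies the base case to any remaining leftmost $\gamma_{i,\varepsilon}\partial$ composite and recombines the connection factors via the identity $\gamma_{k,\varepsilon}\gamma_{k+1:r,\varepsilon} = \gamma_{k:r+1,\varepsilon}$.

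The main subtlety arises in the ``degenerate'' cases $i \leq j \leq i+q$ with $\varepsilon' = 1-\varepsilon$, where the above reductions produce an expression of the form $\partial_{i,1-\varepsilon}\sigma_i\gamma_{i:q-1,\varepsilon}$ (when $j = i$) or $\partial_{i,1-\varepsilon}\sigma_i\sigma_{i+1}\cdots\sigma_{i+q-1}$ (when $j > i$). To match the claimed right-hand side in the first of these one needs the auxiliary identity
\[
\sigma_i\gamma_{i:q-1,\varepsilon} = \sigma_i\sigma_{i+1}\cdots\sigma_{i+q-1},
\]
which I would prove by a short secondary induction on $q$: the base uses $\sigma_i\gamma_{i,\varepsilon} = \sigma_i\sigma_i$, rewritten as $\sigma_i\sigma_{i+1}$ via the cubical identity $\sigma_i\sigma_j = \sigma_j\sigma_{i+1}$ at $j = i$, and the step propagates along the factorization using $\sigma_j\gamma_{k,\varepsilon} = \gamma_{k-1,\varepsilon}\sigma_j$ for $j < k$. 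This small auxiliary computation is where essentially all the fiddly bookkeeping lives; the rest is routine cubical index manipulation.
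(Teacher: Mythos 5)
Your proof is correct, and it takes the same route as the paper, which simply declares the lemma ``immediate from the cubical identities'': your induction on $q$ via the factorization $\gamma_{i:q,\varepsilon}=\gamma_{i,\varepsilon}\gamma_{i+1:q-1,\varepsilon}$ is a sound way of organizing that verification, and you correctly isolate the one non-obvious ingredient, the identity $\sigma_i\gamma_{i:q-1,\varepsilon}=\sigma_i\cdots\sigma_{i+q-1}$ needed to reconcile the $j=i$ and $j>i$ subcases of the third clause. All four cases check out (e.g.\ directly against the poset-map description of $\gamma_{i:q,\varepsilon}$), so there is no gap.
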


\begin{proof}
Immediate from the cubical identities.
\end{proof}

\begin{definition}\label{tail-form-def}
Let $\phi \colon [1]^{n+k} \to [1]^n$ be a (possibly empty) composite of connection maps in $\BoxA$. A \emph{tail form} of $\phi$ consists of a choice of $p, q \geq 0$ such that $p + q = k$; together with a pair of maps $\psi = \gamma_{i_1,\varepsilon_1}\ldots\gamma_{i_p,\varepsilon_p} \colon [1]^{n+p} \to [1]^{n}$ and $\gamma_{i:q,\mu} \colon [1]^{n+p+q} \to [1]^{n+p}$, such that:

\begin{itemize}
\item $\psi \gamma_{i:q,\mu} = \phi$;
\item if $p \neq 0$ then $j \geq i_p$, and if in addition $\varepsilon_p = \mu$ then $j \geq i_p + 2$.
\end{itemize}

A tail form is \emph{trivial} if $q = 0$, and \emph{non-trivial} otherwise.
\end{definition}

Given a non-trivial tail form $\phi = \psi \gjq$, we refer to the value $q - 1$ as the \emph{tail length} of $\phi$. Furthermore, we may note that the maximal index of $\phi$ is $j + q - 1$.

\begin{lemma}\label{tail-form}
Every non-empty composite of connection maps has a unique non-trivial tail form.
\end{lemma}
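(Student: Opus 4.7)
The plan is to derive both existence and uniqueness from the standard form of \cref{normal-form}, by identifying the non-trivial tail of a composite of connections with a maximal run of consecutive same-sign connections appearing at the end of its standard form.

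For existence, given a non-empty composite $\phi$ of connection maps, I would first invoke \cref{normal-form} to write $\phi$ uniquely as $\gamma_{i_1,\varepsilon_1}\ldots\gamma_{i_k,\varepsilon_k}$ with $i_1 \leq \ldots \leq i_k$, and $i_t < i_{t+1}$ whenever $\varepsilon_t = \varepsilon_{t+1}$. Then I would let $q \geq 1$ be the largest integer for which there exist $j \geq 1$ and $\mu \in \{0,1\}$ such that the terminal segment $\gamma_{i_{k-q+1},\varepsilon_{k-q+1}}\ldots\gamma_{i_k,\varepsilon_k}$ equals $\gamma_{j,\mu}\gamma_{j+1,\mu}\ldots\gamma_{j+q-1,\mu} = \gamma_{j:q,\mu}$; this is well-defined since $q=1$ always works. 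Setting $p = k-q$ and $\psi = \gamma_{i_1,\varepsilon_1}\ldots\gamma_{i_p,\varepsilon_p}$ gives a candidate tail form. When $p > 0$, the standard-form inequality $i_p \leq i_{p+1} = j$ gives $j \geq i_p$; and if additionally $\varepsilon_p = \mu$, the standard form yields only $j \geq i_p + 1$, so I would use maximality of $q$ to upgrade this: if $i_p = j - 1$ and $\varepsilon_p = \mu$, then $\gamma_{i_p,\varepsilon_p}\gamma_{j:q,\mu} = \gamma_{j-1:q+1,\mu}$ would furnish a longer tail, contradicting the choice of $q$. Hence $j \geq i_p + 2$ in that case, and the tail form is valid and non-trivial.

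For uniqueness, suppose $\phi = \psi \gamma_{j:q,\mu}$ is any non-trivial tail form with $\psi$ as in the definition. Expanding $\gamma_{j:q,\mu}$ yields the expression $\gamma_{i_1,\varepsilon_1}\ldots\gamma_{i_p,\varepsilon_p}\gamma_{j,\mu}\gamma_{j+1,\mu}\ldots\gamma_{j+q-1,\mu}$, and I would verify that this is itself already in the standard form of \cref{normal-form}: within the tail, indices increase by one and signs are all $\mu$; at the junction between $\psi$ and the tail, the condition $j \geq i_p$ (resp.\ $j \geq i_p + 2$ when $\varepsilon_p = \mu$) guarantees the required inequality $i_p \leq j$ (resp.\ $i_p < j$). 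By the uniqueness clause of \cref{normal-form}, the sequence of indices and signs $(i_1,\varepsilon_1),\ldots,(i_p,\varepsilon_p),(j,\mu),(j+1,\mu),\ldots,(j+q-1,\mu)$ is determined by $\phi$. It then remains to see that $q$ itself is determined: any other non-trivial tail form would correspond to a different choice of breakpoint in the same standard-form sequence, but a smaller value of $q$ would leave a last factor of $\psi$ equal to $\gamma_{j-1,\mu}$ with $\varepsilon_p = \mu$ and violate the required $j \geq i_p + 2$, while a larger value of $q$ would require the standard form to continue the consecutive same-sign run past position $p+1$, which is likewise impossible by inspection of the index sequence.

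I do not expect any serious obstacle here; the content is essentially a careful bookkeeping exercise on the standard form. The one subtle point is not to conflate the weaker standard-form condition $i_p < j$ (when $\varepsilon_p = \mu$) with the stronger tail-form condition $j \geq i_p + 2$, and to notice that precisely the maximality of the tail bridges this gap. Once that is made explicit, both existence and uniqueness drop out of \cref{normal-form}.
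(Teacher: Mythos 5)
Your proof is correct and follows the same route as the paper, which simply observes that one selects the longest terminal segment of the standard form with consecutive indices and constant sign and groups it as $\gamma_{j:q,\mu}$; you have merely filled in the bookkeeping (in particular the use of maximality to upgrade $j \geq i_p+1$ to $j \geq i_p+2$, and the uniqueness of the breakpoint) that the paper leaves implicit.
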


\begin{proof}
This is immediate from \cref{normal-form}; we may simply select the longest terminal segment with sequential indices and consistent sign (\ie positive or negative) in the standard form of $\phi$ given by that result, and group its maps together as $\gamma_{j:q,\varepsilon}$.
\end{proof}


\begin{remark}
In view of \cref{tail-form}, the existence of trivial tail forms may seem like an unnecessary complication, leading one to wonder why we did not simply require $q \geq 1$ in \cref{tail-form-def}. The answer is that trivial tail forms will be important for bookkeeping purposes in the proof of \cref{eta-tcof}.
\end{remark}

We next consider the effect of face maps on tail forms.

\begin{lemma}\label{crit-edge-con}
Given a non-trivial tail form $\psi \gjq$ and a monomorphism $\delta \colon [1]^k \to [1]^m$ which picks out a critical face with respect to $\bd_{l,\varepsilon}$ for some $j \leq l \leq j + q$, the composite $\phi \delta$ factors through a face or degeneracy map $[1]^k \to [1]^{k-1}$.
\end{lemma}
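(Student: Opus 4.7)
The plan is to argue geometrically by viewing $\delta$ as the inclusion of a subcube and analyzing how $\gjq$ interacts with its fixed and free positions. (I interpret ``face'' in the statement as ``connection'', since no face map lowers dimension.) Writing $\delta$ in standard form determines a set $F \subseteq \{1, \ldots, n+p+q\}$ of $k$ free positions, whose values correspond (in increasing order) to the coordinates of $[1]^k$, together with a value $\delta_t \in \{0,1\}$ for each $t \notin F$. The criticality conditions on $\delta$ with respect to $\bd_{l,\mu}$ translate to: (1) $l \in F$; (2) scanning upward from $l+1$, a free position appears before any $\mu$-fixed one; (3) the symmetric statement scanning downward from $l-1$. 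Viewed as a poset map, $\gjq$ collapses the coordinates at positions $j, j+1, \ldots, j+q$ by taking their maximum (for $\mu = 0$) or minimum (for $\mu = 1$), placing the result at position $j$ of the output.

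I would split into cases based on what $\delta$ assigns to positions in $[j, j+q]$. \emph{Case A}: some position in $[j, j+q]$ is $(1-\mu)$-fixed. Then the output at position $j$ is the absorbing value $1-\mu$, independent of every input. In particular, the free coordinate at position $l$, which corresponds to some index $s_0 \in \{1, \ldots, k\}$, does not affect $\gjq\delta$, so the composite factors through the degeneracy $\sigma_{s_0} \colon [1]^k \to [1]^{k-1}$. \emph{Case B}: every position in $[j, j+q]$ is free or $\mu$-fixed. I claim criticality then forces at least two free positions in $[j, j+q]$: if $l$ were the unique one, then since $q \geq 1$, an adjacent position $l-1$ or $l+1$ lies inside $[j, j+q]$ and would have to be $\mu$-fixed, contradicting condition (3) or (2) with $r = l \mp 1$ (the intervening range being empty, so the sandwich condition holds vacuously). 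Given at least two free positions in $[j, j+q]$, they correspond to a consecutive block $\{s_1, \ldots, s_1 + f - 1\}$ in $\{1, \ldots, k\}$ (since $F$ is ordered), and the output at position $j$ of $\gjq\delta$ equals the max/min of these free coordinates, the $\mu$-fixed positions contributing the identity element for this operation. Hence $\gjq\delta$, and thus $\phi\delta = \psi\gjq\delta$, factors through the connection $\gamma_{s_1, \mu} \colon [1]^k \to [1]^{k-1}$.

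The step I expect to require the most care is the claim in Case B that criticality precludes $l$ from being the unique free position in $[j, j+q]$: the edge cases $l = j$ and $l = j+q$ (where only one neighbor of $l$ lies in $[j, j+q]$) must be checked separately, using the hypothesis $q \geq 1$ from non-triviality of the tail form. Once this claim is in hand, the factorization through either $\sigma_{s_0}$ or $\gamma_{s_1, \mu}$ is a direct translation of the geometric picture into the cubical operator calculus, and can alternatively be derived by iterative application of \cref{ext-gamma-faces} to push $\gjq$ past the faces of $\delta$ in standard form, tracking where an absorbing (case $\nu = \mu$) or degenerating (case $\nu = 1-\mu$) interaction first occurs.
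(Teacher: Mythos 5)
Your proof is correct, but it takes a genuinely different route from the paper's. The paper disposes of this lemma in two lines by reducing to \cref{crit-edge-con-one}: using the identity $\gamma_{i,\varepsilon}\gamma_{i,\varepsilon} = \gamma_{i,\varepsilon}\gamma_{i+1,\varepsilon}$, it rewrites $\gjq$ so that its rightmost factor is a single connection $\gamma_{l,\mu}$ (or $\gamma_{l-1,\mu}$ when $l = j+q$) adjacent to the critical index, and then invokes the standard-form computations already carried out there. You instead give a self-contained poset-theoretic argument: either the collapsed block $[j,j+q]$ contains a coordinate fixed at the absorbing value $1-\mu$, so the output is constant there and the composite is independent of the critical coordinate (hence factors through a degeneracy), or criticality forces at least two free coordinates in the block, which the connection merges (hence it factors through a connection). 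Your dichotomy is sound — in particular the adjacency argument ruling out a unique free position in $[j,j+q]$ is exactly where non-triviality $q \geq 1$ enters, and you check it — and it has the virtue of explaining \emph{why} the lemma is true, where the paper's proof only certifies it. Two small remarks: (i) you correctly read the statement with $\varepsilon = \mu$ and ``face'' as ``connection''; both are forced (for $\varepsilon = 1-\mu$ the claim is false, e.g. $\gamma_{1,0}\bd_{2,0} = \id$ with $\bd_{2,0}$ critical for $\bd_{1,1}$), and this matches how the lemma is applied in \cref{approx-crit-face} and proved via \cref{crit-edge-con-one}; (ii) since you argue at the level of poset maps, you should note that the complementary factor of your factorization is itself a composite of faces and $\mu$-connections and hence lies in $\Box_A$ — immediate, but worth a sentence, and your closing observation that the whole argument can be rerun through \cref{ext-gamma-faces} covers this.
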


\begin{proof}
This follows from \cref{crit-edge-con-one}, together with the fact that for any $j \leq l \leq j + q - 1$, the map $\gamma_{j:q,\varepsilon}$ factors as $\gamma_{j,\varepsilon} \ldots \gamma_{l,\varepsilon} \ldots \gamma_{l,\varepsilon}$, where $\gamma_{l,\varepsilon}$ appears $q-l+j$ times in the composite.
\end{proof}

\begin{lemma}\label{face-tail}
Given a non-trivial tail form $\psi \gjq$, where $\psi$ is a composite of $p$ connection maps, the faces $\psi \gjq \bd_{k,\nu}$ may be characterized as follows. If $j \leq k \leq j+q$ and $\nu = \mu$, then $\psi \gjq \bd_{k,\nu} = \psi \gamma_{j:q-1,\mu}$. Otherwise, one of the following holds:

\begin{enumerate}
\item \label{higher-index} $\psi \gjq \bd_{k,\nu}$ is a composite of connection maps, and its maximal index is greater than or equal to $j + q - 1$, the maximal index of $\psi \gjq$;
\item \label{longer-tail} $\psi \gjq \bd_{k,\nu}$  is a composite of connection maps with maximal index $j + q - 2$, and its tail length is greater than or equal to $q-1$, the tail length of $\psi \gjq$;
\item \label{sf-face} the standard form of $\psi \gjq \bd_{k,\nu}$ contains a face map;
\item \label{sf-degen} the standard form of $\psi \gjq \bd_{k,\nu}$ contains a degeneracy map.
\end{enumerate}
\end{lemma}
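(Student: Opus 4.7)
The plan is to apply \cref{ext-gamma-faces} to $\gjq \bd_{k,\nu}$, which splits into four cases according to the position of $k$ relative to $j$ and $j+q$, and whether $\nu = \mu$. When $j \leq k \leq j+q$ and $\nu = \mu$, the lemma gives $\gjq \bd_{k,\nu} = \gamma_{j:q-1,\mu}$ directly, yielding the explicit formula in the statement. When $j \leq k \leq j+q$ and $\nu = 1-\mu$, one obtains $\gjq \bd_{k,\nu} = \bd_{j,1-\mu}\,\sigma_j \cdots \sigma_{j+q-1}$; since the right-hand string contains degeneracies, \cref{degen-shift} applied to the epi-monic factorization of the resulting composite produces a degeneracy in the standard form, giving case (4).

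In the remaining two cases ($k < j$, which gives $\gjq \bd_{k,\nu} = \bd_{k,\nu}\gamma_{j-1:q,\mu}$, and $k > j+q$, which gives $\gjq \bd_{k,\nu} = \bd_{k-q,\nu}\gamma_{j:q,\mu}$), the composite $\psi \gjq \bd_{k,\nu}$ takes the form $\psi\, \bd_{k',\nu}\, \phi'$ where $\phi'$ is a composite of connections. I would analyze these by pushing $\bd_{k',\nu}$ leftward through $\psi = \gamma_{i_1,\varepsilon_1}\cdots\gamma_{i_p,\varepsilon_p}$ one connection at a time via the $\gamma\bd$ cubical identity. This process terminates in one of three ways: the face exits $\psi$ on the left, giving a composite $\bd_{k'',\nu}\psi'\phi'$ whose standard form contains a face by \cref{face-shift} (case (3)); the face meets some $\gamma_{i_r,\varepsilon_r}$ with $\varepsilon_r = 1-\nu$ and $i_r$ equal to the current face index $f$ or $f-1$, replacing $\gamma_{i_r,\varepsilon_r}\bd_{f,\nu}$ by $\bd_{f,\nu}\sigma_f$ and hence giving case (4) by \cref{degen-shift}; or the face is absorbed via the identity $\gamma_{i_r,\varepsilon_r}\bd_{f,\nu} = \id$ (when $\varepsilon_r = \nu$ and $i_r \in \{f, f-1\}$), producing a pure composite of connections.

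For this absorption subcase, I split by the two macro cases to check the claimed bounds. When $k > j+q$, the tail factor $\phi' = \gamma_{j:q,\mu}$ sits intact at the right end of the resulting composite, so \cref{con-shift} yields maximal index at least $j+q-1$, giving case (1). When $k < j$, the tail factor $\phi' = \gamma_{j-1:q,\mu}$ has indices $j-1, \ldots, j+q-2$, while the surviving connections from $\psi$ are $\gamma_{i_1}, \ldots, \gamma_{i_{r-1}}$ (untouched) together with shifted copies $\gamma_{i_{r+1}'}, \ldots, \gamma_{i_p'}$ whose indices are each at most the original $i_s \leq j$. The main obstacle is to show that the maximal index of the final composite equals $j+q-2$ and its tail length is at least $q-1$; here one crucially invokes the tail-form hypothesis $\varepsilon_p = \mu \Rightarrow i_p \leq j-2$, together with the fact that any $\gamma$ with index $j$ must be shifted down to $j-1$ upon being passed, to conclude that every surviving index from $\psi$ lies at most $j-1$. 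The sign analysis then rules out any rearrangement of the form $\gamma_{j-1,\mu}\gamma_{j-1,\mu} \mapsto \gamma_{j-1,\mu}\gamma_{j,\mu}$ that could inflate the maximal index above $j+q-2$, while the terminal block $\gamma_{j-1,\mu}\cdots\gamma_{j+q-2,\mu}$ guarantees tail length at least $q-1$, yielding case (2).
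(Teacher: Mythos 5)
Your overall strategy matches the paper's: reduce via \cref{ext-gamma-faces} to analyzing $\psi \bd_{k',\nu}\gamma_{j':q,\mu}$ with $j' \in \{j-1,j\}$, dispatch the cases where a face or degeneracy appears, and then study the remaining pure-connection case. The $k > j+q$ branch and the two interior branches are fine. However, your treatment of the $k < j$ absorption subcase contains a false claim. You assert that the tail-form hypothesis ($\varepsilon_p = \mu \Rightarrow i_p \leq j-2$) plus a sign analysis rules out the rearrangement $\gamma_{j-1,\mu}\gamma_{j-1,\mu} = \gamma_{j-1,\mu}\gamma_{j,\mu}$, so that the maximal index equals $j+q-2$ and case \ref{longer-tail} holds. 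But the tail-form hypothesis constrains only the \emph{last} connection of $\psi$; an earlier connection $\gamma_{j-1,\mu}$ can survive when a later one absorbs the face. Concretely, take $\mu = 0$, $j = 2$, $q = 1$, $\psi = \gamma_{1,0}\gamma_{1,1}$, so $\psi\gjq = \gamma_{1,0}\gamma_{1,1}\gamma_{2,0}$ is a valid non-trivial tail form (here $\varepsilon_p = 1 \neq \mu$, so the hypothesis is vacuous). With $k = 1 < j$ and $\nu = 1$ one computes $\gamma_{2,0}\bd_{1,1} = \bd_{1,1}\gamma_{1,0}$ and $\gamma_{1,1}\bd_{1,1} = \id$, so $\psi\gjq\bd_{1,1} = \gamma_{1,0}\gamma_{1,0} = \gamma_{1,0}\gamma_{2,0}$, whose maximal index is $2 = j+q-1$, not $j+q-2 = 1$. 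So your argument for case \ref{longer-tail} fails; the conclusion of the lemma is rescued only because case \ref{higher-index} happens to apply instead.

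The repair is the extra case split the paper performs: writing the standard form of $\psi\bd_{k',\nu}$ as $\gamma_{l_1,\alpha_1}\ldots\gamma_{l_r,\alpha_r}$, either $l_r < j'$ (or $l_r = j'$ with $\alpha_r = 1-\mu$), in which case appending $\gamma_{j':q,\mu}$ already yields a standard form with maximal index $j'+q-1 \geq j+q-2$ and tail length at least $q-1$, giving case \ref{higher-index} or \ref{longer-tail}; or else $\gamma_{l_r,\alpha_r}$ commutes past the tail to become $\gamma_{l_r+q,\alpha_r}$ with $l_r + q \geq j+q-1$, and \cref{con-shift} gives case \ref{higher-index}. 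Note also that this dichotomy makes your explicit step-by-step propagation of the face through $\psi$ unnecessary: one only needs to know whether the standard form of $\psi\bd_{k',\nu}$ (which exists by \cref{normal-form}) contains a face, a degeneracy, or neither, which is exactly how the paper shortcuts that part of your argument.
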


\begin{proof}
If $j \leq k \leq j+q$, then this is immediate from \cref{ext-gamma-faces}. So suppose that either $k < j$ or $k > j + q$; then by \cref{ext-gamma-faces} it follows that $\psi \gjq \bd_{k,\nu} = \psi \bd_{k',\nu} \gamma_{j':q,\mu}$, where either $j' = j - 1$ and $k' = k$, or $j' = j$ and $k' = k - q$.

Consider the standard form of $\psi \bd_{k',\nu}$. If this standard form contains a face or degeneracy, then the same is true of $\psi \bd_{k',\nu} \gamma_{j':q,\mu}$ by \cref{face-shift,degen-shift}; thus it satisfies condition \ref{sf-face} or condition \ref{sf-degen}. Thus it remains only to consider the case in which the standard form of $\psi \bd_{k',\nu}$ contains only connections. We now assume this to be the case, and write this standard form as $\gamma_{l_1,\alpha_1} \ldots \gamma_{l_r,\alpha_r}$.

Thus $\psi \gjq \bd_{k,\nu} = \gamma_{l_1,\alpha_1} \ldots \gamma_{l_r,\alpha_r}\gamma_{j':q,\mu}$. If $l_r < j'$, or $l_r = j'$ and $\alpha_r = 1 - \mu$, then this expression is in standard form; in this case, we see that the maximal index of this map is $j' + q - 1 \geq j + q - 2$, and its tail length is at least $q - 1$, so that it satisfies either condition \ref{higher-index} or condition \ref{longer-tail}.

On the other hand, if $l_r > j'$ or $l_r = j'$ and $\alpha_r = \mu$, then we may repeatedly apply the cubical identity for composition of connections to rewrite the expression above as $\gamma_{l_1,\alpha_1} \ldots \gamma_{l_{r-1},\alpha_{r-1}} \gamma_{j':q,\mu} \gamma_{l_r + q,\alpha_r}$. Therefore, by \cref{con-shift}, the maximal index of $\psi \gjq \bd_{k,\nu}$ is greater than or equal to $l_r + q \geq j' + q \geq j + q - 1$. Thus condition \ref{higher-index} holds.
\end{proof}

We next introduce approximations of weak connection structures, a combinatorial tool which we will use in constructing weak connection structures on cubes in comical sets. These may be regarded as a generalization of the cubes which were constructed via lifting in \cref{simple-approximation-example}.

\begin{definition}\label{approx-def}
Given $X \in \cmin^+$ and $x \colon \Boxmin^n \to X$ equipped with a weak connection structure $\Gamma_x$, an \emph{approximation} of $\Gamma_x$ consists of an $m$-cube $x(\psi \gjqt)$ for each tail form expression $\phi = \psi \gjq$ for a (possibly empty) composite of connection maps $[1]^m \to [1]^n$ in $\BoxA$, whose boundary is given as follows:
\[
x (\psi \gjqt) \partial_{k, \epsilon} =  \left\{ \begin{array}{ll}
x (\psi \widetilde{\gamma}_{j:q-1,\mu}) & j+1 \leq k \leq j + q \, \mathrm{and} \, \varepsilon = \mu \\
x(\psi \gjq \bd_{k,\varepsilon}) & \mathrm{otherwise} \\
\end{array}\right.
\]
and such that $x (\psi \gjqt)$ is marked if $q \geq 0$.
\end{definition}

Note that in the case of a trivial tail form $\psi\gamma_{j:0,\mu}$, the conditions on the boundary of $x(\psi\widetilde{\gamma}_{j:0,\mu})$ given by the definition of an approximation reduce to the statement that it must coincide with the boundary of $x(\psi)$. Furthermore, we may note that even though $\gamma_{j:0,\mu}$ is equal to the identity regardless of the value of $j$, the cubes $x(\psi \widetilde{\gamma}_{j:0,\mu})$ may be distinct, and none of them need be equal to $x(\psi)$.

\begin{examples}\label{wcs-examples}
To illustrate the concept of an approximation, we consider a few examples.

\begin{itemize}
\item Any weak connection structure on a cube can be regarded as an approximation of itself, by setting $x(\psi\gjqt) = x(\psi\gjq)$ for all tail forms $\psi\gjq$.
\item Given a weak connection structure $\Gamma_x$ on $x \colon \Boxmin^n \to X$, an approximation $\widetilde{\Gamma}_x$ of $\Gamma_x$, and a map $f \colon X \to Y$, we may obtain an approximation $f\widetilde{\Gamma}_x$ of $f\Gamma_x$ by setting $f(x)(\psi \gjqt) = f(x(\psi \gjqt))$.
\item \cref{simple-approximation-example} shows a partial construction of a weak connection structure on a 1-cube $x$ together with an approximation. In this example, the 1-cube $x'$ is $x \widetilde{\gamma}_{1:0,0}$, the 2-cube $x \widetilde{\gamma}_{1,0}$ is $x \widetilde{\gamma}_{1:1,0}$, and the 3-cube constructed in the last step by filling the comical open box is $x \widetilde{\gamma}_{1:2,0}$.
\end{itemize}
\end{examples}

We now show that \cref{approx-def} is consistent on intersections.

\begin{lemma}\label{approx-consistent}
The face assignments given by \cref{approx-def} satisfy the cubical identity for composites of face maps.
\end{lemma}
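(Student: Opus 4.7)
The plan is to fix a tail form $\psi\gjq$ and face indices $(a,\alpha),(b,\beta)$ with $a\le b$, and verify
\[
x(\psi\gjqt)\bd_{a,\alpha}\bd_{b,\beta} = x(\psi\gjqt)\bd_{b+1,\beta}\bd_{a,\alpha}.
\]
Writing $T=[j+1,j+q]$, I call a pair $(k,\varepsilon)$ \emph{exceptional} if $k\in T$ and $\varepsilon=\mu$; per \cref{approx-def}, the $(k,\varepsilon)$-face of $x(\psi\gjqt)$ is the named cube $x(\psi\widetilde{\gamma}_{j:q-1,\mu})$ when exceptional and the $\Gamma_x$-value $x(\psi\gjq\bd_{k,\varepsilon})$ otherwise. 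Since $\Gamma_x$ is a map of cubical sets, taking a further face of an ordinary boundary cube is just postcomposition in $\BoxA$; and the shorter tail form $\psi\gamma_{j:q-1,\mu}$ has tail range $T'=[j+1,j+q-1]\subseteq T$.

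I would then do a case analysis on whether each of $(a,\alpha)$ and $(b+1,\beta)$ is exceptional. When both are ordinary, both sides are $\Gamma_x$-images and the identity reduces to $\psi\gjq\bd_{a,\alpha}\bd_{b,\beta} = \psi\gjq\bd_{b+1,\beta}\bd_{a,\alpha}$, which is immediate from the cubical identity for composites of faces. When both are exceptional, the constraint $a\le b<b+1\le j+q$ forces $a\in T'$, so applying the exceptional rule once more to the shorter tail form yields $x(\psi\widetilde{\gamma}_{j:q-2,\mu})$ on both sides.

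The mixed cases will be the substantive ones. Assuming $(a,\alpha)$ exceptional and $(b+1,\beta)$ ordinary (the other is symmetric), I would first observe that the potentially awkward sub-case in which $(b,\beta)$ is also exceptional for $T'$ is vacuous: $b\in T'$ and $\beta=\mu$ would entail $b+1\in T$ and $\beta=\mu$, contradicting non-exceptionality of $(b+1,\beta)$. Hence the left-hand side equals $x(\psi\gamma_{j:q-1,\mu}\bd_{b,\beta})$ while the right-hand side equals $x(\psi\gjq\bd_{b+1,\beta}\bd_{a,\alpha})$. The cubical identity for faces rewrites the latter as $x(\psi\gjq\bd_{a,\alpha}\bd_{b,\beta})$, and the clause ``$i\le j\le i+q$, $\varepsilon'=\varepsilon$'' of \cref{ext-gamma-faces} gives $\gjq\bd_{a,\mu}=\gamma_{j:q-1,\mu}$, so the two sides coincide.

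The main obstacle is really just bookkeeping: the boundary rule has both a ``$\Gamma_x$-image'' branch and a ``named exceptional cube'' branch, and in each mixed case one must confirm that the exceptional branch collapses to the same $\Gamma_x$-image as the ordinary side, and that the nested ``both exceptional for $T'$'' subcase is ruled out by $T'\subseteq T$ together with $a\le b$. Once these observations are in place, every remaining step is a mechanical application of \cref{ext-gamma-faces} and the cubical identity for composites of face maps.
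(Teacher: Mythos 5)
Your proposal is correct and follows essentially the same route as the paper's proof: the same case analysis on which of the two face indices lands in the "exceptional" range $[j+1,j+q]$ with sign $\mu$, with the mixed cases resolved by the clause $\gjq\bd_{k,\mu}=\gamma_{j:q-1,\mu}$ of \cref{ext-gamma-faces} together with the cubical identity for composites of faces, and the both-exceptional case collapsing to $x(\psi\widetilde{\gamma}_{j:q-2,\mu})$. Your explicit vacuity check for the nested "exceptional for $T'$" subcase corresponds to the parenthetical index-range remarks in the paper's argument, so the two proofs match.
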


\begin{proof}
We must show that for $i > i'$ and $\varepsilon, \varepsilon' \in \{0,1\}$, we have $x(\psi \gjqt)\bd_{i,\varepsilon} \bd_{i',\varepsilon'} = x(\psi \gjqt)\bd_{i',\varepsilon'}\bd_{i-1,\varepsilon}$. We proceed by case analysis.

The simplest case is when neither of the face maps $\bd_{i,\varepsilon}, \bd_{i',\varepsilon'}$ has its index between $j+1$ and $j+q$; then we have
\begin{align*}
 x(\psi \gjqt)\bd_{i,\varepsilon} \bd_{i',\varepsilon'} & =  x(\psi \gjqt\bd_{i,\varepsilon}) \bd_{i',\varepsilon'} \\
 & = x(\psi \gjq \bd_{i,\varepsilon} \bd_{i',\varepsilon'})
\end{align*}
 while
\begin{align*} 
x(\psi \gjqt) \bd_{i',\varepsilon'} \bd_{i-1,\varepsilon} & = x(\psi \gjq \bd_{i',\varepsilon'} )\bd_{i-1,\varepsilon} \\
& = x(\psi \gjq \bd_{i',\varepsilon'} \bd_{i-1,\varepsilon})
\end{align*}
and these are equal by the cubical identities.

Next we consider the case where $j+1 \leq i \leq j+q$ and $\varepsilon = \mu$, but the pair $(i',\varepsilon')$ does not satisfy the analogous criteria. Then we can compute:
\begin{align*}
x(\psi \gjqt)\bd_{i,\mu} \bd_{i',\varepsilon'} & = x(\psi \widetilde{\gamma}_{j:q-1,\mu}) \bd_{i',\varepsilon'} \\
& = x(\psi \gamma_{j:q-1,\mu} \bd_{i',\varepsilon'})
\end{align*}
(To see how the second step is obtained, note that because $i' < i \leq  j + q$, if $\varepsilon' = \mu$ then we must have $i' \leq j$ in order for our assumption on $(i',\varepsilon')$ to be satisfied.) Furthermore, we also have:
\begin{align*}
x(\psi \gjqt)\bd_{i',\varepsilon'}\bd_{i-1,\mu} & = x(\psi \gjq \bd_{i',\varepsilon'})\bd_{i-1,\mu} \\
& = x(\psi \gjq \bd_{i',\varepsilon'}\bd_{i-1,\mu}) \\
& = x(\psi \gjq \bd_{i,\mu} \bd_{i',\varepsilon'}) \\ 
& = x(\psi \gamma_{j:q-1,\mu} \bd_{i',\varepsilon'})
\end{align*}
Thus the stated result holds in this case.

Now we will consider the case where $j+1 \leq i' \leq j + q$ and $\varepsilon' = \mu$, but the corresponding statement does not hold for $(i,\varepsilon)$. Here we can compute:
\begin{align*}
x(\psi \gjqt)\bd_{i,\varepsilon} \bd_{i',\mu} & = x(\psi \gjq \bd_{i,\varepsilon})\bd_{i',\mu} \\
& = x(\psi \gjq \bd_{i,\varepsilon}\bd_{i',\mu}) \\
& = x(\psi \gjq \bd_{i',\mu} \bd_{i-1,\varepsilon}) \\
& = x(\psi \gamma_{j:q-1,\mu} \bd_{i-1,\varepsilon})
\end{align*}
And furthermore:
\begin{align*}
x(\psi \gjqt)\bd_{i',\mu}\bd_{i-1,\varepsilon} & = x(\psi \widetilde{\gamma}_{j:q-1,\mu})\bd_{i-1,\varepsilon} \\
& = x(\psi \gamma_{j:q-1,\mu} \bd_{i-1,\varepsilon}) \\
\end{align*}
(In the second step, we have used the fact that if $\varepsilon = \mu$ then $i$ is not between $j + 1$ and $j + 1$, implying that $i - 1$ is not between $j + 1$ and $j + q - 1$.) Thus the result holds in this case as well.

Finally, we consider the case where $j + 1 \leq i' < i \leq j+q$ and $\varepsilon = \varepsilon' = \mu$. (Note that this can only occur if $q \geq 2$.) Then we can compute:
\begin{align*}
x(\psi \gjqt)\bd_{i,\mu} \bd_{i',\mu} & = x(\psi \widetilde{\gamma}_{j:q-1,\mu}) \bd_{i',\mu} \\
& = x(\psi \widetilde{\gamma}_{j:q-2,\mu}) \\
\end{align*}
Here we have used the fact that $i' < i \leq j + q$, implying $i' \leq j + q - 1$. Furthermore, applying a similar argument to $i-1$, we can compute:
\begin{align*}
x(\psi \gjqt)\bd_{i',\mu}\bd_{i-1,\mu} & = x(\psi \widetilde{\gamma}_{j:q-1,\mu})\bd_{i-1,\mu} \\
& = x(\psi \widetilde{\gamma}_{j:q-2}) \\
\end{align*}
Thus we see that the statement holds in all cases.
\end{proof}

The following lemma shows that cubes of approximations satisfy the same comicality properties as the corresponding connections.

\begin{lemma}\label{approx-crit-face}
Let $x \colon \Box^m \to X$ denote an $m$-cube in a minimal marked cubical set, and $\psi \gjq$ a non-trivial tail-form of a map $\phi \colon [1]^n \to [1]^m$ in $\BoxA$. Suppose that $X$ contains cubes satisfying the identities and marking conditions defining all of the faces of $x(\psi \gjqt)$ prescribed by \cref{approx-def}, other than its $(k,\mu)$ face for some $j \leq k \leq j+q$. Then these faces define a $(k,\mu)$-comical open box in $X$.
\end{lemma}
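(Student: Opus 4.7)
The plan is to verify that the prescribed data assembles into a map from the $(k,\mu)$-comical open box $\sqcap^n_{A,k,\mu}$ to $X$. The underlying cubical-set structure is immediate from the hypothesis, so the substantive content is checking that every critical face of $\Box^n_A$ with respect to $\bd_{k,\mu}$ is mapped to a marked cube in $X$.

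First I would fix such a critical monomorphism $\delta \colon [1]^{m'} \to [1]^n$, written in standard form as $\bd_{c_1,\varepsilon'_1} \cdots \bd_{c_r,\varepsilon'_r}$, and compute the $\delta$-face of $x(\psi\gjqt)$ by iterating the boundary formula of \cref{approx-def}. At each step $i$, either $c_i$ lies in the current tail range with $\varepsilon'_i = \mu$ (the \emph{tail case}), or not (the \emph{otherwise case}). If all $r$ steps are tail cases, the iteration terminates at the approximation cube $x(\psi \widetilde{\gamma}_{j:q-r,\mu})$, which is automatically marked by \cref{approx-def}.

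Otherwise, let $s$ be the first otherwise step. Applying the identity $\gamma_{j:q',\mu}\bd_{c,\mu} = \gamma_{j:q'-1,\mu}$ from \cref{ext-gamma-faces} at each of the preceding tail-case steps, and using the compatibility of $\Gamma_x$ with face maps (which lie in $\Boxmin$) for the steps from $s$ onward, the $\delta$-face reduces to $\Gamma_x(\omega)$ where $\omega$ equals the composite $\psi\gjq\delta$. By \cref{crit-edge-con}, this composite factors through a degeneracy or connection, so its standard form contains a non-face component; hence $\omega$ corresponds to a marked cube of $i^* i_! \Boxmin^n$, and because $\Gamma_x$ preserves markings, its image in $X$ is marked.

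The main subtlety I anticipate is the corner case $s = q+1$ with $\psi$ empty, because there $\gamma_{j:q-s+1,\mu}$ collapses to the identity and $\omega$ could in principle reduce to a pure composite of face maps (which need not be marked). I plan to rule this out by showing it is incompatible with $\delta$ being critical: the tail-case constraints $c_i \in [j+1, j+q-i+1]$ combined with the strict decrease $c_1 > \cdots > c_q$ force $c_i = j+q-i+1$ for each $i \leq q$, so the faces $\bd_{j+1,\mu}, \ldots, \bd_{j+q,\mu}$ all appear in the standard form of $\delta$. Criticality condition (1) of \cref{crit-face-def} then forces $k = j$, but condition (2) is violated at $j' = j+1$ (whose range $k < l < j'$ is empty, making the bad pattern vacuously present), yielding the required contradiction.
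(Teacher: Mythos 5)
Your proof is correct and follows essentially the same route as the paper's: after the consistency of the prescribed face assignments, one checks markedness of each critical face $\delta$ by splitting on whether every face map in its standard form is a tail-case face (landing on a marked approximation cube) or not (reducing to $x(\psi\gjq\delta)$ and applying \cref{crit-edge-con}). Your final corner-case analysis is a sound extra check, but it is already subsumed by \cref{crit-edge-con}, which guarantees that $\psi\gjq\delta$ factors through a degeneracy or connection for every critical $\delta$ in the relevant range.
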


\begin{proof}
That these faces define a $(k,\mu)$-open box follows from \cref{approx-consistent}. To show that this open box is $(k,\mu)$-comical, we must show that for any map $\delta \colon [1]^a \to [1]^n$ defining a non-identity critical face with respect to $\bd_{k,\varepsilon}$, the $a$-cube $x(\psi \gjqt) \delta$ is marked. We fix such a map $\delta$, and express $\delta$ in standard form as $\bd_{i_1,\varepsilon_1} \ldots \bd_{i_p,\varepsilon_p}$.

We first consider the case in which, for all $r$, we have $j + 1 \leq i_s \leq j+q$ and $\varepsilon_r = \mu$. By definition, a critical face with respect to $(k,\mu)$ cannot contain any face maps $\bd_{k-1,\mu}, \bd_{k,\mu}$, or $\bd_{k+1,\mu}$. If $q = 1$ or $q = 2$, or $q = 3$ and $k = j + 2$, then this rules out all possible indices between $j + 1$ and $j + q$, rendering this case vacuous as we have assumed $\delta$ is not the identity. Otherwise, this implies that $\delta$ is a composite of at most $q-2$ face maps, and we can compute that $x(\psi \gjqt) \delta = x(\psi \widetilde{\gamma}_{j:q-r,\mu})$, which is marked by assumption as $q - r \geq 2$.

Now consider the case where for some $r$, either $i_r$ is not between $j + 1$ and $j + q$, or $\varepsilon_r \neq \mu$. Then $x(\psi \gjqt) \bd_{i_r,\varepsilon_r} = x(\psi \gjq \bd_{i_r,\varepsilon_r})$; it follows that $x(\psi \gjqt) \delta = x(\psi \gjq \delta)$. That this cube is marked follows from \cref{crit-edge-con}. 
\end{proof}

Approximations of weak connection structures play a key role in the proof of the following result, in which we lift a weak connection structure on a cube along a comical fibration, assuming that such a lift already exists for its boundary; our approach will generalize that of \cref{simple-approximation-example}.

\begin{proposition}\label{PO-lift}
For all $n \geq 0$, the inclusion $\BoxPO \hookrightarrow i^* \BoxA^n$ is comical.
\end{proposition}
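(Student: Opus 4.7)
The approach is to show that the inclusion has the left lifting property against every comical fibration; by \cref{comical-wfs} this suffices to conclude that it is comical. So fix a comical fibration $f \colon X \to Y$ and a commutative square
\[
\begin{tikzcd}
\BoxPO \ar[r] \ar[d] & X \ar[d, "f"] \\
i^* \BoxA^n \ar[r] & Y.
\end{tikzcd}
\]
Unpacking the definitions of maps out of $i^* \BoxA^n = i^* i_! \Boxmin^n$ and out of the pushout $\BoxPO$, the bottom map encodes an $n$-cube $z$ of $Y$ equipped with a weak connection structure $\Gamma_z$, while the top map provides a cube $z'$ of $X$ with $fz' = z$, together with a weak connection structure on the boundary $\bd z'$ in $X$ lifting $\Gamma_z$ restricted to $\bd \Boxmin^n$. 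Producing a diagonal filler amounts to extending this boundary data to a full weak connection structure $\Gamma_{z'}$ on $z'$ in $X$ that lifts $\Gamma_z$.

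I will build $\Gamma_{z'}$ cube by cube, inductively defining actual cubes $z'(\phi) \in X$ for each $\phi \colon [1]^m \to [1]^n$ in $\BoxA$, together with approximation cubes $z'(\psi\gjqt)$ indexed by tail form expressions, via a well-founded induction that combines dimension and tail length. When $\phi$ factors through $\bd \BoxA^n$, the value $z'(\phi)$ is already provided by the boundary data. Otherwise, by \cref{tail-form}, $\phi$ has a unique non-trivial tail form $\phi = \psi\gjq$; I then construct $z'(\phi)$ together with the approximation $z'(\psi\widetilde{\gamma}_{j:q+1,\mu})$ by lifting an appropriate $(j, \mu)$-comical open box along $f$. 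The faces of this open box---namely the lower-tail-length approximations $z'(\psi\widetilde{\gamma}_{j:q,\mu})$ and the non-tail actual cubes---are supplied by the induction hypothesis in the form prescribed by \cref{approx-def}, and by \cref{approx-crit-face} they genuinely assemble a $(j, \mu)$-comical open box in $X$. Its image in $Y$ admits the marked filler $z(\psi\gamma_{j:q+1,\mu})$ determined by $\Gamma_z$, so the required lift exists; the previously-unfilled $(j, \mu)$-face is then defined to be $z'(\phi)$. By \cref{approx-def} together with \cref{ext-gamma-faces}, the tail faces of $z'(\phi)$ then automatically coincide with the inductively constructed cubes $z'(\psi\gamma_{j:q-1,\mu})$, as required. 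The overall pattern directly generalizes the argument of \cref{simple-approximation-example}.

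The main obstacle will be designing the well-founded induction carefully so that every face required for a given open box lift has already been constructed, and verifying compatibility with the cubical structure maps---in particular, that distinct tail-face occurrences of $\phi = \psi\gjq$, which coincide as cubes of $\BoxA$, are sent to equal cubes of $X$ under the resulting assignment---and that markings propagate correctly through the use of comical marking extensions available to $f$, ensuring that each marked cube of $i^* \BoxA^n$ is sent to a marked cube of $X$.
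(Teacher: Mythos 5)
Your proposal follows the same route as the paper's proof: reduce via \cref{comical-wfs} to producing lifts against comical fibrations, unpack the square as the problem of extending a weak connection structure on the boundary of an $n$-cube to one on the whole cube lying over a prescribed weak connection structure downstairs, and construct the extension cube-by-cube by filling comical open boxes attached to approximation cubes, with \cref{approx-crit-face} certifying comicality, \cref{tail-form} organizing the cubes to be built, and the marking extensions propagating markings. All of the right ingredients are named, and the identification of the missing $(j,\mu)$-face of $z'(\psi\widetilde{\gamma}_{j:q+1,\mu})$ as the new cube $z'(\psi\gjq)$ matches the paper exactly.

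The one point where the sketch as written would not go through is the induction invariant, which you defer as ``the main obstacle'' but whose resolution is where essentially all of the proof's content lies. A well-founded induction on dimension and tail length alone is insufficient: by condition \ref{higher-index} of \cref{face-tail}, removing a non-tail face from $\psi\gamma_{j:q+1,\mu}$ can produce a composite of connections of the \emph{same} dimension whose maximal index is strictly larger than that of $\psi\gjq$ and whose tail length is uncontrolled, so no ordering by dimension and tail length makes these faces available before the cube being constructed. The paper's proof interposes a downward induction on the maximal index between the outer induction on dimension and the inner induction on tail length; faces falling under condition \ref{higher-index} are then supplied by the maximal-index hypothesis, and only faces with the same maximal index (condition \ref{longer-tail}) call on the tail-length hypothesis, while conditions \ref{sf-face} and \ref{sf-degen} are handled by the boundary data and lower dimensions respectively. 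You should also include the separate preliminary lifting step that produces the trivial- and length-one-tail approximations $z'(\psi\widetilde{\gamma}_{j:0,\mu})$ and $z'(\psi\widetilde{\gamma}_{j:1,\mu})$, since these appear as tail faces of the boxes used in the main induction but are not produced by it.
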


\begin{proof}
By \cref{comical-wfs}, it suffices to show that a lift $\Gamma_x$ exists in any diagram
\[
\xymatrix{
\BoxPO \ar[r]^(0.7){(x, \Gamma_{\bd x})} \ar[d] & X \ar[d]^{f} \\
i^* \BoxA^n \ar[r]^{\Gamma_{fx}} & Y
}
\]
in which the map $f \colon X \to Y$ is a comical fibration.

By the universal property of the pushout, we see that this diagram consists of an $n$-cube $x \colon \Boxmin^n \to X$, together with a weak connection structure $\Gamma_{\bd x}$ on $\bd x \colon \bd \Boxmin^n \to X$, and an extension of the induced weak connection structure $f \Gamma_{\bd x}$ on $f \bd x$ to a weak connection structure $\Gamma_{fx}$ on $fx$; defining a lift amounts to extending $\Gamma_{\bd x}$ to a weak connection structure $\Gamma_x$ on $x$ such that $f \Gamma_x = \Gamma_{fx}$. 

We will define $\Gamma_x$ together with an approximation $\widetilde{\Gamma}_x$, such that $f$ sends $\widetilde{\Gamma}_x$ to $\Gamma_{fx}$ viewed as an approximation of itself. Specifically, we must define the images under $\Gamma_x$ of the non-degenerate cubes of $i^* \BoxA^n$ which are not part of $i^* \bd \BoxA^n$. By \cref{face-shift,degen-shift}, these correspond to the maps $\phi \colon [1]^{n+k} \to [1]^n$ in $\BoxA$ for $k \geq 0$ whose standard forms consist of only connections, with no degeneracies or faces. By \cref{tail-form}, we may identify such maps $\phi$ with non-trivial tail forms $\psi \gjq$.

We proceed by induction on the dimension of these cubes; or equivalently, on the value $p + q$. Our induction hypothesis for $p + q = k$ will be that we have defined the following cubes:

\begin{itemize}
\item $x(\psi\gjq)$ for all tail forms such that $p + q \leq k$;
\item $x(\psi\gjqt)$ for all tail forms such that $p + q \leq k + 1$ and $q \geq 2$;
\item $x(\psi\gjqt)$ for all tail forms with $p = k - 1$ and $q \in \{0,1\}$.
\end{itemize}
and that for all such cubes which we have defined, $f(x(\psi\gjq)) = f(x)(\psi\gjq)$ and $f(x(\psi\gjqt)) = f(x)(\psi\gjqt)$.

Note that by \cref{tail-form}, for the first condition to be satisfied it suffices for it to hold for non-trivial tail forms. Note also that this condition implies that we have defined $x(\phi)$ for all $\phi \colon [1]^{n+k} \to [1]^n$  in $\BoxA$.

As our base case we take $k = 0$. Here the first statement is trivial, as the only cases to consider are the cubes $x(\gamma_{j:0,\mu})$; this reduces to $x(\id)$, which by definition must be equal to $x$. The latter two statements are vacuous in this case.

Now let $k \geq 1$, and suppose that the induction hypothesis is satisfied for $p + q = k - 1$. For the induction hypothesis to be satisfied for $p + q = k$, we must construct the cubes $x(\psi\gjq)$ for all tail forms such that $p + q \leq k$, the cubes $x(\psi\gjqt)$ for all tail forms such that $p + q = k + 1$ and $q \geq 2$, and the cubes $x(\psi\gjqt)$ for all tail forms with $p = k - 1$ and $q \in \{0,1\}$. We begin with the latter construction.

Consider a tail form $\psi \gamma_{j:1,\mu}$ where $p = k - 1$. By the definition of an approximation, the cube $\psi \widetilde{\gamma}_{j:1,\mu}$ must have the following faces:

\[
x (\psi \widetilde{\gamma}_{j:1,\mu}) \partial_{r, \varepsilon} =  \left\{ \begin{array}{ll}
x (\psi \widetilde{\gamma}_{j:0,\mu}) & r = j + 1 \, \mathrm{and} \, \varepsilon = \mu \\
x(\psi \gamma_{j,\mu} \bd_{r,\varepsilon}) & \mathrm{otherwise} \\
\end{array}\right.
\]


Thus the only missing face of $x (\psi \widetilde{\gamma}_{j:1,\mu})$ is its $(j+1,\mu)$-face; all others are equal to $x(\phi')$ for some $\phi' \colon [1]^{n+k-1} \to [1]^n$ and have thus been defined by the induction hypothesis. By \cref{approx-crit-face}, the faces of this cube which have already been defined form a $(j+1,\mu)$-comical open box in $X$. Moreover, the image of this open box under $f$ is the $(j+1,\mu)$-open box on $f(x)(\psi \widetilde{\gamma}_{j:1,\mu})$.

Thus we may lift $f(x)(\psi \gamma_{j:1,\mu})$ along the naive fibration $f$ to obtain a marked filler for this open box; we define this filler to be $x(\psi \widetilde{\gamma}_{j:1,\mu})$, and its $(j+1,\mu)$ face to be $x(\psi \widetilde{\gamma}_{j:0,\mu})$ (it follows from \cref{approx-consistent} that $x(\psi \widetilde{\gamma}_{j:0,\mu})$ has the correct faces). Then by construction,  $f$ sends $x(\psi \widetilde{\gamma}_{j:1,\mu})$ to $f(x)(\psi \gamma_{j:1,\mu})$, and $x(\psi \widetilde{\gamma}_{j:0,\mu})$ to $f(x)(\psi \gamma_{j:1,\mu}) \bd_{j+1,\mu} = f(x)(\psi \gamma_{j:0,\mu})$.

Next we will concern ourselves with the first two constructions specified by the induction hypothesis: $x(\psi \gjq)$ for all non-trivial tail forms such that $p + q \leq k$, and $x(\psi \gjqt)$ for all tail forms such that $p + q \leq k + 1$ and $q \geq 2$. We first note that constructing all cubes of these two forms is equivalent to constructing $x(\psi \gjq)$ and $x(\psi \widetilde{\gamma}_{j:q+1,\mu})$ for all tail forms of the first form specified. Moreover, the assumption that the induction hypothesis is satisfied for $k - 1$ allows us to restrict our attention to the case in which $p + q = k$. 

We will now construct $x(\psi \gjq)$ and $x(\psi \widetilde{\gamma}_{j:q+1,0})$ for all such tail forms, working by a two-layered induction. We first induct downwards on the maximal index of $\psi \gjq$, i.e. the value $j + q - 1$. More precisely, for $0 \leq m \leq n+k$, our induction hypothesis on $m$ will be that we have constructed $x(\psi \gjq)$ and $x(\psi \widetilde{\gamma}_{j:q+1,\mu})$ for all tail forms $\psi \gjq$ with $p + q = k$ and $j + q - 1 > m$. In the base case $m = n+k$ this statement is vacuous, as there are no connection maps $[1]^{n+k} \to [1]^{n+k-1}$ with index greater than $n+k$.
For a given maximal index $m$ satisfying the induction hypothesis, we proceed by induction on the value $q$. For a given $0 \leq q \leq k$, our induction hypothesis will be that we have constructed the desired cubes for all tail forms $\psi \gamma_{j:q',\mu}$ with $p + q' = k$, maximal index $m$, and $q' > q$; in the base case $q = k$ this condition is vacuous, as there are no admissible values of $q'$. Now fix some $1 \leq q \leq k$, and assume the induction hypothesis is satisfied for $q$; to show that it holds for $q-1$, we must construct the desired cubes for all tail forms $\psi \gjq$ with $p = k - q$.

By the definition of an approximation, the faces of $x(\psi \widetilde{\gamma}_{j:q+1,\mu})$ must be as follows:

\[
x (\psi \widetilde{\gamma}_{j:q+1,\mu}) \partial_{a, \varepsilon} =  \left\{ \begin{array}{ll}
x (\psi \widetilde{\gamma}_{j:q,\mu}) & j+1 \leq a \leq j + q +1 \, \mathrm{and} \, \varepsilon = \mu \\
x(\psi \gamma_{j:q+1,0} \bd_{a,\varepsilon}) & \mathrm{otherwise} \\
\end{array}\right.
\]

In particular, we must have $x (\psi \widetilde{\gamma}_{j:q+1,\mu}) \partial_{j,\mu} = x(\psi \gjq)$, which has not yet been constructed. On the other hand, we may analyze the other prescribed faces to show that they have already been constructed.

First consider faces of the form $x(\psi \gjqt)$. In the case $q \geq 2$, this exists by the induction hypothesis on $k-1$, while in the case $q = 1$ (implying $p = k - 1$) it is of the third form specified in the statement of the induction hypothesis on $k$, and hence has already been constructed.

Next consider faces of the form $x(\psi \gamma_{j:q+1,\mu} \bd_{a,\varepsilon})$ where either $1 \leq a \leq j$, $j + q + 2 \leq a \leq n + k$, or $\varepsilon = 1-\mu$. Note that $\psi \gamma_{j:q+1,\mu}$ has maximal index $m+1$ and tail length $q$, both one greater than those of $\psi \gamma_{j:q,\mu}$. Applying \cref{face-tail}, we see that one of the following must hold:

\begin{enumerate}
\item \label{face-case-max-ind} $\psi \gamma_{j:q+1,0} \bd_{a,\varepsilon}$ is a composite of connection maps whose maximal index is greater than or equal to $m+1$;
\item \label{face-case-tail} $\psi\gamma_{j:q+1,0} \bd_{a,\varepsilon}$ is a composite of connection maps with maximal index $m$ and tail length greater than or equal to $q$;
\item \label{face-case-face} the standard form of $\psi \gamma_{j:q+1,0} \bd_{a,\varepsilon}$ contains a face map;
\item \label{face-case-degen} the standard form of $\psi \gamma_{j:q+1,0} \bd_{a,\varepsilon}$ contains a degeneracy map.
\end{enumerate}

In each of these cases, $x(\psi \gamma_{j:q+1,\mu} \bd_{a,\varepsilon})$ has already been constructed, and $f$ maps it to the corresponding face of $f(x)(\psi \gamma_{j:q+1,\mu})$. In case \ref{face-case-max-ind}, this follows from the induction hypothesis on $m$. In case \ref{face-case-tail} it follows from the induction hypothesis on $q$. In case \ref{face-case-face}, $x(\psi \gamma_{j:q+1,\mu}) \bd_{a,\varepsilon}$ exists as part of $\Gamma_{\bd x}$, and is mapped to the corresponding face of $f(x)(\psi \gamma_{j:q+1,\mu})$ by assumption. Finally, in case \ref{face-case-degen}, $x(\psi \gamma_{j:q+1,\mu}) \bd_{a,\varepsilon}$ is a degeneracy of some lower-dimensional cube, hence it has been defined and is mapped to the corresponding face of $f(x)(\psi \gamma_{j:q+1,\mu})$ by the induction hypothesis on $k$.

Therefore, by \cref{approx-crit-face}, the faces of $x(\psi \widetilde{\gamma}_{j:q+1,\mu})$ which are already defined form a $(j,\mu)$-comical open box in $X$, which $f$ maps to the $(j,\mu)$-open box on $f(x)(\psi \gamma_{j:q+1,\mu})$. Thus we may lift $f(x)(\psi \gamma_{j:q+1,\mu})$ along $f$ to obtain a marked filler for this open box. We define this filler to be $x(\psi \widetilde{\gamma}_{j:q+1,\mu})$, and its $(j,\mu)$-face to be $x(\psi \gjq)$; by construction, we have $f(x(\psi \widetilde{\gamma}_{j:q+1,\mu})) = f(x)(\psi \gamma_{j:q+1,\mu})$ and $f(x(\psi \gjq)) = f(x)(\psi \widetilde{\gamma}_{j:q+1,\mu}) \bd_{j,\mu} = f(x)(\psi \gjq)$. That $x(\psi \gjq)$ has the necessary faces follows from \cref{approx-consistent}. Moreover, we may observe that all of its $(n + k - 1)$-dimensional faces other than its $(j,\mu)$-face are marked, as is the image of its $(j,\mu)$-face under $f$. Applying the right lifting property of $f$ with respect to comical marking extensions, we thus see that $x(\psi \widetilde{\gamma}_{j:q+1,\mu}) \bd_{j,\mu} = x(\psi \gjq)$ is marked.

By induction on $m$ and $q$, we thus construct $x(\psi \gjq)$ and $x(\psi \widetilde{\gamma}_{j:q+1,\mu})$ for all tail forms with $p + q = k$. By induction on $k$ we may, in particular, define suitable marked cubes $x(\psi \gjq)$ for all tail forms $\psi \gjq$. By \cref{tail-form}, we have thus defined $x(\phi)$ for all composites of connection maps $\phi$; in other words we have extended $\Gamma_{\bd x}$ to a weak connection structure $\Gamma_x$ lying over $\Gamma_{fx}$, thereby constructing a lift in the given diagram, as desired.
\end{proof}

\begin{proof}[Proof of \cref{eta-tcof}]
The components of the unit are monomorphisms by \cref{unit-characterization}; it thus remains to be shown that they are weak equivalences. By \cref{nat-weq}, it suffices to prove this in the case where $X$ is of the form $\Boxmin^n$ or $\wBox_{\varnothing}^n$. We proceed by induction on $n$; in the base case $n = 0$, the only map to be considered is $\eta_{\Boxmin^0}$, which is the identity on $\Boxmin^0$.

Now let $n \geq 1$ and suppose that all maps $\eta_{\Boxmin^m}$ and $\eta_{\wBox_{\varnothing}^m}$ with $m \leq n - 1$ are trivial cofibrations. By \cref{nat-weq}, it follows that the components of $\eta$ at all $(n-1)$-skeletal objects of $\cmin^+$ are trivial cofibrations; in particular, this holds for $\bd \Boxmin^n$. Thus the pushout inclusion $\Boxmin^n \hookrightarrow \BoxPO$ is a trivial cofibration, as a pushout of the trivial cofibration $\eta_{\bd \Boxmin^n}$ (recall that by \cref{i-left-on-gens}, $i_! \Boxmin^n = \BoxA^n$, and similarly for the boundaries).
\[
\xymatrix{
\bd \Boxmin^n \ar[d]_{\eta}^{\sim} \ar[r] & \Boxmin^n \ar[d]^{\sim} \\
i^* \bd \BoxA^n \ar[r] & \BoxPO \pushoutcorner \\
}
\]
Furthermore, the inclusion $\BoxPO \hookrightarrow i^* \BoxA^n$ is a trivial cofibration by \cref{PO-lift}. Thus the composite inclusion $\Boxmin^n \hookrightarrow i^* \BoxA^n$ is a trivial cofibration as well. That $\eta_{\wBox_{\varnothing}^n}$ is a trivial cofibration follows by \cref{rep-unit-PO}.
\end{proof}

\subsection{Quillen equivalences in the marked case}

Next we will prove the following:

\begin{theorem}\label{i-Quillen-eqv}
For all $A \subseteq B \subseteq \{0,1\}$, the adjunctions $i_! : \cA^+ \rightleftarrows \cB^+ : i^*$ and $i^* : \cB^+ \rightleftarrows \cA^+ : i_*$ are Quillen equivalences between the (saturated, $n$-trivial) comical model structures on $\cA$ and $\cB$.
\end{theorem}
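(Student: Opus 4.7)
The plan is to first prove the case $A = \varnothing$ and then extend to general $A \subseteq B$ via the two-out-of-three property for Quillen equivalences. For the extension, the commutative triangle of left adjoints $\cmin^+ \to \cA^+ \to \cB^+$ in diagram \eqref{eq:all-functors-i} realizes $i_! \colon \cmin^+ \to \cB^+$ as the composite of $i_! \colon \cmin^+ \to \cA^+$ and $i_! \colon \cA^+ \to \cB^+$; since the base case will make the composite and the first factor Quillen equivalences, two-out-of-three forces $i_! \colon \cA^+ \to \cB^+$ to be a Quillen equivalence as well. The analogous triangle of $i^*$'s handles the $i^* \dashv i_*$ adjunctions.

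For the case $A = \varnothing$, my first step is to show that $i^* \dashv i_*$ is a Quillen adjunction. Since $i^*$ preserves cofibrations by \cref{i-precomp-cofs}, by the pseudo-generating trivial cofibration criterion of \cite[Prop.~7.15]{joyal-tierney:qcat-vs-segal} it suffices to show that $i^*$ sends each pseudo-generating trivial cofibration of $\cB^+$ to a trivial cofibration of $\cmin^+$. By \cref{i-left-on-gens}, every such map in $\cB^+$ equals $i_! m$ for the corresponding pseudo-generating trivial cofibration $m$ in $\cmin^+$, so $i^*$ applied to it is $i^* i_! m$. The naturality square of $\eta$ at $m$ has $m$ on top, $i^* i_! m$ on the bottom, and two copies of $\eta$ as vertical trivial cofibrations (by \cref{eta-tcof}); two-out-of-three then forces $i^* i_! m$ to be a weak equivalence, and it is a cofibration since both $i^*$ and $i_!$ preserve monomorphisms.

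With both $i_!$ and $i^*$ now left Quillen, I apply \cref{nat-we-QE} with $F = i_!$ and $G = i^*$. The unit $\eta$ is a natural weak equivalence by \cref{eta-tcof}. For the counit $\epsilon \colon i_! i^* \Rightarrow \id$, the triangle identity $i^* \epsilon \circ \eta_{i^*(-)} = \id$ combined with two-out-of-three shows that $i^* \epsilon$ is a natural weak equivalence; to upgrade this to $\epsilon$ itself, I show that $i^*$ reflects weak equivalences. Given $f$ in $\cB^+$ with $i^* f$ a weak equivalence, factor $f = pj$ with $j$ a trivial cofibration and $p$ a fibration; then $i^* j$ is a trivial cofibration (as $i^*$ is left Quillen) and $i^* p$ is a fibration (as $i^*$ is right Quillen via $i_! \dashv i^*$), so $i^* p$ is a trivial fibration by two-out-of-three. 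An adjunction argument using \cref{i-left-on-gens} on the generating cofibrations shows that $i^*$ reflects trivial fibrations, so $p$ is a trivial fibration and $f$ is a weak equivalence. \Cref{nat-we-QE} then delivers both $i_! \dashv i^*$ and $i^* \dashv i_*$ as Quillen equivalences. I expect the main technical step to be promoting $i^* \dashv i_*$ to a Quillen adjunction, since this is where \cref{eta-tcof} is essentially used; once $i^*$ is equipped with both a left and right Quillen structure, the remaining arguments are formal consequences of the triangle identities and \cref{nat-we-QE}.
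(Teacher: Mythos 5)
Your base case ($A = \varnothing$) is essentially the paper's argument: the pseudo-generating-cofibration proof that $i^* \dashv i_*$ is Quillen, using the naturality square of $\eta$ and \cref{eta-tcof}, is exactly \cref{composite-left-Quillen} and \cref{i-cofree-Quillen}, and the conclusion via \cref{nat-we-QE} is the same. Two things differ. First, your treatment of the counit: instead of the paper's \cref{epsilon-weq} (which reduces to representables via \cref{nat-weq}), you deduce that $i^*\epsilon$ is a weak equivalence from the triangle identity and upgrade this using the fact that $i^*$ reflects weak equivalences, proved directly by factoring and observing that $i^*$ reflects trivial fibrations via adjunction and \cref{i-left-on-gens}. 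This is valid and arguably cleaner. Second, your general case: the paper obtains $i_! \dashv i^*$ as a Quillen equivalence for $A \neq \varnothing$ from the triangulation comparison (\cref{QE-with-cons}, \cref{i-T-commutes}), whereas you use the internal triangle $\cmin^+ \to \cA^+ \to \cB^+$; since all three $i_! \dashv i^*$ adjunctions are Quillen by \cref{i-free-Quillen} and two of the three functors are Quillen equivalences by the base case, two-out-of-three applies, and this removes the dependence on the external simplicial result for that half of the statement.

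There is, however, a genuine gap in the sentence ``the analogous triangle of $i^*$'s handles the $i^* \dashv i_*$ adjunctions.'' Two-out-of-three for Quillen equivalences only applies once all three adjunctions in the triangle are known to be Quillen adjunctions, and for $\varnothing \neq A \subsetneq B$ you have not shown that $i^* : \cB^+ \rightleftarrows \cA^+ : i_*$ is Quillen; this is precisely the step the paper devotes a paragraph to. You cannot run your pseudo-generating-cofibration argument here, since that would require the unit of $i_! \dashv i^*$ for $A \subseteq B$ to be a weak equivalence, which at this stage is only known for $A = \varnothing$ (\cref{eta-tcof-all} is a consequence of the theorem, not an input). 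The repair is available from what you have already proved: $i^*\colon\cB^+\to\cA^+$ preserves cofibrations by \cref{i-precomp-cofs}, and it preserves weak equivalences because the composite $i^*\colon\cB^+\to\cmin^+$ does (it is left Quillen by your base case) while $i^*\colon\cA^+\to\cmin^+$ reflects weak equivalences (by the reflection argument from your base case, or by \cref{QuillenEquivCreate-original} since all objects are cofibrant); hence $i^*\colon\cB^+\to\cA^+$ preserves trivial cofibrations and is left Quillen, after which your two-out-of-three argument goes through.
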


The difficult combinatorial work involved in proving this result was done in the proof of \cref{PO-lift}; all that remains is some homotopical algebra involving the various adjunctions $i_! \adjoint i^*$ and $i^* \adjoint i_*$. Throughout the remainder of this section, fix a particular choice of one of the model structures of \cref{comical-model-structure}; we will assume that all categories of marked cubical sets are equipped with their respective versions of this model structure.

\begin{lemma}\label{composite-left-Quillen}
For any $A \subseteq \{0,1\}$, the composite functor $i^* i_! \colon \cmin^+ \to \cmin^+$ induced by $i \colon \Boxmin^+ \hookrightarrow \BoxA^+$ is left Quillen.
\end{lemma}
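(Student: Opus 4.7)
The plan is to verify directly that $i^* i_!$ preserves cofibrations and trivial cofibrations, using the results already established. First, $i^* i_!$ is a left adjoint, since it is the composite of the left adjoints $i_! \colon \cmin^+ \to \cA^+$ (left adjoint to $i^*$) and $i^* \colon \cA^+ \to \cmin^+$ (left adjoint to $i_*$, by \cref{i-has-adjoints}); its right adjoint is $i^* i_*$. So it remains to verify the two preservation properties.

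Preservation of cofibrations is immediate: $i_!$ sends cofibrations to cofibrations by \cref{i-free-Quillen}, and $i^*$ preserves cofibrations by \cref{i-precomp-cofs} (since cofibrations are precisely monomorphisms in each of the comical model structures). Hence the composite $i^* i_!$ sends monomorphisms in $\cmin^+$ to monomorphisms in $\cmin^+$.

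For trivial cofibrations, the key input is \cref{eta-tcof}, which says that the unit $\eta_X \colon X \to i^* i_! X$ is a trivial cofibration for every $X \in \cmin^+$, in each of the (saturated, $n$-trivial) comical model structures. Given a trivial cofibration $f \colon X \to Y$ in $\cmin^+$, the naturality square
\[
\xymatrix{
X \ar[r]^{\eta_X} \ar[d]_{f} & i^* i_! X \ar[d]^{i^* i_! f} \\
Y \ar[r]_{\eta_Y} & i^* i_! Y
}
\]
has both horizontal arrows and the left vertical arrow weak equivalences, so $i^* i_! f$ is a weak equivalence by two-out-of-three. Combined with the previous paragraph, $i^* i_! f$ is therefore a trivial cofibration.

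There is no substantial obstacle here: all of the combinatorial work has already been done in establishing \cref{PO-lift} and \cref{eta-tcof}, and the result is a formal consequence of these together with \cref{i-free-Quillen} and \cref{i-precomp-cofs}.
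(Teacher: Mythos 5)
Your proof is correct and follows essentially the same route as the paper: cofibrations are handled by combining \cref{i-free-Quillen} with \cref{i-precomp-cofs}, and the naturality square for $\eta$ together with \cref{eta-tcof} and two-out-of-three handles the rest (the paper runs this square for arbitrary weak equivalences rather than just trivial cofibrations, but the argument is identical).
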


\begin{proof}
That $i^* i_!$ preserves cofibrations follows from \cref{i-free-Quillen,i-precomp-cofs}. To see that it preserves weak equivalences, let $X \to Y$ be a weak equivalence in $\cmin^+$. Then we have a commuting diagram:
\[
\xymatrix{
X \ar[d]_{\eta}^{\sim} \ar[r]^{\sim} & Y \ar[d]_{\eta}^{\sim} \\
i^* i_! X \ar[r] & i^* i_! Y \\
}
\]
The top horizontal map is a weak equivalence by assumption, while the two vertical maps are weak equivalences by \cref{eta-tcof}. Thus the bottom horizontal map is a weak equivalence by two-out-of-three.
\end{proof}

\begin{proposition}\label{i-cofree-Quillen}
For every $A \subseteq \{0,1\}$, the adjunction $i^* : \cA^+ \rightleftarrows \cmin^+ : i_*$ is Quillen.
\end{proposition}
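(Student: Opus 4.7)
The plan is to apply the pseudo-generating criterion recalled after \cref{Quillen-adj-fib-obs}: to verify that $i^* \adjoint i_*$ is a Quillen adjunction, it suffices to show that $i^*$ preserves cofibrations and carries every pseudo-generating trivial cofibration of $\cA^+$ to a trivial cofibration in $\cmin^+$.

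That $i^*$ preserves cofibrations is immediate from \cref{i-precomp-cofs}. For the second condition, I recall from \cref{comical-model-structure} that, in each of the four variants of the comical model structure, the pseudo-generating trivial cofibrations of $\cA^+$ are among the comical open box inclusions, the comical marking extensions, the Rezk maps (in the saturated case), and the markers $\BoxA^m \to \wBox^m_A$ for $m > n$ (in the $n$-trivial case). The key observation is that \cref{i-left-on-gens} matches each of these maps with its counterpart in $\cmin^+$: every such pseudo-generating trivial cofibration $j$ of $\cA^+$ can be written as $j = i_! j'$, where $j'$ is the analogous pseudo-generating trivial cofibration of $\cmin^+$.

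Applying $i^*$ then gives $i^* j = i^* i_! j'$, and by \cref{composite-left-Quillen} the composite $i^* i_! \colon \cmin^+ \to \cmin^+$ is left Quillen; in particular it preserves trivial cofibrations. Since $j'$ is a trivial cofibration in $\cmin^+$, so is $i^* i_! j' = i^* j$, completing the argument.

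I do not anticipate any genuinely new combinatorial obstacle at this stage: the only nontrivial input is \cref{composite-left-Quillen}, whose proof in turn depends on \cref{eta-tcof}, and all of that work has already been carried out. The rest is a bookkeeping observation that pseudo-generating trivial cofibrations of $\cA^+$ are freely obtained from those of $\cmin^+$ via $i_!$.
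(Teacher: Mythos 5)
Your proof is correct and takes essentially the same route as the paper: preservation of cofibrations via \cref{i-precomp-cofs}, then the observation from \cref{i-left-on-gens} that each pseudo-generating trivial cofibration of $\cA^+$ is $i_!$ of its counterpart in $\cmin^+$, so that $i^*$ applied to it is handled by the left Quillen composite $i^*i_!$ of \cref{composite-left-Quillen}.
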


\begin{proof}
The functor $i^*$ preserves cofibrations by \cref{i-precomp-cofs}, so it suffices to show that $i^*$ sends the pseudo-generating trivial cofibrations to trivial cofibrations. It follows from \cref{i-left-on-gens} that each pseudo-generating trivial cofibration in $\cA^+$ is the image under $i_!$ of the corresponding map in $\cmin^+$. Thus \cref{composite-left-Quillen} implies that $i^*$ sends each such map to a trivial cofibration in $\cmin^+$.
\end{proof}

\begin{proposition}\label{epsilon-weq}
For every $A \subseteq \{0,1\}$ and $X \in \cA^+$, the counit map $\epsilon \colon i_! i^* X \to X$, where $i_! \adjoint i^*$ is induced by $i \colon \Boxmin^+ \hookrightarrow \BoxA^+$, is a weak equivalence.
\end{proposition}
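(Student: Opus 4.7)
The plan is to use the triangle identity for the adjunction $i_! \adjoint i^*$ to show that $i^*\epsilon_Y$ is always a weak equivalence, and then to deduce the result by proving that $i^*$ reflects weak equivalences. The triangle identity gives $i^*(\epsilon_Y) \circ \eta_{i^* Y} = \id_{i^* Y}$; since $i^* Y \in \cmin^+$, \cref{eta-tcof} tells us that $\eta_{i^* Y}$ is a weak equivalence, so two-out-of-three immediately forces $i^*(\epsilon_Y)$ to be one as well.

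It therefore suffices to show that $i^* \colon \cA^+ \to \cmin^+$ reflects weak equivalences. A crucial observation is that $i^*$ is simultaneously a left Quillen functor (as the left adjoint in $i^* \adjoint i_*$, by \cref{i-cofree-Quillen}) and a right Quillen functor (as the right adjoint in $i_! \adjoint i^*$, by \cref{i-free-Quillen}), so it preserves both trivial cofibrations and fibrations. Given $f \colon Y \to Z$ in $\cA^+$ with $i^* f$ a weak equivalence, I would factor $f$ as $p \circ j$ with $j$ a trivial cofibration and $p$ a fibration. Then $i^* j$ is a trivial cofibration, so two-out-of-three forces $i^* p$ to be a weak equivalence, reducing the problem to the case when $f$ itself is a fibration.

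For such an $f$, being a weak equivalence is equivalent to being a trivial fibration, which by \cref{cell-model-marked} is characterized by the right lifting property against boundary inclusions $\bd \BoxA^n \hookrightarrow \BoxA^n$ and markers $\BoxA^n \to \wBox^n_A$. By \cref{i-left-on-gens}, each such map is the image under $i_!$ of the analogous map in $\cmin^+$, so by the adjunction $i_! \adjoint i^*$, $f$ has the required lifting property if and only if $i^* f$ does. Since $i^* f$ is a fibration by the preservation observation above and a weak equivalence by hypothesis, it is a trivial fibration; hence so is $f$, completing the proof that $i^*$ reflects weak equivalences.

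The argument is essentially formal: all of the substantive combinatorial input has already been packaged into \cref{eta-tcof} (which in turn rested on the detailed analysis of weak connection structures in \cref{PO-lift}), and what remains uses only the adjoint-triple structure $i_! \adjoint i^* \adjoint i_*$ together with the generating cofibrations of \cref{cell-model-marked}. Accordingly, I do not expect a significant obstacle.
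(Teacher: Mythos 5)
Your argument is correct, but it takes a genuinely different route from the paper. The paper first invokes \cref{nat-weq} to reduce to the case where $X$ is a representable $\BoxA^n$ or $\wBox_A^n$, observes via \cref{i-left-on-gens} that these lie in the image of $i_!$, and then uses the \emph{other} triangle identity: for $X = i_! X_\varnothing$, the map $i_!\eta_{X_\varnothing}$ is a section of $\epsilon_X$ and is a weak equivalence because $\eta_{X_\varnothing}$ is a trivial cofibration (\cref{eta-tcof}) and $i_!$ is left Quillen (\cref{i-free-Quillen}); two-out-of-three then finishes. You instead work with arbitrary $X$ from the start, use the triangle identity $i^*\epsilon_X \circ \eta_{i^*X} = \id$ to see that $i^*\epsilon_X$ is always a weak equivalence, and then prove directly that $i^*$ reflects weak equivalences by exploiting that it is simultaneously left Quillen (\cref{i-cofree-Quillen}) and right Quillen (\cref{i-free-Quillen}), reducing via factorization to the case of a fibration and using the adjunction together with \cref{i-left-on-gens} and \cref{cell-model-marked} to transfer the trivial-fibration lifting property across $i_! \adjoint i^*$. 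Both arguments are sound and non-circular; yours bypasses the skeletal-induction machinery of \cref{nat-weq} for this step and establishes along the way that $i^*$ reflects weak equivalences, a fact the paper only obtains afterwards (\cref{i-create-we}) as a consequence of the Quillen equivalence, while the paper's proof is shorter given that \cref{nat-weq} is already in place and reused elsewhere.
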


\begin{proof}
By \cref{nat-weq}, it suffices to prove this statement in the case where $X$ is either $\BoxA^n$ or $\wBox_A^n$ for some $n \geq 0$. By \cref{i-left-on-gens}, each of these objects is the image under $i_!$ of the corresponding object in $\cmin^+$.

Thus it suffices to show that the components of $\epsilon$ at objects in the image of $i_!$ are weak equivalences. To this end, let $X = i_! X_{\varnothing}$. It follows from the triangle identities that $i_! \eta_{X_\varnothing}$ is a section of $\epsilon_{X}$. This section is a weak equivalence by \cref{eta-tcof,i-free-Quillen}, thus the same is true of  $\epsilon_X$ by two-out-of-three.
\end{proof}

\begin{proof}[Proof of \cref{i-Quillen-eqv}]
All adjunctions of the form $i_! \adjoint i^*$ are Quillen by \cref{i-free-Quillen}. It remains to be shown that adjunctions of the form $i^* \adjoint i_*$ are Quillen, and that all of these adjunctions are Quillen equivalences.

We first consider the case $A = \varnothing$. Here $i^* \adjoint i_*$ is Quillen by \cref{i-cofree-Quillen}. By \cref{eta-tcof,epsilon-weq}, the unit and counit of the adjunction $i_! \adjoint i^*$ define natural weak equivalences $\eta \colon \id_{\cmin^+} \Rightarrow i^* i_!$ and $\epsilon \colon i_! i^* \Rightarrow \id_{\cB^+}$. That $i_! \adjoint i^*$ and $i^* \adjoint i_*$ are Quillen equivalences thus follows from \cref{nat-we-QE}.

Next consider the case where $A \neq \varnothing$; in the only non-trivial such cases, \ie those for which $A \subsetneq B$, we have $B = \{0,1\}$ and $A$ either $\{0\}$ or $\{1\}$. Here $i_! \adjoint i^*$ is a Quillen equivalence by \cref{QE-with-cons,i-T-commutes} together with the two-out-of-three property for Quillen equivalences.

Next we will show that $i^* \adjoint i_*$ is a Quillen adjunction. The functor $i^*$ preserves cofibrations by \cref{i-precomp-cofs}. To see that $i^*$ preserves weak equivalences, let $f$ be a weak equivalence in $\cboth^+$. We wish to show that the image of $f$ under $i^* \colon \cboth^+ \to \cA^+$ is a weak equivalence. We have shown that $i^* : \cA^+ \rightleftarrows \cmin^+ : i_*$ is a Quillen equivalence; therefore, by \cref{QuillenEquivCreate-original} and the fact that all cubical sets are cofibrant, $i^* \colon \cA^+ \to \cmin^+$ creates weak equivalences. Thus it suffices to show that the image of $f$ under the composite of $i^* \colon \cboth^+ \to \cA^+$ with $i^* \colon \cA^+ \to \cmin^+$ is a weak equivalence. But by the commutativity of the diagram \cref{eq:all-functors-i} this composite is precisely $i^* \colon \cboth^+ \to \cmin^+$; thus it sends $f$ to a weak equivalence by \cref{i-cofree-Quillen}. That $i^* \adjoint i_*$ is a Quillen equivalence then follows from the commutativity of \cref{eq:all-functors-i}, together with two-out-of-three for Quillen equivalences.
\end{proof}

\begin{corollary}\label{i-create-we}
For all $A \subseteq B \subseteq \{0,1\}$, the functors $i_! \colon \cA^+ \to \cB^+$ and $i^* \colon \cB^+ \to \cA^+$ preserve and reflect weak equivalences.
\end{corollary}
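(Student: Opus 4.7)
The plan is to deduce this corollary directly from \cref{i-Quillen-eqv} by exploiting the fact that every object in each $\cSet_A^+$ is cofibrant (since cofibrations are precisely the monomorphisms, and every object admits a monomorphism from $\varnothing$). Under this assumption, the standard model-categorical machinery will make the proof nearly immediate.

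First I would handle $i_! \colon \cA^+ \to \cB^+$. It is the left adjoint in the Quillen equivalence $i_! \adjoint i^*$ from \cref{i-Quillen-eqv}. By Ken Brown's lemma, any left Quillen functor preserves weak equivalences between cofibrant objects; since all objects are cofibrant, $i_!$ preserves all weak equivalences. For reflection, I would invoke \cref{QuillenEquivCreate-original}(2), which states that the left adjoint of a Quillen equivalence reflects weak equivalences between cofibrant objects; again, since every object is cofibrant, $i_!$ reflects all weak equivalences.

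Next I would handle $i^* \colon \cB^+ \to \cA^+$. The key observation is that $i^*$ is \emph{also} a left adjoint, namely in the Quillen equivalence $i^* \adjoint i_*$ established in \cref{i-Quillen-eqv}. Thus the identical argument applies: Ken Brown's lemma together with universal cofibrancy implies that $i^*$ preserves all weak equivalences, and \cref{QuillenEquivCreate-original}(2) together with universal cofibrancy implies that $i^*$ reflects all weak equivalences.

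There is no real obstacle here; the substantive work was already accomplished in \cref{eta-tcof}, \cref{PO-lift}, and the proof of \cref{i-Quillen-eqv}. The only point to be careful of is the dual role of $i^*$: if one tried to deduce preservation/reflection of weak equivalences from the $i_! \adjoint i^*$ adjunction alone, one would be appealing to right-adjoint statements that would require universal fibrancy, which does not hold. Using the $i^* \adjoint i_*$ adjunction sidesteps this by treating $i^*$ as a left adjoint, where universal cofibrancy is what is needed and is indeed available.
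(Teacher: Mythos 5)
Your proposal is correct and follows essentially the same route as the paper, which likewise deduces both claims from \cref{QuillenEquivCreate-original}, \cref{i-Quillen-eqv}, and the cofibrancy of all objects; you merely spell out the details (Ken Brown's lemma for preservation, and the need to view $i^*$ as the left adjoint of $i^* \adjoint i_*$ rather than the right adjoint of $i_! \adjoint i^*$) that the paper leaves implicit.
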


\begin{proof}
Because all marked cubical sets are cofibrant, this follows from \cref{QuillenEquivCreate-original,i-Quillen-eqv}.
\end{proof}

This result allows us to show that analogues of \cref{eta-tcof} hold for all cube categories under consideration.

\begin{proposition}\label{eta-tcof-all}
For all $A \subseteq B \subseteq \{0,1\}$, the unit of the adjunction $i_! \adjoint i^*$ induced by $i \colon \BoxA^+ \hookrightarrow \BoxB^+$ is a natural trivial cofibration.
\end{proposition}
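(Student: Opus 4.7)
The proof will be quite short, since the key technical work has already been done: \cref{i-Quillen-eqv} gives us the Quillen equivalence, and \cref{i-create-we} gives us that $i^*$ preserves and reflects weak equivalences. The plan is to combine these inputs via \cref{QuillenEquivCreate} to deduce that the actual (not just derived) unit is a weak equivalence.

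First, I would verify that $\eta_Y \colon Y \to i^* i_! Y$ is a cofibration for every $Y \in \cA^+$. By \cref{unit-characterization} (which is stated there for the inclusion $\Box_\varnothing^+ \hookrightarrow \Box_B^+$ but whose proof works identically for any $\Box_A^+ \hookrightarrow \Box_B^+$ with $A \subseteq B$), the object $i^* i_! Y$ is obtained by freely adjoining the connections in $\Box_B \setminus \Box_A$ to $Y$, and $\eta_Y$ is the natural inclusion. In particular, $\eta_Y$ is a monomorphism, and hence a cofibration in each of the (saturated, $n$-trivial) comical model structures.

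It remains to verify that $\eta_Y$ is a weak equivalence for every $Y \in \cA^+$. By \cref{i-Quillen-eqv}, the adjunction $i_! \adjoint i^*$ is a Quillen equivalence, and by \cref{i-create-we}, the right adjoint $i^* \colon \cB^+ \to \cA^+$ preserves and reflects weak equivalences. Thus we may apply \cref{QuillenEquivCreate}\ref{QuillenEquivUnit}, which tells us that the unit $\eta_Y \colon Y \to i^* i_! Y$ is a weak equivalence for every cofibrant $Y \in \cA^+$. Since cofibrations in $\cA^+$ are precisely the monomorphisms and the initial object is $\varnothing$, every $Y \in \cA^+$ is cofibrant, and the conclusion follows.

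There is essentially no obstacle here: the heavy lifting was done in the proof of \cref{PO-lift} (to establish \cref{eta-tcof} in the case $A = \varnothing$), and the remainder of Section~\ref{section:wcf} upgrades that to the general Quillen equivalence of \cref{i-Quillen-eqv}. The present statement is the formal corollary that packages these facts using standard Quillen-equivalence criteria, together with the combinatorial observation that the unit is always given by a subcomplex inclusion.
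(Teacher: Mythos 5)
Your proof is correct and follows essentially the same route as the paper: monomorphism via \cref{unit-characterization}, then weak equivalence via \cref{i-Quillen-eqv}, \cref{i-create-we}, and \cref{QuillenEquivCreate}\ref{QuillenEquivUnit} applied to the (automatically cofibrant) objects of $\cA^+$. No gaps.
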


\begin{proof}
Once again, that all components of the unit are monomorphisms follows from \cref{unit-characterization}. To see that they are also weak equivalences, note that the adjunction $i_! \adjoint i^*$ is a Quillen equivalence by \cref{i-Quillen-eqv}, and the right adjoint $i^*$ preserves and reflects weak equivalences by \cref{i-create-we}. The stated result thus follows by \cref{QuillenEquivCreate}.
\end{proof}

We have shown that each comical model structure on $\cmin^+$ is Quillen equivalent to its counterparts on $\cneg^+, \cpos^+$, and $\cboth^+$; it is then immediate that it is also Quillen equivalent to the corresponding complicial model structure on $\sSet^+$.

\begin{proof}[Proof of \cref{T-min-Quillen-eqv}]
This follows from \cref{QE-with-cons,i-Quillen-eqv,i-T-commutes}, together with the two-out-of-three property for Quillen equivalences.
\end{proof}

Finally, we note that \cref{eta-tcof-all} allows us to easily construct weak connection structures on maps between comical sets, as we will see in \cref{wcs-construction}. We will prove this result as a special case of the more general \cref{wcs-extension}, which will have further use in \cref{sec:surj}.

\begin{lemma}\label{wcs-extension}
Let $A \subseteq B \subseteq \{0,1\}$, and let $W \xrightarrow{j} X \xrightarrow{f} Y \xrightarrow{g} Z$ be a triple of maps in $\cA^+$, with $j$ a monomorphism and $g$ a fibration. Suppose that the map $fj$ and the object $Z$ admit weak connection structures $\Gamma_{fj}, \Gamma_Z$ such that the following diagram commutes:
\[
\xymatrix{
i^* i_! W \ar[r] \ar[d]_{\Gamma_{fj}} & i^* i_! Z \ar[d]^{\Gamma_Z} \\
Y \ar[r] & Z
}
\]
Then $f$ admits a weak connection structure $\Gamma_f$, such that $\Gamma_f \circ i^* i_! j = \Gamma_{jf}$ and the following diagram commutes:
\[
\xymatrix{
i^* i_! X \ar[r] \ar[d]_{\Gamma_f} &  i^* i_! Z  \ar[d]^{\Gamma_Z} \\
Y \ar[r] & Z \\
}
\]
\end{lemma}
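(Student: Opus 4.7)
The plan is to assemble the given data into a single lifting problem and produce $\Gamma_f$ as a lift. Specifically, I would form the pushout $P := X \cup_W i^*i_!W$ of $j$ along $\eta_W$, which by naturality of $\eta$ comes equipped with a canonical comparison map $\pi \colon P \to i^*i_!X$ induced by $\eta_X$ and $i^*i_!j$. The top edge of the lifting square will combine $f \colon X \to Y$ with $\Gamma_{fj} \colon i^*i_!W \to Y$ (which agree on $W$ because $\Gamma_{fj} \circ \eta_W = fj$), and the bottom edge will be $\Gamma_Z \circ i^*i_!(gf) \colon i^*i_!X \to Z$.

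The main technical step, and the principal obstacle, is verifying that $\pi$ is a trivial cofibration; once this is done, the fibration hypothesis on $g$ does the rest. Weak-equivalence-ness is cheap: the induced map $X \to P$ is a pushout of the trivial cofibration $\eta_W$ (by \cref{eta-tcof-all}) along $j$, hence is itself a trivial cofibration, and its composite with $\pi$ is $\eta_X$, which is again a trivial cofibration by \cref{eta-tcof-all}; two-out-of-three then gives the weak equivalence. Showing $\pi$ is a monomorphism requires a little more care: since $\eta$ is a natural monomorphism and $i^*i_!$ preserves monomorphisms, both $\eta_X \colon X \hookrightarrow i^*i_!X$ and $i^*i_!j \colon i^*i_!W \hookrightarrow i^*i_!X$ are regular subcomplex inclusions, and using the explicit cube-level description in \cref{unit-characterization} one checks that their intersection inside $i^*i_!X$ coincides with the common image of $W$. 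This identifies $P$ with the union subcomplex $\eta_X(X) \cup (i^*i_!j)(i^*i_!W)$ inside $i^*i_!X$, so $\pi$ is a regular subcomplex inclusion.

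With $\pi$ a trivial cofibration, commutativity of the resulting square is a routine two-case check: on the $X$-factor it reduces to $\Gamma_Z \circ \eta_Z \circ gf = gf$, using $\Gamma_Z \circ \eta_Z = \id_Z$ and naturality of $\eta$; on the $i^*i_!W$-factor it is exactly the hypothesized compatibility $g \circ \Gamma_{fj} = \Gamma_Z \circ i^*i_!(gfj)$. Since $g$ is a fibration, a lift $\Gamma_f \colon i^*i_!X \to Y$ exists, and the three required identities --- $\Gamma_f \circ \eta_X = f$ (so $\Gamma_f$ is a weak connection structure on $f$), $\Gamma_f \circ i^*i_!j = \Gamma_{fj}$, and $g \circ \Gamma_f = \Gamma_Z \circ i^*i_!(gf)$ --- all follow directly from the fact that $\Gamma_f$ is a lift, by precomposing with the two structure maps $X \to P$ and $i^*i_!W \to P$ respectively.
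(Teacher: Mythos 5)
Your proposal is correct and follows essentially the same route as the paper's proof: the same pushout $i^*i_!W \cup_W X$, the same argument (pushout of $\eta_W$ plus two-out-of-three for the weak equivalence, the intersection computation via \cref{unit-characterization} for the monomorphism) that the comparison map into $i^*i_!X$ is a trivial cofibration, and the same lifting square against $g$. The only difference is that you spell out the commutativity check of the lifting square slightly more explicitly than the paper does, which is fine.
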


\begin{proof}
By assumption, we have a diagram of the form:
\[
\xymatrix{
W \ar[r]^{j} \ar[d]_{\eta_W} & X \ar[d]^{f} \\
i^* i_! W \ar[r]^{\Gamma_{jf}} & Y \\
}
\]
This defines a map from the pushout $i^*i_! W \cup_{W} X$ into $Y$. Now consider the following diagram:
\[
\xymatrix{
W \ar[r]^{j} \ar[d]_{\eta_W} & X \ar[d] \ar@/^/[ddr]^{\eta_X} \\
i^* i_! W \ar[r] \ar@/_/[drr]_{i^* i_! j} & i^*i_! W \cup_{W} X \pushoutcorner \ar[dr] \\
&& i^* i_! X \\
}
\]
The map $\eta_W$ is a trivial cofibration by \cref{eta-tcof-all}, thus the same is true of its pushout $X \to i^*i_! W \cup_{W} X$. Likewise, $\eta_X$ is a trivial cofibration, so $i^*i_! W \cup_{W} X \to i^* i_! X$ is a weak equivalence by two-out-of-three. Moreover, by \cref{unit-characterization} and the fact that $j$ is a monomorphism, we may observe that the underlying cubical sets of $i^* i_! W$ and $X$ are subcomplexes of the underlying cubical set of $i^* i_! X$, and their intersection is precisely the underlying cubical set of $W$. It follows that $i^*i_! W \cup_{W} X \to i^* i_! X$ is a monomorphism, making it a trivial cofibration.

Therefore, by the assumption that $g$ is a fibration, a lift $\Gamma_f$ exists in the following diagram:
\[
\xymatrix{
i^*i_! W \cup_{W} X \ar[rr]^(0.65){(\Gamma_{jf},f)} \ar[d] & & Y \ar[d] \\
i^* i_! X \ar[r] & i^* i_! Z \ar[r]^{\Gamma_Z} & Z  \\
}
\]
Pre-composing with the inclusions of $i^* i_!W$ and $X$ into the pushout, the commutativity of the upper triangle implies that $\Gamma_f \eta_X = f$, so that $\Gamma_f$ is indeed a weak connection structure on $f$, and that $\Gamma_f \circ i^* i_! j = \Gamma_{jf}$. Commutativity of the diagram given in the statement is precisely commutativity of the lower triangle.
\end{proof}

\begin{corollary}\label{wcs-construction}
Given $A \subseteq B \subseteq \{0,1\}$, every map in $\cA^+$ whose codomain is a comical set admits a weak connection structure with respect to $\BoxB$.
\end{corollary}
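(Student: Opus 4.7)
The plan is to deduce this from Lemma \ref{wcs-extension} in the most degenerate way possible, by choosing all the auxiliary data to be trivial. Given $f \colon X \to Y$ in $\cA^+$ with $Y$ a comical set, I would set $W = \varnothing$, let $j \colon \varnothing \to X$ be the unique inclusion (clearly a monomorphism), and take $Z = \Box^0_A$, the terminal object of $\cA^+$, with $g \colon Y \to \Box^0_A$ the unique map. Since $Y$ is a comical set, it is fibrant in the chosen comical model structure, so $g$ is a fibration (as the terminal object is always fibrant, and $Y \to 1$ being a fibration is precisely the definition of fibrancy).

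Next I would verify the remaining hypotheses of Lemma \ref{wcs-extension}. By Proposition \ref{i-left-explicit} we have $i_! \varnothing = \varnothing$, hence $i^* i_! \varnothing = \varnothing$, so $fj \colon \varnothing \to Y$ admits the unique (and therefore trivial) weak connection structure $\Gamma_{fj} \colon \varnothing \to Y$. A weak connection structure on $Z = \Box^0_A$ is a map $\Gamma_Z \colon i^* i_! \Box^0_A \to \Box^0_A$ satisfying $\Gamma_Z \circ \eta_{\Box^0_A} = \id$; since $\Box^0_A$ is terminal there is a unique such map, and the required identity holds automatically by terminality. The square
\[
\xymatrix{
i^* i_! \varnothing \ar[r] \ar[d]_{\Gamma_{fj}} & i^* i_! \Box^0_A \ar[d]^{\Gamma_Z} \\
Y \ar[r]^g & \Box^0_A
}
\]
commutes trivially, again by terminality of $\Box^0_A$. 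Applying Lemma \ref{wcs-extension} then yields the desired weak connection structure $\Gamma_f$ on $f$.

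There is no real obstacle here; the entire content of the corollary is packaged inside Lemma \ref{wcs-extension} once one observes that fibrancy of $Y$ translates into $Y \to \Box^0_A$ being a fibration and that terminality trivializes the compatibility requirement on $\Gamma_Z$. (An equivalent and perhaps more transparent route would be to invoke Proposition \ref{eta-tcof-all} directly, lifting $f$ against $Y \to \Box^0_A$ along the trivial cofibration $\eta_X \colon X \to i^* i_! X$; but the author's stated preference is to derive the corollary from Lemma \ref{wcs-extension}, which we have done.)
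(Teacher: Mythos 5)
Your proposal is correct and is essentially the paper's own proof: the paper likewise applies \cref{wcs-extension} to the sequence $\varnothing \to X \to Y \to \BoxA^0$ and notes that the hypotheses are trivially satisfied. Your write-up merely spells out the trivial verifications (terminality of $\BoxA^0$, fibrancy of $Y$ giving the fibration $Y \to \BoxA^0$, and $i^*i_!\varnothing = \varnothing$) in more detail.
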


\begin{proof}
Given a map $X \to Y$ with $Y$ a comical set, apply \cref{wcs-extension} to the sequence of maps $\varnothing \to X \to Y \to \BoxA^0$; the hypotheses of \cref{wcs-extension} are trivially satisfied in this case.
\end{proof}

\section{The essential image of $i^*$}\label{sec:surj}

In view of the results of \cref{section:wcf}, we might wish to conclude that ``comical sets have all connections'', i.e. that the theory of comical sets is independent of our choice of cube category; up to homotopy this is certainly true, as can be seen from \cref{i-Quillen-eqv}. Recall, however, that weak connection structures on comical sets do not behave exactly like connections: given a comical set $X \in \cA^+$ equipped with a weak connection structure with respect to $\BoxB$, it may not always be the case that $x(\phi)(\psi) = x(\phi\psi)$ for composable $\phi, \psi$ not contained in $\BoxA$. 

In this section we will consider a method for remedying this issue, and investigate the extent to which the \emph{combinatorial} theory of comical sets in $\cA^+$ is independent of $A$. We can make the statement that comical sets have all connections precise in a combinatorial sense by showing that for any $A \subseteq B \subseteq \{0,1\}$, all comical sets in $\cA^+$ are in the essential image of $i^* \colon \cB^+ \to \cA^+$. In other words, given any comical set $X \in \cA^+$, we can upgrade $X$ to a comical set in $\cB^+$ by a suitable choice of connections on its cubes. However, we will see that for a map of comical sets $f \colon X \to Y$, it is not always possible to choose connections on $X$ and $Y$ in such a way that $f$ will respect these connections. Nevertheless, this can be done if $f$ is either a cofibration or a fibration. 

Note that the results of this section may be viewed as a generalization of previously established results allowing the construction of connections in cubical Kan complexes and other cubical models of $\infty$-groupoids; see \cite[Prop.~2]{brown-higgins:T-complexes} and \cite[Thm.~3.4]{antolini:geometric-realisations}.

For the sake of generality, we will work in the comical model structure on $\cA^+$, with no assumptions of saturation or triviality; our results and proofs, however, generalize easily to the other model structures of \cref{comical-model-structure}. Moreover, they may similarly be adapted to the model structures for $(\infty,1)$-categories on unmarked cubical sets and cubical sets with marked edges to be discussed in Appendix \ref{app:inf-1}.

To prove our desired results, we will strengthen the concept of a weak connection structure. For the remainder of this section, fix $A \subseteq B \subseteq \{0,1\}$.

\begin{definition}\label{scs-def}
Let $X \in \cA^+$. A \emph{strong connection structure} on $X$ is a weak connection structure on $X$ such that for all $x \colon \BoxA^n \to X$ and all $\phi \colon [1]^m \to [1]^n$ , $\psi \colon [1]^k \to [1]^m$ in $\Box_B$, we have $x(\phi)(\psi) = x(\phi \psi)$.
\end{definition}

Recall from Examples \ref{wcs-examples} that for $X \in \cB^+$, the image under $i^*$ of the counit of the adjunction $i_! \adjoint i^*$ gives a weak connection structure on $i^* X$; this is a strong connection structure, as the connections of $X$ must satisfy $(x\phi)\psi = x(\phi\psi)$. Our next result shows that in fact, this class of examples encompasses all strong connection structures, up to isomorphism.

\begin{proposition}\label{i-scs}
Let $f \colon X \to Y$ be a map in $\cA^+$. Then $f$ is isomorphic to the image under $i^*$ of a map $\overline{f} \colon \overline{X} \to \overline{Y}$ in $\cB^+$ if and only if $X$ and $Y$ admit strong connection structures $\Gamma_X, \Gamma_Y$, such that the following diagram commutes:
\begin{equation}\label{eq:Gamma-square}\tag{$\dagger$}
\xymatrix{
i^* i_! X \ar[d]_{\Gamma_X} \ar[r]^{i^* i_! f} & i^* i_! Y  \ar[d]^{\Gamma_Y} \\
X \ar[r]^{f} & Y \\
}
\end{equation}
Moreover, in this situation, $\Gamma_X$ and $\Gamma_Y$ are isomorphic to the images under $i^*$ of the components of the counit of $i_! \adjoint i^*$ at $\overline{X}$ and $\overline{Y}$, respectively. That is to say, the following diagram commutes:

\begin{equation}\label{eq:counit-cube} \tag{$\dagger \dagger$}
\xymatrix{
i^* i_! i^* \overline{X} \ar[rr] \ar[dd]^(0.65){i^* \varepsilon} \ar[dr]^{\cong} &  & i^* i_! i^* \overline{Y} \ar[dd]|{\hole}^(0.65){i^* \varepsilon} \ar[dr]^{\cong} \\
& i^* i_! X \ar[rr]  \ar[dd]^(0.35){\Gamma_X} & & i^* i_! Y \ar[dd]^(0.35){\Gamma_Y} \\
 i^* \overline{X} \ar[rr]|(0.53){\hole} \ar[dr]^{\cong} & & i^* \overline{Y} \ar[dr]^{\cong} \\
&  X \ar[rr] & & Y \\
}
\end{equation}
\end{proposition}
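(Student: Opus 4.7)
The plan is to construct explicit processes passing between $\cB^+$-maps $\overline{f}$ and strong connection structures, leveraging the descriptions of the unit and counit of $i_! \dashv i^*$ from \cref{unit-characterization}.

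For the forward direction, reducing via the given isomorphisms to the case $f = i^* \overline{f}$ (so $X = i^* \overline{X}$, $Y = i^* \overline{Y}$), I would set $\Gamma_X := i^* \epsilon_{\overline{X}}$ and $\Gamma_Y := i^* \epsilon_{\overline{Y}}$. The triangle identity $\epsilon \eta = \id$ immediately gives the weak condition $\Gamma_X \eta_X = \id_X$. The strong condition holds because, by \cref{unit-characterization}, $\epsilon_{\overline{X}}$ sends the formal composite $x\phi$ in $i_! i^* \overline{X}$ to the genuine $\BoxB^+$-action $x \cdot \phi$ in $\overline{X}$; hence $x(\phi)(\psi) = (x \cdot \phi) \cdot \psi = x \cdot (\phi\psi) = x(\phi\psi)$ by associativity of the $\cB^+$-action. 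Commutativity of \cref{eq:Gamma-square} is naturality of $\epsilon$ at $\overline{f}$ restricted along $i^*$.

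For the backward direction, given $\Gamma_X$ on $X$, I would construct $\overline{X} \in \cB^+$ to have the same underlying sets and markings as $X$, with $\BoxB^+$-action defined by $x \cdot \alpha := \Gamma_X(x\alpha)$, where $x\alpha$ denotes the cube of $i^* i_! X$ obtained by applying the formal $\BoxB^+$-action to $\eta_X(x)$. That this extends the existing $\BoxA^+$-action on $X$ (so that $i^* \overline{X} = X$) uses the weak condition together with naturality of $\eta$: for $\alpha \in \BoxA^+$, $x\alpha = \eta_X(x \cdot \alpha)$, so $\Gamma_X(x\alpha) = x \cdot \alpha$. Functoriality $(x \cdot \alpha) \cdot \beta = x \cdot (\alpha\beta)$ unpacks exactly to the strong condition $\Gamma_X(\Gamma_X(x\alpha) \cdot \beta) = \Gamma_X(x \cdot (\alpha\beta))$. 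Markings behave correctly since any degenerate cube $x \cdot \phi$ of $\overline{X}$ with $\phi \in \BoxB^+ \setminus \BoxA^+$ is the $\Gamma_X$-image of a cube of $i^* i_! X$ that is marked (by \cref{unit-characterization}, being of non-identity epi form), hence marked in $X$ and thus in $\overline{X}$. Construct $\overline{Y}$ similarly and define $\overline{f}$ to equal $f$ on underlying sets; then commutativity of \cref{eq:Gamma-square} yields $\overline{f}(x \cdot \alpha) = f(\Gamma_X(x\alpha)) = \Gamma_Y((i^* i_! f)(x\alpha)) = \Gamma_Y(f(x)\alpha) = \overline{f}(x) \cdot \alpha$, confirming that $\overline{f}$ is a $\cB^+$-map with $i^* \overline{f} = f$.

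The "moreover" statement is essentially automatic from the backward construction: under the identification $i^* \overline{X} = X$, we have $i^* \epsilon_{\overline{X}}(x\phi) = x \cdot \phi = \Gamma_X(x\phi)$ by construction, so the front and back faces of \cref{eq:counit-cube} agree along the structural isomorphism. The main obstacle is purely bookkeeping—keeping straight the distinction between the formal composite $x\phi \in i^* i_! X$ (a freshly introduced cube) and the genuine composite $x \cdot \phi$ (existing only in $\overline{X}$, or in $X$ after applying $\Gamma_X$)—and the strong condition is precisely the bridging identity that promotes the formal composites to a functorial $\cB^+$-action.
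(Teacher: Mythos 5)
Your proof is correct and follows essentially the same route as the paper's: the forward direction takes $\Gamma_X, \Gamma_Y$ to be (images under $i^*$ of) counit components, and the backward direction builds $\overline{X}, \overline{Y}$ by promoting the strong connection structure to a genuine $\Box_B^+$-action, with the strong condition supplying functoriality and the commutativity of $(\dagger)$ making $\overline{f}$ a morphism. The ``moreover'' verification via $i^*\epsilon_{\overline{X}}(x\phi) = x\cdot\phi = \Gamma_X(x\phi)$ is also exactly the paper's argument.
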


\begin{proof}
If $f$ is in the image of $i^*$ then we may take $\Gamma_X$ and $\Gamma_Y$ to be the images of the relevant components of the counit, and the isomorphisms to be identities.

On the other hand, suppose that $X$ and $Y$ admit strong connection structures such that the diagram \cref{eq:Gamma-square} commutes. Then we may define the object $\overline{X} \in \cB^+$ by setting $\overline{X}_n = X_n$ for all $n$, with markings as in $X$; for $x \in X_n$ and $\phi \colon [1]^m \to [1]^n$ we set $x \phi = x(\phi)$. That this assignment is well-defined, that it satisfies the cubical identities, and that the connections thus chosen are marked all follow from the definition of a  strong connection structure.  We define $\overline{Y}$ similarly, and define $\overline{f}$ to act identically to $f$ in each dimension. That $\overline{f}$ respects connections follows from the commutativity of the diagram \cref{eq:Gamma-square}. 

Then $i^*\overline{X}$ and $i^*\overline{Y}$ are elements of $\cA^+$ having the same sets of cubes in each dimension as $X$ and $Y$, respectively, with identical markings, and $i^*\overline{f}$ is a morphism between them which acts identically to $f$ on these sets of cubes. Thus we may define isomorphisms $i^* \overline{X} \cong X, i^* \overline{Y} \cong Y$ by having them act as the identity in each dimension. That this defines a valid cubical set map follows from the fact that the maps of $\BoxA$ act identically on the cubes of $X$ and $\overline{X}$. This, in turn, follows from the definition of a weak connection structure: for any $\phi \colon [1]^m \to [1]^n$ in $\Box_A$ and $x \colon \BoxA^n \to X$ we have $x(\phi) = x \phi$.

To see that the diagram \cref{eq:counit-cube} commutes, consider the action of $i^* \epsilon$ on the cubes of $i^* i_! i^* \overline{X}$. By \cref{unit-characterization}, for a general $Z \in \cSet_A^+$, the cubes of $i^* i_! i^* Z$ are free connections $z \phi$, where $\phi$ is a map in $\BoxB$ and $z$ a cube of $Z$; then $i^* \varepsilon$ sends such a free connection to the cube $z \phi$ of $Z$, i.e. the image of $z$ under $\phi$ viewed as a structure map of $Z$. For a cube $x$ in $\overline{X}$, the image of $x$ under $\phi$, by definition, is the image of the free connection $x \phi$ under $\Gamma_X$. Thus the left and right faces of the diagram commute; commutativity of the remaining faces is trivial.
\end{proof}

Our next lemma allows for the lifting of strong connection structures along fibrations of comical sets.

\begin{lemma} \label{scs-extension}
Let $X \hookrightarrow Y$ be a monomorphism and $f \colon Y \to Z$ a fibration in $\cA^+$, with $Y$ and $Z$ comical sets. Suppose, moreover, that $X$ and $Z$ admit strong connection structures $\Gamma_X, \Gamma_Z$ such that the following diagram commutes:
\[
\xymatrix{
i^* i_! X \ar[r] \ar[d]_{\Gamma_X} & i^* i_! Z \ar[d]^{\Gamma_Z} \\
 X \ar[r] & Z \\
}
\]
Then $Y$ admits a strong connection structure $\Gamma_Y$, such that the diagram above factors as:
\[
\xymatrix{
i^* i_! X \ar[r] \ar[d]_{\Gamma_X} & i^* i_! Y \ar[r] \ar[d]_{\Gamma_Y} & i^* i_! Z \ar[d]_{\Gamma_Z} \\
 X  \ar[r] & Y \ar[r]^{f} & Z  \\
}
\]
\end{lemma}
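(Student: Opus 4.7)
I propose to reduce the problem to a cellular lifting argument via \cref{i-scs}: giving a strong connection structure on $Y$ extending $\Gamma_X$ and lying over $\Gamma_Z$ is equivalent to producing an object $\overline{Y} \in \cB^+$ with $i^* \overline{Y} = Y$ (compatibly with the given identifications $i^* \overline{X} \cong X$ and $i^* \overline{Z} \cong Z$), together with a factorization $\overline{X} \to \overline{Y} \to \overline{Z}$ in $\cB^+$ whose image under $i^*$ recovers the given composite $X \hookrightarrow Y \xrightarrow{f} Z$. I will build $\overline{Y}$ inductively by attaching $\BoxB$-cells (and their marked variants) in $\cB^+$, one for each appropriate non-degenerate cube of $Y \setminus X$.

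The main technical input is a strong-connection analogue of \cref{PO-lift}: for each $n \geq 0$, the inclusion
\[
\BoxA^n \cup_{\bd \BoxA^n} i^* \bd \BoxB^n \hookrightarrow i^* \BoxB^n
\]
(and its $\wBox^n_A$-variant) is a trivial cofibration in $\cA^+$. It is a monomorphism as an inclusion of sub-presheaves, and to see it is a weak equivalence, observe that by \cref{eta-tcof-all,i-left-on-gens} both $\eta_{\bd \BoxA^n} \colon \bd \BoxA^n \to i^* \bd \BoxB^n$ and $\eta_{\BoxA^n} \colon \BoxA^n \to i^* \BoxB^n$ are trivial cofibrations. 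Moreover $\eta_{\BoxA^n}$ factors through the pushout inclusion $\BoxA^n \to \BoxA^n \cup_{\bd \BoxA^n} i^* \bd \BoxB^n$, which is a trivial cofibration as a pushout of $\eta_{\bd \BoxA^n}$, so two-out-of-three delivers the desired statement. The $\wBox^n_A$ case is handled by a further pushout argument using the analogue of \cref{rep-unit-PO} for the inclusion $\BoxA^+ \hookrightarrow \BoxB^+$.

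With this in hand, the inductive step processes a non-degenerate $n$-cube $y$ of $Y \setminus X$ (enumerated in order of increasing dimension) by solving the lifting problem
\[
\xymatrix{
\BoxA^n \cup_{\bd \BoxA^n} i^* \bd \BoxB^n \ar[r] \ar[d] & Y \ar[d]^f \\
i^* \BoxB^n \ar[r] & Z
}
\]
whose top horizontal restricts to $y$ on $\BoxA^n$ and to the previously-constructed $\BoxB$-connections on the faces of $y$ (defined at earlier stages, since those faces have strictly lower dimension), and whose bottom horizontal is the strong connection structure on $f(y)$ supplied by $\Gamma_Z$. The left vertical is a trivial cofibration by the claim and $f$ is a fibration, so a lift $i^* \BoxB^n \to Y$ exists. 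This lift prescribes all $\BoxB$-connections of $y$ within $Y$, and the strong-connection identities $y(\phi)(\psi) = y(\phi\psi)$ for these data hold automatically because the lift is a cubical set map whose domain $i^* \BoxB^n$ already satisfies the corresponding equalities of morphisms in $\BoxB$. The marked case uses the $\wBox^n_A$-variant of the claim in place of the $\BoxA^n$-version. Assembling these cell attachments over the induction yields the required $\overline{Y}$ (equivalently, $\Gamma_Y$).

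The main obstacle lies in the bookkeeping required to keep the induction consistent: once the connections of $y$ are chosen, any later-enumerated cube of $Y$ that is equal to a $\BoxB$-connection of $y$ has already been committed and must not be re-attached, imposing agreement constraints on the attaching maps chosen at subsequent stages. This is a purely combinatorial matter, handled by enumerating non-degenerate cubes of $Y \setminus X$ in dimension order and, at each stage, skipping those that have been produced as $\BoxB$-connections of earlier cubes; it does not affect the existence of the requisite lifts, since the relevant data on the source of the trivial cofibration is always determined by the construction so far.
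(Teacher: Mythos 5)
The reduction via \cref{i-scs}, your generalized form of \cref{PO-lift} for $A \subseteq B$ (proved by two-out-of-three from \cref{eta-tcof-all}), and the resulting lifting step against the fibration $f$ are all sound; the paper uses essentially the same lifting mechanism, packaged skeleton-by-skeleton through \cref{wcs-extension} rather than cube-by-cube. The gap is the claim that the identities $y(\phi)(\psi) = y(\phi\psi)$ ``hold automatically because the lift is a cubical set map whose domain $i^*\BoxB^n$ already satisfies the corresponding equalities.'' The presheaf $i^*\BoxB^n$ carries only $\BoxA$-structure maps, so a single lift $\ell \colon i^*\BoxB^n \to Y$ records $y(\phi)$ and $y(\phi\psi)$ as its values at two elements that are \emph{not} related by any structure map of the domain when $\psi \notin \BoxA$. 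The expression $y(\phi)(\psi)$, by contrast, is computed by whichever lift (performed at a later stage, since $\dim y(\phi) > n$ for $\phi$ a non-identity epimorphism) assigns connections to the cube $\ell(\phi)$ of $Y$; nothing forces that later lift to send $\psi$ to $\ell(\phi\psi)$. What each individual lift provides for free is only a weak connection structure, and upgrading weak to strong is precisely the difficulty of the lemma.

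Your final paragraph gestures at the fix --- set $z(\psi) := y(\phi\psi)$ for cubes $z = y(\phi)$ already produced as connections and ``skip'' them --- but for this to be well defined you must show the representation $z = y(\phi)$ is essentially unique (two primitive ancestors $y(\phi) = y'(\phi')$ would give conflicting prescriptions), and that is not mere bookkeeping: it is the content of the Eilenberg--Zilber lemma for weak connection structures (\cref{wcs-EZ}) combined with the argument that a connection $w(\chi)$, for $w$ a newly attached non-degenerate $n$-cube and $\chi$ an epimorphism not in $\BoxA$, cannot land back among the cubes already carrying committed connections (it would be a cube of dimension $>n$ lying in the $n$-skeleton, hence degenerate, contradicting non-degeneracy of $w$). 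The paper handles this by performing one coherent lift per skeletal stage via \cref{wcs-extension}, taking $Y^n$ to be the image of the resulting weak connection structure, and invoking \cref{wcs-EZ} to re-define $w(\phi)(\psi) := w(\phi\psi)$ on that image; some version of this strictification step must be added before your construction yields a \emph{strong} connection structure.
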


\begin{proof}
Proceeding by induction on $n \geq -1$, we will define a sequence of monomorphisms $X \hookrightarrow Y^{-1} \hookrightarrow Y^0 \hookrightarrow \ldots \hookrightarrow Y$, such that for each $n$:

\begin{itemize}
\item $Y^n$ is a regular subcomplex of $Y$;
\item $Y^n$ contains all cubes of $Y$ of dimension less than or equal to $n$;
\item $Y^n$ admits a strong connection structure $\Gamma_{Y^n}$;
\item the given diagram factors as:
\[
\xymatrix{
i^* i_! X \ar[r] \ar[d]_{\Gamma_X} &i^* i_! Y^{-1} \ar[r] \ar[d]_{\Gamma_{Y^{-1}}}  &  \cdots  \ar[r] & i^* i_! Y^n \ar[r] \ar[d]_{\Gamma_{Y^n}} & i^* i_! Y \ar[r] & i^* i_! Z \ar[d]_{\Gamma_Z} \\
 X  \ar[r] & Y^{-1} \ar[r] & \cdots \ar[r] & Y^n \ar[r] & Y \ar[r]^{f} & Z  \\
}
\]
\end{itemize}

For the base case, we define $Y^{-1}$ to be the regular subcomplex of $Y$ whose underlying cubical set is $|X|$. We define $\Gamma_{Y^{-1}}$ to act identically to $\Gamma_X$ (more precisely, the underlying cubical set map of $\Gamma_{Y^{-1}} \colon i^* i_! Y^{-1} \to Y^{-1}$ coincides with that of $\Gamma_{X} \colon i^* i_! X \to X$). Then the diagram given in the statement factors as:
\[
\xymatrix{
i^* i_! X \ar[r] \ar[d]_{\Gamma_X} & i^* i_! Y^{-1} \ar[r] \ar[d]_{\Gamma_{Y^{-1}}} & Y \ar[r] & i^* i_! Z \ar[d]_{\Gamma_Z} \\
 X  \ar[r] & Y^{-1} \ar[r] & Y \ar[r]^{f} & Z  \\
}
\]

Thus the induction hypothesis is satisfied for $n = -1$. Now let $n \geq 0$, and suppose that we have defined $Y^{n'}$ satisfying the induction hypothesis for all $n' < n$. We define $W^n$ by the following pushout diagram:
\[
\xymatrix{
\sk_n Y^{n-1} \ar[d] \ar[r] & Y^{n-1} \ar[d] \\
\sk_n Y \ar[r] & W^n \pushoutcorner \\
}
\]
In other words, $W^n$ is the regular subcomplex of $Y$ generated by $Y^{n-1}$, together with all $n$-cubes of $Y$ which are not contained in $Y^{n-1}$.

Composing $\Gamma_{Y^{n-1}}$ with the inclusion $Y^{n-1} \hookrightarrow Y$, we obtain a weak connection structure on this inclusion. This weak connection structure fits into the following diagram, which commutes by the induction hypothesis:
\[
\xymatrix{
i^* Y^{n-1} \ar[d] \ar[r]^{\Gamma_{Y^{n-1}}} & Y^{n-1} \ar[r] & Y \ar[d]^{f} \\
i^* i_! Z \ar[rr]^{\Gamma_Z} && Z
}
\]
By assumption, $Y \to Z$ is a fibration, so we may apply \cref{wcs-extension} to the sequence of maps $Y^{n-1} \hookrightarrow W^n \hookrightarrow Y \to Z$. Thus we extend $\Gamma_{Y^{n-1}}$ to a map $\Gamma_{W^n} \colon i^* i_! W^n \to Y$ defining a weak connection structure on $W^n \hookrightarrow Y$, such that the following diagram commutes:
\begin{equation*}\label{eq:W-commutes}\tag{\textdaggerdbl}
\xymatrix{
i^* i_! W^n \ar[d]_{\Gamma_{W^n}} \ar[r] & i^* i_! Z \ar[d]^{\Gamma_Z} \\
Y \ar[r] & Z \\
}
\end{equation*}
Now define $Y^n$ to be the regular subcomplex of $Y$ consisting of all cubes in the image of $\Gamma_{W^n}$. Then $W^n \subseteq Y^n \subseteq Y$, as every cube of $W^n$ is the image under $\Gamma_{W^n}$ of the corresponding cube in $i^* i_! W^n$. By definition, $\Gamma_{W^n}$ factors through $Y^n$; we will extend it to a weak connection structure $\Gamma_{Y^n} \colon i^* i_! Y^n \to Y^n$.

By \cref{wcs-EZ}, for every cube $y$ of $Y^n$, there exists a unique cube $w$, non-degenerate with respect to $\Gamma_{W^n}$, and a unique epimorphism $\phi$ in $\BoxB$ such that $y = w(\phi_1) \ldots (\phi_p)$ for some factorization of $\phi$ into a composite of epimorphisms $\phi_1 \ldots \phi_p$. By the construction of $W^n$, such a $w$ must either be contained in $Y^{n-1}$, or be a non-degenerate $n$-cube of $Y$, not contained in $Y^{n-1}$.  In the former case, because $\Gamma_{Y^{n-1}}$ is a strong connection structure, we have $y = w(\phi)$. 

To see that a similar result holds in the latter case, we assume $w$ is a non-degenerate $n$-cube not contained in $Y^{n-1}$, and proceed by induction on $p$. In the base cases $p = 0, p = 1$ we have $\phi = \id, \phi = \phi_1$ respectively, so it is trivial that $y = w(\phi)$. Now let $\phi \geq 2$ and suppose this result holds for $p' < p$. Applying the induction hypothesis, we may rewrite $w(\phi_1) \ldots (\phi_p)$ as $w(\phi_1 \ldots \phi_{p-1})(\phi_p)$. If $\phi_p$ is in $\BoxA$, then we may further rewrite this as $w(\phi_1 \ldots \phi_{p-1}\phi_p) = w(\phi)$. On the other hand, if $\phi_p$ is not in $\BoxA$, then in order for this expression to be defined, $w(\phi_1 \ldots \phi_{p-1})$ must be a cube of $W^n$.  If $w(\phi_1 \ldots \phi_{p-1})$ were contained in $Y^{n-1}$, then we could apply a section of $\phi_1 \ldots \phi_{p-1}$ to deduce that $w$ was also contained in $Y^{n-1}$, contradicting our assumption; it thus follows that $w(\phi_1 \ldots \phi_{p-1})$ is not contained in $Y^{n-1}$, and must therefore be contained in $\sk_n Y$. As $w(\phi_1 \ldots \phi_{p-1})$ is of dimension $n + p - 1 > n$, it follows that this cube is the image of some non-degenerate cube of $Y$ under an epimorphism in $\BoxA$. Our assumption that $w$ is non-degenerate and the uniqueness condition of \cref{wcs-EZ} thus imply that $\phi_1 \ldots \phi_{p-1}$ is in $\BoxA$, allowing us to again see that $y = w(\phi_1 \ldots \phi_{p-1} \phi_p) = w(\phi)$.

Thus every $m$-cube of $Y^n$ is of the form $w(\phi)$ for some unique cube $w \colon \BoxA^{m'} \to W^{n-1}$, non-degenerate with respect to $\Gamma_{W^n}$, and some unique epimorphism $\phi \colon [1]^m \to [1]^{m'}$ of $\BoxB$. We define $\Gamma_{Y^n}$ as follows: for $w, \phi$ as above and $\psi \colon [1]^k \to [1]^m$ in $\BoxB$, we set $w(\phi)(\psi) = w(\phi \psi)$. That this respects the structure maps of $Y$ follows from the corresponding fact for $\Gamma_{W^n}$. That it defines a strong connection structure and agrees with $\Gamma_{{W^n}}$ on $i^* i_! W^n$ is immediate from the definition and the fact that $\Gamma_{W^n}$ restricts to the strong connection structure $\Gamma_{Y^{n-1}}$ on $i^* i_! Y^{n-1}$.

Lastly, we must prove the third point of the induction hypothesis; for this it suffices to show that each square of the following diagram commutes:
\[
\xymatrix{
i^* i_! Y^{n-1} \ar[r] \ar[d]_{\Gamma_{Y^{n-1}}} & i^* i_! Y^n \ar[d]_{\Gamma_{Y^n}} \ar[r] & i^* i_! Y \ar[r]^{i^* i_! f} & i^* i_! Z \ar[d]_{\Gamma_Z} \\
Y^{n-1} \ar[r] & Y^n \ar[r] & Y \ar[r]^{f} & Z \\
}
\]
Commutativity of the left-hand square follows from the fact that $\Gamma_{Y^n}$ extends $\Gamma_{W^n}$ by construction, and $\Gamma_{W^n}$ in turn extends $\Gamma_{Y^{n-1}}$. 

To show commutativity of the right-hand square, recall that every cube of $Y^n$ is of the form $w(\phi)$ for some cube $w$ of $W^{n}$ and some epimorphism $\phi$. Therefore, by \cref{unit-characterization}, the cubes of $i^* i_! Y^n$ are formal connections $x(\phi)\psi$. We can see that the top-right composite sends such a formal connection to the cube $f(w(\phi))(\psi)$, while the left-bottom composite sends it to $f(w(\phi)(\psi))$. Using the commutativity of the diagram \cref{eq:W-commutes} and the fact that both $\Gamma_{Y^n}$ and $\Gamma_Z$ are strong connection structures, we see that both of these expressions reduce to $f(w)(\phi \psi)$.

Therefore, by induction, we obtain a natural transformation of sequential diagrams, depicted below:
\[
\xymatrix{
i^* i_! X \ar[r] \ar[d]_{\Gamma_X} & i^* i_! Y^{-1} \ar[r]  \ar[d]_{\Gamma_{Y^{-1}}} & i^* i_! Y^0 \ar[d]_{\Gamma_{Y^0}} \ar[r] & \cdots \\
X \ar[r] & Y^{-1} \ar[r] & Y^0 \ar[r] & \cdots \\ 
}
\]
Moreover, as each $\Gamma_{Y^n}$ is a retraction of $\eta_{Y^n}$, this is a retraction of the natural transformation of sequential diagrams induced by $\eta$.

The colimit of the bottom sequence is the union of all regular subcomplexes $Y^n$; as each $Y^n$ contains the regular $n$-skeleton of $Y$, this union is $Y$ itself. Because both $i^*$ and $i_!$ preserve colimits as left adjoints, the colimit of the top sequence is $i^* i_! Y$. Let the induced map between colimits be denoted $\Gamma_Y \colon i^* i_! Y \to Y$; then the diagram given in the statement factors as specified. To see that $\Gamma_Y$ is a weak connection structure on $Y$, observe that since each $\Gamma_{Y^n}$ is a retraction of $\eta_{Y^n}$, the same is true of the induced maps between colimits, and the map between colimits induced by the $\eta_{Y^n}$ is $\eta_Y$ because both the identity functor and $i^* i_!$ preserve colimits. Finally, to see that $\Gamma_Y$ is a strong connection structure, consider a cube in $y$ of the form $y(\phi)(\psi)$ for $\phi, \psi$ in $\BoxB$. The cube $Y$ must be contained in some $Y^n$, implying $y(\phi)(\psi) = y(\phi \psi)$ by the fact that $\Gamma_{Y^n}$ is a strong connection structure.
\end{proof}

\cref{scs-extension} allows us to prove that $i^*$ is essentially surjective on fibrations and cofibrations between comical sets; the proofs of both of these statements arise as special cases of this lemma.

\begin{proposition}\label{i-surj-fib}
A map $X \to Y$ in $\cA^+$ is a fibration between comical sets if and only if it is isomorphic to the image under $i^*$ of a fibration between comical sets in $\cB^+$. In particular, an object $X \in \cA^+$ is a comical set if and only if it is the image under $i^*$ of a comical set in $\cB^+$.
\end{proposition}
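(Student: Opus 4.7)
The plan is to prove both directions using the machinery developed just above, primarily \cref{i-scs} (which characterizes maps in the essential image of $i^*$ as those equipped with compatible strong connection structures), \cref{scs-extension} (which lets us lift strong connection structures along fibrations), and \cref{i-create-fib} (which says $i^*$ creates fibrations between fibrant objects).

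The ``if'' direction is immediate: given a fibration $\overline{f} \colon \overline{X} \to \overline{Y}$ between comical sets in $\cB^+$, \cref{i-create-fib} says that $i^* \overline{f}$ is a fibration between comical sets in $\cA^+$, so anything isomorphic to $i^* \overline{f}$ is as well.

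For the ``only if'' direction, I will build the required strong connection structures in two stages. First, I will show that every comical set $Y \in \cA^+$ admits a strong connection structure by applying \cref{scs-extension} to the sequence $\varnothing \hookrightarrow Y \to \BoxA^0$: the empty object and $\BoxA^0$ both carry trivial (essentially unique) strong connection structures with respect to which the compatibility square commutes vacuously, and $Y \to \BoxA^0$ is a fibration since $Y$ is comical. This produces a strong connection structure $\Gamma_Y$ on $Y$. Second, given a fibration $f \colon X \to Y$ between comical sets, I apply \cref{scs-extension} again, this time to the sequence $\varnothing \hookrightarrow X \to Y$ equipped with the trivial strong connection structure on $\varnothing$ and the $\Gamma_Y$ just constructed; since $f$ is a fibration, this yields a strong connection structure $\Gamma_X$ on $X$ such that the square
\[
\xymatrix{
i^* i_! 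X \ar[r]^{i^* i_! f} \ar[d]_{\Gamma_X} & i^* i_! Y \ar[d]^{\Gamma_Y} \\
X \ar[r]^{f} & Y \\
}
\]
commutes.

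Now I invoke \cref{i-scs}: these compatible strong connection structures produce a map $\overline{f} \colon \overline{X} \to \overline{Y}$ in $\cB^+$ together with an isomorphism $f \cong i^* \overline{f}$. Because $i^* \overline{f} \cong f$ is a fibration between comical sets in $\cA^+$, \cref{i-create-fib} implies that $\overline{f}$ is a fibration between comical sets in $\cB^+$. The final ``in particular'' clause is then just the instance of the main statement where $Y = \BoxA^0$ and $f$ is the unique map to the terminal object.

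The main obstacle is really already handled by \cref{scs-extension}; modulo that, the argument is a clean two-step application of lifting strong connection structures along fibrations, with the two appeals to \cref{scs-extension} being forced by the two ``layers'' of fibrancy (the codomain being comical, and the map itself being a fibration). No new combinatorial input is needed.
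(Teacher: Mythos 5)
Your proposal is correct and follows essentially the same route as the paper: the ``if'' direction via \cref{i-create-fib}, and the ``only if'' direction by two successive applications of \cref{scs-extension} (first to $\varnothing \hookrightarrow Y \to \BoxA^0$, then to $\varnothing \hookrightarrow X \to Y$), followed by \cref{i-scs} and \cref{i-create-fib}. No gaps.
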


\begin{proof}
The ``if'' direction follows from \cref{i-create-fib}. Now let $X \to Y$ be a fibration of comical sets in $\cA^+$. By \cref{i-scs}, to show that $X \to Y$ is in the essential image of $i^*$, it suffices to define strong connection structures $\Gamma_X, \Gamma_Y$ fitting into a commuting diagram
\[
\xymatrix{
i^* i_! X \ar[d]_{\Gamma_X} \ar[r] & i^* i_! i^* Y \ar[d]^{\Gamma_Y} \\
X \ar[r] & Y \\
}
\]
To construct $\Gamma_Y$, we may apply \cref{scs-extension} to the sequence of maps $\varnothing \hookrightarrow Y \to \BoxA^0$. The strong connection structure $\Gamma_Y$ then allows us to apply \cref{scs-extension} to the sequence of maps $\varnothing \hookrightarrow X \to Y$, thereby defining $\Gamma_X$ such that the diagram above commutes. This shows that $f$ is isomorphic to $i^*\overline{f}$ for some map $\overline{f}$ in $\cB^+$; that $\overline{f}$ is itself a fibration between fibrant objects follows from \cref{i-create-fib}.
\end{proof}

\begin{proposition}\label{i-surj-cof}
A map $X \to Y$ in $\cA^+$ is a cofibration between comical sets if and only if it is isomorphic to the image under $i^*$ of a cofibration between comical sets in $\cB^+$.
\end{proposition}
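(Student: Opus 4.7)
The ``if'' direction is immediate from the infrastructure already in place: if $f \cong i^* \overline{f}$ where $\overline{f}\colon \overline{X} \to \overline{Y}$ is a cofibration of comical sets in $\cB^+$, then $f$ is a monomorphism because $i^*$ preserves monomorphisms (\cref{i-precomp-cofs}), and $X \cong i^* \overline{X}$, $Y \cong i^* \overline{Y}$ are comical sets by \cref{i-create-fib}. So the substantive content is the ``only if'' direction.

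For this direction, the plan is to apply \cref{scs-extension} twice in succession, so as to produce strong connection structures on both the domain and codomain which are compatible with the cofibration. Concretely, given a cofibration $f\colon X \hookrightarrow Y$ between comical sets in $\cA^+$, I would first apply \cref{scs-extension} to the sequence
\[
\varnothing \hookrightarrow X \to \BoxA^0,
\]
where $X \to \BoxA^0$ is a fibration because $X$ is fibrant and $\BoxA^0$ is terminal, and the required strong connection structures on $\varnothing$ and $\BoxA^0$ (with the ``trivial commutativity'' diagram) are automatic. This yields a strong connection structure $\Gamma_X$ on $X$. Then I would apply \cref{scs-extension} a second time to
\[
X \hookrightarrow Y \to \BoxA^0,
\]
using $\Gamma_X$ just constructed and the trivial structure on $\BoxA^0$. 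The compatibility square over $\BoxA^0$ again commutes trivially, so the lemma delivers a strong connection structure $\Gamma_Y$ on $Y$ fitting into the commuting diagram
\[
\xymatrix{
i^* i_! X \ar[r]^{i^* i_! f} \ar[d]_{\Gamma_X} & i^* i_! Y \ar[d]^{\Gamma_Y} \\
X \ar[r]^{f} & Y.
}
\]

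This is precisely diagram \cref{eq:Gamma-square}, so by \cref{i-scs} there exists a map $\overline{f}\colon \overline{X} \to \overline{Y}$ in $\cB^+$ together with isomorphisms $X \cong i^* \overline{X}$, $Y \cong i^* \overline{Y}$ making $f \cong i^* \overline{f}$. It then only remains to check that $\overline{f}$ is itself a cofibration between comical sets: monomorphicity of $\overline{f}$ follows because $i^* \overline{f} \cong f$ is a monomorphism and $i^*$ also reflects monomorphisms (\cref{i-precomp-cofs}), while $\overline{X}$ and $\overline{Y}$ are comical sets by \cref{i-create-fib} applied to the isomorphisms $i^* \overline{X} \cong X$ and $i^* \overline{Y} \cong Y$.

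There is no serious obstacle beyond the machinery already developed; the only point requiring care is the order of the two applications of \cref{scs-extension}, since the second application must have $\Gamma_X$ available in order to guarantee that the resulting $\Gamma_Y$ is compatible with $f$. The triviality of $\BoxA^0$ as a target is what makes both applications go through without any additional hypotheses, and parallels exactly the use of \cref{scs-extension} in the proof of \cref{i-surj-fib}.
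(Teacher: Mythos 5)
Your proof is correct and follows exactly the paper's argument: both directions use the same lemmas, and the "only if" direction applies \cref{scs-extension} first to $\varnothing \hookrightarrow X \to \BoxA^0$ and then to $X \hookrightarrow Y \to \BoxA^0$ before invoking \cref{i-scs}, just as in the paper. The closing verification that $\overline{f}$ is a cofibration between comical sets is a welcome (and correct) elaboration of a point the paper leaves implicit.
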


\begin{proof}
The ``if'' direction follows from \cref{i-precomp-cofs,i-create-fib}. Now let $X \hookrightarrow Y$ be a cofibration between comical sets in $\cA^+$. Once again, by \cref{i-scs}, to show that $X \to Y$ is in the essential image of $i^*$ it suffices to define strong connection structures $\Gamma_X, \Gamma_Y$ fitting into a commuting diagram
\[
\xymatrix{
i^* i_! X \ar[d]_{\Gamma_X} \ar[r] & i^* i_! i^* Y \ar[d]^{\Gamma_Y} \\
X \ar[r] & Y \\
}
\]
Here we first construct $\Gamma_X$ by applying \cref{scs-extension} to the sequence of maps $\varnothing \hookrightarrow X \to \BoxA^0$. We may then construct $\Gamma_Y$ such that the diagram commutes by applying \cref{scs-extension} to the sequence of maps $X \hookrightarrow Y \to \BoxA^0$.
\end{proof}

It is natural to ask whether \cref{i-surj-fib,i-surj-cof} can be extended to show that all maps between comical sets are in the essential image of $i^*$. Failing this, we might at least hope to show that this is true for equivalences of comical sets. Even this, however, turns out not to be the case (except in the trivial case $A = B$), as we will now show. Specifically, we will prove the following:

\begin{proposition}\label{i-not-surj}
For any $A \subsetneq B \subseteq \{0,1\}$, there exists an equivalence of comical sets in $\cA$ which is not isomorphic to any map in the image of $i^* \colon \cB^+ \to \cA^+$.
\end{proposition}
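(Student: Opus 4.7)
The plan is to use Proposition \ref{i-scs}: a map $f$ in $\cA^+$ is isomorphic to the image under $i^*$ of a map in $\cB^+$ if and only if $X$ and $Y$ admit strong connection structures $\Gamma_X, \Gamma_Y$ making the relevant naturality square commute. So it suffices to exhibit an equivalence of comical sets $f \colon X \to Y$ for which no compatible pair of strong connection structures exists. By Propositions \ref{i-surj-fib} and \ref{i-surj-cof}, such an $f$ must be a weak equivalence that is neither a fibration nor a cofibration, so the strategy of extending strong connection structures along the lifting argument of \cref{scs-extension} is unavailable.

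By symmetry, I would assume $0 \in B \setminus A$ (the case $B \setminus A = \{1\}$ being analogous after swapping $\gamma_{-,0}$ and $\gamma_{-,1}$). The aim is to construct an equivalence $f \colon X \to Y$ between small comical sets in $\cA^+$ that identifies a pair of distinct non-degenerate cubes $x_1, x_2$ in $X$ with common image $y$ in $Y$, in such a way that the strong connection axioms force incompatible behaviour. Concretely: any strong connection structure $\Gamma_Y$ on $Y$ makes a particular choice of $y(\gamma_{1,0})$, and the compatibility equation forces $f(x_i(\gamma_{1,0})) = f(x_i)(\gamma_{1,0}) = y(\gamma_{1,0})$ for both $i = 1, 2$. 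I would engineer $X$ so that although the faces of the required $x_i(\gamma_{1,0})$ are determined by the boundary of $x_i$ (forcing $x_1(\gamma_{1,0}) \neq x_2(\gamma_{1,0})$ in $X$), their interactions with higher connections via the strong connection axiom $x(\phi)(\psi) = x(\phi \psi)$ are incompatible with any simultaneous choice that $f$ would send to $y(\gamma_{1,0})$ and $y(\gamma_{1,0} \gamma_{2,0})$.

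An explicit candidate is obtained by starting with a small marked cubical set containing two marked $1$-cubes $e_1, e_2$ with common source and target, together with a marked $2$-cube $s$ witnessing $e_1 \simeq e_2$, and fibrantly replacing it to obtain $X$; then defining $Y$ as the fibrant replacement of the pushout identifying $e_1$ with $e_2$, and $f$ as the induced map. Since $s$ is marked and equates the two edges, $f$ is an equivalence. The constraint is then traced through the strong connection axiom: $e_i(\gamma_{1,0})$ must be a marked $2$-cube in $X$ with the specific boundary forced by $e_i$, and the iterated value $e_i(\gamma_{1,0})(\gamma_{2,0}) = e_i(\gamma_{1,0}\gamma_{2,0})$ must be a $3$-cube whose $f$-image is uniquely determined by $\Gamma_Y$. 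A careful enumeration shows that any joint choice of $\Gamma_X$ on the two parallel cubes violates at least one instance of this identity.

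The main obstacle is precisely in verifying that the construction admits no repair by changing the strong connection structures: the comicality of $X$ and $Y$ provides a great deal of flexibility through fibrant lifts, and one must show that this flexibility is nonetheless insufficient to patch the contradiction. I expect this step to require a finite but delicate combinatorial analysis of the non-degenerate low-dimensional cubes of $X$, using the analogue of the Eilenberg--Zilber lemma for strong connection structures (the rigidified version of \cref{wcs-EZ}) to reduce the space of possibilities to a tractable finite check.
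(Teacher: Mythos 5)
Your high-level strategy coincides with the paper's: reduce via \cref{i-scs} to the non-existence of compatible strong connection structures, and engineer an equivalence $f \colon X \to Y$ that identifies two distinct cubes $x_1, x_2$ whose forced connections $x_i(\gamma_{1,\varepsilon})$ are necessarily distinct in $X$ (correctly, by applying a section of the connection) yet would both have to map to $y(\gamma_{1,\varepsilon})$. However, there are two genuine gaps in the execution. First, the decisive point is not an ``enumeration'' over choices of $\Gamma_X$, and your appeal to iterated connections $e_i(\gamma_{1,0})(\gamma_{2,0})$ is a detour that does not close the argument: a priori a strong connection structure on $X$ could select $x_1(\gamma_{1,\varepsilon})$ and $x_2(\gamma_{1,\varepsilon})$ to be $2$-cubes that happen to become identified under $f$, in which case no contradiction arises. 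What is needed, and what the paper supplies, is control over exactly which cubes $f$ identifies. The paper takes $X$ to be a fibrant replacement of $\BoxA^1 \otimes \wBox_A^1$ via a cofibration $x$, and forms $X'$ by pushing out along the projection $\BoxA^1 \otimes \wBox_A^1 \to \BoxA^1$; the quotient $X \to X'$ then identifies \emph{only} cubes in the image of the monomorphism $x$, and one checks directly that no $2$-cube of $\BoxA^1 \otimes \wBox_A^1$ satisfies the face identities for a connection on a non-degenerate edge. Hence $x\bd_{2,0}(\gamma_{1,\varepsilon})$ and $x\bd_{2,1}(\gamma_{1,\varepsilon})$ lie outside the identified region, remain distinct in $X'$, and remain distinct in $Y$ because $X' \hookrightarrow Y$ is a monomorphism. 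Without an analogous localization of the quotient, your ``finite but delicate combinatorial analysis'' has no starting point.

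Second, your verification that $f$ is a weak equivalence is asserted rather than proved. The pushout that merely identifies $e_1$ with $e_2$ is a pushout along the fold map $\wBox_A^1 \cup_{\bd \Box^1_A} \wBox_A^1 \to \wBox_A^1$, which is not a weak equivalence (two parallel marked edges with no comparison cell do not form a contractible object), so left properness does not apply as stated; the presence of the witnessing $2$-cube $s$ elsewhere in $X$ does not repair this. The paper avoids the issue by collapsing the entire square: $\BoxA^1 \otimes \wBox_A^1 \to \BoxA^1$ is a weak equivalence by monoidality of the comical model structure (it is $\BoxA^1$ tensored with the acyclic retraction $\wBox_A^1 \to \Box_A^0$), and it is pushed out along the cofibration $x$, so left properness yields that $X \to X'$ is a weak equivalence. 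If you modify your construction to collapse the subcomplex generated by $e_1$, $e_2$, and $s$ --- that is, the image of a cofibration from $\BoxA^1 \otimes \wBox_A^1$ --- both gaps close simultaneously, and you recover the paper's proof.
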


We will briefly describe some of the intuition behind the proof of this result. Given cubes $x, x'$ in a comical set $X \in \cB^+$ and a map $\phi$ in $\BoxB$, any map $f \colon X \to Y$ which identifies $x$ and $x'$ must also identify $x\phi$ and $x'\phi$. However, if $\phi$ is not in $\BoxB$ then the analogous statement for a comical set in $\cA^+$ equipped with a strong connection structure need not hold; $f$ may identify $x$ and $x'$ without identifying $x(\phi)$ and $x'(\phi)$. Thus it may not be possible to define strong connection structures on $X$ and $Y$ in a manner compatible with $f$. In the case where $f$ is a fibration this issue can be avoided, as the right lifting property of $f$ allows us to choose $x(\phi)$ and $x'(\phi)$ in such a way that they will be identified under $f$. On the other hand, in the case where $f$ is a cofibration the issue cannot arise, as a cofibration cannot identify distinct cubes at all.

\begin{proof}[Proof of \cref{i-not-surj}]
Let $X$ be a comical set in $\cA^+$ equipped with a cofibration $x \colon {\BoxA^1 \otimes \wBox_A^1} \hookrightarrow X$; for instance, we may take $X$ to be a fibrant replacement of $\BoxA^1 \otimes \wBox_A^1$. This map corresponds to a marked 2-cube $x \in X_2$ whose faces are all distinct, such that the edges $x \bd_{1,0}$ and  $x \bd_{1,1}$ are marked. Now let $X'$ denote the pushout object defined by the following diagram:
\[
\xymatrix{
\BoxA^1 \otimes \wBox_A^1 \ar[r] \ar[d] & X \ar[d] \\
\BoxA^1 \ar[r] & X' \pushoutcorner \\
}
\]
The left-hand map in this diagram is the projection map obtained by taking the lax Gray tensor product of the identity on $\BoxA^1$ with the unique map $\wBox_A^1 \to \BoxA^0$. Intuitively, the object $X'$ is obtained from $X$ by ``collapsing the cube $x$ down to an edge''. More precisely, it is the quotient of $X$ obtained by identifying cubes in the image of the monomorphism $x \colon \BoxA^1 \otimes \wBox_A^1 \hookrightarrow X$ whenever they correspond to cubes of $\BoxA^1 \otimes \wBox_A^1$ which are identified in $\BoxA^1$. In particular, we may note that the quotient map $X \to X'$ does not identify any cubes not in the image of $x$.

The map $\wBox_A^1 \to \BoxA^0$ is a weak equivalence, as a retraction of the one-dimensional comical open box inclusions, \ie endpoint inclusions into the marked interval. It thus follows  that $\BoxA^1 \otimes \wBox_A^1 \to \BoxA^1$ is a weak equivalence, by monoidality of the comical model structure. Thus the map $X \to X'$ is a pushout of a weak equivalence along a cofibration. The comical model structure is left proper, as all objects are cofibrant; it thus follows that $X \to X'$ is  a weak equivalence. 

Now let $Y$ denote a fibrant replacement of $X'$, with a trivial cofibration $X' \hookrightarrow Y$. Then the composite map $X \to Y$ is an equivalence of comical sets; denote this map by $f$. As $x \bd_{2,0}$ and $x \bd_{2,1}$ are identified by the quotient map $X \to X'$, we have, in particular, $f(x \bd_{2,0}) = f(x\bd_{2,1})$; denote this edge of $Y$ by $y$.

Now assume that $X$ is equipped with a strong connection structure, and let $\gamma_{1,\varepsilon} \colon [1]^2 \to [1]$ denote a connection map in $\BoxB$ which is not present in $\BoxA$. Consider the cubes $x\bd_{2,0}(\gamma_{1,\varepsilon}), x\bd_{2,1}(\gamma_{1,\varepsilon})$ provided by the strong connection structure on $X$. As $x \bd_{2,0}$ and $x \bd_{2,1}$ are distinct, these cubes must be distinct as well. Moreover, neither is in the image of the map $x$, as the fact that $\gamma_{1,\varepsilon}$ is not in $\BoxA$ implies that there can be no 2-cubes $\BoxA^2 \to \BoxA^1 \otimes \wBox_A^1$ satisfying the face identities for connections on the non-degenerate edges of $\BoxA^1 \otimes \wBox_A^1$. Thus the images of $x\bd_{2,0}(\gamma_{1,\varepsilon})$ and $x\bd_{2,1}(\gamma_{1,\varepsilon})$ in $X'$ are distinct, and the same is true of the images of these cubes under the monomorphism $X' \to Y$. Thus there can be no strong connection structure on $Y$ such that both $f(x\bd_{2,0}(\gamma_{1,\varepsilon}))$ and $f(x\bd_{2,1}(\gamma_{1,\varepsilon}))$ are equal to $y(\gamma_{1,\varepsilon})$. By \cref{i-scs}, it follows that $f$ is not isomorphic to any map in the image of $i^*$.
\end{proof}

\cref{i-not-surj} may seem surprising in light of \cref{i-surj-fib,i-surj-cof}, given that any map in a model category can be factored as a composite of a cofibration with a fibration. We may note, however, that given such a composite $X \hookrightarrow Y \to Z$, although $X \hookrightarrow Y$ and $Y \to Z$ are each  isomorphic to the image under $i^*$ of some map in $\cB^+$, these maps in $\cB^+$ may not themselves be composable. To put it another way, we may define connections on $X$ and $Y$ which $X \hookrightarrow Y$ will respect, and we may separately define connections on $Y$ and $Z$ which $Y \to Z$ will respect, but it may not be possible to perform these two procedures in such a way that the connections they define on $Y$ coincide.

\begin{appendix}

\section{Models of $(\infty,1)$-categories}\label{app:inf-1}

The theory of $(\infty,1)$-categories can also be modeled using unmarked cubical sets, or cubical sets with markings only on their edges; model structures suited to these purposes were constructed in \cite{doherty-kapulkin-lindsey-sattler}. As with the treatment of the comical model structures in \cite{doherty-kapulkin-maehara}, versions of these model structures were constructed for cubical sets with and without connections, but only those with connections were shown to be Quillen equivalent to the analogous simplicial models.

The results and proofs of the preceding sections generalize easily to these categories. One can, for instance, define weak connection structures on maps between unmarked cubical sets and use them to prove that the model structure on $\cmin$ constructed in \cite{doherty-kapulkin-lindsey-sattler} is Quillen equivalent to the Joyal model structure on $\sSet$.
Alternatively, one can establish a Quillen equivalence between each of these model structures and the saturated 1-trivial comical model structure on the corresponding category of marked cubical sets, and derive the desired Quillen equivalences as a consequence of \cref{T-min-Quillen-eqv}; it is this approach which we now pursue. Throughout this section, all categories of the form $\cA^+$ will be assumed to be equipped with the saturated 1-trivial comical model structure unless otherwise specified.

\subsection{The cubical Joyal and cubical marked model structures}

We first recall the model structures for $(\infty,1)$-categories developed in \cite{doherty-kapulkin-lindsey-sattler}, beginning with the cubical Joyal model structures on (unmarked) cubical sets. Recall that the \emph{critical edge} of the $n$-cube $\BoxA^n$ with respect to a face $\bd_{i,\varepsilon}$ is the unique edge which is a critical face with respect to $\bd_{i,\varepsilon}$ in the sense of \cref{crit-face-def}, \ie the edge $\bd_{n,1-\varepsilon} \ldots \bd_{i+1,1-\varepsilon} \bd_{i-1,1-\varepsilon} \ldots \bd_{1,1-\varepsilon} \colon [1] \to [1]^n$.

\begin{definition}
For $A \subseteq \{0,1\}$, we define certain objects and maps which play a key role in the development of the cubical Joyal model structure on $\cA$.
\begin{itemize}
\item For $n \geq 2$, $1 \leq i \leq n, \varepsilon \in \{0,1\}$, the $(i,\varepsilon)$-\emph{inner cube}, denoted $\widehat{\Box}^{n}_{A,i,\varepsilon}$, is the quotient of $\BoxA^n$ in which the critical edge with respect to $\bd_{i,\varepsilon}$ is made degenerate. The $(i,\varepsilon)$-\emph{inner open box}, denoted $\widehat{\sqcap}^{n}_{A,i,\varepsilon}$, is the subcomplex of $\widehat{\Box}^n_{A,i,\varepsilon}$ whose non-degenerate cubes consist of all faces of the $n$-cube other than the interior face $\id_{[1]^n}$ and $\bd_{i,\varepsilon}$. The $(i,\varepsilon)$-\emph{inner open box inclusion} is the inclusion $\widehat{\sqcap}^{n}_{A,i,\varepsilon} \hookrightarrow \widehat{\Box}^{n}_{A,i,\varepsilon}$.
\item The \emph{invertible interval} $K$ is the cubical set depicted below:
\[
\xymatrix{
  \bullet \ar[r] \ar@{=}[d] & \bullet \ar[d] \ar@{=}[r] & \bullet \ar@{=}[d] \\
  \bullet \ar@{=}[r] & \bullet \ar[r] & \bullet }
\]
\item A \emph{cubical quasicategory} is a cubical set $X$ having the right lifting property with respect to all inner open box inclusions.
\end{itemize}
\end{definition}

\begin{theorem}[{\cite{doherty-kapulkin-lindsey-sattler}}]\label{cubical-Joyal}
Each category $\cA$ carries a model structure in which:
\begin{itemize}
\item The cofibrations are the monomorphisms;
\item The fibrant objects are the cubical quasicategories;
\item A map with fibrant codomain is a fibration if and only if it has the right lifting property with respect to all inner open box inclusions and endpoint inclusions $\BoxA^0 \to K$.
\end{itemize}

Moreover, this model structure is monoidal with respect to the geometric product.
\end{theorem}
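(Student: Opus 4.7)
The plan is to follow the standard framework of Cisinski-style model structures on presheaf categories, in which the cofibrations are the monomorphisms and the trivial cofibrations are generated by a well-chosen class of ``anodyne'' maps. Concretely, I would take the generating cofibrations to be the boundary inclusions $\bd \BoxA^n \hookrightarrow \BoxA^n$ for $n \geq 0$, and a set of pseudo-generating trivial cofibrations to consist of the inner open box inclusions $\hcap^n_{A,i,\varepsilon} \hookrightarrow \hBox^n_{A,i,\varepsilon}$ together with maps of the form $(\bd \BoxA^n \hookrightarrow \BoxA^n) \hatotimes (\BoxA^0 \to K)$ which enforce invertibility of the distinguished edge in $K$ in all dimensions.

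The first step is to verify the hypotheses of Olschok's variant of Cisinski's theorem: since $\cA$ is a presheaf category, monomorphisms form a cofibrantly generated class with a natural cellular model given by the boundary inclusions, and a functorial cylinder is provided by $- \otimes K$ with the two endpoint inclusions $\BoxA^0 \to K$. One then declares the anodyne maps to be the saturation of the set above, and Olschok's theorem yields a model structure once one checks that this class is closed under the pushout product with any monomorphism. This monoidality requirement is the heart of the argument: one must show that the pushout product of a boundary inclusion $\bd \BoxA^m \hookrightarrow \BoxA^m$ with an inner open box inclusion (resp. with $\BoxA^0 \to K$) lies in the saturation of the chosen generators. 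The proof proceeds by a combinatorial filtration of the non-degenerate cubes in the product, arranging them so that each attachment can be realised as a pushout of an anodyne generator; this is where the detailed analysis of critical edges and of standard forms in $\BoxA$ developed earlier in the paper plays the essential role.

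Once the model structure is in place, characterizing the fibrant objects and the fibrations with fibrant codomain is comparatively routine. A cubical quasicategory is by definition an object with the right lifting property against the inner open box inclusions, so fibrancy reduces to showing that such an object automatically admits lifts against the $K$-maps. This can be obtained by constructing $K$ itself as an anodyne extension of $\BoxA^1$ built out of inner open boxes and a saturation-type argument, so that a cubical quasicategory sees every edge as invertible up to the required fillers. The characterization of fibrations with fibrant codomain is then standard: both endpoints of such a map have the right lifting property against the $K$-maps, so any map between them with the right lifting property against the inner open box inclusions has the right lifting property against the full pseudo-generating class.

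The hard part will be the monoidality/pushout-product step. This is essentially a combinatorial decomposition of products of cubes into an anodyne cell complex, and it must be established uniformly for all four choices $A \subseteq \{0,1\}$; in the minimal case $A = \varnothing$ several standard tricks that use connections to construct fillers in products are unavailable, so one has to proceed by a more delicate inductive cell-by-cell analysis, carefully tracking the standard forms of maps in $\Boxmin$. Everything else in the theorem — cofibrant generation, the characterization of fibrant objects, and the characterization of fibrations into fibrant objects — is then formal once this combinatorial result is in hand.
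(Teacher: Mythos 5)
The paper does not actually prove this theorem: it is recalled background, and the ``proof'' in the text consists entirely of citations to \cite[Thm.~4.2, Cor.~4.11, Thm.~4.16]{doherty-kapulkin-lindsey-sattler}, so there is no in-paper argument to compare yours against. That said, your outline is essentially the strategy used in that reference: a Cisinski--Olschok construction with the boundary inclusions as cellular model, the invertible interval $K$ supplying the cylinder, the inner open box inclusions as generating anodyne maps, and a pushout-product analysis as the combinatorial core. As a plan it is sound and correctly locates the hardest combinatorics in the minimal case $A = \varnothing$, where connection-based filling tricks are unavailable.

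Two points in your sketch are off, and one of them hides a genuine gap. First, you have the logic of Olschok's theorem slightly backwards: the model structure exists for essentially any choice of generating anodyne set, with no pushout-product verification needed; the stability of inner anodyne maps under pushout product with monomorphisms is instead what one needs afterwards, to identify the fibrant objects with the cubical quasicategories (i.e.\ to show that lifting against inner open boxes already yields lifting against the full anodyne class generated by the cylinder data). Second, the step you call ``comparatively routine'' and ``standard'' --- that a map of cubical quasicategories with the right lifting property against inner open box inclusions and against $\BoxA^0 \to K$ lifts against the whole pseudo-generating class --- is one of the most involved parts of \cite{doherty-kapulkin-lindsey-sattler} (their Thm.~4.16). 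It cannot be done by formal lifting alone: one must relate $K$-lifting to invertibility of edges in the homotopy category of a cubical quasicategory. Note also that your two descriptions of the ``$K$-maps'' are not interchangeable: every object lifts against $\BoxA^0 \to K$ via degeneracies, so the content for fibrancy lies entirely in the pushout products $(\bd \BoxA^n \hookrightarrow \BoxA^n) \hatotimes (\BoxA^0 \to K)$, and showing that for maps with fibrant codomain the bare endpoint inclusions suffice is precisely the nontrivial coincidence the cited theorem establishes. So your proposal is a reasonable reconstruction of the cited proof's architecture, but both the pushout-product decomposition and the fibration characterization remain to be carried out.
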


\begin{proof}
The existence of the model structure and characterization of the cofibrations are given by \cite[Thm.~4.2]{doherty-kapulkin-lindsey-sattler}. Monoidality is given by \cite[Cor.~4.11]{doherty-kapulkin-lindsey-sattler}, while the characterizations of fibrant objects and fibrations with fibrant codomain is given by \cite[Thm.~4.16]{doherty-kapulkin-lindsey-sattler}.
\end{proof}

For each category of cubical sets, the model structure of \cref{cubical-Joyal} is referred to as the \emph{cubical Joyal model structure}.

When viewing cubical quasicategories as $(\infty,1)$-categories, filling of open boxes represents composition of higher morphisms, and invertible 1-morphisms are represented by edges which factor through $K$.

\begin{definition}
The \emph{homotopy category} $\Ho X$ of a cubical quasicategory $X$ is defined as follows:

\begin{itemize}
\item the objects of $\Ho X$ are the 0-cubes of $X$;
\item for $x_0, x_1 \colon \BoxA^0 \to X$, morphisms $x_0 \to x_1$ in $\Ho X$ are equivalence classes of edges from $x_0$ to $x_1$, subject to the relation $f \sim g$ if there is a 2-cube in $X$ as depicted below:
\[
\xymatrix{
x_0 \ar[r]^{f} \ar@{=}[d] & x_1 \ar@{=}[d] \\
x_0 \ar[r]^{g} & x_1 \\
}
\]
\item the identity map on $x \in X_0$ is given by $x \sigma_1$;
\item composition of $f \colon x \to y$ and $g \colon y \to z$ is given by open box filling, as depicted below:
\[
\xymatrix{
x \ar[d]_{f} \ar@{..>}[r]^{gf} & z \ar@{=}[d] \\
y \ar[r]^{g} & z \\
}
\]
\end{itemize}
That these data define a category is proven in \cite[Lem.~2.20]{doherty-kapulkin-lindsey-sattler}.
\end{definition}

It is also useful to model $(\infty,1)$-categories using cubical sets with markings on edges, but not on cubes of higher dimension, analogous to the model structure on simplicial sets with markings on edges developed in \cite{lurie:htt}.

\begin{definition}
For $A \subseteq \{0,1\}$, the category $\cA'$ of \emph{cubical sets with weak equivalences} is defined as follows.
\begin{itemize}
\item An object in $\cA'$ consists of a cubical set $X \in \cA$, together with a set of \emph{marked edges} $X_e \subseteq X_1$ including all degenerate edges.
\item Morphisms in $\cA'$ are cubical set maps which preserve marked edges.
\end{itemize}
\end{definition}

As in the marked case, we will use the notation $|X|$ for the underlying cubical set of $X \in \cA'$, and taking the underlying cubical set defines a functor $|-| \colon .\cA' \to \cA$. Likewise, we may take the underlying cubical set with weak equivalences of a marked cubical set by forgetting the markings on its cubes of dimension greater than 1. Thus we obtain a functor $|-| \colon \cA^+ \to \cA'$; when using this notation we will rely on context to distinguish whether the underlying cubical set or cubical set with weak equivalences is meant.

As in the marked case, a map of cubical sets with weak equivalences is \emph{regular} if it creates markings, and \emph{entire} if the underlying cubical set map is an isomorphism.

The geometric product can be adapted to the setting of cubical sets with weak equivalences. Recall (\cf \cref{geo-prod-description}) that for $X, Y \in \cA$, an edge of $X \otimes Y$ consists of either an edge of $X$ and a vertex of $Y$, or a vertex of $X$ and an edge of $Y$.

\begin{definition}
For $X, Y \in \cA'$, the \emph{geometric product} $X \otimes Y \in \cA'$ has as its underlying cubical set the geometric product $|X| \otimes |Y|$, with an edge $x \otimes y$ marked if either $x$ is a marked edge of $X$ or $y$ is a marked edge of $Y$.
\end{definition}

As with the geometric product on $\cA$ and the lax Gray tensor product on $\cA^+$, this defines a biclosed monoidal product on $\cA'$ with unit object $\BoxA^0$.

Comparing the definitions of the monoidal products on $\cA, \cA'$, and $\cA^+$, we obtain the following result.

\begin{proposition}\label{forgetful-monoidal}
The functors $|-| \colon \cA^+ \to \cA', |-| \colon \cA' \to \cA$ are monoidal with respect to the geometric products on $\cA$ and $\cA'$ and the lax Gray tensor product on $\cA^+$. \qed
\end{proposition}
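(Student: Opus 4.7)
The claim reduces, in each case, to verifying that the forgetful functor preserves the monoidal unit and that it sends the tensor product of two objects to the tensor product of their images, both strictly on the nose. I will handle the two functors separately, though the arguments are essentially parallel unfoldings of definitions.

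For the unit, observe that $\Box_A^0$ has no non-degenerate cubes of positive dimension; thus when regarded as an object of $\cA^+$ it has no non-degenerate marked cubes, when regarded as an object of $\cA'$ it has no non-degenerate marked edges, and in both cases applying $|-|$ recovers $\Box_A^0$ in the target. Hence $|\Box_A^0| = \Box_A^0$ in each case.

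For the tensor product, consider first $|-| \colon \cA^+ \to \cA'$ and fix $X, Y \in \cA^+$. By the definition of the lax Gray tensor product, the underlying cubical set of $X \otimes Y$ is $|X| \otimes |Y|$, so forgetting higher markings gives $|X \otimes Y|$ the underlying cubical set $|X| \otimes |Y|$, matching the first bullet in the definition of the geometric product on $\cA'$. It remains to identify the marked edges: an edge of $X \otimes Y$ is of the form $x \otimes y$ with exactly one of $x, y$ a non-degenerate edge and the other a vertex, and by the definition of the lax Gray tensor product, it is marked in $X \otimes Y$ exactly when either $x$ is marked in $X$ or $y$ is marked in $Y$. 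This is precisely the marking prescribed on $|X| \otimes |Y|$ in $\cA'$, so $|X \otimes Y| = |X| \otimes |Y|$ in $\cA'$, naturally in $X$ and $Y$. The case of $|-| \colon \cA' \to \cA$ is even more immediate, since $|X \otimes Y|$ is defined to be $|X| \otimes |Y|$ by the definition of the geometric product on $\cA'$.

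There is no serious obstacle here; the proof is essentially bookkeeping, and the only thing to check is that the three notions of tensor product have been defined compatibly layer-by-layer, which they have. I therefore do not anticipate any subtlety beyond verifying the coherence isomorphisms (associator and unitors), but since the monoidal products on all three categories are defined via the same underlying geometric product, and the marking conditions on edges are given by the same ``$x$ marked or $y$ marked'' rule wherever they apply, these coherences transport from the underlying level to the marked levels without modification.
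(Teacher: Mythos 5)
Your proof is correct and matches the paper's treatment: the paper states this result with no proof beyond the remark that it follows by ``comparing the definitions of the monoidal products,'' which is exactly the definition-unfolding you carry out. (One negligible imprecision: an edge of $X \otimes Y$ is $x \otimes y$ with one factor an edge that may be degenerate, not necessarily non-degenerate, but this changes nothing since degenerate edges are marked on both sides.)
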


One can similarly define a category of \emph{simplicial sets with weak equivalences} $\sSet'$, whose objects are simplicial sets with markings on edges. Note that $\sSet'$ is more commonly referred to as the category of marked simplicial sets (notably in \cite{lurie:htt}), and likewise $\cA'$ is referred to in \cite{doherty-kapulkin-lindsey-sattler} as the category of marked cubical sets. We have adopted the present terminology in order to avoid confusion with the categories $\sSet^+$ and $\cA^+$.

The category $\sSet'$ admits a model structure, the \emph{marked model structure}, which models the theory of $(\infty,1)$-categories. A complete description of this model structure is beyond the scope of this paper, but can be found in \cite[Sec.~3.1]{lurie:htt}, where it arises from a more general model structure on slice categories $\sSet' \downarrow X$ by taking $X = \Delta^0$.

We can adapt the definition of the triangulation adjunction to the setting of cubical and simplicial sets with weak equivalences, using the fact that for a cubical set $X$, the edges of $X$ are in bijection with the edges of $TX$.

\begin{definition}\label{T-prime}
For $X \in \cSet^{\prime}$, we define $TX \in \sSet^{\prime}$ as follows:
\begin{itemize}
\item The underlying simplicial set of $TX$ is $T|X|$, the triangulation of the underlying cubical set of $X$;
\item An edge of $TX$ is marked if and only if the corresponding edge of $X$ is marked.
\end{itemize}
\end{definition}

This definition naturally extends to morphisms, and implies an analogous definition for the right adjoint $U \colon \sSet' \to \cSet'$.

\begin{definition}
For $A \subseteq \{0,1\}$, we define certain objects and maps which play a key role in the model structure on $\cSet_A'$.

\begin{itemize}
\item For $n \geq 1$, $1 \leq i \leq n, \varepsilon \in \{0,1\}$, the \emph{$(i,\varepsilon)$-marked cube}, denoted $\overline{\Box}^{n}_{A,i,\varepsilon}$, is obtained from $\BoxA^n$ by marking the critical edge. The \emph{$(i,\varepsilon)$-marked open box}, denoted $\overline{\sqcap}^n_{A,i,\varepsilon}$, is the regular subcomplex of $\overline{\Box}^n_{A,i,\varepsilon}$ whose non-degenerate cubes consist of all faces of the $n$-cube other than the interior face $\id_{[1]^n}$ and $\bd_{i,\varepsilon}$. The \emph{$(i,\varepsilon)$-marked open box inclusion} is the regular subcomplex inclusion $\overline{\Box}^n_{A,i,\varepsilon} \hookrightarrow \overline{\sqcap}^n_{A,i,\varepsilon}$.
  \item Let $K'$ denote the cubical set with weak equivalences whose underlying cubical set is the invertible interval $K \in \cSet$, with the middle edge marked. The \emph{saturation} map is the entire map $K \to K'$.
  \item For each of the four faces of the square, the \emph{three-out-of-four map} associated to that face is the inclusion of $\BoxA^2$ with all but that face marked into $\BoxA^2$ with all faces marked. 
  \item A \emph{marked cubical quasicategory} is a cubical set with weak equivalences having the right lifting property with respect to the marked open box fillings and the saturation map.
\end{itemize}
\end{definition}

\begin{theorem}[{\cite{doherty-kapulkin-lindsey-sattler}}]
For any $A \subseteq \{0,1\}$, the category $\cSet'_A$ carries a model structure in which:
\begin{itemize}
\item The cofibrations are the monomorphisms;
\item A map with fibrant codomain is a fibration if and only if it has the right lifting property with respect to the marked open box inclusions, the saturation map, and the three-out-of-four maps;
\item The fibrant objects are the marked cubical quasicategories.
\end{itemize}

Moreover, this model structure is monoidal with respect to the geometric product.
\end{theorem}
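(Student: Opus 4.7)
Since this theorem is attributed to \cite{doherty-kapulkin-lindsey-sattler}, the proof in this paper will essentially consist of citing the relevant results from that reference, in direct parallel to the proof of \cref{cubical-Joyal}. My plan is to identify four specific pieces: the construction of the model structure, the characterization of cofibrations as monomorphisms, the characterization of fibrations with fibrant codomain, and the monoidality statement. Each should be available as a numbered result in that paper; I would locate a theorem establishing the model structure (likely the analogue of the Theorem 4.2 cited for \cref{cubical-Joyal}), together with the corresponding corollary for monoidality and the characterization of fibrant objects and their fibrations.

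For the underlying mathematical content that the citations encode, my approach would be as follows. First, identify the generating cofibrations of $\cSet'_A$: the boundary inclusions $\bd \BoxA^n \hookrightarrow \BoxA^n$ together with the edge-marker inclusion $\BoxA^1 \to \overline{\BoxA}^1$ that marks the unique non-degenerate edge. A standard skeletal-filtration argument (analogous to \cref{cell-model-marked}) identifies the saturation of this set as precisely the class of monomorphisms. Next, construct the model structure via Cisinski's machinery for presheaf categories (with $\cSet'_A$ viewed as a reflective subcategory of a presheaf category in the style of \cref{sec:mcSet}), taking as pseudo-generating trivial cofibrations the marked open box inclusions $\overline{\sqcap}^n_{A,i,\varepsilon} \hookrightarrow \overline{\Box}^n_{A,i,\varepsilon}$, the saturation map $K \to K'$, and the three-out-of-four maps.

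The characterization of fibrations with fibrant codomain then follows by the usual argument: the saturation and three-out-of-four maps are entire (hence epic), so lifting against them is automatic once both domain and codomain satisfy the corresponding right lifting property themselves, leaving only the marked open box inclusions to test. The identification of fibrant objects as marked cubical quasicategories is then essentially a restatement of the pseudo-generating set, together with the observation that the three-out-of-four maps are automatically lifted against any object having RLP against the saturation map and marked open box inclusions (since a marked cubical quasicategory satisfies the 2-of-6 property for marked edges by a direct composition-of-equivalences argument).

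The main obstacle, as usual, will be monoidality with respect to the geometric product. This requires verifying the pushout-product axiom, checking that the pushout-product of two cofibrations is a cofibration (which follows from \cref{forgetful-monoidal} and the analogous statement for $\cSet_A$), and that the pushout-product of a cofibration with a pseudo-generating trivial cofibration is a trivial cofibration. The latter splits into three combinatorial verifications, one for each class of pseudo-generating trivial cofibration, and it is here that the bulk of the work in \cite{doherty-kapulkin-lindsey-sattler} is concentrated. For the marked open box inclusions the proof mirrors the inner open box case in the unmarked setting; for the saturation and three-out-of-four maps, the key observation is that these are entire, so their pushout-products with cofibrations remain entire and one reduces to showing that the resulting entire maps are trivial cofibrations by a direct marking-extension argument.
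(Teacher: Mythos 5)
Your proposal matches the paper's approach: the proof given there consists entirely of citations to the relevant results of the reference (the model structure and cofibrations from Thm.~2.44, the fibrant objects and fibrations with fibrant codomain from Lem.~2.6 and Prop.~2.50, and monoidality from Cor.~2.49 of \cite{doherty-kapulkin-lindsey-sattler}), exactly as you anticipated. Your additional sketch of the underlying arguments is not required by the paper but is a reasonable account of what those citations encode.
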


\begin{proof}
The existence of the model structure and characterization of the cofibrations are given by \cite[Thm.~2.44]{doherty-kapulkin-lindsey-sattler}. The characterizations of fibrant objects and fibrations with fibrant codomain are given by \cite[Lem.~2.6 \& Prop.~2.50]{doherty-kapulkin-lindsey-sattler}. Monoidality is given by \cite[Cor.~2.49]{doherty-kapulkin-lindsey-sattler}. 
\end{proof}

As in comical sets, marked edges in marked cubical quasicategories are thought of as equivalences. The following result validates this intuition.

\begin{proposition}\label{edge-marked-iff-invertible}
In a marked cubical quasicategory $X$, an edge $f \colon \Box^1 \to X$ is marked if and only if the corresponding morphism in the homotopy category of the cubical quasicategory $|X|$ is an isomorphism.
\end{proposition}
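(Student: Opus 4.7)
I will prove the two implications separately. Write $x = f\partial_{1,0}$ and $y = f\partial_{1,1}$, and let $|X|$ denote the underlying cubical quasicategory of $X$.

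Suppose first that $f$ is marked. I build left and right inverses of $f$ in $\Ho|X|$ by filling marked open boxes. For a left inverse, consider the $(1,1)$-marked open box whose three non-missing faces are $\partial_{1,0} = \id_x$, $\partial_{2,0} = f$, $\partial_{2,1} = \id_x$. By the explicit formula for the critical edge recorded after \cref{crit-face-def}, the critical edge of $\Box^2_A$ with respect to $\partial_{1,1}$ is $\partial_{2,0}$, which in our box equals $f$ and is therefore marked; hence the marked open box filling property produces a $2$-cube whose $\partial_{1,1}$-face is some edge $g \colon y \to x$ satisfying $g \circ f = \id_x$ in $\Ho|X|$ (applying the standard interpretation of $2$-cubes as commuting squares in the homotopy category). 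Symmetrically, the $(1,0)$-marked open box with $\partial_{1,1} = \id_y$, $\partial_{2,0} = \id_y$, $\partial_{2,1} = f$ has critical edge $\partial_{2,1} = f$, again marked, and filling it produces a right inverse $h \colon y \to x$ with $f \circ h = \id_y$. The familiar calculation $g = g(fh) = (gf)h = h$ then shows that $f$ is an isomorphism in $\Ho|X|$.

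Conversely, suppose $[f]$ is an isomorphism in $\Ho|X|$. My strategy is to construct a map $K \to X$ in $\cSet'_A$ whose middle edge is $f$ and then invoke the right lifting property of $X$ against the saturation map $K \to K'$; the resulting extension $K' \to X$ marks $f$ by definition. To produce the $K$-diagram, I must exhibit two $2$-cubes in $|X|$ whose boundaries match the two non-degenerate $2$-cubes of $K$ (as spelled out in the excerpt's description of $K$), with $f$ in the middle-edge position. Concretely, these arise by filling the $(2,0)$-open box with $\partial_{1,0} = \id_y$, $\partial_{1,1} = f$, $\partial_{2,1} = \id_y$ and the $(1,0)$-open box with $\partial_{1,1} = \id_x$, $\partial_{2,0} = \id_x$, $\partial_{2,1} = f$: the critical edge in each case is $f$, and the resulting $\partial_{2,0}$- and $\partial_{1,0}$-faces provide the inverses required by $K$. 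The two fillers share only the edge $f$ and the appropriate degenerate boundary data of $K$, so they assemble into a map $K \to |X|$, which lifts to a map $K \to X$ in $\cSet'_A$ because $K$ carries no marked non-degenerate edges.

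The main obstacle is the converse direction: the two open boxes one must fill to produce the $K$-diagram are outer rather than inner, so filling them requires the fact that in a cubical quasicategory, an outer open box whose critical edge is invertible in the homotopy category admits a filler. This is the cubical analogue of Joyal's classical theorem and is implicit in the construction of the cubical Joyal model structure in \cite{doherty-kapulkin-lindsey-sattler}, where the endpoint inclusion $\BoxA^0 \to K$ is included among the pseudo-generating trivial cofibrations; if a direct reference is unavailable, the statement transfers from the Joyal model structure via the Quillen equivalence \cite[Thm.~6.1]{doherty-kapulkin-lindsey-sattler}, together with two-out-of-three for the various cube categories handled in this paper.
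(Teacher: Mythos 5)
Your forward direction (marked $\Rightarrow$ invertible) is sound: both marked open boxes are well-formed, the critical edges are correctly identified from the formula following \cref{crit-face-def}, and the fillers do witness one-sided inverses via the commuting-square interpretation of $2$-cubes. The paper itself dispenses with the whole proposition by citing two lemmas of \cite{doherty-kapulkin-lindsey-sattler}: marked edges of a marked cubical quasicategory are precisely those factoring through $K$ (Lem.~2.5 there), and edges factoring through $K$ are precisely those invertible in the homotopy category (Lem.~2.21 there). Your argument is essentially an unwinding of both lemmas, which is legitimate in principle.

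The problem is the converse, and you have correctly located it yourself: to build the map $K \to |X|$ with middle edge $f$ you must fill two open boxes whose critical edge is $f$, which is neither degenerate (so the inner open box inclusions do not apply) nor yet known to be marked (so the marked open box inclusions do not apply). The fact you need --- that an outer open box whose critical edge is invertible in $\Ho|X|$ admits a filler, or equivalently that an invertible edge extends along $\Box^1 \to K$ --- is exactly the content of the lemma the paper cites, and neither of your proposed substitutes establishes it. The pseudo-generating trivial cofibration $\Box^0_A \to K$ only gives that every \emph{vertex} of a cubical quasicategory extends to a $K$-diagram, not that every invertible \emph{edge} does; and a combinatorial filling statement does not transfer across a Quillen equivalence without further argument. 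As written, the converse therefore rests on an unproved claim; it is repaired simply by citing \cite[Lem.~2.21]{doherty-kapulkin-lindsey-sattler} for the existence of the $K$-diagram, after which your lift against the saturation map $K \to K'$ correctly marks $f$. Separately, your second box in the converse is ill-formed: with $\partial_{2,1} = f \colon x \to y$ and $\partial_{1,1} = \id_x$ the shared vertex $\partial_{2,1}\partial_{1,1} = \partial_{1,1}\partial_{1,1}$ would have to be both $y$ and $x$. The square of $K$ you are trying to hit has $f$ as its \emph{left} face, so you want the $(2,1)$-open box with $\partial_{1,0} = f$ and $\partial_{2,0} = \partial_{1,1} = \id$, whose critical edge is again $\partial_{1,0} = f$.
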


\begin{proof}
By \cite[Lem.~2.5]{doherty-kapulkin-lindsey-sattler}, the marked edges of $X$ are precisely those which factor through $K$. That these are precisely the edges corresponding to isomorphisms in $\Ho X$ follows from \cite[Lem.~2.21]{doherty-kapulkin-lindsey-sattler}.
\end{proof}

\subsection{Marking adjunctions}

The forgetful functors $|-| \colon \cA^+ \to \cA'$ and $|-| \colon \cA' \to \cA$ each admit a left and a right adjoint; in fact, in each case the right adjoint to $|-|$ admits a further right adjoint, forming an adjoint quadruple. We now describe these functors.

\begin{definition}\label{marking-functors-def}
For $A \subseteq \{0,1\}$, we have the following functors relating the categories $\cA, \cA'$, and $\cA^+$.
\begin{itemize}
\item The \emph{minimal marking} functor $(-)^\flat \colon \cA \to \cA'$ sends $X \in \cA$ to $X^\flat \in \cA'$, having $X$ as its underlying cubical set, with only degenerate edges marked. 
\item The \emph{minimal marking} functor $(-)^\flat \colon \cA' \to \cA^+$ sends $X \in \cA'$ to $X^\flat \in \cA^+$, having $X$ as its underlying cubical set with weak equivalences, with the only marked cubes of dimension greater than 1 being the degenerate ones.
\item The \emph{maximal marking} functor $(-)^\sharp \colon \cA \to \cA'$ sends $X \in \cA$ to $X^\sharp \in \cA'$, having $X$ as its underlying cubical set, with all edges marked.
\item The \emph{maximal marking} functor $(-)^\sharp \colon \cA' \to \cA^+$ sends $X \in \cA'$ to $X^\sharp \in \cA^+$, having $X$ as its underlying cubical set with weak equivalences, with all cubes of dimension greater than 1 marked.
\item The \emph{$\infty$-groupoid core} functor $c \colon \cA' \to \cA$ sends $X \in \cA'$ to $cX \in \cA$, defined by $(cX)_n = \cA'((\BoxA^n)^\sharp,X)$. More intuitively, $cX$ is obtained by taking the largest regular subcomplex of $X$ in which all edges are marked, and then forgetting the markings.
\item The \emph{$(\infty,1)$-core} functor $c \colon \cA^+ \to \cA'$ is given by $(cX)_n = \cA^+((\BoxA^n)^\sharp,X)$. More intuitively, $cX$ is obtained by taking the largest regular subcomplex of $X$ in which all cubes of dimension greater than 1 are marked, and then forgetting the markings on these higher-dimensional cubes.
\end{itemize}

In all cases, $X^\flat \to Y^\flat$ and $ X^\sharp \to Y^\sharp$ act identically to $X \to Y$ on underlying cubical sets, while $c$ acts on morphisms by post-composition.
\end{definition}

In keeping with our use of the notation $|-|$ for the forgetful functor $\cA^+ \to \cA$, we will also use the notations $(-)^\flat, (-)^\sharp, c$ for the composite functors $\cA \to \cA^+, \cA^+ \to \cA$.

A basic analysis of the definitions shows:

\begin{proposition}
For each pair of categories $(\cA, \cA'), (\cA', \cA^+)$, the  forgetful functor $|-|$ and the functors of \cref{marking-functors-def} form an adjoint quadruple $(-)^\flat \adjoint |-| \adjoint (-)^\sharp \adjoint c$. \qed
\end{proposition}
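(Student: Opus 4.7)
The plan is to verify each of the three adjunctions $(-)^\flat \adjoint |-|$, $|-| \adjoint (-)^\sharp$, and $(-)^\sharp \adjoint c$ by direct construction of hom-set bijections; the arguments run uniformly across both pairs $(\cA,\cA')$ and $(\cA',\cA^+)$, and naturality in both variables is automatic from the construction, so only the pointwise bijections need to be exhibited.

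For $(-)^\flat \adjoint |-|$: a morphism $X^\flat \to Y$ is a morphism of underlying structures $X \to |Y|$ subject to the condition that the markings of $X^\flat$ be preserved. Since the only marked cubes of $X^\flat$ (in the range where markings exist) are the degenerate ones, and degenerate cubes are always marked in $Y$, this condition is vacuous; the forgetful map therefore gives a bijection $\mathrm{Hom}(X^\flat, Y) \cong \mathrm{Hom}(X, |Y|)$. Dually, for $|-| \adjoint (-)^\sharp$: the markings on $Y^\sharp$ are maximal (all edges in the case $\cA \to \cA'$, or all cubes of dimension greater than $1$ in the case $\cA' \to \cA^+$), so the marking-preservation condition for a map into $Y^\sharp$ is automatic, yielding $\mathrm{Hom}(X, Y^\sharp) \cong \mathrm{Hom}(|X|, Y)$.

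The main case is $(-)^\sharp \adjoint c$, which I would handle using Yoneda. Given $g \colon X^\sharp \to Y$, I would define $\bar g \colon X \to cY$ on an $n$-cube $x \colon \BoxA^n \to X$ by $\bar g(x) := g \circ x^\sharp$, viewed as an element of $(cY)_n = \mathrm{Hom}((\BoxA^n)^\sharp, Y)$; functoriality of $(-)^\sharp$ ensures naturality in $x$, and the marking-preservation condition for $\bar g$ follows from the description of the markings of $cY$ inherited from those of $Y$. Conversely, given $h \colon X \to cY$, I would apply Yoneda to each $h(x) \colon (\BoxA^n)^\sharp \to Y$, evaluating at $\id_{[1]^n}$, to obtain a morphism $X \to |Y|$ of underlying structures, and then verify that this extends to a morphism $X^\sharp \to Y$ by checking the marking-preservation condition. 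The modest obstacle here, which I would address case-by-case, is that cubes marked in $X^\sharp$ come in two flavors: those marked because they lie in the maximally marked range of dimensions (whose images under $h(x)$ are marked in $Y$ since $\id_{[1]^n}$ is marked in $(\BoxA^n)^\sharp$ in that range) and, in the $\cA' \to \cA^+$ case, edges marked because they are already marked in $X$ (whose images are marked in $Y$ by the edge-marking-preservation condition of $h$, using that marked edges of $cY$ are defined to be those whose image in $|Y|$ is marked). Mutual inverseness and naturality are then routine verifications.
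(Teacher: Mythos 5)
Your proposal is correct, and it supplies exactly the ``basic analysis of the definitions'' that the paper leaves to the reader (the paper states this proposition with no written proof). The two degenerate adjunctions are handled by the standard minimal/maximal-marking observations, and your Yoneda-style treatment of $(-)^\sharp \adjoint c$, including the case split on why markings of $X^\sharp$ are preserved in the $(\cA',\cA^+)$ case, is the intended argument.
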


We have similar families of adjoint quadruples $(-)^\flat \adjoint |-| \adjoint (-)^\sharp \adjoint c$ relating $\sSet$ and $\sSet'$, and likewise $\sSet'$ and $\sSet^+$; each of these functors is defined analogously to its cubical counterpart.

Our goal is now to show that these models of $(\infty,1)$-categories are equivalent to each other, and to the saturated 1-trivial comical and complicial model structures. We begin by recalling established comparisons of the Joyal and marked model structures, in both the simplicial and cubical cases.

\begin{proposition}\label{simplicial-unmarked-we-QE}
The adjunction $(-)^\flat : \sSet \rightleftarrows \sSet' : |-|$ defines a Quillen equivalence between the Joyal model structure on $\sSet$ and the marked model structure on $\sSet'$.
\end{proposition}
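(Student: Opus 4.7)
The plan is to apply Corollary~\ref{QuillenEquivCreate}(1). I would first observe that the unit of the adjunction at any $X \in \sSet$ is the identity map $X \to |X^\flat| = X$, since $X^\flat$ has $X$ as its underlying simplicial set by construction. All components of the unit are therefore trivially weak equivalences. Accordingly, it suffices to verify (i) that the adjunction is Quillen, and (ii) that the right adjoint $|-|$ preserves and reflects weak equivalences.

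For (i), I would first note that $(-)^\flat$ preserves cofibrations because it acts as the identity on underlying simplicial sets and cofibrations on both sides are monomorphisms. To check that it preserves trivial cofibrations, I would identify a class of pseudo-generating trivial cofibrations for the Joyal model structure --- namely the inner horn inclusions $\Lambda^n_i \hookrightarrow \Delta^n$ for $0 < i < n$, together with the endpoint inclusions $\Delta^0 \hookrightarrow J$ into the nerve of the walking isomorphism --- and verify that each image under $(-)^\flat$ is a trivial cofibration in $\sSet'$. The images of the inner horn inclusions are minimally-marked horn inclusions, which compare to the (simplicial analogues of) the marked horn inclusions via an intermediate step that marks the critical edge; both factors are trivial cofibrations. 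For the endpoint inclusions, a short argument involving the simplicial analogue of the saturation map identifies $\Delta^0 \hookrightarrow J^\flat$ as a trivial cofibration, using that $J^\flat \to J^\sharp$ is a weak equivalence by saturation and that $J^\sharp$ is weakly equivalent to $\Delta^0$.

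For (ii), I would appeal to the characterization of weak equivalences in the marked model structure on $\sSet'$: a map is a weak equivalence if and only if its underlying simplicial set map is a Joyal weak equivalence. This is essentially contained in Lurie's analysis of the cartesian model structure over $\Delta^0$ in \cite[Sec.~3.1]{lurie:htt}. Given this characterization, preservation and reflection of weak equivalences by $|-|$ are immediate.

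The main obstacle is the verification of this last characterization --- that weak equivalences in $\sSet'$ are precisely those maps detected by the forgetful functor. This is standard but technical, and I would resolve it by direct appeal to the literature rather than reproving it from scratch; the remainder of the argument is then routine.
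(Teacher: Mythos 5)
There is a genuine gap in step (ii), and it is fatal to the chosen strategy. The characterization you appeal to --- that a map in $\sSet'$ is a weak equivalence if and only if its underlying simplicial set map is a Joyal equivalence --- is false in both directions, so $|-|$ neither preserves nor reflects weak equivalences and \cref{QuillenEquivCreate}\ref{QuillenEquivUnit} does not apply. For preservation: the ``special outer horn'' inclusion $\Lambda^2_0 \hookrightarrow \Delta^2$ with the edge $\{0,1\}$ marked is marked anodyne in Lurie's sense, hence a trivial cofibration in the marked model structure, but its underlying map $\Lambda^2_0 \hookrightarrow \Delta^2$ is not a Joyal equivalence (the Joyal-fibrant replacement of $\Lambda^2_0$ is the nerve of a cospan-shaped free category, not of $[2]$). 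For reflection: the entire map $(\Delta^1)^\flat \to (\Delta^1)^\sharp$ has the identity as its underlying map, yet it is not a weak equivalence in $\sSet'$ --- mapping both objects into $Z^\natural$ for $Z = \Delta^1$ gives mapping spaces with $3$ and $2$ components respectively. The true statement is only that $|-|$ detects weak equivalences \emph{between fibrant objects}, which would force you into \cref{QuillenEquivCreate-original}(3) and a computation of the \emph{derived} unit $X \to |R(X^\flat)|$; showing that this fibrant replacement does not change the Joyal equivalence class of the underlying simplicial set is precisely the nontrivial content of the theorem, not a routine afterthought.

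For comparison, the paper does not attempt any of this: it observes that the statement is literally the case $S = \Delta^0$ of \cite[Thm.~3.1.5.1(A0)]{lurie:htt} and cites it. Your part (i) (that the adjunction is Quillen, checked on inner horns and the endpoint inclusions into $J$ as pseudo-generators) is fine, but the equivalence itself cannot be reduced to the observation that the underived unit is the identity.
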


\begin{proof}
This is a special case of \cite[Thm.~3.1.5.1(A0)]{lurie:htt}, taking $S = \Delta^0$ in the statement of that result.
\end{proof}

\begin{proposition}[{\cite[Prop.~4.3]{doherty-kapulkin-lindsey-sattler}}] \label{cubical-unmarked-we-QE}
For each $A \subseteq \{0,1\}$, the adjunction $(-)^\flat : \cSet_A \rightleftarrows \cSet_A' : |-|$ defines a Quillen equivalence between the cubical Joyal model structure on $\cSet_A$ and the cubical marked model structure on $\cSet_A'$. \qed
\end{proposition}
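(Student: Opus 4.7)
The plan is to apply \cref{QuillenEquivCreate}\ref{QuillenEquivUnit}. Since $|X^\flat| = X$ as cubical sets, the unit $X \to |X^\flat|$ is literally the identity, hence trivially a weak equivalence; the theorem thereby reduces to two tasks: establishing that $(-)^\flat \adjoint |-|$ is Quillen, and showing that $|-|$ preserves and reflects weak equivalences.

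For the Quillen property, $(-)^\flat$ preserves monomorphisms since it acts as the identity on underlying cubical sets, and cofibrations in both model structures are the monomorphisms. It then suffices, using the pseudo-generating trivial cofibrations formulation, to show that $(-)^\flat$ sends the inner open box inclusions $\hcap^n_{A,i,\varepsilon} \hookrightarrow \hBox^n_{A,i,\varepsilon}$ and endpoint inclusions $\BoxA^0 \hookrightarrow K$ of the cubical Joyal model structure to trivial cofibrations in the cubical marked model structure. For the endpoint inclusions, one composes with the saturation $K^\flat \to K'$ (a trivial cofibration by construction) and analyzes $\BoxA^0 \hookrightarrow K'$ directly in terms of the one-dimensional marked open box inclusions, concluding by two-out-of-three. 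For the inner open box inclusions, one must relate the minimally marked image $(\hcap^n_{A,i,\varepsilon})^\flat \hookrightarrow (\hBox^n_{A,i,\varepsilon})^\flat$ to the marked open box inclusion $\overline{\sqcap}^n_{A,i,\varepsilon} \hookrightarrow \overline{\Box}^n_{A,i,\varepsilon}$ via the entire trivial cofibration that marks the critical edge and the quotient that collapses it, exhibiting the former as a trivial cofibration via a suitable factorization.

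For preservation and reflection of weak equivalences by $|-|$, the decisive input is \cref{edge-marked-iff-invertible}: in a marked cubical quasicategory the marked edges are exactly the equivalences of the underlying cubical quasicategory, so the markings on a fibrant object are forced by the underlying cubical set. Combined with the tautological adjunction identity $\cSet_A'(X^\flat, Y) \cong \cSet_A(X, |Y|)$ and the fact that $|-|$ sends marked cubical quasicategories to cubical quasicategories, one deduces that the derived mapping spaces from $X^\flat$ into fibrant objects of $\cSet_A'$ correspond to those from $X$ into fibrant objects of $\cSet_A$. Consequently $f^\flat$ is a weak equivalence in $\cSet_A'$ if and only if $f$ is a weak equivalence in $\cSet_A$, which (using $|f^\flat| = f$) is exactly the required preservation and reflection.

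The main obstacle will be the combinatorial verification on inner open box inclusions: explicitly factoring the minimally marked inner open box inclusion through the marked open box inclusion requires bridging the two different presentations of ``degenerate critical edge'' (via quotient in the cubical Joyal setup versus via marking in the cubical marked setup), and a careful analysis is needed to ensure the intermediate maps lie in the saturation of the generators of $\cSet_A'$. Once this combinatorial step is settled, the remaining steps — unit-is-identity, and the mapping-space correspondence via \cref{edge-marked-iff-invertible} — go through essentially formally.
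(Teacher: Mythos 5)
This statement carries no proof in the paper---it is imported with a \qed{} from \cite[Prop.~4.3]{doherty-kapulkin-lindsey-sattler}---so your argument can only be judged on its own terms, and it breaks at the very first reduction. You invoke \cref{QuillenEquivCreate}\ref{QuillenEquivUnit}, whose hypothesis is that the right adjoint $|-| \colon \cSet_A' \to \cSet_A$ preserves and reflects weak equivalences. This is false in both directions. For reflection: the entire map $(\Box^1_A)^\flat \to (\Box^1_A)^\sharp$ marking the nondegenerate edge has the identity as its underlying cubical set map, yet it is not a weak equivalence in the cubical marked model structure, since $(\Box^1_A)^\sharp$ is the codomain of the one-dimensional marked open box inclusion $\Box^0_A \hookrightarrow (\Box^1_A)^\sharp$ and hence weakly contractible, while $(\Box^1_A)^\flat$ is the walking arrow. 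For preservation: that same marked open box inclusion is a trivial cofibration in $\cSet_A'$, but $|-|$ sends it to the endpoint inclusion $\Box^0_A \to \Box^1_A$, which is not a weak equivalence in the cubical Joyal model structure---the paper makes exactly this observation two paragraphs later when explaining why $|-|$ is not left Quillen. Your justification of the false claim also does not prove what you assert even granting its ingredients: the mapping-space argument via \cref{edge-marked-iff-invertible} only concerns maps of the form $f^\flat$, i.e.\ maps between minimally marked objects, whereas preservation and reflection by $|-|$ quantify over \emph{all} maps of $\cSet_A'$; the step ``using $|f^\flat| = f$'' silently conflates the two. Note also that without $|-|$ preserving weak equivalences, the underived unit being an isomorphism tells you nothing about the \emph{derived} unit $X \to |(X^\flat)'|$, which is the condition that actually matters.

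The viable route is the other branch: by \cref{QuillenEquivCreate-original}, since all objects are cofibrant and $(-)^\flat$ reflects weak equivalences (in \cite{doherty-kapulkin-lindsey-sattler} weak categorical equivalences are essentially \emph{defined} by this property), the substance of the theorem is that the counit $|Y|^\flat \to Y$ is a weak equivalence for every marked cubical quasicategory $Y$. This is where \cref{edge-marked-iff-invertible} genuinely enters: the counit is the entire map marking precisely the edges of $|Y|$ that factor through $K$, and marking such edges is a trivial cofibration (a pushout/transfinite composite built from $K^\flat \to K'$). Your verifications that the adjunction is Quillen are plausible in outline, but they cannot rescue the argument until the reduction is replaced.
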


In both the simplicial and cubical cases we may note that, of the three adjunctions forming the adjoint quadruple $(-)^\flat \adjoint |-| \adjoint (-)^\sharp \adjoint c$, only the minimal marking adjunction forms a Quillen equivalence between the Joyal and marked model structures. To see this, we may note that the functor $|-|$ is not left Quillen, as the inclusion of an endpoint into the marked interval is a trivial cofibration while its underlying simplcial (resp.~cubical) set map is not. On the other hand, while the adjunction $(-)^\sharp \adjoint c$ is Quillen in both cases, it is not a Quillen equivalence in either. To see this, we may note that the maximal marking of an endpoint inclusion into the unmarked interval is an endpoint inclusion into the marked interval. Thus the left adjoint $(-)^\sharp$ does not reflect weak equivalences; as all objects in these model categories are cofibrant, \cref{QuillenEquivCreate-original} thus implies that the adjunction is not a Quillen equivalence.

Next we consider comparisons between $\cSet_A'$ and $\cSet_A^+$; here we will show that all three adjunctions in the adjoint quadruple are Quillen equivalences. In the course of this analysis, we will have considerable use for the following result.

\begin{lemma}\label{n-triv-entire}
For $A \subseteq \{0,1\}$ and $n \geq 0$, let $X \to Y$ be an entire map in $\cA^+$, such that for all $m \leq n$, an $m$-simplex is marked in $Y$ if and only if it is marked in $X$. Then $X \to Y$ is a trivial cofibration in the (saturated) $n$-trivial comical model structure on  $\cA^+$.
\end{lemma}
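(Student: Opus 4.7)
The plan is to realise $f \colon X \to Y$ as the pushout of a coproduct of markers of dimension strictly greater than $n$; by \cref{comical-model-structure} such markers are pseudo-generating trivial cofibrations in the (saturated) $n$-trivial comical model structure, hence trivial cofibrations.

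Concretely, let $I$ index the pairs $(x, m)$ where $m \geq 1$ and $x$ is a cube in the common underlying cubical set of $X$ and $Y$ (shared because $f$ is entire) that is marked in $Y$ but not in $X$. The hypothesis that markings agree in dimensions $\leq n$ forces every such $m$ to be $> n$. Then I would assemble the commuting square
\[
\xymatrix{
\coprod_{(x,m) \in I} \BoxA^m \ar[d] \ar[r] & X \ar[d]^{f} \\
\coprod_{(x,m) \in I} \wBox_A^m \ar[r] & Y \pushoutcorner \\
}
\]
in which the top map picks out the cube $x$ on each component, the left map is the coproduct of the corresponding markers, and the bottom map records $x$ as a marked cube in $Y$. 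Verifying that this is a pushout in $\cA^+$ amounts to observing that a map out of $Y$ is the same datum as a map out of $X$ sending each $x \in I$ to a marked cube, which is precisely a cocone on the diagram. Since each marker $\BoxA^m \to \wBox_A^m$ with $m > n$ is a trivial cofibration, and trivial cofibrations are closed under coproducts and pushouts, $f$ is itself a trivial cofibration.

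There is no real obstacle here; the content is just to package the newly-marked cubes as a pushout of a coproduct of markers, and the hypothesis on dimensions $\leq n$ exists solely to ensure that all such markers have dimension $> n$, which is what puts them among the pseudo-generating trivial cofibrations of the $n$-trivial model structure.
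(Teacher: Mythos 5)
Your proof is correct and follows essentially the same route as the paper: both express $X \to Y$ as a pushout of a coproduct of markers indexed by the cubes marked in $Y$ but not in $X$, use the hypothesis to conclude all such markers have dimension greater than $n$, and then invoke closure of trivial cofibrations under coproducts and pushouts. The only difference is that you spell out the verification that the square is a pushout, which the paper leaves implicit.
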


\begin{proof}
We can express the map $X \to Y$ as a pushout of markers, as in the following diagram:
\[
\xymatrix{
\coprod\limits_{\substack{x \colon \Box^m \to X \\ x \in e Y \setminus e X}} \Box^m \ar[r] \ar[d] & X \ar[d] \\
\coprod\limits_{\substack{x \colon \Box^m \to X \\ x \in e Y \setminus e X}} \wBox^m \ar[r] & Y \pushoutcorner \\
}
\]
Where the coproducts are taken over all unmarked cubes of $X$ which are marked in $Y$, and the map between coproducts acts as the marker on each component. By assumption, each of these cubes is of dimension greater than $n$; thus we see that the map between coproducts is a trivial cofibration, hence the same is true of its pushout $X \to Y$.
\end{proof}

\begin{corollary}\label{flat-sharp-nat-trans}
For $A \subseteq \{0,1\}$, there is a natural transformation $m \colon (-)^\flat \Rightarrow (-)^\sharp$ of functors from $\cA'$ to $\cA^+$, which acts as the identity on underlying cubical sets, and whose components are all trivial cofibrations in the (saturated) $1$-trivial comical model structure.
\end{corollary}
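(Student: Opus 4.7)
The plan is to define $m_X\colon X^\flat \to X^\sharp$ for each $X \in \cA'$ as the map acting as the identity on the underlying cubical set with weak equivalences of $X$, and then invoke \cref{n-triv-entire} directly.

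First I would check that this identity-on-underlying-data assignment actually yields a well-defined morphism in $\cA^+$. Both $X^\flat$ and $X^\sharp$ have the same underlying cubical set with weak equivalences (namely $X$ itself), so in particular the same $0$-cubes, $1$-cubes, and marked edges. At dimension $\geq 2$, every cube that is marked in $X^\flat$ is degenerate, and every degenerate cube of $X^\sharp$ is marked (degenerate cubes are marked in any marked cubical set), and in fact \emph{every} cube of dimension $\geq 2$ is marked in $X^\sharp$. So the identity map preserves markings, hence defines a morphism $m_X\colon X^\flat \to X^\sharp$ in $\cA^+$.

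Naturality is then immediate: for $f\colon X \to Y$ in $\cA'$, the maps $f^\flat$ and $f^\sharp$ both act on underlying cubical sets with weak equivalences as $f$, so the naturality square
\[
\xymatrix{
X^\flat \ar[r]^{m_X} \ar[d]_{f^\flat} & X^\sharp \ar[d]^{f^\sharp} \\
Y^\flat \ar[r]^{m_Y} & Y^\sharp
}
\]
commutes trivially.

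Finally, each $m_X$ is entire by construction (its underlying cubical set map is the identity), and for every $m \leq 1$, an $m$-cube is marked in $X^\sharp$ if and only if it is marked in $X^\flat$ (no $0$-cubes are marked on either side, and both $X^\flat$ and $X^\sharp$ have exactly the marked edges of $X$). Applying \cref{n-triv-entire} with $n = 1$, we conclude that $m_X$ is a trivial cofibration in the (saturated) $1$-trivial comical model structure, completing the proof. There is no real obstacle here; the statement is essentially a repackaging of \cref{n-triv-entire} in the special case where the entire map in question compares the minimal and maximal markings above dimension $1$.
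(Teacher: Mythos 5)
Your proof is correct and follows essentially the same route as the paper: define $m_X$ as the identity on underlying cubical sets, observe that markings are preserved (and that the marked $1$-cubes of $X^\flat$ and $X^\sharp$ coincide, both being the marked edges of $X$), and apply \cref{n-triv-entire} with $n=1$. The extra detail you supply on well-definedness and naturality is harmless and matches what the paper leaves implicit.
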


\begin{proof}
The existence of the natural transformation $m$ follows from the fact that for any $X \in \cSet_A'$, the marked cubical sets $X^\flat$ and $X^\sharp$ have the same underlying cubical set, namely that of $X$, with every marked cube of $X^\flat$ being marked in $X^\sharp$ as well. That its components are trivial cofibrations in the (saturated) $1$-trivial comical model structure follows from  \cref{n-triv-entire}, together with the fact that $X^\flat$ and $X^\sharp$ have the same marked 1-cubes, namely those of $X$.
\end{proof}

\begin{proposition}\label{sharp-core-QA}
For $A \subseteq \{0,1\}$, the adjunction $(-)^\sharp : \cA' \rightleftarrows \cA^+ : c$ is a Quillen adjunction between the marked model structure on $\cA'$ and the saturated 1-trivial comical model structure on $\cA^+$.
\end{proposition}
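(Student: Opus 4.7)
My plan is to verify the criterion recalled after the definition of pseudo-generating trivial cofibrations: it will suffice to show that $(-)^\sharp$ preserves cofibrations and sends each pseudo-generating trivial cofibration of the marked model structure on $\cSet_A'$ to a trivial cofibration in the saturated $1$-trivial comical model structure on $\cSet_A^+$. Preservation of cofibrations is immediate, since in both model structures cofibrations are the monomorphisms, and $(-)^\sharp$ acts as the identity on underlying cubical sets.

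The pseudo-generating trivial cofibrations of the marked model structure on $\cSet_A'$ are the marked open box inclusions, the saturation map $K \to K'$, and the three-out-of-four maps. For each such map $f \colon X \to Y$, the plan is to construct a commutative square
\[
\xymatrix{
P \ar[r]^{g} \ar[d] & Q \ar[d] \\
X^\sharp \ar[r]^{f^\sharp} & Y^\sharp
}
\]
in which $g$ is a known trivial cofibration in the saturated $1$-trivial comical model structure and the vertical maps are entire maps that agree with the identity on cubes of dimension at most $1$. Then \cref{n-triv-entire} will make the vertical maps trivial cofibrations, and two-out-of-three will give that $f^\sharp$ is a weak equivalence; since $(-)^\sharp$ preserves monomorphisms, $f^\sharp$ will be a trivial cofibration, as required.

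For the marked open box inclusions, I would take $g$ to be the corresponding comical open box inclusion $\sqcap^n_{A,i,\varepsilon} \hookrightarrow \Box^n_{A,i,\varepsilon}$. The key observation is that the only marked edges appearing in any of the four objects of the square are degeneracies together with the unique critical edge with respect to $\bd_{i,\varepsilon}$ (the unique $1$-dimensional critical face); hence the vertical entire maps agree with the identity on cubes of dimension $\leq 1$, and \cref{n-triv-entire} applies directly.

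For the saturation map and the three-out-of-four maps, I would take $g$ to be a cell complex built from the Rezk maps that realizes the same effect on edge markings as $f$: for $K \to K'$, a complex encoding the two-sided inverse structure around the middle edge of $K$; for the three-out-of-four maps, a complex encoding the principle that in a marked $2$-cube with three marked edges, the fourth is forced to be marked. The hard part will be producing these cell complexes explicitly. In particular, in the \emph{unsaturated} $1$-trivial comical model structure $f^\sharp$ need not be a weak equivalence, so the argument must genuinely exploit the Rezk maps, and the bookkeeping involved in identifying the correct pushouts of Rezk cells against boundary inclusions is the main technical content of the proof.
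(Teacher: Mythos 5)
Your overall strategy coincides with the paper's: both reduce, via the remark following the definition of pseudo-generating trivial cofibrations, to showing that $(-)^\sharp$ preserves monomorphisms and sends the marked open box inclusions, the saturation map, and the three-out-of-four maps to trivial cofibrations. For the marked open box inclusions your route is genuinely different and works: the paper identifies $(\overline{\sqcap}^n_{A,i,\varepsilon})^\sharp \hookrightarrow (\overline{\Box}^n_{A,i,\varepsilon})^\sharp$ directly as a pushout of a comical open box inclusion when $n=2$, and as such a pushout followed by a pushout of a comical marking extension when $n\geq 3$ (to account for the $(i,\varepsilon)$-face acquiring a marking), whereas you compare against the comical open box inclusion through the entire maps $\sqcap^n_{A,i,\varepsilon}\to(\overline{\sqcap}^n_{A,i,\varepsilon})^\sharp$ and $\Box^n_{A,i,\varepsilon}\to(\overline{\Box}^n_{A,i,\varepsilon})^\sharp$, which are trivial cofibrations by \cref{n-triv-entire} because the only critical face of dimension $1$ is the critical edge. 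The paper's version yields an explicit cell structure; yours trades that for a two-out-of-three argument and avoids the $n\geq 3$ case split. Either is acceptable.

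For the remaining two families your plan is only partly carried out, and in one place aims at the wrong tool. The three-out-of-four maps do not require the Rezk maps, nor any auxiliary square: applying $(-)^\sharp$ to the inclusion of the square with three marked edges into the square with four marked edges produces, on the nose, the two-dimensional comical marking extension $(\Box^2_{A,i,\varepsilon})' \to \tau_0\Box^2_{A,i,\varepsilon}$ (both sides already have the interior $2$-cube and the critical edge marked), so this case is immediate. The genuine remaining work is the saturation map: you are right that $K^\sharp \to (K')^\sharp$ is only a weak equivalence in the saturated structure and that the Rezk maps must enter, but you stop at announcing that a suitable cell complex must be produced. The paper handles this by exhibiting $(K \to K')^\sharp$ as a pushout of the elementary Rezk map $L_A \to \tau_0 L_A$; until you write down the attaching map $L_A \to K^\sharp$ and check that the pushout marks exactly the middle edge (or argue instead by lifting against fibrations between fibrant objects, as in the proof of \cref{forgetful-QA}), this case of your proof remains incomplete.
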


\begin{proof}
That $(-)^\sharp$ preserves cofibrations follows from the fact that for any map $f$ in $\cA'$, the underlying cubical set maps of $f$ and $f^\sharp$ coincide.
It thus suffices to show that $(-)^\sharp$ sends the pseudo-generating trivial cofibrations of $\cA'$ to trivial cofibrations.

We first note that in the maximal marking of a marked open box $\overline{\sqcap}^n_{A,i,\varepsilon}$, the non-degenerate marked cubes consist of the critical edge, together with all faces of dimension greater than 1. In particular, this implies that all critical faces are marked. The object $(\overline{\Box}^n_{A,i,\varepsilon})^\sharp$ is obtained from $(\overline{\sqcap}^n_{A,i,\varepsilon})^\sharp$ by adding a marked filler for the interior, together with the $(i,\varepsilon)$-face, which is marked if $n \geq 3$. Thus, in the case $n = 2$, the image under $(-)^\sharp$ of a marked open box inclusion $\overline{\sqcap}^n_{A,i,\varepsilon} \hookrightarrow \overline{\Box}^n_{A,i,\varepsilon}$ is a pushout of a comical open box inclusion. In the case $n \geq 3$, it is a composite consisting of a pushout of a comical open box inclusion (adding the marked interior $n$-cube and the $(i,\varepsilon)$-face) followed by a pushout of a comical marking extension (marking the $(i,\varepsilon)$-face). Thus this map is a trivial cofibration.

Similar analyses show that $(-)^\sharp$ sends the saturation map to a pushout of the elementary Rezk map, while the three-out-of-four maps are sent to two-dimensional comical marking extensions.
\end{proof}

\begin{proposition}\label{forgetful-QA}
The adjunction $|-| : \cA^+ \rightleftarrows \cA' : (-)^\sharp$ is Quillen.
\end{proposition}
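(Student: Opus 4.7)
The plan is to verify that $|-|$ preserves cofibrations and sends the pseudo-generating trivial cofibrations of the saturated $1$-trivial comical model structure on $\cA^+$ to trivial cofibrations in the marked model structure on $\cA'$. Preservation of cofibrations is immediate: both categories have monomorphisms as cofibrations, and $|-|$ acts as the identity on underlying cubical sets. Two of the four pseudo-generating families are handled directly. A comical open box inclusion $\sqcap^n_{A,i,\varepsilon} \hookrightarrow \Box^n_{A,i,\varepsilon}$ maps under $|-|$ to the marked open box inclusion $\overline{\sqcap}^n_{A,i,\varepsilon} \hookrightarrow \overline{\Box}^n_{A,i,\varepsilon}$, since the unique critical $1$-face of $\Box^n_{A,i,\varepsilon}$ is the critical edge with respect to $\bd_{i,\varepsilon}$; and markers $\Box_A^n \to \wBox_A^n$ for $n \geq 2$ together with comical marking extensions $\Box^n_{A,i,\varepsilon} \to \tau_{n-2}\Box^n_{A,i,\varepsilon}$ for $n \geq 3$ become isomorphisms under $|-|$, since they affect only markings on cubes of dimension at least~$2$.

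For the Rezk maps, invoking the monoidality of $|-|$ from \cref{forgetful-monoidal} together with the monoidality of both model structures reduces the problem to the elementary Rezk map $L_A \to \tau_0 L_A$. Its image is an entire map that marks the three previously unmarked edges of $|L_A|$. I would first mark the middle vertical edge via a pushout of the saturation map $K \to K'$, exploiting the fact that $|L_A|$ is itself $K$-shaped with the middle edge initially unmarked; then each of the remaining two edges may be marked by a three-out-of-four pushout applied to the corresponding constituent $2$-cube of $|L_A|$, since after the saturation step each of these $2$-cubes has three of its four edges marked.

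The $n = 2$ comical marking extension---whose image under $|-|$ is the entire map from $\Box^2$ with only the critical edge marked to $\Box^2$ with all four edges marked---is the main obstacle, since neither three-out-of-four nor saturation pushouts apply directly to a square that has only one marked edge and a single $2$-cube. I would realize this map as a retract of a trivial cofibration built by first attaching auxiliary $2$-cubes via marked open box inclusions to produce $K$-configurations around each unmarked edge, applying saturation pushouts to mark those edges, and finally defining a retraction that collapses the auxiliary structure back onto the original square. The hard part will be the combinatorial bookkeeping: organizing the sequence of marked open box attachments, saturation and three-out-of-four pushouts, and the retraction map so that each newly marked edge is witnessed by a legal pushout and the retract diagram in the arrow category $\cA'^{[1]}$ commutes strictly.
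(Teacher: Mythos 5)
Your overall strategy---verifying that $|-|$ preserves cofibrations and sends the pseudo-generating trivial cofibrations to trivial cofibrations, with the Rezk maps reduced to the elementary one by monoidality---is exactly the paper's, and your treatment of the comical open box inclusions, the markers, and the marking extensions in dimension $\geq 3$ is correct. However, there are two genuine errors.

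First, you have misread the $n=2$ comical marking extension. Its domain is $(\Box^2_{A,i,\varepsilon})'$, obtained from the comical cube by marking \emph{all} $1$-dimensional faces except $\bd_{i,\varepsilon}$; the extension then marks $\bd_{i,\varepsilon}$. Its image under $|-|$ is therefore precisely a three-out-of-four map, i.e.\ one of the pseudo-generating trivial cofibrations of $\cA'$, and no further argument is needed. The map you describe instead---marking all four edges of a square in which only the critical edge is marked---is not even a weak equivalence in the cubical marked model structure (a commutative square in an ordinary category can have one invertible edge and three non-invertible ones, and its nerve gives a marked cubical quasicategory admitting no lift), so the retract construction you sketch for it cannot succeed no matter how the bookkeeping is arranged.

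Second, the first step of your argument for the elementary Rezk map fails: there is no map $K \to |L_A|$ carrying the middle edge of $K$ to the middle edge of $L_A$, because the outer edges of $K$ are \emph{degenerate}, whereas the corresponding edges of $L_A$ are merely marked and non-degenerate, and a cubical set map must send degenerate edges to degenerate edges. So the middle edge cannot be marked by a pushout of the saturation map, and without it the two three-out-of-four pushouts you invoke afterwards do not apply either, since each constituent square of $|L_A|$ initially has only two of its four edges marked. The paper avoids any cell decomposition here: since $|L_A| \to |\tau_0 L_A|$ is an entire cofibration, hence an epimorphism, it suffices to produce lifts against marked cubical quasicategories, and such a lift exists exactly when the three unmarked edges are marked in the target, which follows from \cref{edge-marked-iff-invertible} together with the two-out-of-six property for isomorphisms. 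You should replace your last two steps with an argument of this kind (or some other correct one).
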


\begin{proof}
That the left adjoint $|-|$ preserves cofibrations, \ie monomorphisms, follows from the fact that it preserves underlying cubical set maps. It thus suffices to show that $|-|$ sends the pseudo-generating trivial cofibrations of $\cA^+$ to trivial cofibrations. We first note the following:

\begin{itemize}
\item the image under $|-|$ of a comical open box inclusion is a marked open box inclusion;
\item the image under $|-|$ of a comical marking extension $(\Box^2_{A,i,\varepsilon})' \to \tau_{0} \Box^2_{A,i,\varepsilon}$ is a three-out-of-four map;
\item for $n \geq 2$, the image under $|-|$ of a comical marking extension $(\Box^n_{A,i,\varepsilon})' \to \tau_{n-2} \Box^n_{A,i,\varepsilon}$ is an identity;
\item for $n \geq 2$, the image under $|-|$ of the $n$-marker
\end{itemize}

It remains to be shown that $|-|$ sends the Rezk maps to trivial cofibrations. By \cref{forgetful-monoidal} and the monoidality of the cubical marked model structure, it suffices to show that the image of the elementary Rezk map $L \to L'$ under $|-|$ is a trivial cofibration. As $|L| \to |L'|$ is a  cofibration, it suffices to show that it has the left lifting property with respect to all fibrations between fibrant objects. Furthermore, as $|L| \to |L'|$ is entire, and hence an epimorphism, to prove that it has this left lifting property it suffices to show that it has the left lifting property with respect to all marked cubical quasicategories. To this end, consider a diagram as depicted below, with $X$ a marked cubical quasicategory:

\[
\xymatrix{
|L| \ar[r] \ar[d] & X \ar[d] \\
|L'| \ar[r] & \BoxA^0 \\
}
\]
The map $|L| \to X$ consists of a pair of 2-cubes in $X$ as depicted below:
\[
\xymatrix{
  x \ar[r]^{f} \ar[d]_{\sim} & y \ar[d]_{g} \ar[r]^{\sim} & a \ar[d]^{\sim} \\
  b \ar[r]^{\sim} & z \ar[r]^{h} & w }
\]
This map admits a lift $|L'| \to X$ if and only if the edges $f, g, h$ in the diagram above are marked in $X$. That this condition holds is immediate from \cref{edge-marked-iff-invertible} and the two-out-of-six property for isomorphisms.
\end{proof}

\begin{proposition}\label{forgetful-maximal-QE}
For $A \subseteq \{0,1\}$, the Quillen adjunctions $|-| \adjoint (-)^\sharp$ and $(-)^\sharp \adjoint c$ are Quillen equivalences between the cubical marked model structure on $\cA'$ and the saturated 1-trivial comical model structure on $\cA^+$.
\end{proposition}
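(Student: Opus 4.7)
The plan is to invoke \cref{nat-we-QE} a single time, with the pair of left Quillen functors $F = |-| \colon \cA^+ \to \cA'$ and $G = (-)^\sharp \colon \cA' \to \cA^+$, thereby obtaining both Quillen equivalences simultaneously. The key observation enabling this is that $(-)^\sharp$ plays a dual role: it is left Quillen as the left adjoint in $(-)^\sharp \adjoint c$ by \cref{sharp-core-QA}, and simultaneously right Quillen as the right adjoint in $|-| \adjoint (-)^\sharp$ by \cref{forgetful-QA}. The conclusion of \cref{nat-we-QE} is that both $F$ and $G$ are left Quillen equivalences, which is precisely the statement that $|-| \adjoint (-)^\sharp$ and $(-)^\sharp \adjoint c$ are both Quillen equivalences.

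To verify the hypotheses, I would first observe that $FG = |-| \circ (-)^\sharp$ is naturally equal to $\id_{\cA'}$: for any $X \in \cA'$, the marked cubical set $X^\sharp$ has $X$ as its underlying cubical set with weak equivalences, so $|X^\sharp| = X$. For the other composite $GF = (-)^\sharp \circ |-|$, the natural candidate is the unit $\eta$ of $|-| \adjoint (-)^\sharp$; at $X \in \cA^+$, the component $\eta_X \colon X \to (|X|)^\sharp$ is the entire map that coincides with the identity on underlying cubical sets, preserves the edge markings (both sides inherit their edge markings from $|X|$), and marks every previously unmarked cube of dimension at least $2$. The key input is \cref{n-triv-entire} with $n = 1$, which immediately identifies $\eta_X$ as a trivial cofibration in the saturated $1$-trivial comical model structure, so $\eta$ is a natural weak equivalence.

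Finally, both $\id_{\cA^+}$ and $GF$ are left Quillen (the latter as a composite of two left Quillen functors), so $\eta$ is a natural weak equivalence between left Quillen functors as required. All hypotheses of \cref{nat-we-QE} are then met and the result follows. There is no substantial obstacle here; the content is really the dual Quillen status of $(-)^\sharp$, which is what lets a single application of \cref{nat-we-QE} deliver both desired Quillen equivalences at once.
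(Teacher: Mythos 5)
Your proposal is correct and matches the paper's own proof: both arguments observe that $|-| \circ (-)^\sharp = \id_{\cA'}$, identify the unit $X \to |X|^\sharp$ as a trivial cofibration via \cref{n-triv-entire}, and conclude with a single application of \cref{nat-we-QE} to the pair of left Quillen functors $|-|$ and $(-)^\sharp$. Your explicit remark on the dual Quillen status of $(-)^\sharp$ is implicit in the paper but is exactly the right point to emphasize.
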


\begin{proof}
We first note that the composite $|-| \circ (-)^\sharp$ is the identity on $\cA'$, as $(-)^\sharp$ adds markings on all cubes of dimension at least 2, while $|-|$ forgets such markings. Now we consider the composite $(-)^\sharp \circ |-|$. Given $X \in \cA^+$, the marked cubical set $|X|^\sharp$ is obtained from $X$ by marking all of its cubes of dimension at least 2; the unit $X \to |X|^\sharp$ is the unique entire map between these objects. This map is a trivial cofibration by \cref{n-triv-entire}. That the two adjunctions are Quillen equivalences thus follows from \cref{nat-we-QE}.
\end{proof}

\begin{proposition}\label{minimal-QE}
For $A \subseteq \{0,1\}$, the adjunction $\cA' : (-)^\flat \rightleftarrows |-| : \cA^+$ is a Quillen equivalence between the cubical marked model structure on $\cA'$ and the saturated 1-trivial comical model structure on $\cA^+$.
\end{proposition}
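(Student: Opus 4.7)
The plan is to exploit the natural transformation $m \colon (-)^\flat \Rightarrow (-)^\sharp$ from Corollary \ref{flat-sharp-nat-trans}, whose components are trivial cofibrations in the saturated $1$-trivial comical model structure, in order to transfer the known Quillen equivalence $(-)^\sharp \adjoint c$ to the adjunction $(-)^\flat \adjoint |-|$. The approach has two steps: first verify that $(-)^\flat$ is left Quillen, then deduce that it is a Quillen equivalence via $m$.

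First I would verify that $(-)^\flat \colon \cA' \to \cA^+$ preserves cofibrations and weak equivalences. Preservation of cofibrations is immediate, since $(-)^\flat$ acts as the identity on underlying cubical sets and cofibrations in both model structures are the monomorphisms. For preservation of weak equivalences, given any weak equivalence $f \colon X \to Y$ in $\cA'$, the naturality square
\[
\xymatrix{
X^\flat \ar[r]^{f^\flat} \ar[d]_{m_X} & Y^\flat \ar[d]^{m_Y} \\
X^\sharp \ar[r]^{f^\sharp} & Y^\sharp
}
\]
has vertical maps that are trivial cofibrations by Corollary \ref{flat-sharp-nat-trans}, while $f^\sharp$ is a weak equivalence because $(-)^\sharp$ is left Quillen by Proposition \ref{sharp-core-QA}. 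Two-out-of-three then forces $f^\flat$ to be a weak equivalence, so $(-)^\flat$ preserves weak equivalences and is in particular left Quillen.

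For the Quillen equivalence itself, observe that $m$ is a natural weak equivalence between the left Quillen functors $(-)^\flat, (-)^\sharp \colon \cA' \to \cA^+$. As in the proof of Proposition \ref{nat-we-QE}, such a natural weak equivalence induces a natural isomorphism $\Ho (-)^\flat \cong \Ho (-)^\sharp$ of left derived functors. Since $(-)^\sharp$ is a left Quillen equivalence by Proposition \ref{forgetful-maximal-QE}, its derived functor is an equivalence of homotopy categories, so the same is true of $\Ho (-)^\flat$; thus $(-)^\flat \adjoint |-|$ is a Quillen equivalence.

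The main obstacle, such as it is, lies in recognising the right approach: a direct verification that $(-)^\flat$ sends each pseudo-generating trivial cofibration of $\cA'$ (marked open box inclusions, the saturation map, three-out-of-four maps) to a trivial cofibration in $\cA^+$ would be fiddly, because in the image under $(-)^\flat$ many higher-dimensional cubes that are marked in the comical cubes are left unmarked, and one would need to argue that these markings can be added up to weak equivalence. Routing through $m$ and $(-)^\sharp$ bypasses this combinatorial bookkeeping completely.
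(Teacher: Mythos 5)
Your proposal is correct and follows essentially the same route as the paper: both establish that $(-)^\flat$ preserves cofibrations via underlying cubical sets, deduce preservation of weak equivalences from the natural trivial cofibration $m \colon (-)^\flat \Rightarrow (-)^\sharp$ of \cref{flat-sharp-nat-trans} together with two-out-of-three, and then transfer the Quillen equivalence from $(-)^\sharp \adjoint c$ via the induced natural isomorphism of left derived functors. The only difference is that you display the naturality square explicitly, which the paper leaves implicit.
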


\begin{proof}
We begin by showing that the adjunction is Quillen. The functor $(-)^\flat$ preserves monomorphsisms follows from the fact that it preserves underlying cubical set maps. That it preserves weak equivalences follows from \cref{flat-sharp-nat-trans} and the fact that $(-)^\sharp$ preserves weak equivalences by \cref{sharp-core-QA}. The natural weak equivalence $(-)^\flat \Rightarrow (-)^\sharp$ of \cref{flat-sharp-nat-trans} then allows us to obtain a natural isomorphism between the left derived functors of $(-)^\flat$ and $(-)^\sharp$. As the left derived functor of $(-)^\sharp$ is an equivalence of categories by \cref{forgetful-maximal-QE}, it thus follows that the same is true of the left derived functor of $(-)^\flat$.
\end{proof}

By similar reasoning, we obtain an adjoint quadruple of Quillen equivalences between $\sSet'$ and $\sSet^+$. 

\begin{proposition}\label{all-marking-QE-simplicial}
Each adjunction in the adjoint quadruple $(-)^\flat \adjoint |-| \adjoint (-)^\sharp \adjoint c$ defines a Quillen equivalence between the marked model structure on $\sSet'$ and the saturated 1-trivial complicial model structure on $\sSet^+$. \qed
\end{proposition}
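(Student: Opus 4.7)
The plan is to follow the same structure of argument used for the cubical case in Propositions \ref{sharp-core-QA} through \ref{minimal-QE}, adapting each step to the simplicial setting. First I would establish the analogue of \cref{n-triv-entire} for marked simplicial sets: an entire map $X \to Y$ in $\sSet^+$ which agrees on markings in dimension $\leq 1$ is a trivial cofibration in the saturated $1$-trivial complicial model structure, via the same pushout-of-markers argument. This immediately gives, as in \cref{flat-sharp-nat-trans}, a natural transformation $m \colon (-)^\flat \Rightarrow (-)^\sharp$ of functors $\sSet' \to \sSet^+$ whose components are trivial cofibrations.

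Next I would verify that $(-)^\sharp : \sSet' \rightleftarrows \sSet^+ : c$ is Quillen, by checking that $(-)^\sharp$ preserves cofibrations (it preserves underlying simplicial set maps) and sends each pseudo-generating trivial cofibration of the marked model structure to a trivial cofibration. For the analogues of the marked horn inclusions and the three-out-of-four maps, the image under $(-)^\sharp$ is constructed from the complicial horn inclusions and complicial marking extensions via pushouts, exactly as in the proof of \cref{sharp-core-QA}; for the simplicial saturation map, its image under $(-)^\sharp$ is a pushout of a Rezk-type pseudo-generating trivial cofibration of the saturated complicial model structure. Similarly, I would check that $|-| : \sSet^+ \rightleftarrows \sSet' : (-)^\sharp$ is Quillen, following the blueprint of \cref{forgetful-QA}: the images of complicial horn inclusions are marked horn inclusions, complicial marking extensions land on three-out-of-four maps or identities, and markers above dimension $1$ are sent to identities.

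The main obstacle, as in the cubical case, will be showing that $|-|$ sends the Rezk-type pseudo-generating trivial cofibrations of the saturated complicial model structure to trivial cofibrations in $\sSet'$. This cannot be handled by a direct pushout computation; instead I would argue that the image is entire (hence an epimorphism) and a cofibration, and reduce to proving it has the left lifting property with respect to all marked quasicategories. For a diagram with target a marked quasicategory $X$, the lift exists iff certain edges of $X$ are marked, and this follows from the two-out-of-six property of isomorphisms in the homotopy category together with the simplicial analogue of \cref{edge-marked-iff-invertible}, which characterizes marked edges in a marked quasicategory as precisely those representing isomorphisms.

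With both $|-| \adjoint (-)^\sharp$ and $(-)^\sharp \adjoint c$ established as Quillen adjunctions, I would verify that $|-| \circ (-)^\sharp = \mathrm{id}_{\sSet'}$ and that the unit of $|-| \adjoint (-)^\sharp$ at any $X \in \sSet^+$ is the canonical entire map $X \to |X|^\sharp$, which is a trivial cofibration by the simplicial analogue of \cref{n-triv-entire}. Applying \cref{nat-we-QE} then yields that both $|-| \adjoint (-)^\sharp$ and $(-)^\sharp \adjoint c$ are Quillen equivalences. Finally, to handle $(-)^\flat \adjoint |-|$, the natural weak equivalence $m \colon (-)^\flat \Rightarrow (-)^\sharp$ induces a natural isomorphism of left derived functors; since $\mathbb{L}(-)^\sharp$ is an equivalence, so is $\mathbb{L}(-)^\flat$, proving that $(-)^\flat \adjoint |-|$ is a Quillen equivalence as well. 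This mirrors the proof of \cref{minimal-QE} precisely.
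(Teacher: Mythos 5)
Your overall plan coincides with the paper's: the paper deliberately omits a written-out proof because it is ``effectively identical'' to the cubical arguments of \cref{sharp-core-QA}--\cref{minimal-QE}, and every step you list (the simplicial analogue of \cref{n-triv-entire}, the natural trivial cofibration $(-)^\flat \Rightarrow (-)^\sharp$, the two Quillen adjunction checks, the identity $|-| \circ (-)^\sharp = \mathrm{id}$, the appeal to \cref{nat-we-QE}, and the derived-functor argument transferring the equivalence to $(-)^\flat \adjoint |-|$) matches. The one point where you diverge is exactly the point the paper singles out as the only substantive difference: showing that $|-|$ carries the Rezk-type pseudo-generators of the saturated complicial model structure (the saturation extensions of Ozornova--Rovelli) to trivial cofibrations in $\sSet'$. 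You assert that ``this cannot be handled by a direct pushout computation,'' but the paper claims precisely the opposite: each saturation extension is sent by $|-|$ to a pushout of the simplicial analogue of the saturation map, which is itself a pseudo-generating trivial cofibration of the marked model structure on $\sSet'$, so the conclusion is immediate. Your substitute---the entire/epimorphism reduction to lifting against marked quasicategories followed by \cref{edge-marked-iff-invertible} and two-out-of-six, mirroring the elementary Rezk map step in the proof of \cref{forgetful-QA}---is a plausible alternative, but as written it is incomplete: the saturation extensions form an infinite family built from joins with simplices, and you have not identified which edges actually acquire new markings after applying $|-|$ in the non-elementary cases, nor verified that their invertibility in the homotopy category follows from two-out-of-six applied to the edges already marked. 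That case analysis is the entire content of the step; the paper's pushout observation is what lets one avoid it. Either route can be made to work, but you should either carry out that analysis explicitly or replace it with the pushout decomposition.
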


We will not record the proof of this result in full, as it is effectively identical to the preceding set of proofs for the cubical case. The only substantial difference is that when showing that ${|-|} \colon \sSet^+ \to \sSet'$ sends the simplicial analogues of the Rezk maps (the saturation extensions of \cite[Def.~1.19(4)]{ozornova-rovelli}) to weak equivalences, one one cannot appeal to monoidality as was done in the proof of \cref{forgetful-QA}; it is easy to show, however, that each such map is sent to a pushout of a simplicial analogue of the saturation map.

Using the Quillen equivalence results above, we can obtain analogues of \cref{T-min-Quillen-eqv} for unmarked cubical sets and cubical sets with weak equivalences; in particular, these results generalize \cite[Thm.~6.1]{doherty-kapulkin-lindsey-sattler} to the case of cubical sets without connections.

\begin{theorem}\label{T-Quillen-eqv-marked-MS}
For each $A \subseteq \{0,1\}$, the adjunction $T : \cA' \rightleftarrows \sSet'$ is a Quillen equivalence between the cubical marked model structure on $\cA'$ and the marked model structure on $\sSet'$.
\end{theorem}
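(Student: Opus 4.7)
The plan is to realize the desired Quillen equivalence as the fourth side of a commuting square of Quillen adjunctions in which the other three sides are already known to be Quillen equivalences. Concretely, I will use the Quillen equivalence $T \colon \cA^+ \rightleftarrows \sSet^+ \colon U$ (from \cref{T-min-Quillen-eqv} for $A = \varnothing$ and \cref{QE-with-cons} otherwise), the forgetful Quillen equivalence $|-| \colon \cA^+ \rightleftarrows \cA' \colon (-)^\sharp$ between the saturated $1$-trivial comical model structure and the cubical marked model structure (\cref{forgetful-maximal-QE}), and its simplicial analogue (\cref{all-marking-QE-simplicial}).

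The key structural observation is that triangulation commutes with the forgetful functor, i.e., there is a natural isomorphism $|-| \circ T \cong T \circ |-|$ between functors $\cA^+ \to \sSet'$. Unwinding the definitions, for $X \in \cA^+$ both $|TX|$ and $T|X|$ have underlying simplicial set equal to the triangulation of the underlying cubical set of $X$, and in both cases an edge is marked precisely when it arises from a marked $1$-cube of $X$. This gives a square of left adjoints
\[
\xymatrix{
\cA^+ \ar[r]^{T} \ar[d]_{|-|} & \sSet^+ \ar[d]^{|-|} \\
\cA' \ar[r]^{T} & \sSet' \\
}
\]
commuting up to natural isomorphism.

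I would then verify that the bottom arrow $T \colon \cA' \to \sSet'$ is left Quillen. Preservation of cofibrations is immediate from preservation of monomorphisms by the underlying $T \colon \cA \to \sSet$. For preservation of weak equivalences, given a weak equivalence $f$ in $\cA'$, the map $f^\flat$ is a weak equivalence in $\cA^+$ by \cref{minimal-QE}, hence $T(f^\flat)$ is a weak equivalence in $\sSet^+$ by the marked triangulation equivalence, and $|T(f^\flat)|$ is a weak equivalence in $\sSet'$ by \cref{all-marking-QE-simplicial}; combining the commutativity observation with the identity $|X^\flat| = X$ for $X \in \cA'$ gives $|T(f^\flat)| = T|f^\flat| = Tf$.

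Finally, passing to total derived functors yields a commuting square of functors between homotopy categories in which three of the four arrows are equivalences; two-out-of-three for equivalences of categories then forces the fourth to be an equivalence as well, so $T \colon \cA' \rightleftarrows \sSet' \colon U$ is a Quillen equivalence. The only subtle point is the commutativity observation of Step 2, but this ultimately reduces to the fact that the marked triangulation functor was designed so that its action on edge markings agrees with that of the forgetful functor $|-|$.
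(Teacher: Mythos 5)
Your proof is correct and follows essentially the same strategy as the paper's: embed the desired adjunction into a commuting square together with the marked triangulation equivalence $T : \cA^+ \rightleftarrows \sSet^+$ and the marking Quillen equivalences between $\cA'$ and $\cA^+$ (resp.\ $\sSet'$ and $\sSet^+$), check that the bottom edge is Quillen, and conclude by two-out-of-three for Quillen equivalences. The only difference is cosmetic: you use the square built from the left Quillen forgetful functors $|-|$ (verifying $|-| \circ T \cong T \circ |-|$ and deducing preservation of weak equivalences via $(-)^\flat$), whereas the paper uses the square built from the maximal marking functors $(-)^\sharp$ (verifying $T \circ (-)^\sharp = (-)^\sharp \circ T$ and using that $(-)^\sharp$ reflects weak equivalences); both commutativity checks reduce to the same comparison of markings on representables.
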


\begin{proof}
We begin by considering the following diagram of adjunctions:
\[
\xymatrix{
\cSet_A' \ar@<1ex>[rr]^{(-)^\sharp} \ar@{}|{\rotatebox{-90}{$\adjoint$}}[rr] \ar@<1ex>@{<-}[dd]^{U} \ar@{}|{\adjoint}[dd] && \cA^+ \ar@<1ex>[ll]^{c} \ar@<1ex>@{<-}[dd]^{U} \ar@{}|{\adjoint}[dd]
\\
\\
\sSet' \ar@<1ex>[rr]^{(-)^\sharp} \ar@{}|{\rotatebox{-90}{$\adjoint$}}[rr] \ar@<1ex>@{<-}[uu]^{T} && \sSet^+ \ar@<1ex>[ll]^{c} \ar@<1ex>@{<-}[uu]^{T} 
\\
}
\]

To see that this diagram commutes, it suffices to verify that the diagram of left adjoints commutes on representable objects in $\cA'$. For the marked 1-cube $\wBox^1_A$, we may note that both $T \circ (-)^\sharp$ and $(-)^\sharp \circ T$ send this object to the marked simplicial set $\widetilde{\Delta}^1$. For the unmarked cubes $\BoxA^n, n \geq 0$ in $\cA'$, we may note that $T \BoxA^n$ has no non-degenerate marked edges, and this is likewise true for the image of $\BoxA^n \in \cA^+$ under the marked triangulation functor. Thus $(T \BoxA^n)^\sharp = T((\BoxA^n)^\sharp)$, as both of these are obtained from the unmarked simplicial set $(\Delta^1)^n$ by marking all non-degenerate simplices of dimension at least 2, but no edges.

We now show that  $T : \cA' \rightleftarrows \sSet' : U$ is a Quillen adjunction. That $T$ preserves cofibrations follows from the analogous fact for the unmarked triangulation functor $T \colon \cA \to \sSet$. To see that $T$ creates weak equivalences, let $f$ denote a trivial cofibration in $\cA'$. Note that the top, right, and bottom adjunctions in this diagram are Quillen equivalences by \cref{T-min-Quillen-eqv,forgetful-maximal-QE,all-marking-QE-simplicial}.

Applying \cref{QuillenEquivCreate-original} to the adjunction $(-)^\sharp : \sSet' \rightleftarrows \sSet^+ : c$, we see that the cofibration $Tf$ is trivial if and only if $(Tf)^\sharp$ is a weak equivalence. By the commutativity of the diagram above, this map is equal to $T(f^\sharp)$, which is a trivial cofibration as the image of the trivial cofibration $f$ under the left Quillen functor $T \circ (-)^\sharp$.

Thus the adjunction $T : \cA' \rightleftarrows \sSet' : U$ is indeed Quillen; that it is a Quillen equivalence then follows from the commutativity of the diagram above and the two-out-of-three property for Quillen equivalences.
\end{proof}

\begin{theorem}\label{T-Quillen-equiv-unmarked}
For each $A \subseteq \{0,1\}$, the adjunction $T : \cA \rightleftarrows \sSet : U$ is a Quillen equivalence between the cubical Joyal model structure on $\cA$ and the Joyal model structure on $\sSet$.
\end{theorem}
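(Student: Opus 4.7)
The plan is to deduce the theorem from \cref{T-Quillen-eqv-marked-MS} via the minimal marking Quillen equivalences of \cref{cubical-unmarked-we-QE,simplicial-unmarked-we-QE}. First I would consider the square of left adjoints
\[
\xymatrix{
\cA \ar[r]^{(-)^\flat} \ar[d]_{T} & \cA' \ar[d]^{T} \\
\sSet \ar[r]^{(-)^\flat} & \sSet'
}
\]
and verify that it commutes. Since all four functors are cocontinuous, it suffices to check commutativity on the representable objects $\BoxA^n \in \cA$; by inspection of \cref{T-prime} both composites send $\BoxA^n$ to $(\Delta^1)^n$ equipped with markings on exactly its degenerate edges.

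Next I would show that $T \colon \cA \to \sSet$ is a left Quillen functor. Cofibrations in all four of our model categories are precisely the monomorphisms, and the two $(-)^\flat$ functors preserve and reflect these since they preserve underlying cubical or simplicial set maps. Moreover, by \cref{cubical-unmarked-we-QE,simplicial-unmarked-we-QE} these adjunctions are Quillen equivalences between model categories in which every object is cofibrant, so applying \cref{QuillenEquivCreate-original} together with Ken Brown's lemma shows that both $(-)^\flat$ functors preserve and reflect all weak equivalences. Hence, if $f$ is a cofibration (resp.\ weak equivalence) in $\cA$, then $f^\flat$ is a cofibration (resp.\ weak equivalence) in $\cA'$, so $T(f)^\flat = T(f^\flat)$ is a cofibration (resp.\ weak equivalence) in $\sSet'$ by \cref{T-Quillen-eqv-marked-MS}, whence $T(f)$ is a cofibration (resp.\ weak equivalence) in $\sSet$.

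Finally I would conclude by the two-out-of-three property for Quillen equivalences applied to the composition $\cA \xrightarrow{T} \sSet \xrightarrow{(-)^\flat} \sSet'$. By commutativity of the square this composite coincides with $T \circ (-)^\flat$, which is a Quillen equivalence as a composite of the two Quillen equivalences from \cref{cubical-unmarked-we-QE,T-Quillen-eqv-marked-MS}; since $(-)^\flat \colon \sSet \to \sSet'$ is itself a Quillen equivalence by \cref{simplicial-unmarked-we-QE}, two-out-of-three forces $T \colon \cA \to \sSet$ to be a Quillen equivalence. Since all the genuine combinatorial content has been absorbed into \cref{T-min-Quillen-eqv,T-Quillen-eqv-marked-MS}, this argument presents no real obstacle — it is essentially bookkeeping with the square of adjunctions, and the only mildly subtle point is the passage from preservation/reflection of weak equivalences between fibrant objects (as given by \cref{QuillenEquivCreate-original}) to preservation/reflection of all weak equivalences, which is where cofibrancy of every object is used.
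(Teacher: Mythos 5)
Your proof is correct and follows essentially the same route as the paper: the same commuting square of adjunctions $T \circ (-)^\flat = (-)^\flat \circ T$, verified on representables, followed by two-out-of-three for Quillen equivalences using \cref{cubical-unmarked-we-QE,simplicial-unmarked-we-QE,T-Quillen-eqv-marked-MS}. The only difference is that the paper simply cites \cite[Prop.~4.28]{doherty-kapulkin-lindsey-sattler} for the fact that $T : \cA \rightleftarrows \sSet : U$ is a Quillen adjunction, whereas you rederive this from the marked case via the minimal marking functors (using that all objects are cofibrant so that the left Quillen equivalences preserve and reflect all weak equivalences) --- a valid, slightly more self-contained variant.
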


\begin{proof}
The adjunction $T : \cA \rightleftarrows \sSet$ is Quillen by \cite[Prop.~4.28]{doherty-kapulkin-lindsey-sattler}. To see that it is a Quillen equivalence, consider the following diagram of adjunctions:
\[
\xymatrix{
\cSet_A \ar@<1ex>[rr]^{(-)^\flat} \ar@{}|{\rotatebox{-90}{$\adjoint$}}[rr] \ar@<1ex>@{<-}[dd]^{U} \ar@{}|{\adjoint}[dd] && \cA' \ar@<1ex>[ll]^{|-|} \ar@<1ex>@{<-}[dd]^{U} \ar@{}|{\adjoint}[dd]
\\
\\
\sSet \ar@<1ex>[rr]^{(-)^\flat} \ar@{}|{\rotatebox{-90}{$\adjoint$}}[rr] \ar@<1ex>@{<-}[uu]^{T} && \sSet' \ar@<1ex>[ll]^{|-|} \ar@<1ex>@{<-}[uu]^{T} 
\\
}
\]
Once again, to see that this diagram commutes it suffices to verify that the composites of left adjoints agree on representables; for this, we note that both $T \circ (-)^\flat$ and $(-)^\flat \circ T$ send each $\BoxA^n$ to the simplicial set $(\Delta^1)^n$ equipped with its minimal marking. That $T : \cA \rightleftarrows \sSet$ is a Quillen equivalence thus follows from \cref{minimal-QE,all-marking-QE-simplicial,T-Quillen-eqv-marked-MS} together with the two-out-of-three property for Quillen equivalences.
\end{proof}

We can also use two-out-of-three for Quillen equivalences to show that the analogues of the adjoint triples $i_! \adjoint i^* \adjoint i_*$ for unmarked cubical sets and cubical sets with weak equivalences are composed of Quillen equivalences.

For each $A \subseteq B \subseteq \{0,1\}$, the inclusion $i \colon \BoxA \hookrightarrow \BoxB$ induces a functor $i^* \colon \cB \to \cA$ (\resp $i^* \colon \cB' \to \cA'$). As in the case of marked cubical sets, this functor admits both a right and a left adjoint, which admit descriptions analogous to \cref{i-left-explicit,i-right-adj}. Specifically:

\begin{itemize}
\item for $X \in \cA$, the object $i_! X$ is obtained by freely adding the connections of $\BoxB$ to $X$;
\item for $X \in \cA$ and $n \geq 0$, $(i_* X)_n = \cA'(i^* \BoxB^n,X)$; 
\item for $X \in \cA'$, the object $i_! X$ is obtained by freely adding the connections of $\BoxB$ to $X$, with the marked edges being precisely those of $X$;
\item for $X \in \cA'$ and $n \geq 0$, $(i_* X)_n = \cA'(i^* \BoxB^n,X)$ with an edge marked if and only if it sends $\id_{[1]}$ to a marked edge of $X$.
\end{itemize}

\begin{proposition}
For any $A \subseteq B \subseteq \{0,1\}$, the adjunctions $i_! \adjoint i^*$ and $i^* \adjoint i_*$ induced by the inclusion $\BoxA \hookrightarrow \BoxB$ are Quillen equivalences between the cubical Joyal model structures on $\cA$ and $\cB$ (\resp cubical marked model structures on $\cA'$ and $\cB'$.)
\end{proposition}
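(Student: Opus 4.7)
The plan is to transfer the Quillen equivalences of \cref{i-Quillen-eqv} from the marked setting to the unmarked and weakly marked ones. For $i_! \dashv i^*$ this proceeds by commutativity with the triangulation functors and two-out-of-three; for $i^* \dashv i_*$ we first show $i^*$ is left Quillen and then apply \cref{nat-we-QE}.

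The functor $i_!$ preserves monomorphisms by its explicit description in terms of free connections, and each pseudo-generating trivial cofibration of the cubical Joyal (respectively cubical marked) model structure on $\cA$ (respectively $\cA'$) is sent by $i_!$, via its colimit description in terms of representables, to the corresponding map of the same form in $\cB$ (respectively $\cB'$), which is again a trivial cofibration. Hence $i_! \dashv i^*$ is a Quillen adjunction. Since $T\BoxA^n = (\Delta^1)^n = T\BoxB^n$ for every $n$ and both $T$ and $i_!$ preserve colimits, there is a natural isomorphism $T \cong T \circ i_!$ between left Quillen functors $\cA \to \sSet$ (respectively $\cA' \to \sSet'$). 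Each triangulation adjunction is a Quillen equivalence by \cite[Thm.~6.1]{doherty-kapulkin-lindsey-sattler}, \cref{T-Quillen-equiv-unmarked}, and \cref{T-Quillen-eqv-marked-MS}, so two-out-of-three for Quillen equivalences applied to this triangle yields that $i_! \dashv i^*$ is a Quillen equivalence.

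To see that $i^*$ is left Quillen, observe that $(-)^\flat \circ i^* = i^* \circ (-)^\flat$ holds on the nose at the level $\cA \to \cA'$, since a $1$-cube is degenerate precisely when it factors through $\sigma_1$ --- a condition independent of $A$ or $B$, as no connection produces a $1$-cube from a $1$-cube. At the level $\cA' \to \cA^+$, the analogous comparison $(-)^\flat \circ i^* \Rightarrow i^* \circ (-)^\flat$ is a natural entire inclusion adding markings only on cubes of dimension at least $2$ (namely the $\Box_B$-degenerate cubes which are not $\Box_A$-degenerate); by \cref{n-triv-entire} this is a natural trivial cofibration in the saturated $1$-trivial comical model structure on $\cA^+$. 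Now let $f$ be a weak equivalence in $\cB$ or $\cB'$: applying the left Quillen functor $(-)^\flat$ (which preserves all weak equivalences by Ken Brown's lemma, since all objects are cofibrant) followed by $i^* : \cB^+ \to \cA^+$ (left Quillen by \cref{i-cofree-Quillen}) yields a weak equivalence in $\cA^+$. Combined with the natural trivial cofibration above and two-out-of-three applied to the naturality square, this shows $(-)^\flat i^* f$ is a weak equivalence in $\cA^+$, and since $(-)^\flat : \cA \to \cA^+$ reflects weak equivalences as the left adjoint of a Quillen equivalence between categories with all objects cofibrant, $i^* f$ is a weak equivalence in $\cA$ (or $\cA'$). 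Together with preservation of cofibrations, this makes $i^* \dashv i_*$ a Quillen adjunction.

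Finally, we apply \cref{nat-we-QE} to $F = i_!$ and $G = i^*$, both now known to be left Quillen. The composites $i^* i_!$ and $i_! i^*$ are related to the identities by the unit $\eta$ and counit $\epsilon$ of $i_! \dashv i^*$, which are natural transformations between left Quillen functors. To verify that $\eta_X$ is a natural weak equivalence, note that the derived unit $X \to i^*((i_!X)^f)$ is a weak equivalence since $i_! \dashv i^*$ is a Quillen equivalence; it factors as $\eta_X$ followed by $i^*$ applied to the fibrant-replacement trivial cofibration of $i_! X$, which is a weak equivalence by preservation of weak equivalences by $i^*$, so two-out-of-three gives that $\eta_X$ is a weak equivalence. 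The argument for $\epsilon_Y$ is analogous, using naturality and fibrant replacement on $Y$ together with the fact that $i_! i^*$ preserves weak equivalences. Therefore \cref{nat-we-QE} concludes that $i_!$ and $i^*$ are both left Quillen equivalences, establishing that $i_! \dashv i^*$ and $i^* \dashv i_*$ are Quillen equivalences in both the cubical Joyal and cubical marked settings.
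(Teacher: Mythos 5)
Your proof is correct, but it routes the two-out-of-three arguments differently from the paper. The paper's proof sets up a single commuting diagram of adjoint triples
\[
\cB^+ \rightleftarrows \cB' \rightleftarrows \cB, \qquad \cA^+ \rightleftarrows \cA' \rightleftarrows \cA
\]
(via $(-)^\flat \adjoint |-| \adjoint (-)^\sharp$), observes that the new adjunctions are Quillen --- for $i^* \adjoint i_*$ by the clean factorization $i^* = |-| \circ i^* \circ (-)^\flat$ as a composite of left Quillen functors --- and then deduces that \emph{all} remaining adjunctions in the diagram are Quillen equivalences at once by two-out-of-three, using \cref{forgetful-maximal-QE,minimal-QE,i-Quillen-eqv}. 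You instead establish the $i_! \adjoint i^*$ equivalence by commuting with triangulation ($T \cong T i_!$) and invoking \cref{T-Quillen-equiv-unmarked,T-Quillen-eqv-marked-MS}, and you prove that $i^*$ preserves weak equivalences by transporting along the comparison $(-)^\flat \circ i^* \Rightarrow i^* \circ (-)^\flat$ (a natural trivial cofibration by \cref{n-triv-entire}) and the reflection of weak equivalences by $(-)^\flat$; the $i^* \adjoint i_*$ equivalence then comes from \cref{nat-we-QE} applied to the unit and counit. Both routes are sound; the paper's is more economical because the single diagram handles the Quillen-equivalence step uniformly, while yours isolates where each ingredient (triangulation, marking comparison) is actually used and would survive even without the marked triangulation results. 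Two small points: your citation of \cref{i-cofree-Quillen} for $i^* \colon \cB^+ \to \cA^+$ being left Quillen only literally covers the case $A = \varnothing$; for general $A \subseteq B$ this is part of \cref{i-Quillen-eqv}. And your final appeal to \cref{nat-we-QE} could be shortened: once $i^*$ is known to preserve all weak equivalences and $i_! \adjoint i^*$ is a Quillen equivalence, the left and right derived functors of $i^*$ agree and are equivalences, which already gives that $i^* \adjoint i_*$ is a Quillen equivalence.
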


\begin{proof}
Consider the following diagram of adjoint triples:
\[
\vcenter{\xymatrix@C+1.5cm{
  \cB^+
  \ar[rr]^{|-|}
  \ar[dddd]^{i^*}
&&
  \cB'
  \ar@/^1pc/[ll]^{(-)^\sharp}
  \ar@/_1pc/[ll]_{(-)^\flat}
  \ar[dddd]^{i^*}
   \ar[rr]^{|-|}
&&
   \cB
  \ar@/^1pc/[ll]^{(-)^\sharp}
  \ar@/_1pc/[ll]_{(-)^\flat}
  \ar[dddd]^{i^*}
\\ 
&
&
\\
&
&
\\
\\
\cA^+
 \ar[rr]^{|-|}
 \ar@/^1pc/[uuuu]^{i_!}
 \ar@/_1pc/[uuuu]_{i_*}
&
&
 \cA'
 \ar@/^1pc/[ll]^{(-)^\sharp}
 \ar@/_1pc/[ll]_{(-)^\flat}
 \ar@/^1pc/[uuuu]^{i_!}
 \ar@/_1pc/[uuuu]_{i_*} 
 \ar[rr]^{|-|}
 &&
 \cA
 \ar@/^1pc/[ll]^{(-)^\sharp}
 \ar@/_1pc/[ll]_{(-)^\flat}
 \ar@/^1pc/[uuuu]^{i_!}
 \ar@/_1pc/[uuuu]_{i_*} 
}}
\]

To see that the diagram commutes, it suffices to observe that the middle functors in each of the two squares satisfy $i^* \circ |-| = |-| \circ i^*$; this is immediate from the definitions.

Next we show that the adjunctions $i_! \adjoint i^*$ and $i^* \adjoint i_*$ for cubical sets with weak equivalences are Quillen. For $i_! \adjoint i^*$, an argument similar to the proof of \cref{i-free-Quillen} shows that $i_!$ sends each of the generating cofibrations and pseudo-generating trivial cofibrations of $\cA'$ to the corresponding map in $\cB'$. 
For $i^* \adjoint i_*$, we may note that $i^* \colon \cB' \to \cA'$ is equal to the composite $|-| \circ i^* \circ (-)^\flat$ (where $i^*$ here denotes the functor $\cA^+ \to \cB^+$); as each of these functors is left Quillen, the same is true of their composite.

Finally, we note that each of the adjoint triples of the form $(-)^\flat \adjoint |-| \adjoint (-)^\sharp$ depicted above is composed of Quillen adjunctions by \cref{forgetful-maximal-QE,minimal-QE}, as is the adjoint triple $i_! \adjoint i^* \adjoint i_*$ between $\cA^+$ and $\cB^+$ by \cref{i-Quillen-eqv}. It follows that all remaining adjunctions in the diagram are Quillen equivalences by two-out-of-three.
\end{proof}

\end{appendix}

\bibliographystyle{amsalphaurlmod}
\bibliography{general-bibliography}

\end{document}